\newtheorem{theorem}{Theorem}[section]
\newtheorem{lema}[theorem]{Lemma}
\newtheorem{prop}[theorem]{Proposition}
\newtheorem{coro}[theorem]{Corollary}
\newtheorem*{Lan}{Rigidity Conjecture}
\theoremstyle{definition}
\newtheorem{definition}[theorem]{Definition}
\theoremstyle{remark}
\numberwithin{equation}{section}
\theoremstyle{plain}
\newtheorem{maintheorem}{Theorem}
\newcommand{\A}{\ensuremath{\mathbb{A}}}
\newcommand{\D}{\ensuremath{\mathbb{D}}}
\newcommand{\Q}{\ensuremath{\mathbb{Q}}}
\newcommand{\h}{\ensuremath{\mathbb{H}}}
\newcommand{\R}{\ensuremath{\mathbb{R}}}
\newcommand{\Z}{\ensuremath{\mathbb{Z}}}
\newcommand{\C}{\ensuremath{\mathbb{C}}}
\newcommand{\sph}{\widehat{\mathbb{C}}}
\newcommand{\nt}{\ensuremath{\mathbb{N}}}
\DeclareMathOperator{\interior}{interior}
\DeclareMathOperator{\Leb}{Leb}
\DeclareMathOperator{\Diff}{Diff}
\DeclareMathOperator{\modulo}{mod}
\DeclareMathOperator{\diam}{diam}
\begin{document}

\title{Rigidity of Smooth Critical Circle Maps}

\author{Pablo Guarino}
\address{Instituto de Matem\'atica e Estat\'istica, Universidade de S\~ao Paulo}
\curraddr{Rua do Mat\~ao 1010, 05508-090, S\~ao Paulo SP, Brasil}
\email{guarino@ime.usp.br}

\author{Welington de Melo}
\address{IMPA, Rio de Janeiro, Brazil}
\curraddr{Estrada Dona Castorina 110, 22460-320}
\email{demelo@impa.br}

\thanks{The authors are grateful to Artur Avila, Trevor Clark, Edson de Faria, Daniel Smania, Sebastian van Strien and Charles Tresser for several useful conversations. The first author is indebted to Enrique Pujals for his constant encouragement. The authors also would like to thank IMPA, Universidade de S\~ao Paulo and University of Warwick for their warm hospitality during the preparation of this article. This work was partially supported by CNPq (grant $\#$300813/2010-4), FAPERJ (grant $\#$E-26/101.584/2010), the DynEurBraz program and the Balzan Research project of J. Palis. The first author was supported by FAPESP (postdoctoral fellow $\#$2012/06614-8).}

\subjclass[2000]{Primary }

\keywords{Critical circle maps, smooth rigidity, renormalization, commuting pairs}

\date{}

\dedicatory{}

\begin{abstract} We prove that any two $C^3$ critical circle maps with the same irrational rotation number of bounded type and the same odd criticality are conjugate to each other by a $C^{1+\alpha}$ circle diffeomorphism, for some universal $\alpha>0$.
\end{abstract}

\maketitle

\section{Introduction}

In the theory of real one-dimensional dynamics there exist many levels of equivalence between two systems: combinatorial, topological, quasi-symmetric and smooth equivalence are major examples.

In the circle case, a classical result of Poincar\'e \cite[Chapter 1, Theorem 1.1]{livrounidim} states that circle homeomorphisms with the same irrational rotation number are \emph{combinatorially equivalent}: for each $n\in\nt$ the first $n$ elements of an orbit are ordered in the same way for any homeomorphism with a given rotation number. This implies that circle homeomorphisms with irrational rotation number are semi-conjugate to the corresponding rigid rotation and, therefore, they admit a unique invariant Borel probability measure.

By Denjoy's theorem \cite{denjoy}, any two $C^2$ circle diffeomorphisms with the same irrational rotation number are conjugate to each other by a $C^0$ homeomorphism (actually we just need $C^1$ maps such that the logarithm of the modulus of the derivative has bounded variation). This implies that $C^2$ diffeomorphisms with irrational rotation number are minimal, and therefore, the support of its unique invariant probability measure is the whole circle.

By a fundamental result of Herman \cite{hermanihes}, improved by Yoccoz \cite{yoccoz0}, any two $C^{2+\varepsilon}$ circle diffeomorphisms whose common rotation number $\rho$ satisfies the Diophantine condition:
\begin{equation}\label{diof}
\left|\rho - \frac{p}{q}\right|\geq\frac{C}{q^{2+\delta}}\,,
\end{equation}
for some $\delta \in [0,1)$ and $C>0$, and for every positive coprime integers $p$ and $q$, are conjugate to each other by a circle diffeomorphism. More precisely, if $0 \leq \delta < \varepsilon \leq 1$ and $\varepsilon-\delta\neq 1$, any such diffeomorphism is conjugate to the corresponding rigid rotation by a $C^{1+\varepsilon-\delta}$ diffeomorphism \cite{khaninteplinsky2}. This implies that its invariant probability measure is absolutely continuous with respect to Lebesgue, with H\"older continuous density with exponent $\varepsilon-\delta$. Moreover, any two $C^{\infty}$ circle diffeomorphisms with the same Diophantine rotation number are $C^{\infty}$-conjugate to each other, and real-analytic diffeomorphisms with the same Diophantine rotation number are conjugate to each other by a real-analytic diffeomorphism \cite[Chapter I, Section 3]{livrounidim}.

These are examples of \emph{rigidity} results: lower regularity of conjugacy implies higher regularity under certain conditions.

Since rigidity is totally understood in the setting of circle diffeomorphisms we continue in this article the study of rigidity problems for critical circle maps developed by de Faria, de Melo, Yampolsky, Khanin and Teplinsky among others.

By a \emph{critical circle map} we mean an orientation preserving $C^3$ circle homeomorphism with exactly one non-flat critical point of odd type (for simplicity, and for being the generic case, we will assume in this article that the critical point is of cubic type). As usual, a critical point $c$ is called \emph{non-flat} if in a neighbourhood of $c$ the map $f$ can be written as $f(t)=\pm\big|\phi(t)\big|^d+f(c)$, where $\phi$ is a $C^3$ local diffeomorphism with $\phi(c)=0$, and $d\in\nt$ with $d \geq 2$. The \emph{criticality} or \emph{type} of the critical point $c$ is $d$.

Classical examples of critical circle maps are obtained from the two-parameter family $\widetilde{f}_{a,b}:\C\to\C$ of entire maps in the complex plane:

\begin{equation}\label{arfam}
\widetilde{f}_{a,b}(z)=z+a-\left(\frac{b}{2\pi}\right)\sin(2\pi z)\quad\mbox{for $a\in[0,1)$ and $b \geq 0$.}
\end{equation}

Since each $\widetilde{f}_{a,b}$ commutes with unitary horizontal translation, it is the lift of a holomorphic map of the punctured plane $f_{a,b}:\C\setminus\{0\}\to\C\setminus\{0\}$ via the holomorphic universal cover $z \mapsto e^{2\pi iz}$. Since $\widetilde{f}_{a,b}$ preserves the real axis, $f_{a,b}$ preserves the unit circle $S^1=\big\{z\in\C:|z|=1\big\}$ and therefore induces a two-parameter family of real-analytic maps of the unit circle. This classical family was introduced by Arnold in \cite{arnold}, and is called the \emph{Arnold family}.

For $b=0$ the family $f_{a,b}:S^1 \to S^1$ is the family of rigid rotations $z \mapsto e^{2\pi ia}z$, and for $b \in (0,1)$ the family is still contained in the space of real-analytic circle diffeomorphisms.

For $b=1$ each $\widetilde{f}_{a,b}$ still restricts to an increasing real-analytic homeomorphism of the real line, that projects to an orientation-preserving real-analytic circle homeomorphism, presenting one critical point of cubic type at $1$, the projection of the integers. Denote by $\rho(a)$ the rotation number of the circle homeomorphism $f_{a,1}$. It is well-known that $a \mapsto \rho(a)$ is continuous, non-decreasing, maps $[0,1)$ onto itself and is such that the interval $\rho^{-1}(\theta) \subset [0,1)$ degenerates to a point whenever $\theta \in [0,1) \setminus \Q$ (see \cite{hermanihes}). Moreover the set $\big\{a\in[0,1):\rho(a)\in\R\setminus\Q\big\}$ has zero Lebesgue measure, see \cite{swiatek}. For $0 \leq p < q$ coprime integers we know that $\rho^{-1}\big(\{\frac{p}{q}\}\big)$ is always a non-degenerate closed interval. In the interior of this interval we find critical circle maps with two periodic orbits (of period $q$), one attracting and one repelling, which collapse to a single parabolic orbit in the boundary of the interval, see \cite{epskeentres}.

For $b>1$ the maps $f_{a,b}:S^1 \to S^1$ are not invertible any more (they present two critical points of even degree). These examples show how critical circle maps arise as bifurcations from circle diffeomorphisms to endomorphisms, and in particular, from zero to positive topological entropy (compare with infinitely renormalizable unimodal maps \cite[Chapter VI]{livrounidim}). This is one of the main reasons why critical circle maps attracted the attention of physicists and mathematicians interested in understanding the \emph{boundary of chaos} (\cite{dgk}, \cite{feigetal}, \cite{ksh}, \cite{lanford1}, \cite{lanford2}, \cite{mckay} \cite{mackay}, \cite{ostlundetal}, \cite{rand1}, \cite{rand2}, \cite{rand3}, \cite{sh}).

Another important class of critical circle maps is provided by the one-parameter family $f_{\gamma}:\C\to\C$ of Blaschke products in the complex plane:

\begin{equation}\label{formBlaschke}
f_{\gamma}(z)=e^{2\pi i\gamma}z^2\left(\frac{z-3}{1-3z}\right)\quad\mbox{for $\gamma\in[0,1)$.}
\end{equation}

Every map in this family leaves invariant the unit circle (Blaschke products are the rational maps leaving invariant the unit circle), and its restriction to $S^1$ is a real-analytic homeomorphism with a unique critical point at $1$, which is of cubic type (see Figure 1). Furthermore, for each irrational number $\theta$ in $[0,1)$ there exists a unique $\gamma$ in $[0,1)$ such that the rotation number of $f_{\gamma}|_{S^1}$ is $\theta$. With this family at hand, the developments on rigidity of critical circle maps were very useful in the study of local connectivity and Lebesgue measure of Julia sets associated to generic quadratic polynomials with Siegel disks (\cite{petersen}, \cite{mcacta}, \cite{yampolsky1}, \cite{petersenzakeri}).

\begin{figure}[ht!]
\begin{center}
\includegraphics[scale=0.6]{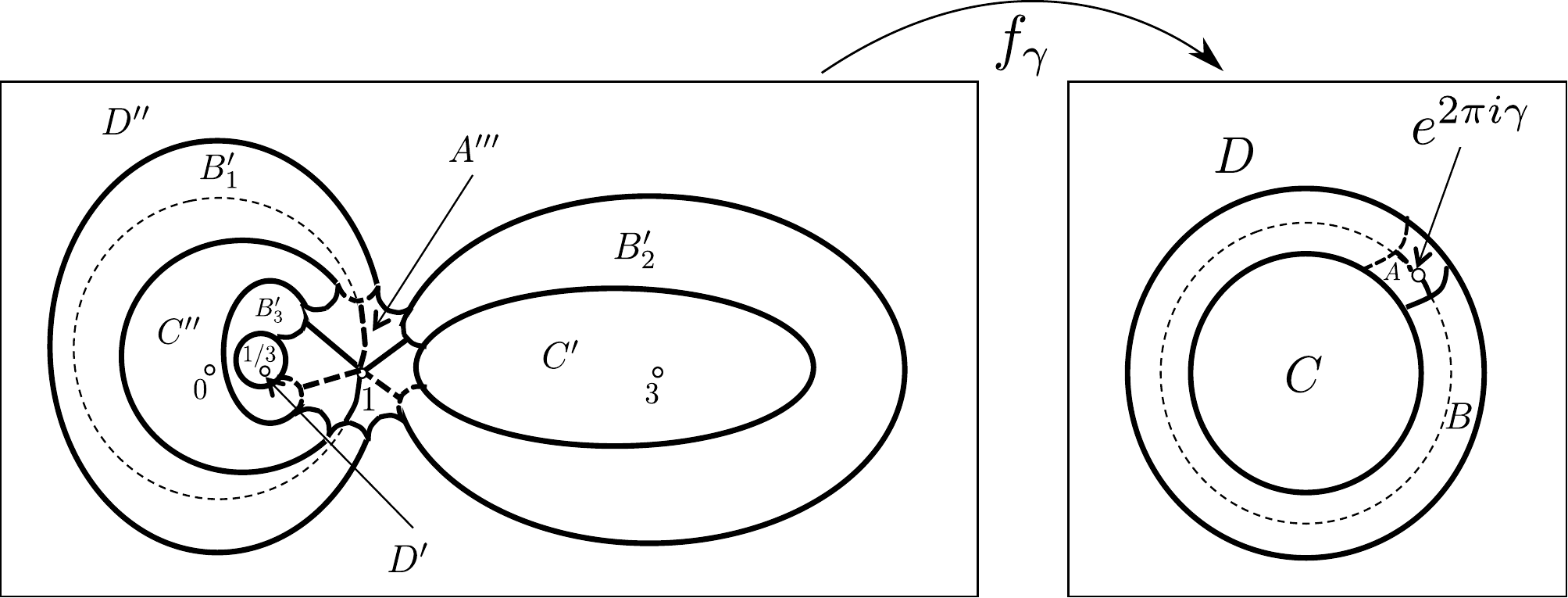}
\caption{Topological behaviour of the Blaschke product $f_{\gamma}$ \eqref{formBlaschke} around the unit circle, for $\gamma$ approximately equal to $1/8$. At the left of Figure 1 we see the preimage under $f_{\gamma}$ of the annulus around the unit circle drawn at the right (in both planes, the unit circle is dashed). The complement of the annulus $A \cup B$ in the complex plane has two connected components, $C$ and $D$. The preimage of $C$ is the union $C' \cup C''$, where the notation $C'$ means that $f_{\gamma}:C' \to C$ has topological degree $1$ (equivalently $f_{\gamma}:C'' \to C$ has topological degree $2$). In the same way, the preimage of $D$ is the union $D' \cup D''$, the preimage of $B$ is $B_1' \cup B_2' \cup B_3'$ and the preimage of $A$ is $A'''$.}
\end{center}
\end{figure}

Since our goal is to study smoothness of conjugacies we will focus on critical circle maps without periodic orbits, that is, the ones with irrational rotation number. In \cite{yoccoz} Yoccoz proved that the rotation number is the unique invariant of the topological classes. More precisely, any $C^3$ orientation preserving circle homeomorphism presenting only non-flat critical points (maybe more than one) and with irrational rotation number is topologically conjugate to the corresponding rigid rotation.

From the topological rigidity we get that any $C^3$ critical circle map with irrational rotation number is minimal and therefore the support of its unique invariant Borel probability measure is the whole circle. However let us point out that this invariant measure is always singular with respect to Lebesgue measure (see \cite[Theorem 4, page 182]{khanin} or \cite[Proposition 1, page 219]{singular}). We remark also that the condition of non-flatness on the critical points cannot be removed: in \cite{hall} Hall was able to construct $C^\infty$ homeomorphisms of the circle with no periodic points and no dense orbits.

Recall that an irrational number is of \emph{bounded type} if it satisfies the Diophantine condition (\ref{diof}) for $\delta=0$, that is, $\theta$ in $[0,1]$ is of bounded type if there exists $C>0$ such that:$$\left|\theta-\frac{p}{q}\right|\geq\frac{C}{q^2}\,,$$for any integers $p$ and $q \neq 0$. On one hand this is a respectable class: the set of numbers of bounded type is dense in $[0,1]$, with Hausdorff dimension equal to one. On the other hand, from the metrical viewpoint, this is a rather restricted class: while Diophantine numbers have full Lebesgue measure in $[0,1]$, the set of numbers of bounded type has zero Lebesgue measure.

Since a critical circle map cannot be smoothly conjugate to a rigid rotation, in order to study smooth-rigidity problems we must restrict to the class of critical circle maps. Numerical observations (\cite{feigetal}, \cite{ostlundetal}, \cite{sh}) suggested in the early eighties that smooth critical circle maps with rotation number of bounded type are geometrically rigid. This was posed as a conjecture in several works by Lanford (\cite{lanford1}, \cite{lanford2}), Rand (\cite{rand1}, \cite{rand2} and \cite{rand3}, see also \cite{ostlundetal}) and Shenker (\cite{sh}, see also \cite{feigetal}) among others:

\begin{Lan} Any two $C^3$ critical circle maps with the same irrational rotation number of bounded type and the same odd criticality are conjugate to each other by a $C^{1+\alpha}$ circle diffeomorphism, for some $\alpha>0$.
\end{Lan}

The conjecture has been proved by de Faria and de Melo for real-analytic critical circle maps \cite{edsonwelington2} and nowadays (after the work of Yampolsky, Khanin and Teplinsky) it is understood without any assumption on the irrational rotation number: inside each topological class of real-analytic critical circle maps the degree of the critical point is the unique invariant of the $C^1$-conjugacy classes. In the following result we summarize many contributions of the authors quoted above:

\begin{maintheorem}[de Faria-de Melo, Khmelev-Yampolsky, Khanin-Teplinsky]\label{riganalitico} Let $f$ and $g$ be two real-analytic circle homeomorphisms with the same irrational rotation number and with a unique critical point of the same odd type. Let $h$ be the conjugacy between $f$ and $g$ (given by Yoccoz's result) that maps the critical point of $f$ to the critical point of $g$ (note that this determines $h$). Then:

\begin{enumerate}
\item\label{Aitem1} $h$ is a $C^1$ diffeomorphism.
\item\label{Aitem2} $h$ is $C^{1+\alpha}$ at the critical point of $f$ for a universal $\alpha>0$.
\item\label{Aitem3} For a full Lebesgue measure set of rotation numbers (that contains all bounded type numbers) $h$ is globally $C^{1+\alpha}$.
\end{enumerate}
\end{maintheorem}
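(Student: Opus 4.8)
The plan is to derive all three items from two pillars --- the real and complex \emph{a priori bounds} for critical circle maps and the (exponential) convergence of renormalization --- and then to transfer the resulting geometric information from the critical point to the rest of the circle. First I would set up renormalization of critical commuting pairs and record the real a priori bounds (Herman, \'{S}wi\k{a}tek, de Faria--de Melo): the dynamical partitions $\mathcal{P}_n$ determined by the orbit of the critical point have bounded geometry, with constants depending only on the criticality; in particular $h$ is quasisymmetric and the sequence $\mathcal{R}^n f$ is precompact in the $C^2$ topology on commuting pairs. For real-analytic $f$ I would then invoke the complex bounds (de Faria, Yampolsky): for large $n$ the renormalization $\mathcal{R}^n f$ extends to a holomorphic commuting pair with a definite modulus, so the renormalization orbit lies in a compact family of holomorphic pairs whose accumulation points are again analytic pairs with rotation number $\rho$.

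The core step is to show that there exist $C>0$ and $\lambda\in(0,1)$ --- depending only on the criticality when $\rho$ is of bounded type, and depending mildly on the arithmetic of $\rho$ in general --- such that
$$\dist\!\big(\mathcal{R}^n f,\mathcal{R}^n g\big)\;\le\;C\lambda^n\qquad\text{for every }n,$$
in a suitable $C^r$ metric on commuting pairs (with a rate that, in the general case, at least makes these distances summable). For bounded type I would follow de Faria--de Melo, using the complex bounds together with a Schwarz-lemma estimate on moduli to obtain contraction of renormalization along pairs with equal rotation number, exploiting that a bounded-type renormalization step is essentially a composition of a bounded number of ``standard'' steps. For arbitrary rotation number I would invoke the hyperbolicity theory of the critical-circle renormalization operator (Yampolsky, Khanin--Teplinsky, Khmelev--Yampolsky): its limit set is a hyperbolic horseshoe, the pairs with a prescribed rotation number form a single stable lamina, and contraction along it follows from a pullback / Schwarz argument on the holomorphic-pair structure.

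Granting this, I would conclude as follows. Normalize $f$ and $g$ to share the critical point $c$ with $h(c)=c$. After affine rescaling, the two atoms of $\mathcal{P}_n$ adjacent to $c$ are the support intervals of $\mathcal{R}^n f$ (resp. $\mathcal{R}^n g$), and $h$ conjugates one pair to the other; telescoping over the nested scales and using the $\lambda^n$-closeness of renormalizations shows that $|h(I_n)|/|I_n|$ converges geometrically to a positive limit $Dh(c)$, and in fact $\big|\frac{h(x)-c}{x-c}-Dh(c)\big|\le C|x-c|^{\alpha}$ near $c$ for a universal $\alpha>0$ determined by $\lambda$ and the real bounds; this is item \eqref{Aitem2}. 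For item \eqref{Aitem1} I would propagate this to an arbitrary $x\in S^1$: by the real bounds there is an iterate $f^k$ carrying a definite neighbourhood of $x$ diffeomorphically onto a neighbourhood of $c$, with distortion bounded by the Koebe principle on the portions of the orbit disjoint from $c$ and by the renormalization geometry at the single passage near $c$; conjugating by $h$ and applying the estimate at $c$ shows that the scaling ratios of $\mathcal{P}_n$ at $x$ for $f$ and for $g$ agree asymptotically, so $h$ is differentiable with non-vanishing, continuous derivative. If in addition the partial quotients of $\rho$ do not grow too fast --- true for every bounded-type $\rho$ and, by Borel--Cantelli, for a full Lebesgue-measure set of rotation numbers --- the distortion accumulated in this transfer is summable and gives a uniform H\"older modulus for $Dh$, which is item \eqref{Aitem3}.

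The decisive obstacle is the exponential-convergence step. For bounded type it rests on the complex bounds and the de Faria--de Melo contraction estimate; the general arithmetic requires the full hyperbolicity theory of critical-circle renormalization (Yampolsky, Khanin--Teplinsky, Khmelev--Yampolsky), by far the deepest input. The real bounds, the telescoping at $c$, and the distortion/transfer argument are comparatively soft once that step is in place.
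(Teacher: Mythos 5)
The paper does not prove this theorem; it is assembled from the literature, with Item~\eqref{Aitem1} coming from Theorem~\ref{condition} combined with Theorem~\ref{uniform}, Item~\eqref{Aitem2} from \cite{khmelevyampolsky}, and Item~\eqref{Aitem3} from Theorem~\ref{primeroedsonwelington} combined with Theorem~\ref{uniform}. Your overall architecture --- real and complex a~priori bounds, exponential convergence of renormalization, transfer to the conjugacy --- matches this, and your sketches for Items~\eqref{Aitem2} and~\eqref{Aitem3} are faithful to Khmelev--Yampolsky and de~Faria--de~Melo respectively.

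The genuine gap is in Item~\eqref{Aitem1}. To go from the estimate at the critical point $c$ to differentiability at an arbitrary $x$ you transfer via Koebe distortion along the iterate that maps the atom of $\mathcal{P}_n$ containing $x$ onto the atom adjacent to $c$, and conclude that the scaling ratios at $x$ for $f$ and $g$ ``agree asymptotically.'' But Koebe gives a distortion constant bounded away from $1$, uniformly in $n$; by itself it yields only comparability of scaling ratios, not asymptotic agreement, and even asymptotic agreement would be insufficient: $|h(J_n)|/|J_n|$ is the partial product over $k<n$ of the ratios $\sigma_k^g(h(x))/\sigma_k^f(x)$ of scaling ratios, which converges only if $\sum_k\big|\log\big(\sigma_k^g/\sigma_k^f\big)\big|<\infty$. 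That summability is precisely the feature that is available only for $\rho\in\A$ and is the reason Theorem~\ref{primeroedsonwelington} is stated with the restriction $\rho\in\A$; if your transfer argument gave summability for every irrational, that restriction would be superfluous. Obtaining $C^1$-rigidity for \emph{all} irrationals is the nontrivial content of Khanin--Teplinsky's Theorem~\ref{condition}, whose proof is driven by $d_2$-convergence (not merely $d_0$-convergence) of the renormalizations and replaces the summable-distortion transfer by a finer cancellation argument. Your proposal folds this into ``the hyperbolicity theory of the renormalization operator,'' which misattributes the input: the hyperbolic horseshoe (Yampolsky, Theorem~\ref{uniform}) supplies the convergence, while Khanin--Teplinsky supply the passage from convergence to $C^1$-rigidity for arbitrary irrationals, and that passage is not a consequence of complex bounds and Koebe distortion.
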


On one hand, the presence of the critical point gives us more rigidity than in the case of diffeomorphisms: smooth conjugacy is obtained for all irrational rotation numbers, with no Diophantine conditions. On the other hand, there exist examples (\cite{avila}, \cite{edsonwelington1}) showing that $h$ may not be globally $C^{1+\alpha}$ in general, even for real-analytic dynamics.

Item \eqref{Aitem1} of Theorem \ref{riganalitico} was proved by Khanin and Teplinsky in \cite{khaninteplinsky1}, building on earlier work of de Faria, de Melo and Yampolsky (\cite{tesisedson}, \cite{edson}, \cite{edsonwelington1}, \cite{edsonwelington2}, \cite{yampolsky1}, \cite{yampolsky2}, \cite{yampolsky3}, \cite{yampolsky4}). Item \eqref{Aitem2} was proved in \cite{khmelevyampolsky} and Item \eqref{Aitem3} is obtained combining \cite{edsonwelington1} with \cite{yampolsky4}. The proof of Theorem \ref{riganalitico} relies on methods coming from complex analysis and complex dynamics (\cite{mclivro2}, \cite{mcicm}), and that is why rigidity is well understood for real-analytic critical circle maps, but nothing was known yet for smooth ones (even in the $C^{\infty}$ setting). In this article we take the final step and solve positively the Rigidity Conjecture:

\begin{maintheorem}[Main result]\label{main} Any two $C^3$ critical circle maps with the same irrational rotation number of bounded type and the same odd criticality are conjugate to each other by a $C^{1+\alpha}$ circle diffeomorphism, for some universal $\alpha>0$.
\end{maintheorem}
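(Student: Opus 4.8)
The plan is to reduce the smooth case to the real-analytic case (Theorem \ref{riganalitico}) by a density/approximation argument combined with renormalization theory. The crucial object is the renormalization operator acting on (equivalence classes of) critical commuting pairs. First I would recall the analytic framework: given a $C^3$ critical circle map $f$ with irrational rotation number $\rho$ of bounded type, one associates to it the sequence of renormalizations $\mathcal{R}^n f$, realized as critical commuting pairs. The key input, which must be established (or quoted if available from the literature of de Faria--de Melo and Yampolsky), is the \emph{$C^{1+\alpha}$ a priori bounds}: the renormalizations of any $C^3$ critical circle map lie, after finitely many steps, in a compact subset of the space of $C^{1+\alpha}$ critical commuting pairs with uniformly controlled geometry (real bounds). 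For bounded type rotation numbers these bounds are uniform in $n$ and depend only on the bound on the partial quotients.

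The heart of the argument is \textbf{exponential convergence of renormalizations}: if $f$ and $g$ are two $C^3$ critical circle maps with the same bounded-type rotation number and the same odd criticality, then $\dist(\mathcal{R}^n f, \mathcal{R}^n g) \leq C\lambda^n$ for some universal $\lambda \in (0,1)$, in an appropriate $C^1$ (or $C^{1+\alpha}$) metric on commuting pairs. To prove this I would first use the real-analytic rigidity theorem: the renormalization operator has a hyperbolic (in fact, by the results of Yampolsky, the attractor is the closure of analytic pairs and renormalization acts on it with exponential contraction along the relevant directions) structure on the space of analytic pairs, and crucially the global stable manifolds have finite codimension --- but here, since rotation number is fixed, we are moving \emph{within} a single ``vertical'' leaf. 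I would approximate $f$ and $g$ by real-analytic critical circle maps $f_k, g_k$ with the same rotation number (using the Blaschke family \eqref{formBlaschke} or Arnold-type perturbations to realize the exact bounded-type rotation number), show that $\mathcal{R}^n f_k \to \mathcal{R}^n f$ as $k \to \infty$ for each fixed $n$, and invoke exponential convergence $\dist(\mathcal{R}^n f_k, \mathcal{R}^n g_k) \leq C\lambda^n$ from Theorem \ref{riganalitico} with constants \emph{uniform in $k$} --- this uniformity is exactly where the real bounds being $C^3$-robust is essential. Passing to the limit gives $\dist(\mathcal{R}^n f, \mathcal{R}^n g) \leq C\lambda^n$.

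Once exponential convergence of renormalizations is in hand, the conjugacy $h$ between $f$ and $g$ (the one sending critical point to critical point, which exists and is a quasisymmetric homeomorphism by Yoccoz's theorem and the real bounds) is upgraded to $C^{1+\alpha}$ by a standard but delicate argument: one writes $h$ on each piece of the dynamical partition $\mathcal{P}_n$ of $f$ as a composition of branches of $g$, branches of $f^{-1}$, and a rescaled copy of the ``comparison map'' between $\mathcal{R}^n f$ and $\mathcal{R}^n g$; the exponential closeness of these renormalizations, together with the exponential shrinking and bounded geometry (real bounds) of $\mathcal{P}_n$, shows that the ratio of derivatives of $h$ at nearby points differs from $1$ by $O(|x-y|^\alpha)$. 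Concretely, $h$ restricted to each atom $I \in \mathcal{P}_n$ is close to an affine map with a universally controlled nonlinearity, and the H\"older exponent $\alpha$ comes from balancing $\lambda^n$ against the diameter $\diam(I) \approx \mu^n$ via $\alpha = \log\lambda / \log\mu$. This is essentially the ``tool'' part: reconstructing global smoothness of the conjugacy from control at all renormalization scales, following the scheme of de Faria--de Melo and Khanin--Teplinsky.

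The main obstacle I anticipate is precisely the \emph{robustness of the exponential convergence estimate under $C^3$ approximation}. In the real-analytic category, exponential convergence rests on the complex-analytic machinery (complex bounds, holomorphic motions, the hyperbolicity of renormalization via McMullen-style rigidity) which is simply unavailable for $C^3$ maps; so one cannot directly renormalize in the smooth category and hope for contraction. The delicate point is to show that the analytic estimate $\dist(\mathcal{R}^n f_k, \mathcal{R}^n g_k) \le C\lambda^n$ has $C$ and $\lambda$ depending only on the criticality, the bound on partial quotients, and a bound on the $C^3$-norms (hence a \emph{fortiori} not on how far $f_k, g_k$ are into the analytic category) --- this requires the real \emph{a priori} bounds to be genuinely $C^1$ (or $C^{1+\alpha}$) and uniform, and requires checking that $\mathcal{R}^n f_k \to \mathcal{R}^n f$ in the $C^1$ topology on commuting pairs as $k\to\infty$, for which one needs that a single step of renormalization is continuous in the $C^1$ topology on pairs with controlled geometry. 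A secondary subtlety is the choice of approximating analytic family realizing the \emph{exact} prescribed bounded-type rotation number while keeping $C^3$-convergence; the Blaschke family \eqref{formBlaschke} handles the existence of such analytic representatives, but interpolating a given $C^3$ map to an analytic one with the same rotation number (not merely a nearby one) needs a small argument, e.g. by first conjugating and then using the structural stability of the rotation number along well-chosen paths.
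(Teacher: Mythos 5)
Your reduction of the theorem to exponential convergence of renormalizations, and the final step upgrading that to $C^{1+\alpha}$ regularity of the conjugacy via the dynamical partitions, both match the paper's framing (Theorem \ref{primeroedsonwelington}). But the core of your argument has a genuine gap that you notice as ``the main obstacle'' but do not resolve, and it cannot be resolved along the lines you suggest. You propose to approximate $f$ and $g$ by real-analytic maps $f_k, g_k$ with the same rotation number, invoke the analytic exponential convergence $d_0(\mathcal{R}^n f_k, \mathcal{R}^n g_k)\le C\lambda^n$ from Theorem \ref{uniform} with $C$ \emph{uniform in $k$}, and pass to the limit. The problem is that in Theorem \ref{uniform} the constant $C$ is only uniform over a $C^\omega$-\emph{compact} family of analytic pairs. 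If $f$ is $C^3$ but not real-analytic and $f_k\to f$ in $C^3$, the sequence $\{f_k\}$ has no $C^\omega$-convergent subsequence, so it sits in no $C^\omega$-compact set; in concrete terms, any uniformization of the strip of analyticity of $f_k$ must degenerate as $k\to\infty$, and with it the complex bounds that drive the contraction. The real bounds being ``$C^3$-robust'' does not save this: real bounds control geometry but do not by themselves produce exponential contraction of renormalization, which in the analytic setting comes from complex-analytic rigidity (McMullen-type) and is genuinely tied to the domain of holomorphy. Hoping that $C$ depends only on $C^3$-norms would essentially amount to reproving the theorem in the $C^3$ category directly, which is the whole difficulty.

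The paper closes exactly this gap by a different mechanism. Rather than approximating $f$ once and for all by a single analytic sequence and then renormalizing, it approximates $\mathcal{R}^n(f)$ directly, for each $n$, by a real-analytic commuting pair $f_n$ lying in a \emph{fixed} $C^\omega$-compact set $\mathcal{K}$, with error $d_0(\mathcal{R}^n(f), f_n)\le C\lambda^n$ exponentially small in $n$ and with matching combinatorics (Theorem \ref{compacto}). This is possible because the asymptotically holomorphic extension of $f$ (Definition \ref{canext}), pushed through $q_{n+1}$ steps and controlled via the Almost Schwarz inclusion (Proposition \ref{schwarz}) and the real bounds, has conformal distortion that decays exponentially in $n$ (Theorem \ref{nuevoE}); a bidimensional glueing procedure plus Ahlfors--Bers then converts each such almost-conformal extension into a genuine holomorphic commuting pair nearby. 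One then runs the analytic contraction (Theorem \ref{uniform}) along the sequence $\{f_n\}\subset\mathcal{K}$, where uniformity of $C$ is available, and combines with a Lipschitz estimate (Lemma \ref{Lipschitz}) for a finite stretch of renormalization to interpolate. In short, the objects being compared analytically live in a fixed compact family, which is precisely what your scheme cannot arrange.
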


The novelties of this article in order to transfer rigidity from real-analytic dynamics to (finitely) smooth ones are two: the first one is a \emph{bidimensional} version of the \emph{glueing procedure} (first introduced by Lanford \cite{lanford1}, \cite{lanford2}) developed in Section \ref{Secglueing}, and the second one is the notion of \emph{asymptotically holomorphic maps}, to be defined in Section \ref{Seccombounds} (Definition \ref{ashol}). Asymptotically holomorphic maps were already used in one-dimensional dynamics by Graczyk, Sands and \'Swi\c{a}tek in \cite{grasandsswia}, but as far as we know never for critical circle maps.

Let us discuss the main ideas of the proof of Theorem \ref{main}: a $C^3$ critical circle map $f$ with irrational rotation number generates a sequence $\big\{\mathcal{R}^n(f)=(\eta_n,\xi_n)\big\}_{n\in\nt}$ of commuting pairs of interval maps, each one being the renormalization of the previous one (see Definition \ref{renop}). To prove Theorem \ref{main} we need to prove the exponential convergence of the orbits generated by two critical circle maps with a given combinatorics of bounded type (see Theorem \ref{primeroedsonwelington}).

Our main task (see Theorem \ref{compacto} in Section \ref{Secred}) is to show the existence of a sequence $\big\{f_n=(\widetilde{\eta}_n,\widetilde{\xi}_n)\big\}_{n\in\nt}$ that belongs to a universal $C^{\omega}$-compact set of real-analytic critical commuting pairs, such that $\mathcal{R}^n(f)$ is $C^0$-exponentially close to $f_n$ at a universal rate, and both have the same rotation number. In Section \ref{Secred}, using the exponential contraction of the renormalization operator on the space of real-analytic critical commuting pairs (see Theorem \ref{uniform}), we conclude the exponential contraction of the renormalization operator in the space of $C^3$ critical commuting pairs with bounded combinatorics (see Theorem \ref{expconv} in Section \ref{Secfirstred}), and therefore the $C^{1+\alpha}$ rigidity as stated in Theorem \ref{main}.

To realize the main task we extend the initial commuting pair to a pair of $C^3$ maps in an open complex neighbourhood of each original interval (the so-called \emph{extended lift}, see Definition \ref{canext}), that are asymptotically holomorphic (see Definition \ref{ashol}), each having a unique cubic critical point at the origin.

Using the real bounds (see Theorem \ref{realbounds}), the Almost Schwarz inclusion (see Proposition \ref{schwarz}) and the asymptotic holomorphic property we prove that for all $n\in\nt$, greater or equal than some $n_0$, both $\eta_n$ and $\xi_n$ extend to a definite neighbourhood of their interval domains in the complex plane, giving rise to maps with a unique cubic critical point at the origin, and with exponentially small conformal distortion (see Theorem \ref{nuevoE}). Theorem \ref{nuevoE} gives us also some geometric control that will imply the desired compactness (we wont study the dynamics of these extensions, just their geometric behaviour).

Using Ahlfors-Bers theorem (see Proposition \ref{aprox2}) we construct for each $n \geq n_0$ a $C^3$ diffeomorphism $\Phi_n$, exponentially close to the identity in definite domains around the dynamical intervals, that conjugates $(\eta_n,\xi_n)$ to a $C^3$ critical commuting pair $(\widehat{\eta}_n,\widehat{\xi}_n)$ exponentially close to $(\eta_n,\xi_n)$, and such that $\widehat{\eta}_n^{-1}\circ\widehat{\xi}_n$ is an holomorphic diffeomorphism between complex neighbourhoods of the endpoints of the union of the dynamical intervals (see Subsection \ref{sub1}). Using this holomorphic diffeomorphism to glue the ends of a band around the union of the dynamical intervals we obtain a Riemann surface conformally equivalent to a rounds annulus $A_{R_n}$ around the unit circle. This identification gives rise to a holomorphic local diffeomorphism $P_n$ mapping the band onto the annulus and such that, via $P_n$, the pair $(\widehat{\eta}_n,\widehat{\xi}_n)$ induces a $C^3$ map $G_n$ from an annulus in $A_{R_n}$ to $A_{R_n}$, having exponentially small conformal distortion, that restricts to a critical circle map on $S^1$ (see Proposition \ref{Gn}). The commuting condition of each pair $(\widehat{\eta}_n,\widehat{\xi}_n)$ is equivalent to the continuity of the corresponding $G_n$, and that is why we project to the annulus $A_{R_n}$. The topological behaviour of each $G_n$ on its annular domain is the same as the restriction of the Blaschke product $f_{\gamma}$ \eqref{formBlaschke} to the annulus $A''' \cup B_1'$, as depicted in Figure 1.
	
Using again Ahlfors-Bers theorem we construct a holomorphic map $H_n$, on a smaller but definite annulus around the unit circle, that is exponentially close to $G_n$ and restricts to a real-analytic critical circle map with the same combinatorics as the restriction of $G_n$ to $S^1$ (see Proposition \ref{pertanillo} for much more properties).

Finally, using the projection $P_n$, we lift each $H_n$ to a real-analytic critical commuting pair $f_n=(\widetilde{\eta}_n,\widetilde{\xi}_n)$ exponentially close to $(\widehat{\eta}_n,\widehat{\xi}_n)$, having the same combinatorics and with complex extensions $C^0$-exponentially close to the ones of $\mathcal{R}^{n}(f)$ produced in Theorem \ref{nuevoE} (see Proposition \ref{shadow}). Compactness follows then from the geometric properties obtained in Theorem \ref{nuevoE} (see Lemma \ref{compact}).

The organization of this article is the following: in Section \ref{Secfirstred} we reduce Theorem \ref{main} to Theorem \ref{expconv}, which states the exponential convergence of the renormalization orbits of $C^3$ critical circle maps with the same bounded combinatorics. In Section \ref{Secren} we introduce the renormalization operator in the space of critical commuting pairs, and review its basic properties. In Section \ref{Secred} we reduce Theorem \ref{expconv} to Theorem \ref{compacto}, which states the existence of a $C^{\omega}$-compact piece of real-analytic critical commuting pairs such that for a given $C^3$ critical circle map $f$, with any irrational rotation number, there exists a sequence $\big\{f_n\big\}$, contained in that compact piece, such that $\mathcal{R}^n(f)$ is $C^0$-exponentially close to $f_n$ at a universal rate, and both have the same rotation number. In Section \ref{SecAB} we state a corollary of Ahlfors-Bers theorem (Proposition \ref{aprox2}) that will be fundamental in Section \ref{Secglueing} (its proof will be given in Appendix \ref{apA}). In Section \ref{Seccombounds} we construct the \emph{extended lift} of a $C^3$ critical circle map (see Definition \ref{canext}), and then we state and prove Theorem \ref{nuevoE} as described above. In Section \ref{Secglueing} we develop a bidimensional glueing procedure in order to prove Theorem \ref{compacto}. Finally in Section \ref{Secfinal} we review further questions and open problems in the area.

\section{A first reduction of the main result}\label{Secfirstred}

As in the case of unimodal maps, the main tool in order to obtain smooth conjugacy between critical circle maps is the use of \emph{renormalization} group methods \cite{dmicm}. As it was already clear in the early eighties (\cite{feigetal}, \cite{ostlundetal}) it is convenient to construct a renormalization operator $\mathcal{R}$ (see Definition \ref{renop}) acting not on the space of critical circle maps but on a suitable space of \emph{critical commuting pairs} (see Definition \ref{critpair}).

Just as in the case of unimodal maps (see for instance \cite[Chapter VI, Theorem 9.4]{livrounidim}), the principle that \emph{exponential convergence of the renormalization operator is equivalent to smooth conjugacy} also holds for critical circle maps. The following result is due to de Faria and de Melo \cite[First Main Theorem, page 341]{edsonwelington1}. For any $0 \leq r < \infty$ denote by $d_r$ the $C^r$ metric in the space of critical commuting pairs (see Definition \ref{Crmetric}):

\begin{theorem}[de Faria-de Melo 1999]\label{primeroedsonwelington} There exists a set $\A$ in $[0,1]$, having full Lebesgue measure and containing all irrational numbers of bounded type, for which the following holds: let $f$ and $g$ be two $C^3$ critical circle maps with the same irrational rotation number in the set $\A$ and with the same odd type at the critical point. If $d_0\big(\mathcal{R}^n(f),\mathcal{R}^n(g)\big)$ converge to zero exponentially fast when $n$ goes to infinity, then $f$ and $g$ are $C^{1+\alpha}$ conjugate to each other for some $\alpha>0$.
\end{theorem}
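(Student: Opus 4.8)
The plan is to run the classical scheme by which exponential convergence of renormalization promotes a topological conjugacy to a $C^{1+\alpha}$ one, by comparing the geometries of the dynamical partitions of $f$ and $g$ scale by scale. Recall that for a critical circle map $F$ with critical point $c_F$ and rotation number $\rho=[a_1,a_2,\dots]$, with convergents $p_n/q_n$, the $n$-th dynamical partition $\mathcal{P}_n(F)$ is the partition of the circle into the $q_{n-1}+q_n$ intervals $F^i\big(I_n(F)\big)$, $0\leq i<q_{n-1}$, and $F^i\big(I_{n-1}(F)\big)$, $0\leq i<q_n$, where $I_n(F)$ has endpoints $c_F$ and $F^{q_n}(c_F)$; the pair $\mathcal{R}^n(F)$ records, after affinely rescaling $I_{n-1}(F)$ (or $I_n(F)$) to unit size, the return-type maps on the two fundamental intervals $I_{n-1}(F)$, $I_n(F)$. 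Since $f$ and $g$ share the same irrational rotation number, Yoccoz's theorem supplies a conjugacy $h$ with $h(c_f)=c_g$; being prescribed on the dense orbit of $c_f$ it is unique, and it carries $\mathcal{P}_n(f)$ onto $\mathcal{P}_n(g)$ atom by atom, preserving the combinatorial labels. The goal is to prove that $h$ is a $C^{1+\alpha}$ circle diffeomorphism.

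First I would invoke the real bounds (Theorem \ref{realbounds}): the atoms of $\mathcal{P}_n(F)$ shrink at least geometrically, $|J|\leq C\lambda^{n}$ for universal $C>1$, $0<\lambda<1$ and every atom $J$ of $\mathcal{P}_n(F)$; consecutive partitions are comparable; there is definite Koebe space around every atom; and each $\mathcal{R}^n(F)$ has uniformly bounded $C^1$ norm once $n$ is large. The heart of the argument is a \emph{ratio-geometry comparison}: there is a sequence $\varepsilon_n\to 0$ — with $\varepsilon_n\leq C\mu^{n}$ when $\rho$ is of bounded type and $\varepsilon_n\leq e^{o(n)}\mu^{n}$ for $\rho\in\A$, where the $e^{o(n)}$ absorbs the powers of the partial quotients $a_{n+1}$ that enter below — such that
\[
\left|\,\frac{|J|}{|I|}-\frac{|h(J)|}{|h(I)|}\,\right|\;\leq\;C\,\varepsilon_n
\qquad\text{for every }I\in\mathcal{P}_n(f)\text{ and every }J\in\mathcal{P}_{n+1}(f),\ J\subset I.
\]
When $I$ is a fundamental interval of level $n$ this is read directly off the renormalization: after rescaling $I$ to unit size the sub-atoms $J$ are the intervals cut out by at most $a_{n+1}+1$ iterates of the commuting pair $\mathcal{R}^n(f)$, these rescaled intervals have unit-comparable lengths, and $d_0\big(\mathcal{R}^n(f),\mathcal{R}^n(g)\big)\leq C\mu^{n}$ propagates through a bounded number of compositions of $C^1$-bounded maps. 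One then spreads the estimate to a general atom $I=f^{j}(I_0)$, $I_0$ fundamental: the orbit $I_0,f(I_0),\dots,f^{j}(I_0)$ is pairwise disjoint, so by the cross-ratio (Koebe) distortion estimates for $C^3$ maps with a non-flat critical point $f^{j}|_{I_0}$ has bounded distortion; since $g^{j}$, combinatorially identical, is applied to $h(I_0)$, it is the \emph{difference} of the two distortions — not the distortions themselves — that must be controlled, and the geometric decay of the atoms together with the real bounds makes it exponentially small. The unique atom of $\mathcal{P}_{n+1}$ whose orbit meets the critical point is handled apart, using the non-flatness normal form $t\mapsto\pm|\phi(t)|^{d}+\mathrm{const}$ in place of Koebe.

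Granting the comparison, I would reconstruct the derivative of $h$. Given $x$ and a nearby $y$, let $n=n(x,y)$ be the largest index with $x,y$ in a common atom of $\mathcal{P}_n(f)$, and expand the secant slope $\big(h(y)-h(x)\big)/(y-x)$ as a telescoping product over the nested atoms of levels $0,\dots,n$ through $x$; by the comparison, together with the comparability of consecutive partitions, each factor is $1+O(\varepsilon_k)$, a summable perturbation of $1$, so the product converges as $y\to x$. Hence $h'(x)$ exists, is positive, and is bounded away from $0$ and $\infty$ uniformly in $x$. If $x',x''$ lie in a common atom of $\mathcal{P}_n(f)$ their chains agree up to level $n$, so $|h'(x')-h'(x'')|\leq C\sum_{k>n}\varepsilon_k\leq C'\nu^{n}$ for a fixed $\nu\in(0,1)$ (one may take $\nu=\sqrt{\mu}$ after absorbing the $e^{o(n)}$ factor, $\nu=\mu$ for bounded type); since $|x'-x''|\leq C\lambda^{n}$ by the real bounds, this gives $|h'(x')-h'(x'')|\leq C|x'-x''|^{\alpha}$ with $\alpha=\log(1/\nu)/\log(1/\lambda)>0$, universal when $\rho$ is of bounded type. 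The symmetric estimate for $h^{-1}$ then shows that $h$ is a $C^{1+\alpha}$ diffeomorphism. Finally, $\A$ is the set of rotation numbers for which $\varepsilon_n$ decays exponentially, i.e.\ for which the partial quotients grow subexponentially, $\tfrac1n\log a_n\to 0$: this is automatic, with universal constants, for numbers of bounded type, and a Borel--Cantelli argument on the continued-fraction digits shows that $\A$ has full Lebesgue measure.

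The main obstacle is the ratio-geometry comparison of the second paragraph: transferring the control of the renormalizations, which lives on the fundamental intervals, to the length ratios of \emph{all} atoms of the dynamical partitions, with an error that stays exponentially small. The subtlety is that a single long iterate $f^{j}$ has only bounded — not $1+o(1)$ — distortion on a deep atom, so one cannot just transport ratios; the exponential gain must come from comparing $f^{j}$ with the combinatorially identical $g^{j}$ on the image atom and showing that the \emph{difference} of their distortions — governed by the difference of the Birkhoff-type sums of $\log f'$ and $\log g'$ along matched orbit segments — is exponentially small, which is where the real bounds and the geometric decay of the atoms do the work. Keeping track of the way the partial quotients $a_{n+1}$ enter this estimate is precisely what pins down the full-measure set $\A$ and singles out bounded type as the case with universal constants.
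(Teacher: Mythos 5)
The paper does not prove Theorem~\ref{primeroedsonwelington}: it is quoted verbatim from de~Faria and de~Melo's 1999 paper (their First Main Theorem), and the only place the present article touches its content is Section~\ref{Secfinal}, where the set $\A$ is spelled out. Your sketch does follow the general scheme of that reference---comparing ratio geometries of the dynamical partitions of $f$ and $g$ scale by scale, reconstructing $h'$ by a telescoping product, and reading off a H\"older exponent from the real bounds---so the architecture is the right one, and you correctly identify the key subtlety that it is the \emph{difference} of distortions of matched long compositions, not each distortion by itself, that must be controlled.

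There is, however, a concrete gap in the step that pins down $\A$. You write that the renormalization estimate ``propagates through a bounded number of compositions of $C^1$-bounded maps,'' and later characterize $\A$ as the set where $\tfrac1n\log a_n\to 0$. Both are too optimistic. To locate the subdivision points of $I_n$ into $\mathcal{P}_{n+1}$-atoms one iterates the first component of $\mathcal{R}^n(f)$ up to $a_{n+1}$ times, not a bounded number of times; by the crude composition estimate (the paper's Lemma~\ref{comp}) a $C^0$-error $\mu^n$ in $\mathcal{R}^n$ can blow up by a factor of order $B^{a_{n+1}}$, where $B>1$ is the fixed $C^1$ bound from the real bounds. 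Subexponential growth $a_n=e^{o(n)}$ does \emph{not} make $B^{a_{n+1}}\mu^n$ decay exponentially (already $a_n\sim n^2$ kills it), so ``one may take $\nu=\sqrt{\mu}$ after absorbing the $e^{o(n)}$ factor'' is unjustified. The actual set $\A$ of de~Faria--de~Melo, reproduced in Section~\ref{Secfinal}, is strictly smaller: besides $\tfrac1n\log a_n\to 0$ it requires $\limsup_n \tfrac1n\sum_{j\le n}\log a_j<\infty$ and, crucially, a uniform window condition $\tfrac1n\sum_{j=k+1}^{k+n}\log a_j\le\omega_\rho(n/k)$ for $0<n\le k$ with $t\,\omega_\rho(t)\to 0$, which the paper describes informally as ``growth less than quadratic.'' These are precisely what lets the cumulative composition errors over all scales remain summable and geometric; your $\A$ is too large for the telescoping estimate as you have set it up to close. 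If you replace the ``bounded number of compositions'' claim with the honest $a_{n+1}$-fold composition and then isolate the arithmetic condition on $(a_n)$ that makes the resulting error sum still decay exponentially, you will be led to these three conditions, and the rest of your outline goes through.
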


Roughly speaking, the full Lebesgue measure set $\A$ is composed by irrational numbers in $[0,1]$ whose coefficients in the continued fraction expansion may be unbounded, but their growth is less than quadratic (see Section \ref{Secfinal} or \cite[Appendix C]{edsonwelington1} for the precise definition). In sharp contrast with the case of diffeomorphisms, let us point out that $\A$ does not contain all Diophantine numbers, and contains some Liouville numbers (again see Section \ref{Secfinal}). The remaining cases were more recently solved by Khanin and Teplinsky \cite[Theorem 2, page 198]{khaninteplinsky1}:

\begin{theorem}[Khanin-Teplinsky 2007]\label{condition} Let $f$ and $g$ be two $C^3$ critical circle maps with the same irrational rotation number and the same odd type at the critical point. If $d_2\big(\mathcal{R}^n(f),\mathcal{R}^n(g)\big)$ converge to zero exponentially fast when $n$ goes to infinity, then $f$ and $g$ are $C^1$-conjugate to each other.
\end{theorem}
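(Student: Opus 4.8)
The plan is to show that the Yoccoz conjugacy $h$ (the unique one sending the critical point of $f$ to that of $g$) is a $C^1$ diffeomorphism by identifying $h'$ as a \emph{uniform} limit of ratios of lengths of corresponding atoms of the dynamical partitions, and by using the exponential $C^2$-closeness of the renormalized pairs to force that limit to converge geometrically.

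First I would recall the combinatorial picture. For $F\in\{f,g\}$ with rotation number $[a_1,a_2,\dots]$ and convergents $p_n/q_n$, let $I_n(F)$ be the interval with endpoints $c_F$ and $F^{q_n}(c_F)$, and let $\mathcal{P}_n(F)$ be the dynamical partition into the atoms $F^i(I_n(F))$, $0\le i<q_{n+1}$, and $F^i(I_{n+1}(F))$, $0\le i<q_n$. The conjugacy $h$ maps each atom of $\mathcal{P}_n(f)$ onto the corresponding atom of $\mathcal{P}_n(g)$. Normalizing both circles to unit length, for $x$ in the circle and $n$ large let $\Delta_n(x)$ be the atom of $\mathcal{P}_n(f)$ containing $x$, and set
\[
R_n(x)=\frac{\bigl|h(\Delta_n(x))\bigr|}{\bigl|\Delta_n(x)\bigr|}.
\]
By the real bounds (Theorem \ref{realbounds}) the partitions $\mathcal{P}_n(F)$ have bounded geometry (adjacent atoms comparable, each atom comparable to the atom of the previous level containing it), and the monotone transition maps $F^i$ carrying a critical-scale interval onto a given atom have distortion bounded by a universal constant. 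Hence each $R_n$ is continuous (locally constant off a finite set, then extended) and bounded away from $0$ and $\infty$ by universal constants.

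The heart of the matter is the estimate $\|R_{n+1}-R_n\|_\infty\le C\lambda^n$ for universal $C>0$, $\lambda\in(0,1)$, granted $d_2\big(\mathcal{R}^n f,\mathcal{R}^n g\big)\le C\lambda^n$. Fix $x$, let $\Delta_{n+1}(x)\subset\Delta_n(x)$ be the atoms containing it, and write
\[
\frac{R_{n+1}(x)}{R_n(x)}=\frac{\bigl|h(\Delta_{n+1}(x))\bigr|\big/\bigl|h(\Delta_n(x))\bigr|}{\bigl|\Delta_{n+1}(x)\bigr|\big/\bigl|\Delta_n(x)\bigr|}.
\]
Both numerator and denominator on the right are, up to transition maps of controlled distortion, ratios \emph{read off} from the $n$-th renormalized pairs: the pair $(\Delta_n(x),\Delta_{n+1}(x))$ is the image under a monotone iterate of $f$ of a pair of intervals at the critical scale whose relative size is a continuous function of $\mathcal{R}^n(f)$, and likewise for $g$. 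Exponential $C^0$-closeness of $\mathcal{R}^n(f)$ to $\mathcal{R}^n(g)$ controls the ``renormalization-intrinsic'' part of this ratio, while exponential $C^2$-closeness — transported along the orbit via the real bounds and the chain rule for the non-linearity — controls the distortion of the transition map $f^i$ relative to that of $g^i$ on $h(\Delta_n(x))$. Together these yield $R_{n+1}(x)/R_n(x)=1+O(\lambda^n)$ uniformly in $x$, hence $R_n\to R$ uniformly with $R$ continuous and positive. It then remains to upgrade this to genuine differentiability: for $y$ near $x$ choose $n$ so that $\Delta_n(x)$ (or a bounded number of adjacent atoms) contains both $x$ and $y$ while $\Delta_{n+k}(x)$ does not, for controlled bounded $k$; bounded geometry of the nested partitions together with Koebe/cross-ratio distortion bounds on the transition maps squeeze the difference quotient $\bigl(h(y)-h(x)\bigr)/(y-x)$ between $R_n(x)(1+o(1))$ and $R_n(x)(1+o(1))^{-1}$ as $y\to x$, and the uniform convergence $R_n(x)\to R(x)$ forces $h'(x)=R(x)$. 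Symmetry in $f$ and $g$ gives the same for $h^{-1}$, so $h$ is a $C^1$ diffeomorphism.

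I expect the main obstacle to be exactly the middle step: converting ``the renormalized pairs are exponentially $C^2$-close'' into ``the level-$n$ and level-$(n+1)$ scaling ratios of $h$ differ by $O(\lambda^n)$''. The subtlety is that the monotone transition map from the critical scale to an atom $\Delta_n$ is a long composition of branches of $f$, and controlling both its distortion and the $C^2$-dependence of that distortion on the underlying commuting pair requires combining the real bounds with the non-flatness of the critical point (and the Almost Schwarz inclusion of Proposition \ref{schwarz}) in a neighbourhood of $c$, together with Koebe-type estimates for compositions with summable cross-ratio distortion away from $c$; keeping every constant universal along the entire orbit, for arbitrary irrational rotation number, is precisely where the work concentrates — and is also the reason $C^2$-convergence of renormalization, rather than merely $C^0$, is what the statement requires.
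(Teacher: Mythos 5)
The paper does not prove Theorem~\ref{condition}: it is quoted verbatim from Khanin and Teplinsky \cite{khaninteplinsky1} with a citation and no argument, so there is no ``paper's own proof'' to compare your proposal against. I will therefore evaluate your proposal on its own merits against what is known.

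Your outline (uniform convergence of the scaling ratios $R_n$, followed by a squeezing argument to identify $h'$ with the limit) is a sensible strategy, and it is indeed the template behind Theorem~\ref{primeroedsonwelington} of de Faria and de Melo. But the step you yourself flag as ``the heart of the matter'' --- the uniform estimate $\|R_{n+1}-R_n\|_\infty\le C\lambda^n$ --- is precisely the point at which that template breaks for unbounded-type rotation numbers, and you neither derive it nor explain how to circumvent the obstruction. Concretely: for a nested pair $\Delta_{n+1}(x)\subset\Delta_n(x)$, the ratio $|\Delta_{n+1}|/|\Delta_n|$ is of order $1/a_{n+1}$, so $R_{n+1}(x)/R_n(x)$ is a quotient of two quantities of this small size, and to get $\bigl|\log(R_{n+1}/R_n)\bigr|=O(\lambda^n)$ you need the absolute discrepancy between the $f$-side and $g$-side ratios to be $O(\lambda^n/a_{n+1})$, uniformly over the $a_{n+1}+1$ sub-atoms. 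Transporting $C^2$-closeness of the normalized return maps along the (unboundedly long, $a_{n+1}$-fold) return-map composition and through the critical point, where first derivatives degenerate, is exactly where the de Faria--de Melo argument loses uniformity and why they needed the Diophantine-type condition $\rho\in\A$. Khanin and Teplinsky's actual proof is structured differently: rather than comparing atom lengths level by level, they compare the logarithms of the derivatives $D f^{q_n}$ and $D g^{q_n}$ along the critical orbit via a telescoping (martingale-type) identity, and use exponential $C^2$-convergence of renormalization to make the resulting series converge \emph{absolutely and uniformly} --- giving $C^1$ but not, in general, $C^{1+\alpha}$. Your sketch does not reach this mechanism. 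Finally, your appeal to the Almost Schwarz inclusion (Proposition~\ref{schwarz}) is misplaced: that is part of the asymptotically holomorphic machinery specific to this paper's later sections, and plays no role in the real-variable cross-ratio estimates that underlie the Khanin--Teplinsky argument.
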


To obtain the smooth conjugacy (Item \eqref{Aitem1} of Theorem \ref{riganalitico}), Khanin and Teplinsky combined Theorem \ref{condition} with the following fundamental resut:

\begin{theorem}[de Faria-de Melo 2000, Yampolsky 2003]\label{uniform} There exists a universal constant $\lambda$ in $(0,1)$ with the following property: given two real-analytic critical commuting pairs $\zeta_1$ and $\zeta_2$ with the same irrational rotation number and the same odd type at the critical point, there exists a constant $C>0$ such that:$$d_{r}\big(\mathcal{R}^n(\zeta_1),\mathcal{R}^n(\zeta_2)\big) \leq C\lambda^n$$for all $n\in\nt$ and for any $0 \leq r < \infty$. Moreover given a $C^{\omega}$-compact set $\mathcal{K}$ of real-analytic critical commuting pairs, the constant $C$ can be chosen the same for any $\zeta_1$ and $\zeta_2$ in $\mathcal{K}$.
\end{theorem}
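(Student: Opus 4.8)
The plan is to pass to the complex-analytic renormalization picture and then quote the hyperbolicity of the renormalization operator. First I would invoke the \emph{real a priori bounds} of Herman and \'Swi\c{a}tek (Theorem~\ref{realbounds}): there is $n_0$ such that for $n\geq n_0$ the renormalizations $\mathcal{R}^n(\zeta_i)$ have uniformly bounded geometry, and these bounds are \emph{beau} — for $n$ large they no longer depend on the initial pair, as long as it ranges over a fixed $C^{\omega}$-compact set $\mathcal{K}$. Combined with the complex a priori bounds of de Faria and Yampolsky (obtained via the Almost Schwarz inclusion, Proposition~\ref{schwarz}, together with holomorphic-motion/quasiconformal estimates), this shows that after a number of steps $N=N(\mathcal{K})$ the renormalizations extend holomorphically to a fixed definite complex neighbourhood of their dynamical intervals and lie in a fixed $C^{\omega}$-compact family $\mathcal{K}_0$ of holomorphic commuting pairs. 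On such a family Yampolsky's \emph{cylinder renormalization} $\mathcal{R}$ is a well-defined analytic (indeed compact) operator on a complex Banach manifold $\mathcal{B}$ of analytic maps, consistent with renormalization of commuting pairs at the level of dynamics.

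The second ingredient is the structure of $\mathcal{R}$ on $\mathcal{B}$ \cite{edsonwelington1, edsonwelington2, yampolsky3, yampolsky4}. The renormalization \emph{horseshoe} $\mathcal{A}\subset\mathcal{B}$ is a compact invariant set on which $\mathcal{R}$ is conjugate to the two-sided shift on continued-fraction expansions; the forward dynamics of $\mathcal{R}$ on a pair is governed by the Gauss map acting on its rotation number, every real-analytic critical commuting pair lies in the global stable set of $\mathcal{A}$, and the orbit of any pair in $\mathcal{K}$ enters a uniform neighbourhood of $\mathcal{A}$ after $N(\mathcal{K})$ steps. Yampolsky's theorem asserts that $\mathcal{R}$ is uniformly hyperbolic along $\mathcal{A}$: the unstable bundle is one-dimensional (carried by the one-parameter family of rotation numbers) and the stable bundle has codimension one, the local stable manifolds have uniform size, and the contraction along them is uniform with a universal rate $\bar\lambda\in(0,1)$.

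Granting this, the conclusion is short. If $\zeta_1$ and $\zeta_2$ have the same irrational rotation number then they have identical combinatorics, so for every $m$ the tails of their continued fractions coincide; hence, with $m:=N(\mathcal{K})$, both $\mathcal{R}^m(\zeta_1)$ and $\mathcal{R}^m(\zeta_2)$ lie in the local stable manifold of one common point $\alpha_m\in\mathcal{A}$ (the one whose rotation number has the correct tail), at a distance bounded uniformly on $\mathcal{K}$. Iterating $\mathcal{R}$ and using the uniform contraction on $W^s_{\mathrm{loc}}$ gives, for $i=1,2$ and all $k\geq 0$,
\[
\big\| \mathcal{R}^{m+k}(\zeta_i) - \mathcal{R}^{k}(\alpha_m) \big\|_{\mathcal{B}} \;\leq\; C\,\bar\lambda^{\,k},
\]
with $C$ uniform on $\mathcal{K}$; therefore $\big\| \mathcal{R}^n(\zeta_1) - \mathcal{R}^n(\zeta_2) \big\|_{\mathcal{B}} \leq 2C\,\bar\lambda^{\,n-m}$ for $n\geq m$, and after enlarging $C$ by the fixed factor $\bar\lambda^{-m}$ (and absorbing the finitely many $n<m$) this reads $\leq C\,\bar\lambda^{\,n}$ for all $n$. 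Since the norm of $\mathcal{B}$ controls the supremum on a definite complex neighbourhood of the dynamical intervals, Cauchy's integral formula on a slightly smaller neighbourhood upgrades this to $d_r\big(\mathcal{R}^n(\zeta_1),\mathcal{R}^n(\zeta_2)\big)\leq C_r\,\bar\lambda^{\,n}$ for every $0\leq r<\infty$, with the \emph{same} rate $\lambda=\bar\lambda$ and $C_r$ depending only on $r$ and $\mathcal{K}$. All the parameters used — the time $N(\mathcal{K})$, the size of the stable manifolds, the rate $\bar\lambda$, the Cauchy constants — depend only on the fixed compact family into which $\mathcal{K}$ renormalizes, so $C$ can be chosen the same for all $\zeta_1,\zeta_2\in\mathcal{K}$, as required.

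The main obstacle is the hyperbolicity input itself: that cylinder renormalization is a genuine analytic compact operator and is uniformly hyperbolic along $\mathcal{A}$. This rests on the complex a priori bounds needed to pin down the holomorphic domains of the renormalizations, on identifying the unstable direction as \emph{exactly} one-dimensional, and on a cone-field/spectral argument for the contraction along the stable directions; the earlier convergence-without-rate of de Faria and de Melo already requires delicate pull-back and quasiconformal-rigidity arguments. By comparison, the combinatorial bookkeeping — that renormalization of commuting pairs realizes the shift on rotation numbers — and the passage from the analytic norm to the $C^r$ metrics via Cauchy estimates are routine.
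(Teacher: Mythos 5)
Theorem~\ref{uniform} is not proved in this paper: it is quoted as a black box, attributed in the text to de Faria--de Melo \cite{edsonwelington2} for bounded-type rotation numbers and to Yampolsky \cite{yampolsky4} for the general irrational case. There is therefore no internal proof to compare your argument against; the right benchmark is the cited literature.

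Measured against that, your sketch is a faithful outline of Yampolsky's route: real and complex a priori bounds forcing the renormalization orbit into a fixed $C^{\omega}$-compact family of holomorphic commuting pairs; cylinder renormalization as a compact analytic operator on a Banach manifold of analytic maps; the renormalization horseshoe $\mathcal{A}$ with one-dimensional unstable direction indexed by the continued-fraction shift and codimension-one stable leaves of uniform size; two pairs with the same irrational rotation number landing in the same local stable leaf, whence uniform exponential contraction; and finally Cauchy estimates on a slightly smaller domain to upgrade the Banach-space estimate to every $C^r$ metric with the same rate. You also correctly single out the genuine difficulty (establishing that cylinder renormalization is an analytic compact operator and proving the hyperbolicity) versus the routine bookkeeping. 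One remark worth making: the de Faria--de Melo proof for bounded type does not set up a hyperbolic operator at all; it is a McMullen-style tower/quasiconformal-rigidity argument yielding exponential contraction directly. Your sketch subsumes that case under Yampolsky's later framework, which is legitimate but historically inverted. As a summary of what the theorem rests on, the proposal is accurate and appropriately honest about which steps are cited rather than re-derived.
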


Theorem \ref{uniform} was proved by de Faria and de Melo \cite{edsonwelington2} for rotation numbers of bounded type, and extended by Yampolsky \cite{yampolsky4} to cover all irrational rotation numbers.

With Theorem \ref{primeroedsonwelington} at hand, our main result (Theorem \ref{main}) reduces to the following one:

\begin{maintheorem}\label{expconv} There exists $\lambda \in (0,1)$ such that given $f$ and $g$ two $C^3$ critical circle maps with the same irrational rotation number of bounded type and the same criticality, there exists $C>0$ such that for all $n\in\nt$:$$d_0\big(\mathcal{R}^n(f),\mathcal{R}^n(g)\big) \leq C\lambda^n\,,$$where $d_0$ is the $C^0$ distance in the space of critical commuting pairs.
\end{maintheorem}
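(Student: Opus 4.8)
The plan is to derive Theorem~\ref{expconv} from Theorem~\ref{compacto} together with the exponential contraction of renormalization in the real-analytic category (Theorem~\ref{uniform}). Let $\lambda_{0}\in(0,1)$ and the universal $C^{\omega}$-compact set $\mathcal{K}$ of real-analytic critical commuting pairs be those produced by Theorem~\ref{compacto}, and apply that theorem to $f$ and to $g$: this gives constants $C_{f},C_{g}>0$ and sequences $\{f_{n}\}_{n\in\nt}$, $\{g_{n}\}_{n\in\nt}$ in $\mathcal{K}$ with
\begin{equation*}
d_{0}\big(\mathcal{R}^{n}(f),f_{n}\big)\leq C_{f}\,\lambda_{0}^{\,n},\qquad d_{0}\big(\mathcal{R}^{n}(g),g_{n}\big)\leq C_{g}\,\lambda_{0}^{\,n}\qquad(n\in\nt),
\end{equation*}
and with $\rho(f_{n})=\rho\big(\mathcal{R}^{n}(f)\big)$, $\rho(g_{n})=\rho\big(\mathcal{R}^{n}(g)\big)$. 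Since the rotation number of $\mathcal{R}^{n}(\cdot)$ depends only on $n$ and on the rotation number one starts with, and $\rho(f)=\rho(g)$, we get $\rho\big(\mathcal{R}^{n}(f)\big)=\rho\big(\mathcal{R}^{n}(g)\big)$ for all $n$; this common value stays of bounded type, with the same bound $N$ on its partial quotients. In particular $\rho(f_{n})=\rho(g_{n})$ for every $n$, so Theorem~\ref{uniform} is applicable to $f_{n},g_{n}\in\mathcal{K}$.

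I would first record an elementary continuity property of $\mathcal{R}$. By the real bounds (Theorem~\ref{realbounds}) there is $n_{0}=n_{0}(f,g)$ such that, for $n\geq n_{0}$, the pairs $\mathcal{R}^{n}(f)$, $\mathcal{R}^{n}(g)$ and all of $\mathcal{K}$ have uniformly bounded geometry and combinatorics bounded by $N$; in particular the branches of these pairs have uniformly bounded $C^{1}$ norms. Consequently a single renormalization step --- a composition of at most $N+1$ such branches followed by an affine rescaling whose factor is pinned by the real bounds --- is, \emph{when restricted to pairs of this class with a prescribed rotation number}, Lipschitz in the $d_{0}$ metric, with a constant $L=L(N)<\infty$. (We shall only compare $\mathcal{R}^{m}(f)$ with $f_{m}$ and $\mathcal{R}^{m}(g)$ with $g_{m}$, pairs which by construction have equal rotation numbers, so this restricted Lipschitz property is all that is required.) Now fix $n\geq n_{0}$ and an integer $m$ with $n_{0}\leq m\leq n$, and split
\begin{align*}
d_{0}\big(\mathcal{R}^{n}(f),\mathcal{R}^{n}(g)\big)\leq\ &
d_{0}\big(\mathcal{R}^{n-m}(\mathcal{R}^{m}(f)),\,\mathcal{R}^{n-m}(f_{m})\big)\\
&+\,d_{0}\big(\mathcal{R}^{n-m}(f_{m}),\,\mathcal{R}^{n-m}(g_{m})\big)\\
&+\,d_{0}\big(\mathcal{R}^{n-m}(g_{m}),\,\mathcal{R}^{n-m}(\mathcal{R}^{m}(g))\big).
\end{align*}
Theorem~\ref{uniform} bounds the middle term by $C\lambda^{\,n-m}$, with $C$ depending only on $\mathcal{K}$ (hence universal) and $\lambda\in(0,1)$ universal; and applying the Lipschitz bound $n-m$ times,
\begin{equation*}
d_{0}\big(\mathcal{R}^{n-m}(\mathcal{R}^{m}(f)),\,\mathcal{R}^{n-m}(f_{m})\big)\leq L^{\,n-m}\,d_{0}\big(\mathcal{R}^{m}(f),f_{m}\big)\leq C_{f}\,L^{\,n-m}\lambda_{0}^{\,m},
\end{equation*}
and symmetrically for the third term.

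To finish I would optimize over $m$. Since $0<\lambda_{0}<1$ there is $\beta\in(0,1)$, as close to $1$ as desired, with $L^{\,1-\beta}\lambda_{0}^{\,\beta}<1$; taking $m=m(n):=\max\{n_{0},\lceil\beta n\rceil\}$ makes all three terms above decay geometrically in $n$, so that
\begin{equation*}
d_{0}\big(\mathcal{R}^{n}(f),\mathcal{R}^{n}(g)\big)\leq\widetilde{C}\,\mu^{\,n}\qquad(n\in\nt),
\end{equation*}
with $\mu:=\max\{L^{\,1-\beta}\lambda_{0}^{\,\beta},\,\lambda^{\,1-\beta}\}<1$ and $\widetilde{C}=\widetilde{C}(f,g)>0$ absorbing the finitely many indices $n<n_{0}$ and the fixed powers $L^{\,n_{0}},\lambda_{0}^{-n_{0}}$. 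This is exactly the statement of Theorem~\ref{expconv}, with $\lambda:=\mu$.

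Granting Theorems~\ref{compacto} and~\ref{uniform}, the one delicate point in this reduction is the competition between the exponential gain $\lambda_{0}^{\,m}$ supplied by Theorem~\ref{compacto} and the exponential loss $L^{\,n-m}$ incurred in pushing a $C^{0}$ perturbation through $n-m$ renormalizations. The bounded-type hypothesis enters precisely here: it is what keeps the local expansion constant $L$ of $\mathcal{R}$ finite, and hence what lets the gain win once $m$ is a suitable fixed fraction of $n$. To make the final rate $\mu$ genuinely universal (independent of the bound $N$), one should not estimate the outer terms by the crude factor $L^{\,n-m}$ but rather transport the closeness along the asymptotically holomorphic complex extensions furnished by Theorem~\ref{nuevoE} --- these have exponentially small dilatation, and in the (almost) holomorphic setting renormalization is uniformly contracting, which is part of the analytic theory behind Theorem~\ref{uniform}; alternatively, the universal H\"older exponent claimed in Theorem~\ref{main} can be recovered from the analytic rigidity of Theorem~\ref{riganalitico} once exponential convergence at some rate is in hand. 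In any event, the substance of the argument lies upstream, in the construction of the shadowing real-analytic sequence $\{f_{n}\}$ of Theorem~\ref{compacto}.
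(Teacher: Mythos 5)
Your proposal is correct and follows essentially the same route as the paper: triangle-inequality splitting through the real-analytic shadows $f_m,g_m$, exponential contraction from Theorem~\ref{uniform} for the middle term, a Lipschitz bound for the renormalization operator applied $n-m$ times to control the outer terms, and then choosing $m$ to be a fixed fraction of $n$ to balance the gain $\lambda_0^{\,m}$ against the loss $L^{\,n-m}$. The one place where the paper is more careful is the Lipschitz step: you assert the Lipschitz property of $\mathcal{R}$ somewhat informally from the real bounds, whereas the paper isolates it as Lemma~\ref{Lipschitz} (proved in Appendix~A), whose statement carries several hypotheses beyond bounded geometry and matching first partial quotient (conditions on $\xi(0)$ and on the sign of $(\eta\circ\xi)(0)$), all of which must be verified along the iteration; you would need to articulate and check an analogous lemma for your argument to be complete at the same level of rigor.
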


This article is devoted to proving Theorem \ref{expconv}. Of course it would be desirable to obtain Theorem \ref{expconv} for $C^3$ critical circle maps with any irrational rotation number, but we have not been able to do this yet (see Section \ref{Secfinal} for more comments).

Let us fix some notation that we will use along this article: $\nt$, $\Z$, $\Q$, $\R$ and $\C$ denotes respectively the set of natural, integer, rational, real and complex numbers. The real part of a complex number $z$ will be denoted by $\Re(z)$, and its imaginary part by $\Im(z)$. $B(z,r)$ denotes the Euclidean open ball of radius $r>0$ around a complex number $z$. $\h$ and $\widehat{\C}$ denotes respectively the upper-half plane and the Riemann sphere. $\D=B(0,1)$ denotes the unit disk in the complex plane, and $S^1=\partial\,\D$ denotes its boundary, that is, the unit circle. $\Diff_{+}^{3}(S^1)$ denotes the group (under composition) of orientation-preserving $C^3$ diffeomorphisms of the unit circle. $\Leb(A)$ denotes the Lebesgue measure of a Borel set $A$ in the plane, and $\diam(A)$ denotes its Euclidean diameter. Given a bounded interval $I$ in the real line we denote its Euclidean length by $|I|$. Moreover, for any $\alpha>0$, let:$$N_{\alpha}(I)=\big\{z\in\C:d(z,I)<\alpha|I|\big\},$$where $d$ denotes the Euclidean distance in the complex plane.

\section{Renormalization of critical commuting pairs}\label{Secren}

In this section we define the space of $C^3$ critical commuting pairs (Definition \ref{critpair}), and we endow it with the $C^3$ metric (Definition \ref{Crmetric}). This metric space, which is neither compact nor locally-compact, contains the phase space of the renormalization operator (Definition \ref{renop}). Each $C^3$ critical circle map with irrational rotation number gives rise to an infinite renormalization orbit in this phase space, and the asymptotic behaviour of these orbits is the subject of this article.

We remark that, since there is no canonical differentiable structure (like a Banach manifold structure) in the space of $C^3$ critical commuting pairs endowed with the $C^3$ metric, we cannot apply the standard machinery from hyperbolic dynamics (see for instance \cite[Chapters 6, 18 and 19]{katokhass}) in order to obtain exponential convergence as stated in Theorem \ref{expconv}.

As we said in the introduction, a \emph{critical circle map} is an orientation-preserving $C^3$ circle homeomorphism $f$, with exactly one critical point $c \in S^1$ of odd type. For simplicity, and for being the generic case, we will assume in this article that the critical point is of cubic type. Suppose that the rotation number $\rho(f)=\theta$ in $[0,1)$ is irrational, and let $\big[a_0,a_1,...,a_n,a_{n+1},...\big]$ be its continued fraction expansion:$$\theta=\lim_{n\to+\infty}\dfrac{1}{a_0+\dfrac{1}{a_1+\dfrac{1}{a_2+\dfrac{1}{\ddots\dfrac{1}{a_n}}}}}$$

We define recursively the \emph{return times} of $\theta$ by:
\begin{center}
$q_0=1,\quad$ $\quad q_1=a_0\quad$ and $\quad q_{n+1}=a_nq_n+q_{n-1}\quad$ for $\quad n \geq 1$.
\end{center}

Recall that the numbers $q_n$ are also obtained as the denominators of the truncated expansion of order $n$ of $\theta$:$$\frac{p_n}{q_n}=[a_0,a_1,a_2,...,a_{n-1}]=\dfrac{1}{a_0+\dfrac{1}{a_1+\dfrac{1}{a_2+\dfrac{1}{\ddots\dfrac{1}{a_{n-1}}}}}}$$

Let $R_{\theta}$ be the rigid rotation of angle $2\pi\theta$ in the unit circle. The arithmetical properties of the continued fraction expansion imply that the iterates $\{R_{\theta}^{q_n}(c)\}_{n \in \nt}$ are the closest returns of the orbit of $c$ under the rotation $R_{\theta}$:$$d\big(c,R_{\theta}^{q_n}(c)\big)<d\big(c,R_{\theta}^j(c)\big)\quad\mbox{for any}\quad j\in\{1,...,q_n-1\}$$where $d$ denote the standard distance in $S^1$. The sequence of return times $\{q_n\}$ increase at least exponentially fast as $n \to \infty$, and the sequence of return distances $\{d(c,R_{\theta}^{q_n}(c))\}$ decrease to zero at least exponentially fast as $n \to \infty$. Moreover the sequence $\{R_{\theta}^{q_n}(c)\}_{n \in \nt}$ approach the point $c$ alternating the order:$$R_{\theta}^{q_{1}}(c)<R_{\theta}^{q_3}(c)<...<R_{\theta}^{q_{2k+1}}(c)<...<c<...<R_{\theta}^{q_{2k}}(c)<...<R_{\theta}^{q_2}(c)<R_{\theta}^{q_0}(c)$$

By Poincar\'e's result quoted at the beginning of the introduction, this information remains true at the combinatorial level for $f$: for any $n\in\nt$ the interval $[c,f^{q_n}(c)]$ contains no other iterates $f^j(c)$ for $j \in \{1,...,q_n-1\}$, and if we denote by $\mu$ the unique invariant Borel probability of $f$ we can say that $\mu\big([c,f^{q_n}(c)]\big)<\mu\big([c,f^j(c)]\big)$ for any $j \in \{1,...,q_n-1\}$. A priori we cannot say anything about the usual distance in $S^1$.

We say that $\rho(f)$ is of \emph{bounded type} if there exists a constant $M\in\nt$ such that $a_n < M$ for any $n\in\nt$ (it is not difficult to see that this definition is equivalent with the one given in the introduction, see \cite[Chapter II, Theorem 23]{khin}). As we said in the introduction, the set of numbers of bounded type has zero Lebesgue measure in $[0,1]$.

\subsection{Dynamical partitions} Denote by $I_n$ the interval $[c,f^{q_n}(c)]$ and define $\mathcal{P}_n$ as:$$\mathcal{P}_n=\big\{I_n,f(I_n),...,f^{q_{n+1}-1}(I_n)\big\} \bigcup \big\{I_{n+1},f(I_{n+1}),...,f^{q_n-1}(I_{n+1})\big\}$$

A crucial combinatorial fact is that $\mathcal{P}_n$ is a partition (modulo boundary points) of the circle for every $n \in \nt$. We call it the \emph{n-th dynamical partition} of $f$ associated with the point $c$. Note that the partition $\mathcal{P}_n$ is determined by the piece of orbit:$$\{f^{j}(c): 0 \leq j \leq q_n + q_{n+1}-1\}$$

The transitions from $\mathcal{P}_n$ to $\mathcal{P}_{n+1}$ can be described in the following easy way: the interval $I_n=[c,f^{q_n}(c)]$ is subdivided by the points $f^{jq_{n+1}+q_n}(c)$ with $1 \leq j \leq a_{n+1}$ into $a_{n+1}+1$ subintervals. This sub-partition is spreaded by the iterates of $f$ to all the $f^j(I_n)=f^j([c,f^{q_n}(c)])$ with $0 \leq j < q_{n+1}$. The other elements of the partition $\mathcal{P}_n$, which are the $f^j(I_{n+1})$ with $0 \leq j < q_n$, remain unchanged.

As we are working with critical circle maps, our partitions in this article are always determined by the critical orbit. A major result for critical circle maps is the following:

\begin{theorem}[real bounds]\label{realbounds} There exists $K>1$ such that given a $C^3$ critical circle map $f$ with irrational rotation number there exists $n_0=n_0(f)$ such that for all $n \geq n_0$ and for every pair $I,J$ of adjacent atoms of $\mathcal{P}_n$ we have:$$K^{-1}|I| \leq |J| \leq K|I|.$$
Moreover, if $Df$ denotes the first derivative of $f$, we have:$$\frac{1}{K}\leq\frac{\big|Df^{q_n-1}(x)\big|}{\big|Df^{q_n-1}(y)\big|}\leq K\quad\mbox{for all $x,y \in f(I_{n+1})$ and for all $n \geq n_0$, and:}$$
$$\frac{1}{K}\leq\frac{\big|Df^{q_{n+1}-1}(x)\big|}{\big|Df^{q_{n+1}-1}(y)\big|}\leq K\quad\mbox{for all $x,y \in f(I_{n})$ and for all $n \geq n_0$.}$$
\end{theorem}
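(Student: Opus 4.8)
The plan is to prove the real bounds by controlling the distortion of \emph{cross-ratios} along orbits, in the spirit of Herman, \'Swi\c{a}tek and de Faria--de Melo. For nested intervals $M\subset T$, write the two components of $T\setminus M$ as $L$ and $R$ and set $\Cr(T,M)=\frac{|M|\,|T|}{|L\cup M|\,|R\cup M|}$. Two inputs drive the argument. First, away from $c$ the map $f$ is a $C^3$ diffeomorphism on each atom, and a $C^3$ diffeomorphism $g$ of an interval onto its image satisfies $\Cr\big(g(T),g(M)\big)\ge\big(1-\varepsilon_f(|T|)\big)\Cr(T,M)$, where $\varepsilon_f(t)\to0$ as $t\to0$ at a rate depending only on the modulus of continuity of $D^3f$. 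Second, near $c$ non-flatness lets us write $f$ as a composition of $C^3$ diffeomorphisms with the model $x\mapsto\pm|x|^3$; the pure cube does not contract cross-ratios of intervals lying on one side of $0$, and moreover \emph{expands} the ratio $|[a,b]|/|[0,a]|$ of two adjacent intervals with common endpoint at $0$. Since every dynamical partition $\mathcal{P}_m$ covers each point of $S^1$ at most twice, these combine to the \emph{cross-ratio inequality}: there is a universal $C_0$ such that whenever $T,f(T),\dots,f^{k-1}(T)$ have intersection multiplicity $\le N$ and $f^k|_T$ is a diffeomorphism, then $\Cr\big(f^k(T),f^k(M)\big)\ge\big(1-C_0N\tau_k\big)\Cr(T,M)$ with $\tau_k=\max_{0\le j<k}|f^j(T)|$. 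By Yoccoz's theorem $f$ is minimal, so $\tau_k\to0$ along all chains used below; this is the sole source of the $f$-dependent threshold $n_0$.

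First I would extract macroscopic geometry from the nested partitions. Inside $\mathcal{P}_n$, the atom $I_{n-1}$ is subdivided into $I_{n+1}$ together with the intervals $f^{q_{n-1}+jq_n}(I_n)$, $0\le j<a_n$; each of these is reached from $f(I_n)$ by an iterate $f^i$, $0\le i<q_{n+1}$, whose orbit is a subfamily of $\mathcal{P}_n$ and hence avoids $c$, so that $f^i|_{f(I_n)}$ is diffeomorphic with multiplicity $\le2$. Feeding these chains into the cross-ratio inequality --- which, crucially, needs no a priori bounds --- and running the classical shortest-interval argument yields, for $n\ge n_0(f)$, the bound $|I_{n+1}|\le\theta_0|I_{n-1}|$ for a universal $\theta_0<1$, so the mesh of $\mathcal{P}_n$ decays geometrically and $|I_n|/|I_{n-1}|$ is bounded above and below. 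Next I would compare the two atoms $I_n$ and $I_{n+1}$ abutting $c$: factoring $f^{q_n}$ near $c$ as (bounded-distortion diffeomorphism)$\,\circ\,(x\mapsto\pm|x|^3)\,\circ\,$(bounded-distortion diffeomorphism), the ratio-expansion property of the cube, together with the geometric decay just obtained, forces $|I_n|$ and $|I_{n+1}|$ to be comparable by a universal constant. This is the step that genuinely uses the critical point: no Koebe-type estimate survives at $c$.

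With these bounds available, the two displayed assertions of the theorem follow. For the derivative estimates: for $1\le j\le q_n-1$ the interval $f^j(I_n)$ is an atom of $\mathcal{P}_n$ distinct from $I_n$ and $I_{n+1}$, hence avoids $c$, so $f^{q_n-1}$ is a diffeomorphism on $f(I_n)$; extending it diffeomorphically to a controlled enlargement of $f(I_{n+1})$ (using, near the critical value $f(c)$, the comparability of $I_n$ and $I_{n+1}$ just proved) and applying the Koebe form of the cross-ratio inequality gives $|Df^{q_n-1}(x)|/|Df^{q_n-1}(y)|\le K$ for all $x,y\in f(I_{n+1})$, with $K$ universal; the estimate for $f^{q_{n+1}-1}$ on $f(I_n)$ is obtained identically using $\mathcal{P}_{n+1}$. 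For the comparability of adjacent atoms, any adjacent pair of $\mathcal{P}_n$ is the image of the pair $\{I_n,I_{n+1}\}$, or of $I_n$ and an atom next to it inside $I_{n-1}$, under some iterate $f^i$, $0\le i<q_{n+1}$, along a chain of multiplicity $\le2$ on which $f^i$ is diffeomorphic; the bounded distortion just obtained transports comparability, yielding a universal $K$ with $K^{-1}|I|\le|J|\le K|I|$ for all adjacent $I,J\in\mathcal{P}_n$ and all $n\ge n_0(f)$.

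The main obstacle is the passage through the critical point. At $c$ the Koebe/Schwarz machinery degenerates, so $f^{q_n}$ cannot be treated as a mild perturbation of an affine map there; one must peel off the exact cubic model and argue that the pure power map simultaneously does no harm to cross-ratios and comparably expands adjacent-interval ratios, while the residual $C^3$ nonlinearity is merely an $o(1)$ error as the atoms shrink. Carrying out the resulting bootstrap rigorously --- deriving the macroscopic bounds of the second paragraph from the cross-ratio inequality alone, without presupposing what is to be proved --- is the technical heart of the proof, and is also why the universal constant $K$ is attained only beyond an $f$-dependent threshold $n_0$.
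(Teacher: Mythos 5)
The paper does not supply a proof of Theorem~\ref{realbounds}: immediately after the statement it attributes the comparability of adjacent atoms to \'Swi\c{a}tek and Herman (citing \cite{herman}, \cite{swiatek}, \cite{graswia}, \cite{edsonwelington1}) and says that the distortion control on the return maps follows from the Koebe distortion principle (citing \cite[Section~3]{edsonwelington1}). Your outline is a faithful sketch of what those references actually do: a cross-ratio inequality for chains of $C^3$ diffeomorphisms with bounded intersection multiplicity, with the cubic power map peeled off separately at the critical point where it expands rather than contracts cross-ratios and adjacent-interval ratios; Yoccoz's minimality to drive the mesh of the dynamical partitions to zero, which is indeed the sole source of the $f$-dependent threshold $n_0$; the \'Swi\c{a}tek shortest-interval argument to obtain the macroscopic comparability of atoms; and then Koebe distortion to pass from those geometric bounds to the stated control on $Df^{q_n-1}$ over $f(I_{n+1})$ and $Df^{q_{n+1}-1}$ over $f(I_n)$, exactly as the paper indicates. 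Since the paper delegates the proof entirely to the cited literature, there is no in-paper argument to mark a divergence from, and your reconstruction is consistent with the delegated one; the only point worth flagging is that the bootstrap in your second paragraph (extracting geometric decay of the mesh and comparability of $I_n$, $I_{n+1}$ from the cross-ratio inequality alone, without presupposing the real bounds) is where the genuine technical content of \cite{swiatek} and \cite[Section~3]{edsonwelington1} resides, and your sketch correctly identifies it as such rather than glossing over it.
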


Theorem \ref{realbounds} was proved by \'Swi\c{a}tek and Herman (see \cite{herman}, \cite{swiatek}, \cite{graswia} and \cite{edsonwelington1}). The control on the distortion of the return maps follows from Koebe distortion principle (see \cite[Section 3]{edsonwelington1}). Note that for a rigid rotation we have $|I_n|=a_{n+1}|I_{n+1}|+|I_{n+2}|$. If $a_{n+1}$ is big, then $I_n$ is much larger than $I_{n+1}$. Thus, even for rigid rotations, real bounds do not hold in general.

\subsection{Critical commuting pairs}\label{subccp} The first return map of the union of adjacent intervals $I_n \cup I_{n+1}$ is given respectively by $f^{q_{n+1}}$ and $f^{q_n}$. This pair of interval maps:$$\left(f^{q_{n+1}}|_{I_n},f^{q_n}|_{I_{n+1}}\right)$$motivates the following definition:

\begin{definition}\label{critpair} A \emph{critical commuting pair} $\zeta=(\eta,\xi)$ consists of two smooth orientation-preserving interval homeomorphisms $\eta:I_{\eta}\to\eta(I_{\eta})$ and $\xi:I_{\xi}\to\xi(I_{\xi})$ where:
\begin{enumerate}
\item $I_{\eta}=[0,\xi(0)]$ and $I_{\xi}=[\eta(0),0]$;
\item There exists a neighbourhood of the origin where both $\eta$ and $\xi$ have homeomorphic extensions (with the same degree of smoothness) which commute, that is, $\eta\circ\xi=\xi\circ\eta$;
\item $\big(\eta\circ\xi\big)(0)=\big(\xi\circ\eta\big)(0) \neq 0$;
\item $\eta'(0)=\xi'(0)=0$;
\item $\eta'(x) \neq 0$ for all $x \in I_{\eta}\setminus\{0\}$ and $\xi'(x) \neq 0$ for all $x \in I_{\xi}\setminus\{0\}$.
\end{enumerate}
\end{definition}

\begin{figure}[ht!]
\begin{center}
\includegraphics[scale=0.8]{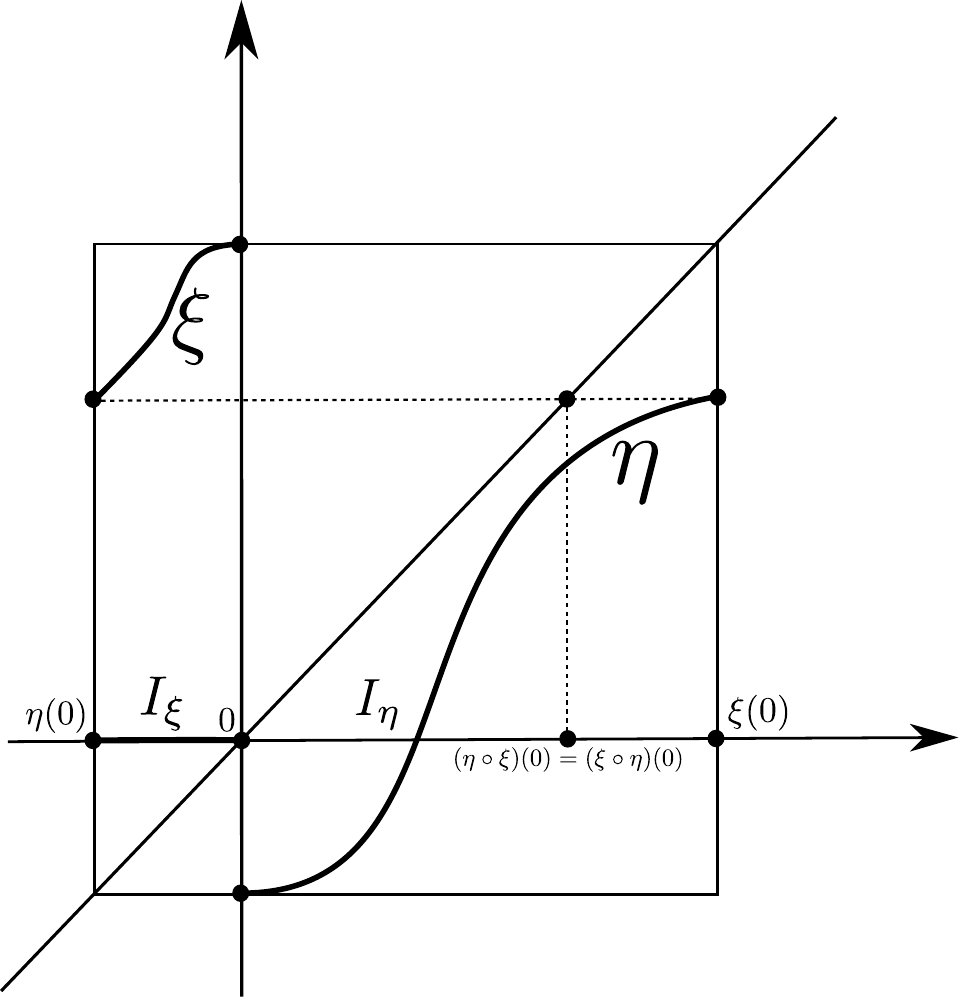}
\caption{A commuting pair.}
\end{center}
\end{figure}

Any critical circle map $f$ with irrational rotation number $\theta$ induces a sequence of critical commuting pair in a natural way: let $\widetilde{f}$ be the lift of $f$ to the real line (for the canonical covering $t \mapsto e^{2\pi it}$) satisfying $\widetilde{f}'(0)=0$ and $0<\widetilde{f}(0)<1$. For each $n \geq 1$ let $\widetilde{I_n}$ be the closed interval in the real line, adjacent to the origin, that projects to $I_n$. Let $T:\R \to \R$ be the translation $x \mapsto x+1$ and define $\eta:\widetilde{I_n} \to \R$ and $\xi:\widetilde{I_{n+1}} \to \R$ as:$$\eta=T^{-p_{n+1}}\circ\widetilde{f}^{q_{n+1}}\quad\mbox{and}\quad\xi=T^{-p_n}\circ\widetilde{f}^{q_n}$$

Then the pair $(\eta|_{\widetilde{I_n}}, \xi|_{\widetilde{I_{n+1}}})$ form a critical commuting pair that we denote by $(f^{q_{n+1}}|_{I_n},f^{q_n}|_{I_{n+1}})$ to simplify notation.

A converse of this construction was introduced by Lanford (\cite{lanford1}, \cite{lanford2}) and it is known as \emph{glueing procedure}: the map $\eta^{-1}\circ\xi$ is a diffeomorphism from a small neighbourhood of $\eta(0)$ onto a neighbourhood of $\xi(0)$. Identifying $\eta(0)$ and $\xi(0)$ in this way we obtain from the interval $\big[\eta(0),\xi(0)\big]$ a smooth compact boundaryless one-dimensional manifold $M$. The discontinuous piecewise smooth map:
$$f_{\zeta}(t)=\left\{\begin{array}{ll}
\xi(t)&\mbox{for } t\in\big[\eta(0),0\big)\\
\eta(t)&\mbox{for } t\in\big[0,\xi(0)\big]\\
\end{array}\right.$$projects to a smooth homeomorphism on the quotient manifold $M$. By choosing a diffeomorphism $\psi:M \to S^1$ we obtain a critical circle map in $S^1$, just by conjugating with $\psi$. Although there is no canonical choice for the diffeomorphism $\psi$, any two different choices give rise to smoothly-conjugate critical circle maps in $S^1$. Therefore any critical commuting pair induces a whole \emph{smooth conjugacy class} of critical circle maps. In Section \ref{Secglueing} we propose a bidimensional extension of this procedure, in order to prove our main result (Theorem \ref{main}).

\begin{figure}[ht!]
\begin{center}
\includegraphics[scale=0.5]{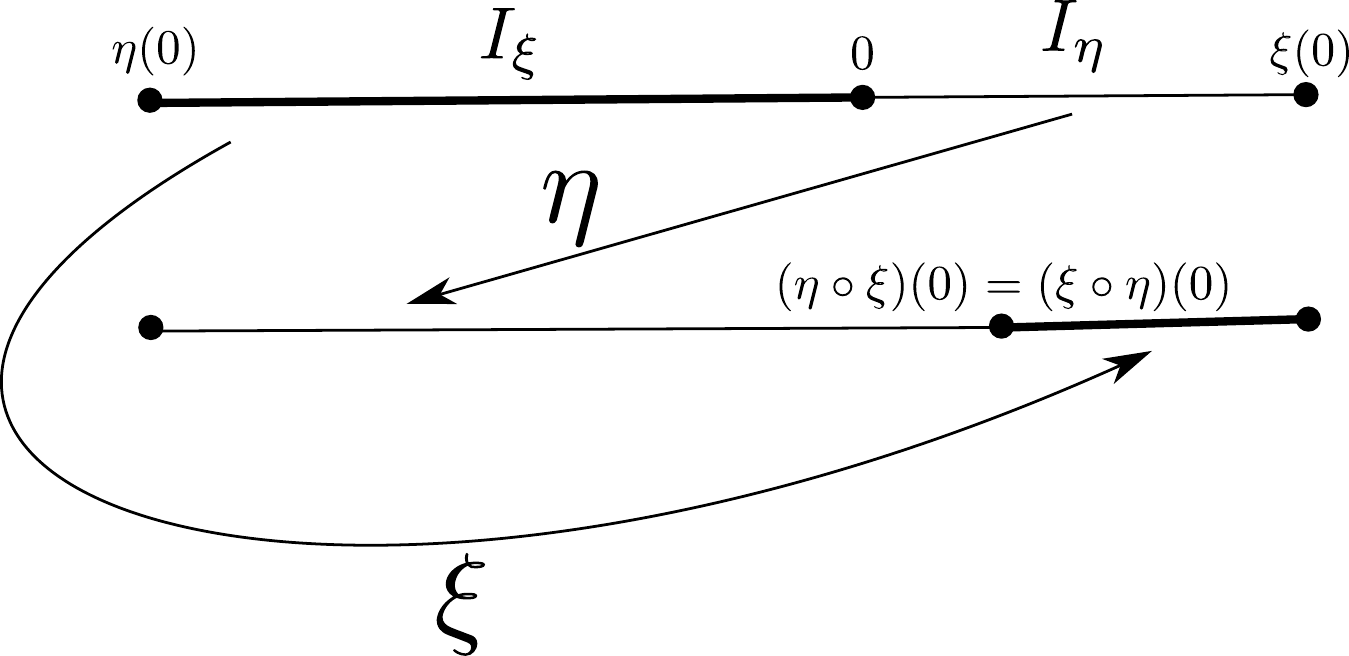}
\caption{Scheme of a commuting pair.}
\end{center}
\end{figure}

\subsection{The $C^r$ metric}\label{submetric} We endow the space of $C^3$ critical commuting pairs with the $C^3$ metric. Given two critical commuting pairs $\zeta_1=(\eta_1,\xi_1)$ and $\zeta_2=(\eta_2,\xi_2)$ let $A_1$ and $A_2$ be the M\"obius transformations such that for $i=1,2$:$$A_i\big(\eta_i(0)\big)=-1,\quad A_i(0)=0\quad\mbox{and}\quad A_i\big(\xi_i(0)\big)=1\,.$$

\begin{definition}\label{Crmetric} For any $0 \leq r < \infty$ define the $C^r$ metric on the space of $C^r$ critical commuting pairs in the following way:$$d_r(\zeta_1,\zeta_2)=\max\left\{\left|\frac{\xi_1(0)}{\eta_1(0)}-\frac{\xi_2(0)}{\eta_2(0)}\right|, \big\|A_1 \circ \zeta_1 \circ A_1^{-1}-A_2 \circ \zeta_2 \circ A_2^{-1}\big\|_r\right\}$$where $\|\cdot\|_r$ is the $C^r$-norm for maps in $[-1,1]$ with one discontinuity at the origin, and $\zeta_i$ is the piecewise map defined by $\eta_i$ and $\xi_i$:$$\zeta_i:I_{\xi_i} \cup I_{\eta_i} \to I_{\xi_i} \cup I_{\eta_i} \quad\mbox{such that}\quad \zeta_i|_{I_{\xi_i}}=\xi_i \quad\mbox{and}\quad \zeta_i|_{I_{\eta_i}}=\eta_i$$
\end{definition}

Note that $d_r$ is a pseudo-metric since it is invariant under conjugacy by homotheties: if $\alpha$ is a positive real number, $H_{\alpha}(t)=\alpha t$ and $\zeta_1=H_{\alpha} \circ \zeta_2 \circ H_{\alpha}^{-1}$ then $d_r(\zeta_1,\zeta_2)=0$. In order to have a metric we need to restrict to \emph{normalized} critical commuting pairs: for a commuting pair $\zeta=(\eta,\xi)$ denote by $\widetilde{\zeta}$ the pair $(\widetilde{\eta}|_{\widetilde{I_{\eta}}}, \widetilde{\xi}|_{\widetilde{I_{\xi}}})$ where tilde means rescaling by the linear factor $\lambda=\frac{1}{|I_{\xi}|}$. Note that $|\widetilde{I_{\xi}}|=1$ and $\widetilde{I_{\eta}}$ has length equal to the ratio between the lengths of $I_{\eta}$ and $I_{\xi}$. Equivalently $\widetilde{\eta}(0)=-1$ and $\widetilde{\xi}(0)=\frac{|I_{\eta}|}{|I_{\xi}|}=\xi(0)/\big|\eta(0)\big|$.

When we are dealing with real-analytic critical commuting pairs, we consider the $C^{\omega}$-topology defined in the usual way: we say that $\big(\eta_n,\xi_n\big)\to\big(\eta,\xi\big)$ if there exist two open sets $U_{\eta} \supset I_{\eta}$ and $U_{\xi} \supset I_{\xi}$ in the complex plane and $n_0\in\nt$ such that $\eta$ and $\eta_n$ for $n \geq n_0$ extend continuously to $\overline{U_{\eta}}$, are holomorphic in $U_{\eta}$ and we have $\big\|\eta_n-\eta\big\|_{C^0(\overline{U_{\eta}})} \to 0$, and such that $\xi$ and $\xi_n$ for $n \geq n_0$ extend continuously to $\overline{U_{\xi}}$, are holomorphic in $U_{\xi}$ and we have $\big\|\xi_n-\xi\big\|_{C^0(\overline{U_{\xi}})} \to 0$. We say that a set $\mathcal{C}$ of real-analytic critical commuting pairs is closed if every time we have $\{\zeta_n\}\subset\mathcal{C}$ and $\{\zeta_n\}\to\zeta$, we have $\zeta\in\mathcal{C}$. This defines a Hausdorff topology, stronger than the $C^r$-topology for any $0 \leq r \leq \infty$.

\subsection{The renormalization operator} Let $\zeta=(\eta,\xi)$ be a $C^3$ critical commuting pair according to Definition \ref{critpair}, and recall that $\big(\eta\circ\xi\big)(0)=\big(\xi\circ\eta\big)(0) \neq 0$. Let us suppose that $\big(\xi\circ\eta\big)(0) \in I_{\eta}$ (just as in both Figure 2 and Figure 3 above) and define the \emph{height} $\chi(\zeta)$ of the commuting pair $\zeta=(\eta,\xi)$ as $r$ if:$$\eta^{r+1}(\xi(0)) \leq 0 \leq \eta^r(\xi(0))$$and $\chi(\zeta)=\infty$ if no such $r$ exists (note that in this case the map $\eta|_{I_\eta}$ has a fixed point, so when we are dealing with commuting pairs induced by critical circle maps with irrational rotation number we have finite height). Note also that the height of the pair $(f^{q_{n+1}}|_{I_n},f^{q_n}|_{I_{n+1}})$ induced by a critical circle maps $f$ is exactly $a_{n+1}$, where $\rho(f)=[a_0,a_1,a_2,...,a_n,a_{n+1},...]$ (because the combinatorics of $f$ are the same as for the rigid rotation $R_{\rho(f)}$).

For a pair $\zeta=(\eta,\xi)$ with $\big(\xi\circ\eta\big)(0) \in I_{\eta}$ and $\chi(\zeta)=r<\infty$ we see that the pair:$$\big(\eta|_{[0,\eta^r(\xi(0))]},\eta^r \circ \xi|_{I_\xi}\big)$$is again a commuting pair, and if $\zeta=(\eta,\xi)$ is induced by a critical circle map:$$\zeta=(\eta,\xi)=\big(f^{q_{n+1}}|_{I_n},f^{q_n}|_{I_{n+1}}\big)$$we have that:$$\big(\eta|_{[0,\eta^r(\xi(0))]},\eta^r \circ \xi|_{I_\xi}\big)=\big(f^{q_{n+1}}|_{I_{n+2}},f^{q_{n+2}}|_{I_{n+1}}\big)$$

This motivates the following definition (the definition in the case $\big(\xi\circ\eta\big)(0) \in I_{\xi}$ is analogue):

\begin{definition}\label{renop} Let $\zeta=(\eta,\xi)$ be a critical commuting pair with $\big(\xi\circ\eta\big)(0) \in I_{\eta}$. We say that $\zeta$ is \emph{renormalizable} if $\chi(\zeta)=r<\infty$. In this case we define the \emph{renormalization} of $\zeta$ as the critical commuting pair:$$\mathcal{R}(\zeta)=\left(\widetilde{\eta}|_{\widetilde{[0,\eta^r(\xi(0))]}},\widetilde{\eta^r \circ \xi}|_{\widetilde{I_\xi}}\right)$$
\end{definition}

A critical commuting pair is a special case of a \emph{generalized interval exchange map} of two intervals, and the renormalization operator defined above is just the restriction of the \emph{Zorich accelerated version} of the \emph{Rauzy-Veech renormalization} for interval exchange maps (see for instance \cite{yoccoziem}). However we will keep in this article the classical terminology for critical commuting pairs.

\begin{figure}[ht!]
\begin{center}
\includegraphics[scale=1.0]{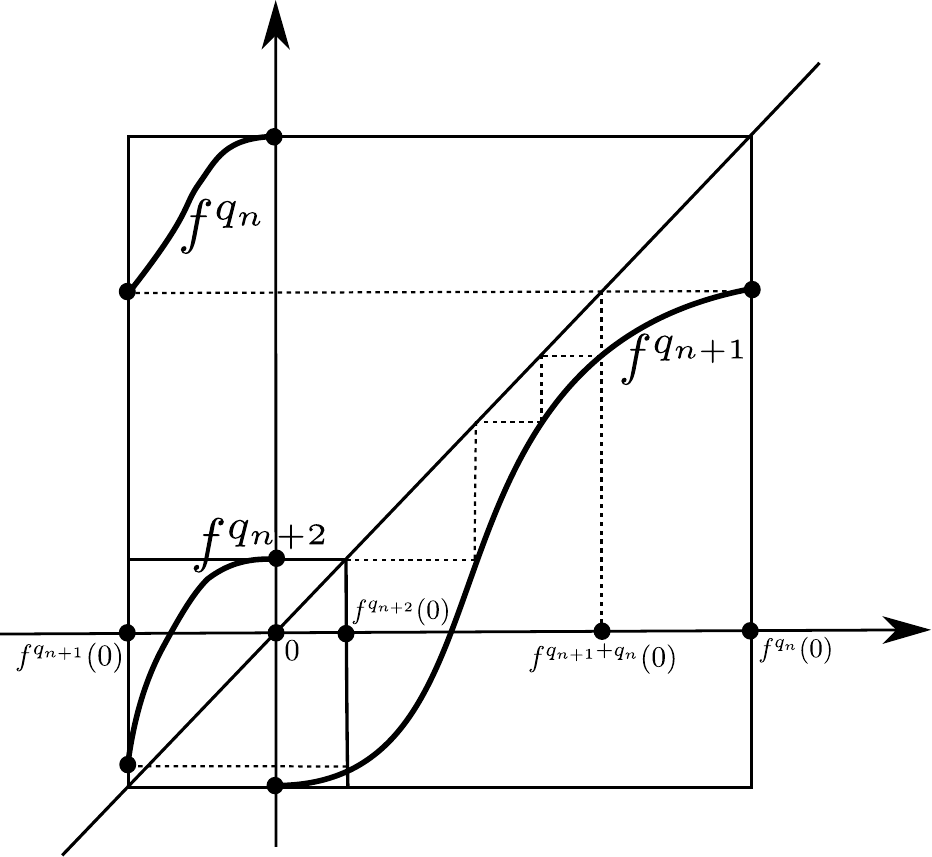}
\caption{Two consecutive renormalizations of $f$, without rescaling (recall that $f^{q_n}$ means $T^{-p_n}\circ\widetilde{f}^{q_n}$). In this example $a_{n+1}=4$.}
\end{center}
\end{figure}

\begin{definition}\label{rotnum} Let $\zeta$ be a critical commuting pair. If $\chi(\mathcal{R}^j(\zeta))<\infty$ for $j \in \{0,1,...,n-1\}$ we say that $\zeta$ is \emph{n-times renormalizable}, and if $\chi(\mathcal{R}^j(\zeta))<\infty$ for all $j\in\nt$ we say that $\zeta$ is \emph{infinitely renormalizable}. In this case the irrational number $\theta$ whose continued fraction expansion is equal to:$$\big[\chi\big(\zeta\big), \chi\big(\mathcal{R}(\zeta)\big),...,\chi\big(\mathcal{R}^n(\zeta)\big),\chi\big(\mathcal{R}^{n+1}(\zeta)\big),... \big]$$is called the \emph{rotation number} of the critical commuting pair $\zeta$, and denoted by $\rho(\zeta)=\theta$.
\end{definition}

The rotation number of a critical commuting pair can also be defined with the help of the glueing procedure described above, just as the rotation number of any representative of the conjugacy class obtained after glueing and uniformizing.

An immediate but very important remark is that when $\zeta$ is induced by a critical circle map with irrational rotation number, the pair $\zeta$ is automatically infinitely renormalizable (and both notions of rotation number coincide): any $C^3$ critical circle map $f$ with irrational rotation number gives rise to a well defined orbit $\big\{\mathcal{R}^n(f)\big\}_{n \geq 1}$ of infinitely renormalizable $C^3$ critical commuting pairs defined by:$$\mathcal{R}^n(f)=\left(\widetilde{f^{q_n}}|_{\widetilde{I_{n-1}}},\widetilde{f^{q_{n-1}}}|_{\widetilde{I_n}}\right)\quad\mbox{for all}\quad n \geq 1.$$
For any positive number $\theta$ denote by $\lfloor\theta\rfloor$ the \emph{integer part} of $\theta$, that is, $\lfloor\theta\rfloor\in\nt$ and $\lfloor\theta\rfloor\leq\theta<\lfloor\theta\rfloor+1$. Recall that the \emph{Gauss map} $G:[0,1]\to[0,1]$ is defined by:$$G(\theta)=\frac{1}{\theta}-\left\lfloor\frac{1}{\theta}\right\rfloor\quad\mbox{for}\quad\theta\neq 0\quad\mbox{and}\quad G(0)=0\,,$$and note that $\rho$ \emph{semi-conjugates} the renormalization operator with the Gauss map:$$\rho\big(\mathcal{R}^n(\zeta)\big)=G^n\big(\rho(f)\big)$$for any $\zeta$ at least $n$-times renormalizable. In particular the renormalization operator acts as a \emph{left shift} on the continued fraction expansion of the rotation number: if $\rho(\zeta)=[a_0,a_1,...]$ then $\rho\big(\mathcal{R}^n(\zeta)\big)=[a_n,a_{n+1},...]$.

\subsection{Lipschitz continuity along the orbits} For $K>1$ and $r\in\{0,1,...,\infty,\omega\}$ denote by $\mathcal{P}^r(K)$ the space of $C^r$ critical commuting pairs $\zeta=(\eta,\xi)$ such that $\eta(0)=-1$ (they are normalized) and $\xi(0) \in [K^{-1},K]$. Recall also that $T$ denotes the translation $t \mapsto t+1$ in the real line. Let $K_0>1$ be the universal constant given by the real bounds. In the next section we will use the following:

\begin{lema}\label{Lipschitz} Given $M>0$ and $K>K_0$ there exists $L>1$ with the following property: let $f$ be a $C^3$ critical circle map with irrational rotation number $\rho(f)=[a_0,a_1,...]$ satisfying $a_n<M$ for all $n\in\nt$. There exists $n_0=n_0(f)\in\nt$ such that for any $n \geq n_0$ and any renormalizable critical commuting pair $\zeta=(\eta,\xi)$ satisfying:
\begin{enumerate}
\item $\zeta,\mathcal{R}(\zeta)\in\mathcal{P}^3(K)$,
\item $$\left\lfloor\frac{1}{\rho(\zeta)}\right\rfloor=a_n\,,$$
\item\label{pololejos} If $\big(T^{-p_{n+1}}\circ\widetilde{f}^{q_{n+1}}\big)(0)<0<\big(T^{-p_n}\circ\widetilde{f}^{q_n}\big)(0)$ then:$$\left|\left|\frac{\big(T^{-p_n}\circ\widetilde{f}^{q_n}\big)(0)}{\big(T^{-p_{n+1}}\circ\widetilde{f}^{q_{n+1}}\big)(0)}\right|-\xi(0)\right|<\left(\frac{1}{K^2}\right)\left(\frac{K+1}{K-1}\right).$$
Otherwise, if $\big(T^{-p_n}\circ\widetilde{f}^{q_n}\big)(0)<0<\big(T^{-p_{n+1}}\circ\widetilde{f}^{q_{n+1}}\big)(0)$, then:$$\left|\left|\frac{\big(T^{-p_{n+1}}\circ\widetilde{f}^{q_{n+1}}\big)(0)}{\big(T^{-p_n}\circ\widetilde{f}^{q_n}\big)(0)}\right|-\xi(0)\right|<\left(\frac{1}{K^2}\right)\left(\frac{K+1}{K-1}\right),\quad\mbox{and}$$
\item\label{delmismolado} $\big(\eta\circ\xi\big)(0)$ and $\big(T^{-p_{n+1}-p_n}\circ\widetilde{f}^{q_{n+1}+q_n}\big)(0)$ have the same sign,
\end{enumerate}
then we have that:$$d_0\left(\mathcal{R}^{n+1}(f),\mathcal{R}(\zeta)\right) \leq L \cdot d_0\left(\mathcal{R}^n(f),\zeta\right),$$where $d_0$ is the $C^0$ distance in the space of critical commuting pairs.
\end{lema}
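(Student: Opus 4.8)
The strategy is to reduce everything to an explicit comparison of the two piecewise maps $\mathcal{R}^{n+1}(f)$ and $\mathcal{R}(\zeta)$ after normalization, showing that the distance between them is controlled by the distance between $\mathcal{R}^n(f)$ and $\zeta$ with a multiplicative constant $L$ depending only on $M$ and $K$. The point is that both $\mathcal{R}^{n+1}(f)$ and $\mathcal{R}(\zeta)$ are obtained by the \emph{same} combinatorial recipe (Definition \ref{renop}) applied to $\mathcal{R}^n(f)$ and $\zeta$ respectively: hypothesis (2) guarantees that $\chi(\zeta)=a_n=\chi(\mathcal{R}^n(f))$, so the two renormalizations perform the same number $r=a_n<M$ of iterations of the $\eta$-branch, and hypothesis (4) guarantees the branch $\big(\xi\circ\eta\big)(0)$ falls on the same side for both pairs, so the renormalization formula (rather than its mirror image) applies to both. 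Thus $\mathcal{R}(\zeta)=\big(\widetilde{\eta|_{[0,\eta^r(\xi(0))]}},\widetilde{\eta^r\circ\xi|_{I_\xi}}\big)$ and $\mathcal{R}^{n+1}(f)$ is the analogous expression built from the components of $\mathcal{R}^n(f)$.

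First I would set up a uniform a priori framework: by the real bounds (Theorem \ref{realbounds}) there is $n_0(f)$ so that for $n\geq n_0$ the pair $\mathcal{R}^n(f)$, after normalizing $\eta(0)=-1$, lies in $\mathcal{P}^3(K_0)\subset\mathcal{P}^3(K)$, its $\xi(0)$ is bounded away from $0$ and $\infty$, and the domains $I_\eta, I_\xi$ have comparable lengths; moreover the $C^3$-norms of the branches of $\mathcal{R}^n(f)$ are uniformly bounded (this is standard from the real bounds together with Koebe, and is implicit in the compactness statements cited). Hypothesis (1) gives the same control for $\zeta$. Next, since $d_0(\mathcal{R}^n(f),\zeta)$ is small (we may assume it is, as the inequality is trivial otherwise by boundedness), the $C^0$-closeness of the normalized branches $\eta_n$ vs.\ $\eta$ and $\xi_n$ vs.\ $\xi$, propagated through at most $r<M$ compositions, gives $C^0$-closeness of $\eta^r\circ\xi$ to $\eta_n^r\circ\xi_n$ and of the restricted domains $[0,\eta^r(\xi(0))]$ versus $[0,\eta_n^r(\xi_n(0))]$: here one uses that post-composition with a map of bounded derivative is Lipschitz in $C^0$, and that the number of compositions is bounded by $M$, so the constant is at most (bound on $\|D\eta\|)^{M}$ times a combinatorial factor — this is where $L=L(M,K)$ is born.

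The remaining point is that renormalization involves \emph{rescaling} (the tilde in Definition \ref{renop}), i.e.\ conjugation by the homothety $H_\lambda$ with $\lambda=1/|I_\xi|$, and conjugation by a homothety is not uniformly Lipschitz unless $\lambda$ stays in a compact set of $(0,\infty)$. This is exactly what hypotheses (1) and (3) provide: (1) forces $\mathcal{R}(\zeta)\in\mathcal{P}^3(K)$, which pins down the rescaling factor of $\zeta$ to a definite range; (3) is the technical bound forcing $\xi(0)$ of the pair $\zeta$ to be within an explicit distance of the corresponding ratio for $\mathcal{R}^n(f)$, which guarantees that the pre-renormalization domains $I_\xi$ for the two pairs are comparable, so that the two rescaling factors differ by a bounded ratio. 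Combining: the normalized renormalized branches differ in $C^0$ by at most $L\cdot d_0(\mathcal{R}^n(f),\zeta)$, and the ratio $\xi(0)$-coordinate (the first entry in the definition of $d_0$, Definition \ref{Crmetric}) is likewise controlled since $\widetilde{\xi}(0)$ after renormalization is a smooth function of the finitely many evaluated values $\eta^j(\xi(0))$, $0\leq j\leq r$, with bounded derivatives. This yields the claimed Lipschitz estimate.

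The main obstacle is the bookkeeping around the rescaling: one must verify that, under hypotheses (1)--(4), the linear normalizing factors for $\mathcal{R}(\zeta)$ and for $\mathcal{R}^{n+1}(f)$ (equivalently, the Möbius normalizers $A_1, A_2$ entering Definition \ref{Crmetric}) stay in a compact family and depend in a $C^0$-Lipschitz way on the pair, with constants independent of $n$. Everything else — propagating $C^0$-closeness through $\leq M$ compositions, controlling domains of restriction, using bounded $C^3$-norms from the real bounds — is routine once this uniform non-degeneracy of the rescaling is in place; hypotheses (2) and (4) are purely combinatorial (matching height and matching branch side) and hypothesis (3) is precisely the quantitative input making the rescaling factors comparable.
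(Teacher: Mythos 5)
Your plan is essentially the paper's argument: the paper first proves a composition estimate (its Lemma~\ref{comp}, giving the factor $\sum_{j<a_0}B^j\le\sum_{j<M}B^j$), then proves an abstract Lipschitz statement (Lemma~\ref{corolario}) by explicit bookkeeping with the M\"obius normalizers $T_\alpha$ from Definition~\ref{Crmetric}, using hypothesis~(3) precisely to keep the poles of $T_{\xi_i(0)}$ a definite distance from the relevant interval, and finally deduces Lemma~\ref{Lipschitz} by invoking the real bounds to supply $n_0(f)$, membership in $\mathcal{P}^3(K)$, and the uniform $C^1$ bound $B$. One small correction to your narrative: since the pairs under comparison are already normalized with $\eta(0)=-1$, the homothety tilde in Definition~\ref{renop} is trivial and $d_0$ is scale-invariant anyway, so the genuine non-degeneracy issue is not the rescaling factor but, as you also note parenthetically, the uniform $C^1$ control of the M\"obius maps $A_i$ away from their poles -- which is exactly what the paper's pole computation and hypothesis~(3) deliver.
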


We postpone the proof of Lemma \ref{Lipschitz} until Appendix \ref{provaLip}.

\section{Reduction of Theorem \ref{expconv}}\label{Secred}

In this section we reduce Theorem \ref{expconv} to the following:

\begin{maintheorem}\label{compacto} There exist a $C^{\omega}$-compact set $\mathcal{K}$ of real-analytic critical commuting pairs and a constant $\lambda \in (0,1)$ with the following property: given a $C^3$ critical circle map $f$ with any irrational rotation number there exist $C>0$ and a sequence $\{f_n\}_{n\in\nt}$ contained in $\mathcal{K}$ such that:$$d_{0}\big(\mathcal{R}^n(f),f_n\big)\leq C\lambda^n\quad\mbox{for all $n\in\nt$,}$$and such that the pair $f_n$ has the same rotation number as the pair $\mathcal{R}^n(f)$ for all $n\in\nt$.
\end{maintheorem}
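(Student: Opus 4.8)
The plan is to assemble the pieces sketched in the introduction, turning the renormalizations $\mathcal{R}^n(f)$ of a merely $C^3$ object into genuinely real-analytic objects while keeping track of an exponentially small error at every step. First I would equip $\mathcal{R}^n(f)$ with its \emph{extended lift} (Definition \ref{canext}): a pair of $C^3$ maps defined on open complex neighbourhoods of the two real dynamical intervals, each with a single cubic critical point at the origin and each \emph{asymptotically holomorphic} along the real axis (Definition \ref{ashol}). The point is that $\mathcal{R}^n(f)$ is built from a very high iterate of $f$, hence its complex extension is a long composition of asymptotically holomorphic maps; because the $\bar\partial$-defect behaves subadditively under composition and the real bounds (Theorem \ref{realbounds}) furnish uniform geometric control of the nested dynamical intervals, one expects the conformal distortion of the extension to decay exponentially in $n$.

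I would make this precise by proving Theorem \ref{nuevoE}: combining the real bounds, the Almost Schwarz inclusion (Proposition \ref{schwarz}) to propagate \emph{definite} complex neighbourhoods through the first-return maps, and the asymptotic-holomorphy estimate, one shows that for all $n\geq n_0$ both $\eta_n$ and $\xi_n$ extend to complex neighbourhoods of conformally definite size, univalent away from the cubic critical point, with Beltrami coefficient of $L^{\infty}$-norm $O(\lambda^{n})$ for a universal $\lambda\in(0,1)$; the same argument supplies uniform lower bounds on the moduli of those domains. I expect this to be the main obstacle of the whole proof: it is the step where finite smoothness is converted into near-analyticity, and it requires a careful accounting of how the (small) $\bar\partial$-defect accumulates along the composition defining $\mathcal{R}^n(f)$ against the contraction coming from the shrinking dynamical scales.

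With Theorem \ref{nuevoE} in hand I would run Ahlfors-Bers twice. First, the quantitative measurable Riemann mapping theorem (Proposition \ref{aprox2}) integrates the $O(\lambda^n)$ Beltrami coefficient to a $C^3$ diffeomorphism $\Phi_n$ that is $O(\lambda^n)$-close to the identity on definite domains and \emph{conjugates} $(\eta_n,\xi_n)$ to a critical commuting pair $(\widehat\eta_n,\widehat\xi_n)$, $O(\lambda^n)$-close to $(\eta_n,\xi_n)$, whose transition map $\widehat\eta_n^{-1}\circ\widehat\xi_n$ is honestly holomorphic near the endpoints; being a conjugacy, $\Phi_n$ preserves the rotation number exactly. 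I would then carry out the bidimensional glueing of Section \ref{Secglueing}: glue the two ends of a band around the union of the dynamical intervals using this holomorphic transition, uniformize the resulting Riemann surface to a round annulus $A_{R_n}$ with $R_n$ uniformly bounded away from $1$, and push $(\widehat\eta_n,\widehat\xi_n)$ forward by the uniformizing map $P_n$ to a $C^3$ annulus map $G_n:A\to A_{R_n}$ of exponentially small conformal distortion whose restriction to $S^1$ is a critical circle map with $\rho(G_n|_{S^1})=\rho(\mathcal{R}^n(f))$, topologically modelled on the relevant piece of the Blaschke family of Figure 1 (Proposition \ref{Gn}). Projecting to the annulus is what makes the commuting condition of the pair into the mere continuity of $G_n$ across the glued seam.

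Second, I would apply Proposition \ref{aprox2} again, now on the annulus, to replace $G_n$ by a genuinely holomorphic $H_n$ on a slightly smaller but still definite annulus around $S^1$, $O(\lambda^n)$-close to $G_n$ and restricting to a real-analytic critical circle map with the same combinatorics (Proposition \ref{pertanillo}); lifting $H_n$ back through $P_n$ yields a real-analytic critical commuting pair $f_n=(\widetilde\eta_n,\widetilde\xi_n)$ (Proposition \ref{shadow}). Chaining the estimates $\mathcal{R}^n(f)\approx(\eta_n,\xi_n)\approx(\widehat\eta_n,\widehat\xi_n)\approx f_n$ gives $d_0(\mathcal{R}^n(f),f_n)\leq C\lambda^n$ for $n\geq n_0$ (the finitely many $n<n_0$ being absorbed into $C$ by choosing, for those, any real-analytic pairs in $\mathcal{K}$ with the prescribed rotation numbers), while $\rho(f_n)=\rho(H_n|_{S^1})=\rho(G_n|_{S^1})=\rho(\mathcal{R}^n(f))$ since every perturbation in the chain preserves the combinatorics. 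Finally, each $f_n$ extends holomorphically to a complex neighbourhood of conformally definite size (from the definite annulus of Proposition \ref{pertanillo} together with the uniform geometry of Theorem \ref{nuevoE}) and is uniformly bounded there; by Montel's theorem the whole family lies in a single $C^{\omega}$-compact set $\mathcal{K}$ (Lemma \ref{compact}), with $\mathcal{K}$ and $\lambda$ universal, which is exactly the claim. The smoothness reduction of Theorem \ref{expconv} then follows by feeding this sequence, together with the exponential contraction of Theorem \ref{uniform} and the Lipschitz control of Lemma \ref{Lipschitz}, into the argument of Section \ref{Secred}.
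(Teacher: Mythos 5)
Your proposal reconstructs the paper's proof essentially step for step: the extended lift and Theorem~\ref{nuevoE} to produce exponentially small conformal distortion on definite complex neighbourhoods, the first application of Proposition~\ref{aprox2} to make the transition map $\eta_n^{-1}\circ\xi_n$ holomorphic and then the bidimensional glueing and uniformization to $A_{R_n}$ yielding $G_n$ (Proposition~\ref{Gn}), a second application of Proposition~\ref{aprox2} to produce the holomorphic $H_n$ with the prescribed combinatorics, the lifted shadowing pairs $f_n$ (Proposition~\ref{shadow}), and Montel's theorem (Lemma~\ref{compact}) for compactness. The only technicality your sketch elides is that Proposition~\ref{aprox2} applies to quasiconformal \emph{homeomorphisms} while $G_n$ is a branched covering, so the paper first factors $G_n=\phi_n\circ A\circ\psi_n$ (Lemma~\ref{coef}) and perturbs the coefficient $\psi_n$, and it uses monotonicity of the rotation number (Lemma~\ref{controlcomb}) to restore the exact combinatorics afterwards --- both details internal to the strategy you describe.
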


Note that $\mathcal{K}$ is $C^r$-compact for any $0 \leq r \leq \infty$ (see Section \ref{submetric}). Note also that Theorem \ref{compacto} is true for \emph{any} combinatorics. The following argument was inspired by \cite{demelopinto}:

\begin{proof}[Proof that Theorem \ref{compacto} implies Theorem \ref{expconv}] Let $\mathcal{K}$ be the $C^{\omega}$-compact set of real-analytic critical commuting pairs given by Theorem \ref{compacto}. By the real bounds there exists a uniform constant $n_0\in\nt$ such that $\mathcal{R}^n(\zeta)\in\mathcal{P}^{\omega}(K_0)$ for all $\zeta\in\mathcal{K}$ and all $n \geq n_0$. Therefore there exists $K>K_0$ such that $\mathcal{R}^n(\zeta)\in\mathcal{P}^{\omega}(K)$ for all $\zeta\in\mathcal{K}$ and all $n \geq 1$. Let $M>\max_{n\in\nt}\{a_n\}$ where $\rho(f)=\rho(g)=[a_0,a_1,...]$, and let $L>1$ given by Lemma \ref{Lipschitz}.

By Theorem \ref{compacto} there exist constants $\lambda_1 \in (0,1)$, $C_1(f),C_1(g)>0$ and two sequences $\{f_n\}_{n\in\nt}$ and $\{g_n\}_{n\in\nt}$ contained in $\mathcal{K}$ such that for all $n\in\nt$ we have $\rho(f_n)=\rho(g_n)=[a_n,a_{n+1},...]$ and:

\begin{equation}\label{perto}
d_0\big(\mathcal{R}^n(f),f_n\big)\leq C_1(f)\lambda_1^n \quad\mbox{and}\quad d_0\big(\mathcal{R}^n(g),g_n\big)\leq C_1(g)\lambda_1^n\,.
\end{equation}

Let $n_0(f),n_0(g)\in\nt$ given by Lemma \ref{Lipschitz}, and consider $n_0=\max\big\{n_0(f),n_0(g)\big\}$ and also $C_1=\max\big\{C_1(f),C_1(g)\big\}$. Fix $\alpha\in(0,1)$ such that $\alpha>\frac{\log L}{\log L-\log\lambda_1}$, and for all $n > (1/\alpha)n_0$ let $m=\lfloor\alpha n\rfloor$. By the choice of $K>K_0$, and since $f_m,g_m\in\mathcal{K}$ for all $m\in\nt$, we have that $\mathcal{R}^j(f_m)\in\mathcal{P}^3(K)$ for all $j\in\nt$. By the real bounds:$$\left|\frac{\big(T^{-p_{n+1}-p_n}\circ\widetilde{f}^{q_{n+1}+q_n}\big)(0)}{\big(T^{-p_{n+1}}\circ\widetilde{f}^{q_{n+1}}\big)(0)}\right|\in\left[\frac{1}{K},K\right]\quad\mbox{for all $n \geq n_0$}\,,$$and by \eqref{perto} we have Item \eqref{pololejos} and Item (\ref{delmismolado}) of Lemma \ref{Lipschitz} for $\zeta=f_n$, by taking $n_0$ big enough. Applying Lemma \ref{Lipschitz} we obtain:
\begin{align}
d_0\big(\mathcal{R}^n(f),\mathcal{R}^{n-m}(f_m)\big)&\leq L^{n-m}\cdot d_0\big(\mathcal{R}^m(f),f_m\big)\notag\\
&\leq C_1L^{n-m}\lambda_{1}^{m}\notag
\end{align}
and by the same reasons:
\begin{align}
d_0\big(\mathcal{R}^n(g),\mathcal{R}^{n-m}(g_m)\big)&\leq L^{n-m}\cdot d_0\big(\mathcal{R}^m(g),g_m\big)\notag\\
&\leq C_1L^{n-m}\lambda_{1}^{m}\,.\notag
\end{align}
Let $\lambda_2=L^{1-\alpha}\lambda_1^{\alpha}$, and note that $\lambda_2\in(0,1)$ by the choice of $\alpha$. Consider also $C_2=2C_1(1/\lambda_1)L>0$. Since $f_m$ and $g_m$ are real-analytic and they have the same combinatorics, we know by Yampolsky's result (Theorem \ref{uniform}) that there exist constants $\lambda_3 \in (0,1)$ and $C_3>0$ (uniform in $\mathcal{K}$) such that:$$d_0\big(\mathcal{R}^{n-m}(f_m),\mathcal{R}^{n-m}(g_m)\big)\leq C_3\lambda_3^{n-m}\quad\mbox{for all $n\in\nt$.}$$
Finally consider $C=C_2+C_3>0$ and $\lambda=\max\{\lambda_2,\lambda_3^{1-\alpha}\}\in(0,1)$. By the triangle inequality:$$d_0\big(\mathcal{R}^n(f),\mathcal{R}^n(g)\big) \leq C\lambda^n\quad\mbox{for all $n\in\nt$.}$$
\end{proof}

\section{Approximation by holomorphic maps.}\label{SecAB}

\subsection{The Beltrami equation} Until now we were working on the real line, now we start to work on the complex plane. We assume that the reader is familiar with the notion of \emph{quasiconformality} (the book of Ahlfors \cite{ahlfors} and the one of Lehto and Virtanen \cite{lehtovirt} are classical references of the subject). Recall the two basic differential operators of complex calculus:$$\frac{\partial}{\partial z}=\left(\frac{1}{2}\right)\left(\frac{\partial}{\partial x}-i\frac{\partial}{\partial y}\right)\quad\mbox{and}\quad\frac{\partial}{\partial\overline{z}}=\left(\frac{1}{2}\right)\left(\frac{\partial}{\partial x}+i\frac{\partial}{\partial y}\right)$$

Instead of $\frac{\partial F}{\partial z}$ and $\frac{\partial F}{\partial\overline{z}}$ we will use the more compact notation $\partial F$ and $\overline{\partial}F$ respectively. To be more explicit, if $F:\Omega\subset\C\to\C$ is differentiable at $w\in\Omega$ then $\big(DF(w)\big)(z)=\partial F(w)z+\overline{\partial}F(w)\overline{z}$ for any $z\in\C$.

Recall that a continuous real function $h:\R\to\R$ is \emph{absolutely continuous} if it has derivative at Lebesgue almost every point, its derivative is integrable and $h(b)-h(a)=\int_{a}^{b}h'(t)dt$. A continuous function $F:\Omega\subset\C\to\C$ is \emph{absolutely continuous on lines} in $\Omega$ if its real and imaginary parts are absolutely continuous on Lebesgue almost every horizontal line, and Lebesgue almost every vertical line.

\begin{definition}\label{defanalqc} Let $\Omega\subset\C$ be a domain and let $K \geq 1$. An orientation-preserving homeomorphism $F:\Omega \to F(\Omega)$ is \emph{$K$-quasiconformal} (from now on $K$-q.c.) if $F$ is absolutely continuous on lines and:$$\left|\overline{\partial}F(z)\right|\leq\left(\frac{K-1}{K+1}\right)\big|\partial F(z)\big|\quad\mbox{a.e. in $\Omega$.}$$
\end{definition}

Given a $K$-q.c. homeomorphism $F:\Omega \to F(\Omega)$ we define its \emph{Beltrami coefficient} as the measurable function $\mu_F:\Omega\to\D$ given by:$$\mu_F(z)=\frac{\overline{\partial}F(z)}{\partial F(z)}\quad\mbox{for a.e. $z\in\Omega$.}$$
Note that $\mu_F$ belongs to $L^{\infty}(\Omega)$ and satisfy $\|\mu_F\|_{\infty} \leq (K-1)/(K+1)<1$. Conversely any measurable function from $\Omega$ to $\C$ with $L^{\infty}$ norm less than one is the Beltrami coefficient of a quasiconformal homeomorphism:

\begin{theorem}[Morrey 1938]\label{morrey} Given any measurable function $\mu:\Omega\to\D$ such that $|\mu(z)|\leq(K-1)/(K+1)<1$ almost everywhere in $\Omega$ for some $K \geq 1$, there exists a $K$-q.c. homeomorphism $F:\Omega \to F(\Omega)$ which is a solution of the \emph{Beltrami equation}:$$\partial F(z)\,\mu(z)=\overline{\partial}F(z)\mbox{ a.e.}.$$
The solution is unique up to post-composition with conformal diffeomorphisms. In particular, if $\Omega$ is the entire Riemann sphere, there is a unique solution (called the \emph{normalized} solution) that fixes $0$, $1$ and $\infty$.
\end{theorem}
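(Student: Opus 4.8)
The plan is to reduce to the model case $\Omega=\sph$ with $\mu$ of compact support, and then solve the Beltrami equation by a Neumann series built from the Cauchy and Beurling--Ahlfors transforms. Given $\mu$ on an arbitrary domain $\Omega$, extend it by $0$ to all of $\C$ (hence of $\sph$), preserving the bound $\|\mu\|_\infty\le\frac{K-1}{K+1}$; a global $K$-q.c. homeomorphism of $\sph$ solving this extended equation restricts on $\Omega$ to the desired $F$. So assume $\mu$ is supported in a large disk, $k:=\|\mu\|_\infty<1$. Introduce the Cauchy transform $Pg(z)=-\frac1\pi\iint\frac{g(w)}{w-z}\,dA(w)$, which satisfies $\overline{\partial}(Pg)=g$, and the Beurling--Ahlfors transform $Tg=\partial(Pg)$, the principal-value singular integral with kernel $-\frac1\pi(w-z)^{-2}$.

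The analytic core is the Calder\'on--Zygmund theory of $T$: it is an isometry of $L^2(\C)$ and is bounded on $L^p(\C)$ for every $1<p<\infty$, with $\|T\|_{L^p}\to1$ as $p\to2$. Hence one may fix $p>2$ with $k\,\|T\|_{L^p}<1$. Look for the solution in the form $F(z)=z+Ph(z)$ with $h\in L^p(\C)$ of compact support; then $\overline{\partial}F=h$ and $\partial F=1+Th$ a.e., so the Beltrami equation $\overline{\partial}F=\mu\,\partial F$ becomes the fixed-point equation $(I-\mu T)h=\mu$. Since $\|\mu T\|_{L^p}\le k\|T\|_{L^p}<1$, the operator $I-\mu T$ is invertible on $L^p$ and $h=\sum_{n\ge0}(\mu T)^n\mu$ converges; set $F=z+Ph$.

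Now for regularity and the homeomorphism property. Since $h\in L^p$ with $p>2$ is compactly supported, $Ph$ is H\"older continuous of exponent $1-2/p$ and $Ph(z)\to0$ as $z\to\infty$, so $F$ extends to a continuous self-map of $\sph$ fixing $\infty$; moreover $F\in W^{1,p}_{\mathrm{loc}}$, hence absolutely continuous on lines, and its generalized Jacobian equals $|\partial F|^2-|\overline{\partial}F|^2=(1-|\mu|^2)|\partial F|^2\ge0$. A Sto\"ilow-type factorization (or a direct degree argument) shows that such a nonconstant $W^{1,p}$ map with nonnegative Jacobian is open and discrete, hence a local homeomorphism off a discrete set; combined with properness at $\infty$ (as $F(z)-z\to0$) this forces $F:\sph\to\sph$ to be an orientation-preserving homeomorphism, with no branching. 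By construction its Beltrami coefficient is $\mu$ a.e., so $|\overline{\partial}F|\le\frac{K-1}{K+1}|\partial F|$ a.e. and $F$ is $K$-q.c.; post-composing with the M\"obius map carrying $F(0),F(1)$ to $0,1$ (and fixing $\infty$) produces the normalized solution.

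Finally, uniqueness up to post-composition with conformal maps: if $F_1,F_2$ both solve the equation for the same $\mu$, the composition formula for quasiconformal maps gives that $G=F_1\circ F_2^{-1}$ has Beltrami coefficient $0$ a.e., i.e. $\overline{\partial}G=0$ in $W^{1,2}_{\mathrm{loc}}$; by Weyl's lemma $G$ is holomorphic, hence a conformal isomorphism between the corresponding domains (a M\"obius transformation when $\Omega=\sph$), and the normalization $G(0)=0$, $G(1)=1$, $G(\infty)=\infty$ forces $G=\mathrm{id}$. I expect the main obstacle to be precisely the hard-analysis ingredient --- the sharp $L^p$ bounds for the Beurling transform near $p=2$ --- together with the nontrivial topological step that the $W^{1,p}$ ($p>2$) solution with nonnegative nonvanishing Jacobian is genuinely a homeomorphism; the rest is formal manipulation of the Cauchy and Beurling transforms or a standard appeal to Weyl's lemma.
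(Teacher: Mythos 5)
The paper offers no proof of this result; immediately after the statement it writes ``See \cite[Chapter V, Section B]{ahlfors} or \cite[Chapter V]{lehtovirt} for the proof.'' So there is nothing in the paper to compare against, and your proposal must be judged on its own. Your sketch is correct and its analytic core --- extension of $\mu$ by zero, the ansatz $F=z+Ph$, reduction to $(I-\mu T)h=\mu$, and convergence of the Neumann series in $L^p$ for $p>2$ close to $2$ using $\|T\|_{L^p}\to 1$ --- is precisely the argument Ahlfors gives in the cited chapter. Where you diverge from the cited source is the homeomorphism step: you invoke Sto\"{\i}low factorization / a degree argument directly for the $W^{1,p}_{\rm loc}$ solution. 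Ahlfors instead first treats $C^1$ compactly supported $\mu$, where $\partial F$ can be exhibited as an exponential $e^{\lambda}$ (hence nonvanishing and the Jacobian is strictly positive, giving a $C^1$ diffeomorphism by an elementary argument), and then passes to general $\mu$ by approximation, using the uniform H\"older estimates to get normal families for the maps and for their inverses so that the limit is a homeomorphism. Your route is more in the spirit of modern treatments (Astala--Iwaniec--Martin) and is valid, but it front-loads the nontrivial theorem that a nonconstant quasiregular $W^{1,p}$ ($p>2$) map is open and discrete with $\partial F\neq 0$ a.e., which you acknowledge but do not prove; Ahlfors' approximation scheme buys you the same conclusion with only normal-family compactness. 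Everything else --- the reduction to $\sph$, the H\"older estimate on $Ph$, the normalization by an affine M\"obius map, and the uniqueness argument via the composition formula and Weyl's lemma --- is standard and correctly reproduced.
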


See \cite[Chapter V, Section B]{ahlfors} or \cite[Chapter V]{lehtovirt} for the proof. Note that Theorem \ref{morrey} not only states the existence of a solution of the Beltrami equation, but also the fact that any solution is a homeomorphism. Moreover we have:

\begin{prop}\label{convident} If $\mu_n \to 0$ in the unit ball of $L^{\infty}$, the normalized quasiconformal homeomorphisms $f^{\mu_n}$ converge to the identity uniformly on compact sets of $\C$. In general if $\mu_n\to\mu$ almost everywhere in $\C$ and $\|\mu_n\|_{\infty} \leq k<1$ for all $n\in\nt$, then the normalized quasiconformal homeomorphisms $f^{\mu_n}$ converge to $f^{\mu}$ uniformly on compact sets of $\C$.
\end{prop}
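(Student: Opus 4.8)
The plan is to deduce everything from two ingredients: the compactness (normality) of the family of normalized $K$-quasiconformal homeomorphisms of $\widehat{\C}$ fixing $0$, $1$ and $\infty$, and the uniqueness statement in Theorem \ref{morrey}. For the first assertion, note that if $\mu_n\to 0$ in $L^\infty$ then $K_n:=\frac{1+\|\mu_n\|_\infty}{1-\|\mu_n\|_\infty}\to 1$, so each $f^{\mu_n}$ is $K_n$-q.c. with $K_n\to 1$. Since the normalized $K$-q.c. homeomorphisms of $\widehat{\C}$ form a compact family in the topology of uniform convergence on compact subsets of $\C$ (classical, via the uniform Hölder estimates for $K$-q.c. maps; see \cite[Chapter II]{lehtovirt}), any subsequence of $\{f^{\mu_n}\}$ has a further subsequence converging uniformly on compacta to a homeomorphism $g$ of $\widehat{\C}$ fixing $0,1,\infty$. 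Because $K_n\to 1$, the limit $g$ is $1$-q.c., hence conformal, hence a Möbius transformation, and fixing three points forces $g=\mathrm{id}$. As every subsequence admits a sub-subsequence converging to the identity, the whole sequence $f^{\mu_n}$ converges to the identity.

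For the second assertion we now have $\|\mu_n\|_\infty\le k<1$ for all $n$, so every $f^{\mu_n}$ is $K$-q.c. with $K=\frac{1+k}{1-k}$. Take an arbitrary subsequence and, by the same compactness, extract a sub-subsequence $f^{\mu_{n_j}}\to g$ uniformly on compacta, with $g$ a $K$-q.c. homeomorphism of $\widehat{\C}$ fixing $0,1,\infty$. The crux is to identify $g$ with $f^\mu$. The $K$-q.c. maps $f^{\mu_{n_j}}$ lie in $W^{1,2}_{\mathrm{loc}}(\C)$, and since $|\partial f^{\mu_{n_j}}|^2\le\frac{K}{1-k^2}\,J_{f^{\mu_{n_j}}}$ while the images $f^{\mu_{n_j}}(\Omega)$ have uniformly bounded area on any compact $\Omega$ (they converge uniformly to $g(\Omega)$), the distributional derivatives $\partial f^{\mu_{n_j}}$ and $\overline{\partial}f^{\mu_{n_j}}$ are uniformly bounded in $L^2_{\mathrm{loc}}(\C)$ (in fact in $L^p_{\mathrm{loc}}$ for some $p>2$ by Astala's theorem, but $L^2$ is all we need). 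Passing to a further subsequence they converge weakly in $L^2_{\mathrm{loc}}$; since $f^{\mu_{n_j}}\to g$ in the sense of distributions, the weak limits must be $\partial g$ and $\overline{\partial}g$, so $g\in W^{1,2}_{\mathrm{loc}}$. Now pass to the limit in $\overline{\partial}f^{\mu_{n_j}}=\mu_{n_j}\,\partial f^{\mu_{n_j}}$: for any test function $\varphi$ we have $\mu_{n_j}\varphi\to\mu\varphi$ strongly in $L^2$ (by $\mu_{n_j}\to\mu$ a.e., $\|\mu_{n_j}\|_\infty\le k$, and dominated convergence), while $\partial f^{\mu_{n_j}}\rightharpoonup\partial g$ weakly, and weak-times-strong convergence gives $\overline{\partial}g=\mu\,\partial g$ a.e. Hence $g$ is a normalized $K$-q.c. solution of the Beltrami equation with coefficient $\mu$, so $g=f^\mu$ by the uniqueness in Theorem \ref{morrey}. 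Since every subsequence of $\{f^{\mu_n}\}$ admits a sub-subsequence converging to $f^\mu$, the whole sequence converges to $f^\mu$ uniformly on compacta.

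The step I expect to require the most care is the identification of the subsequential limit $g$ with $f^\mu$ — that is, checking that $g$ inherits the limiting Beltrami coefficient. This rests on the uniform $L^2_{\mathrm{loc}}$-boundedness of the derivatives of a normalized $K$-q.c. family together with the correct bookkeeping when passing to the limit in the Beltrami equation: one cannot multiply two merely weakly convergent sequences, so one must exploit that $\mu_{n_j}\to\mu$ a.e. \emph{and} is uniformly bounded, turning $\mu_{n_j}\varphi$ into a strongly convergent sequence that can be paired against the weakly convergent $\partial f^{\mu_{n_j}}$. The remaining ingredients — normality of $K$-q.c. families and uniqueness of normalized solutions — are standard and already available to us through Theorem \ref{morrey}.
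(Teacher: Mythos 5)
The paper does not give a proof of this proposition; it simply cites Ahlfors \cite[Chapter V, Section C]{ahlfors}. Your argument is correct and self-contained, but it takes a genuinely different route from the cited source. Ahlfors proves convergence of $f^{\mu_n}$ analytically: he writes $f^\mu(z)=z+P\lambda(z)$ with $\lambda$ determined by a Neumann series in the Beurling transform $T$, and then deduces locally uniform convergence of $f^{\mu_n}$ from $L^p$-convergence of $\lambda_n$, using the Calder\'on--Zygmund estimate $\|T\|_{L^p\to L^p}\to 1$ as $p\to 2$ together with dominated convergence to pass $\mu_n\to\mu$ a.e.\ into $L^p$-convergence. You instead argue via normal families: uniform $K$-quasiconformality gives subsequential limits; a uniform $L^2_{\mathrm{loc}}$ bound on the derivatives (from $|\partial f|^2\le J_f/(1-k^2)$ and bounded image area) lets you pass to the limit in the Beltrami equation by pairing the weakly convergent $\partial f^{\mu_{n_j}}$ against the strongly (by dominated convergence) convergent $\mu_{n_j}\varphi$; uniqueness of normalized solutions then identifies every subsequential limit with $f^\mu$. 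Your route avoids the singular-integral machinery entirely and is arguably more transparent; the trade-off is that it yields no quantitative rate of convergence and does not set up the holomorphic dependence of $f^\mu$ on $\mu$, which is the real content of the section of Ahlfors that the paper cites (and which the paper also needs, through Theorem \ref{AB}). As a proof of the stated Proposition, your argument is complete and correct, including the delicate weak-times-strong step that you flag.
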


See \cite[Chapter V, Section C]{ahlfors}.

\subsection{Ahlfors-Bers theorem}

The Beltrami equation induces therefore a one-to-one correspondence between the space of quasiconformal homeomorphisms of $\sph$ that fix $0$, $1$ and $\infty$, and the space of Borel measurable complex-valued functions $\mu$ on $\sph$ for which $\|\mu\|_{\infty}<1$. The following classical result expresses the analytic dependence of the solution of the Beltrami equation with respect to $\mu$:

\begin{theorem}[Ahlfors-Bers 1960]\label{AB} Let $\Lambda$ be an open subset of some complex Banach space and consider a map $\Lambda\times\C\to\D$, denoted by $(\lambda,z)\mapsto\mu_\lambda(z)$, satisfying the following properties:

\begin{enumerate}
\item For every $\lambda$ the function $\C\to\D$ given by $z\mapsto\mu_\lambda(z)$ is measurable, and $\|\mu_\lambda\|_\infty \leq k$ for some fixed $k<1$.
\item For Lebesgue almost every $z\in\C$, the function $\Lambda\to\D$ given by $\lambda\mapsto\mu_\lambda(z)$ is holomorphic.
\end{enumerate}

For each $\lambda$ let $F_\lambda$ be the unique quasiconformal homeomorphism of the Riemann sphere that fixes $0$, $1$ and $\infty$, and whose Beltrami coefficient is $\mu_\lambda$ ($F_\lambda$ is given by Theorem \ref{morrey}). Then $\lambda \mapsto F_\lambda(z)$ is holomorphic for all $z\in\C$.
\end{theorem}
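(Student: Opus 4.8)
The plan is to establish holomorphic dependence through the explicit representation of the solution of the Beltrami equation by a Neumann series in the Beurling transform, after two standard reductions. First I would reduce to the case $\Lambda\subset\C$: a map from an open subset of a Banach space into $\C$ is holomorphic precisely when it is locally bounded and its restriction to every affine complex line is holomorphic. Local boundedness of $\lambda\mapsto F_\lambda(z)$ follows from Proposition \ref{convident} together with hypotheses (1)–(2) (along any sequence $\lambda_n\to\lambda$ we get $\mu_{\lambda_n}\to\mu_\lambda$ a.e.\ by pointwise continuity, hence $F_{\lambda_n}\to F_\lambda$ locally uniformly), so we may fix a slice and assume $\Lambda$ is a disk in $\C$.

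Next I would reduce to compactly supported coefficients. Writing $\mu_\lambda=\mu_\lambda\mathbf{1}_{\D}+\mu_\lambda\mathbf{1}_{\C\setminus\D}$, the second piece becomes supported in $\overline{\D}$ after transplanting by $z\mapsto 1/z$, and still satisfies (1)–(2); solving it first, then pushing the first piece forward by the resulting homeomorphism, then solving again and composing, produces a solution for $\mu_\lambda$. Each intermediate Beltrami coefficient is supported in a fixed compact set, bounded by a constant $<1$, and depends holomorphically on $\lambda$; composition of solutions and the chain rule for Beltrami coefficients preserve holomorphy in $\lambda$ (by Proposition \ref{convident} and Weierstrass's theorem), and the final post-composition with the Möbius map restoring the normalization at $0,1,\infty$ — whose coefficients depend holomorphically on $F_\lambda(0),F_\lambda(1),F_\lambda(\infty)$ — does as well. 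So assume $\supp\mu_\lambda\subset\overline{\D}$ for all $\lambda$.

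For the main step, let $P$ be the solid Cauchy transform and $T$ the Beurling (complex Hilbert) transform on $\C$, so that $\overline{\partial}(Pg)=g$, $\partial(Pg)=Tg$, and $\|T\|_{L^p\to L^p}\to 1$ as $p\downarrow 2$; fix $p>2$ with $k\,\|T\|_{L^p}<1$. Seeking $F_\lambda=\mathrm{id}+Pg_\lambda$ with $g_\lambda\in L^p(\C)$ supported in $\overline{\D}$, the Beltrami equation $\overline{\partial}F_\lambda=\mu_\lambda\,\partial F_\lambda$ becomes $g_\lambda=\mu_\lambda+\mu_\lambda Tg_\lambda$, solved by the Neumann series $g_\lambda=\sum_{n\geq 0}(\mu_\lambda T)^n\mu_\lambda$, which converges in $L^p(\overline{\D})$ uniformly for $\lambda$ in a compact subset of $\Lambda$. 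The map $\lambda\mapsto\mu_\lambda\in L^p(\overline{\D})$ is holomorphic: it is bounded by hypothesis (1) on a bounded domain, and it is weakly holomorphic by Morera's theorem and Fubini — for $\phi\in L^{p'}(\overline{\D})$ and a triangle $\partial\Delta\subset\Lambda$, $\oint_{\partial\Delta}\!\big(\int\mu_\lambda\phi\,dA\big)\,d\lambda=\int\phi(z)\big(\oint_{\partial\Delta}\mu_\lambda(z)\,d\lambda\big)dA(z)=0$ using hypothesis (2) pointwise and the uniform bound to justify the interchange. Hence each $\lambda\mapsto(\mu_\lambda T)^n\mu_\lambda$ is holomorphic into $L^p$ (pointwise multiplication by $\mu_\lambda$ and the bounded operator $T$ preserve holomorphy), and uniform convergence of the series makes $\lambda\mapsto g_\lambda\in L^p(\overline{\D})$ holomorphic. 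Since for $p>2$ the operator $P$ maps $L^p(\overline{\D})$ continuously into $C^0(\C)$ (evaluation at a point being a bounded linear functional on $L^p(\overline{\D})$), the map $\lambda\mapsto Pg_\lambda(z)=F_\lambda(z)-z$ is holomorphic for every fixed $z\in\C$; that $F_\lambda$ is the normalized quasiconformal solution is Theorem \ref{morrey}.

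The step I expect to be the main obstacle is the second reduction: verifying rigorously that solving the Beltrami equation piecewise and composing genuinely yields the solution for $\mu_\lambda$, and that each intermediate coefficient stays compactly supported, uniformly bounded below $1$, and holomorphic in $\lambda$. The Neumann-series core is routine once the $L^p$-holomorphy of $\lambda\mapsto\mu_\lambda$ is established, and that in turn, while standard, demands the care with Fubini and Morera indicated above.
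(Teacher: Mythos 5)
The paper does not prove this theorem; it is cited from \cite{ahlforsbers} and \cite[Chapter V, Section C]{ahlfors}, so there is no internal argument to compare against. Your sketch is the standard Ahlfors route for compactly supported coefficients: the Neumann series $g_\lambda=\sum_{n\geq 0}(\mu_\lambda T)^n\mu_\lambda$ in $L^p(\overline{\D})$ with $p>2$ and $k\|T\|_{L^p}<1$, the representation $F_\lambda=\mathrm{id}+Pg_\lambda$, and the $L^p$-valued holomorphy of $\lambda\mapsto\mu_\lambda$ via Morera, Fubini and Dunford. That core, and the first reduction to $\Lambda\subset\C$ via Gateaux holomorphy plus local boundedness, are sound; the multiplication step closes because, for each $\phi\in L^{p'}$, $\lambda\mapsto\phi\mu_\lambda\in L^{p'}$ is holomorphic and one then composes with the bounded bilinear pairing $L^{p'}\times L^p\to\C$.

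The second reduction, however, is a genuine gap and not merely a delicate verification. Solving first for $\mu_\lambda^{(2)}=\mu_\lambda\mathbf{1}_{\C\setminus\D}$ (after transplanting by $z\mapsto 1/z$) produces a map $f_{2,\lambda}$ conformal on $\D$, and the pushed-forward coefficient on $f_{2,\lambda}(\D)$ is
$$\nu_\lambda(w)=\mu_\lambda\!\left(f_{2,\lambda}^{-1}(w)\right)\cdot\frac{f_{2,\lambda}'\!\left(f_{2,\lambda}^{-1}(w)\right)}{\overline{f_{2,\lambda}'\!\left(f_{2,\lambda}^{-1}(w)\right)}}\,.$$
For fixed $w$, $\lambda\mapsto\nu_\lambda(w)$ fails to be holomorphic on two counts: the phase factor contains the conjugate $\overline{f_{2,\lambda}'}$, which is antiholomorphic in $\lambda$; and the argument of the merely measurable $\mu_\lambda$ is the $\lambda$-dependent point $f_{2,\lambda}^{-1}(w)$, so $\lambda\mapsto\mu_\lambda(f_{2,\lambda}^{-1}(w))$ need not even be continuous. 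Thus hypotheses (1)--(2) cannot be re-invoked for $\nu_\lambda$, and the assertion that "the chain rule for Beltrami coefficients preserves holomorphy in $\lambda$" is false in the sense your plan needs --- Proposition~\ref{convident} and Weierstrass give uniform limits of holomorphic families, not holomorphy of this composition. Equivalently, writing $F_\lambda=G_\lambda\circ f_{2,\lambda}$, the term $\overline{\partial}_wG_\lambda(f_{2,\lambda}(z))\cdot\overline{\partial_\lambda f_{2,\lambda}(z)}$ in $\overline{\partial}_\lambda F_\lambda(z)$ does not vanish for $z\in\D$.

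The classical bridge from compact support to general $\mu_\lambda$ is precisely the tool the paper uses elsewhere: the explicit integral representation of the $\lambda$-derivative \cite[Chapter V, Section C, Theorem 5]{ahlfors},
$$\dot f(z)=-\frac{1}{\pi}\iint_{\C}\dot\mu(w)\,S\!\left(f(w),f(z)\right)\left(\partial f(w)\right)^2\,dx\,dy\,,$$
which is valid with no compactness assumption on $\operatorname{supp}\mu$, is complex-linear in $\dot\mu$, and together with uniform $L^p$ estimates on the kernel $S$ yields holomorphic dependence directly. That is exactly the machinery deployed in the proof of Proposition~\ref{aprox2} in Appendix~\ref{apA}. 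You should replace the composition reduction by this integral-formula argument (or confine your Neumann-series proof to compactly supported $\mu_\lambda$ and cite Ahlfors for the general case).
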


See \cite{ahlforsbers} or \cite[Chapter V, Section C]{ahlfors} for the proof. In Section \ref{Secglueing} we will make repeated use of the following corollary of Ahlfors-Bers theorem:

\begin{prop}\label{aprox2} For any bounded domain $U$ in the complex plane there exists a number $C(U)>0$, with $C(U) \leq C(W)$ if $U \subseteq W$, such that the following holds: let $\big\{G_n:U \to G_n(U)\big\}_{n\in\nt}$ be a sequence of quasiconformal homeomorphisms such that:

\begin{itemize}
\item The domains $G_n(U)$ are \emph{uniformly bounded}: there exists $R>0$ such that $G_n(U) \subset B(0,R)$ for all $n\in\nt$.
\item $\mu_n \to 0$ in the unit ball of $L^{\infty}$, where $\mu_n$ is the Beltrami coefficient of $G_n$ in $U$.
\end{itemize}

Then given any domain $V$ such that $\overline{V} \subset U$ there exist $n_0\in\nt$ and a sequence $\big\{H_n:V \to H_n(V)\big\}_{n \geq n_0}$ of biholomorphisms such that:$$\|H_n-G_n\|_{C^0(V)} \leq C(U)\left(\frac{R}{d\big(\partial V,\partial U\big)}\right)\|\mu_n\|_{\infty}\quad\mbox{for all $n \geq n_0$,}$$where $d\big(\partial V,\partial U\big)$ denote the Euclidean distance between the boundaries of $U$ and $V$ (which are disjoint compact sets in the complex plane, since $V$ is compactly contained in the bounded domain $U$).
\end{prop}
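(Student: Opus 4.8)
The plan is to deduce Proposition \ref{aprox2} from the Ahlfors–Bers theorem (Theorem \ref{AB}) by embedding the given family $\{G_n\}$ into a holomorphic family of Beltrami coefficients over the unit disk in $\C$, and then estimating the resulting holomorphic solution at the ``central'' parameter using the Cauchy estimates on a disk of radius controlled by $1/\|\mu_n\|_\infty$. Concretely, fix $n$ and first reduce to the case where the relevant conformal map is normalized: extend the Beltrami coefficient $\mu_n$ from $U$ to all of $\C$ by setting it equal to zero outside $U$, and for $\lambda$ in the unit disk $\D\subset\C$ define $\mu_{\lambda,n}(z)=(\lambda/\|\mu_n\|_\infty)\,\mu_n(z)$ when $\mu_n\neq 0$ (so $\mu_{1,n}=\mu_n$, $\mu_{0,n}\equiv 0$). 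Each $\mu_{\lambda,n}$ satisfies $\|\mu_{\lambda,n}\|_\infty\le\|\mu_n\|_\infty<1$, the map $z\mapsto\mu_{\lambda,n}(z)$ is measurable, and $\lambda\mapsto\mu_{\lambda,n}(z)$ is (linear, hence) holomorphic for a.e.\ $z$. By Theorem \ref{morrey} and Theorem \ref{AB}, letting $F_{\lambda,n}$ be the normalized q.c.\ solution on $\sph$, the map $\lambda\mapsto F_{\lambda,n}(z)$ is holomorphic on $\D$ for every $z$; moreover $F_{0,n}=\mathrm{id}$ and $F_{1,n}$ differs from $G_n$ only by post-composition with a conformal map (since both solve the same Beltrami equation on $U$).

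The key estimate is then a Schwarz-lemma/Cauchy-estimate argument: consider the holomorphic function $\varphi_z(\lambda)=F_{\lambda,n}(z)-z$ on $\D$, which vanishes at $\lambda=0$. One knows from the standard theory (e.g.\ the $L^p$-continuity of the Beurling transform, or equivalently the normal-family compactness of $K$-q.c.\ maps fixing $0,1,\infty$ together with Proposition \ref{convident}) that the maps $F_{\lambda,n}$ with $\|\mu_{\lambda,n}\|_\infty\le k<1$ are uniformly bounded on any compact subset of $\C$ — here the hypothesis that $G_n(U)\subset B(0,R)$ uniformly, together with the normalization, confines the relevant part of $F_{\lambda,n}(\overline V)$ to a ball $B(0,R')$ with $R'$ depending only on $R$ and $U$. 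Applying the Cauchy estimate to $\varphi_z$ on the disk $|\lambda|\le r$ for a fixed $r<1$ gives $|\varphi_z(\lambda)|\le \frac{C' |\lambda|}{r}$ for $|\lambda|\le r/2$, with $C'=C'(R,U)$. Evaluating at $\lambda=\|\mu_n\|_\infty$ (which is $\le r/2$ once $n\ge n_0$, since $\|\mu_n\|_\infty\to 0$) yields $\|F_{1,n}-\mathrm{id}\|_{C^0}\le C''(R,U)\,\|\mu_n\|_\infty$ on the relevant set. The factor $R/d(\partial V,\partial U)$ in the statement arises when we pass from the global normalized solution $F_{1,n}$ back to a local holomorphic correction of $G_n$ on $V$: writing $H_n = \Psi_n\circ F_{1,n}$ where $\Psi_n$ is the conformal map with $G_n=\Psi_n\circ F_{1,n}$ on $U$, the composition $H_n$ is holomorphic on $V$, and the Koebe distortion/Schwarz-lemma control of $\Psi_n'$ on the interior domain $V$ (whose distance to $\partial U$ is $d(\partial V,\partial U)$, and whose image lies in $B(0,R)$) contributes precisely the scale factor $R/d(\partial V,\partial U)$; absorbing the universal constants into $C(U)$, and noting monotonicity $C(U)\le C(W)$ for $U\subseteq W$ since a larger $U$ only improves the available Cauchy radius, finishes the proof.

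I would organize the write-up as: (i) the reduction to the normalized solution and the linear embedding into a holomorphic family; (ii) the uniform boundedness of $\{F_{\lambda,n}\}$ via q.c.\ compactness; (iii) the Cauchy estimate for $\lambda\mapsto F_{\lambda,n}(z)-z$ and evaluation at $\lambda=\|\mu_n\|_\infty$; (iv) undoing the normalization to produce $H_n$ on $V$ and tracking the $R/d(\partial V,\partial U)$ factor through Koebe distortion. The main obstacle I anticipate is step (iv): the bound we directly get is for the \emph{normalized} solution on the sphere, but the conclusion is about a \emph{local biholomorphism} on $V$ that is $C^0$-close to $G_n$ itself, so one must carefully control the normalizing conformal change of coordinates (its derivative and its $C^0$ size on $V$) in terms of the given geometric data $R$ and $d(\partial V,\partial U)$ — this is where the precise shape of the constant in the statement is determined, and it requires a clean application of Koebe's distortion theorem together with the fact that $G_n(U)$ sits inside $B(0,R)$. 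A secondary technical point is making sure the threshold $n_0$ (needed so that $\|\mu_n\|_\infty$ is small enough to lie well inside the disk of holomorphy and so that $H_n(V)$ is genuinely contained in the target) depends only on $U$, $V$ and $R$ and not on finer features of the sequence; this follows from $\|\mu_n\|_\infty\to 0$ and the uniform estimates above.
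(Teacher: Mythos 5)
Your proposal is correct in spirit and follows the same overall architecture as the paper's proof: extend $\mu_n$ by zero outside $U$, embed $\mu_n$ into a linear one-complex-parameter family of Beltrami coefficients, apply Ahlfors--Bers, then bound the derivative of the conformal correction on $V$ via Cauchy's derivative estimate using $G_n(U)\subset B(0,R)$. The genuine difference is in the central estimate $\|F-\mathrm{Id}\|_{C^0}\lesssim\|\mu_n\|_\infty$ for the normalized solution. You use a Schwarz-lemma/Cauchy estimate in the holomorphic parameter $\lambda$, with the required uniform $C^0$ bound on $F_{\lambda,n}-\mathrm{Id}$ over $\overline U$ supplied by normal-family compactness of $K$-q.c. maps fixing $0,1,\infty$ with $\|\mu\|_\infty\leq r$. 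The paper instead parameterizes by $\mu_n(t)=t\mu_n$ over a large disk of radius $(1-\varepsilon)/\|\mu_n\|_\infty$ and uses Ahlfors' explicit integral representation of the derivative $\dot f_n(z,s)$ in terms of the kernel $S(w,z)$, integrating from $s=0$ to $s=1$. Your route is lighter on classical machinery; the paper's route yields the fully explicit constant $C(U)=\tfrac{4}{\pi}\sup_{z\in U}\iint_U|S(w,z)|\,dx\,dy$, which makes $C(U)\leq C(W)$ for $U\subseteq W$ immediate. Your stated reason for that monotonicity ("a larger $U$ only improves the available Cauchy radius") is off -- the radius $r<1$ in $\lambda$ is fixed regardless of $U$ -- the correct justification in your setup is that the normal-family bound on $\sup_{z\in U}|F_{\lambda,n}(z)-z|$ is monotone in $U$. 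Finally, two slips in your parameterization: with $\mu_{\lambda,n}=(\lambda/\|\mu_n\|_\infty)\mu_n$ one has $\|\mu_{\lambda,n}\|_\infty=|\lambda|$, not $\leq\|\mu_n\|_\infty$, and the coefficient equal to $\mu_n$ is $\mu_{\|\mu_n\|_\infty,n}$, not $\mu_{1,n}$; since you later correctly evaluate at $\lambda=\|\mu_n\|_\infty$ these are notational, but they should be corrected for internal consistency. The last step, bounding $\|H_n'\|_{C^0(V)}$ by $2R/d(\partial V,\partial U)$ via Cauchy on disks $B(z,d(\partial V,\partial U)/2)\subset f^{\mu_n(1)}(U)$ for $n$ large, matches the paper exactly.
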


We postpone the proof of Proposition \ref{aprox2} until Appendix \ref{apA}. In the next section we will also use the following extension of the classical Koebe's one-quarter theorem \cite[Theorem 1.3]{cargam}:

\begin{prop}\label{koebe} Given $\varepsilon>0$ there exists $K>1$ for which the following holds: let $f:\D \to f(\D)\subset\C$ be a $K$-quasiconformal homeomorphism such that $f(0)=0$, $f\big((-1,1)\big)\subset\R$ and $f(\D) \subset B(0,1/\varepsilon)$. Suppose that $f|_{(-1/2,1/2)}$ is differentiable and that $\big|f'(t)\big|>\varepsilon$ for all $t\in(-1/2,1/2)$, where $f'$ denotes the real one-dimensional derivative of the restriction of $f$ to the interval $(-1/2,1/2)$. Then:$$B(0,\varepsilon/16) \subset f(\D).$$
\end{prop}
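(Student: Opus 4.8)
The plan is to reduce the statement to the classical Koebe one-quarter theorem by composing $f$ with a suitably chosen straightening homeomorphism and then extracting from it a genuine univalent (holomorphic) function to which Koebe applies. First I would recall that every $K$-quasiconformal homeomorphism $f:\D\to f(\D)$ admits the Stoilow factorization $f=g\circ\psi$, where $\psi:\D\to\D$ is a $K$-quasiconformal homeomorphism with $\psi(0)=0$ (a solution of the Beltrami equation $\bar\partial\psi=\mu_f\,\partial\psi$, normalized to fix $0$ and, say, to be symmetric about the real axis since $f$ preserves $(-1,1)$), and $g:\D\to f(\D)$ is a \emph{holomorphic} univalent map with $g(0)=0$. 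The hypotheses on $f$ — that $f(\D)\subset B(0,1/\varepsilon)$, that $f$ maps the real interval to the real axis, and that the one-dimensional derivative of $f|_{(-1/2,1/2)}$ has modulus bounded below by $\varepsilon$ — must be translated into quantitative information about $g$.

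The key step is a lower bound on $|g'(0)|$. Since $\psi$ is $K$-q.c.\ on $\D$ fixing $0$ and preserving $\R$, the distortion theorems for normalized q.c.\ maps of the disk (Mori's theorem and the corresponding two-sided Hölder bounds, see \cite[Chapter III]{lehtovirt}) give, for $K$ sufficiently close to $1$, that $\psi$ is close to the identity in a quantitative way on $[-1/2,1/2]$: there is a modulus of continuity $\delta(K)\to 0$ as $K\to 1$ with $|\psi(t)-t|\le\delta(K)$ for $t\in[-1/2,1/2]$, and moreover $\psi((-1/2,1/2))$ contains an interval $(-a,a)$ with $a=a(K)\to 1/2$. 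Combining this with the chain rule $f=g\circ\psi$ along the real line and the hypothesis $|f'(t)|>\varepsilon$, I would obtain a lower bound of the form $|g'(s)|\ge c(\varepsilon,K)$ for some $s$ near $0$ (at $0$ itself after a mean-value/compactness argument), where $c(\varepsilon,K)\to c(\varepsilon)>0$ as $K\to 1$. Simultaneously, $g(\D)=f(\D)\subset B(0,1/\varepsilon)$, so $g$ is a univalent function on $\D$ with bounded image and $|g'(0)|$ bounded below; Koebe's one-quarter theorem then yields $B\big(0,\tfrac14|g'(0)|\big)\subset g(\D)=f(\D)$, and choosing $K=K(\varepsilon)>1$ close enough to $1$ makes $\tfrac14|g'(0)|\ge\varepsilon/16$, which is the desired conclusion.

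I would be careful about two technical points. The symmetry reduction: because $f$ preserves the real line, $\mu_f$ satisfies $\mu_f(\bar z)=\overline{\mu_f(z)}$, so the normalized solution $\psi$ can be taken to commute with complex conjugation and hence to preserve $(-1,1)$; this is what licenses the one-dimensional chain rule on the real interval. And the differentiability issue: $\psi$ need not be differentiable everywhere, but it is q.c.\ hence differentiable a.e.\ and absolutely continuous on lines, so the identity $f'(t)=g'(\psi(t))\,\psi'(t)$ holds for a.e.\ $t$, which together with the a.e.\ bounds $|\psi'(t)|\le\Lambda(K)$ (from q.c.\ distortion on the slightly smaller interval, with $\Lambda(K)\to 1$) is enough to extract the pointwise lower bound on $|g'|$ near $0$; one may also simply integrate $g'$ over $\psi((-1/2,1/2))$ to avoid pointwise statements entirely.

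The main obstacle I anticipate is making the passage from "$|f'(t)|>\varepsilon$ on the real interval" to "$|g'(0)|$ is bounded below" fully quantitative and \emph{uniform in the particular $f$} — i.e.\ controlling how much the q.c.\ factor $\psi$ can compress lengths near $0$ along the real line, for $K$ close to $1$. This is precisely where one needs the quantitative q.c.\ distortion estimates (Mori-type bounds, and the fact that a $K$-q.c.\ self-map of $\D$ fixing $0$ has derivative controlled in measure near the center), and where the choice of how close $K$ must be to $1$ (depending only on $\varepsilon$) gets pinned down. Once that uniform lower bound on $|g'(0)|$ is in hand, the rest is an immediate invocation of Koebe's theorem.
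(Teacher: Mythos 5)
Your plan takes a genuinely different route from the paper. The paper's proof is a short contradiction argument: it supposes $B(0,\varepsilon/16)\not\subset f_n(\D)$ for a sequence of $K_n$-quasiconformal maps with $K_n\to 1$, extracts a locally uniform limit $f$ which is holomorphic and univalent (normal families for quasiconformal maps), shows $|Df(0)|>\varepsilon/2$ by passing the inclusion $(-\varepsilon/m,\varepsilon/m)\subset f_n([-1/m,1/m])$ to the limit, and then invokes Koebe's one-quarter theorem on $f$ to contradict the assumption. Your plan instead factors $f=g\circ\psi$ (Stoilow), tries to show $\psi$ is quantitatively close to the identity when $K$ is close to $1$, extracts a lower bound on $|g'(0)|$, and applies Koebe to the univalent factor $g$. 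This is a viable alternative, and your suggested ``integrate $g'$'' variant is the clean way to close it: $f\big((-1/2,1/2)\big)\supset(-\varepsilon/2,\varepsilon/2)$ by monotonicity and the derivative bound, $\psi\big((-1/2,1/2)\big)$ lies in a disc of radius $r(K)\to 1/2$, and the Koebe distortion inequality for $g$ then forces $|g'(0)|\to\varepsilon/4$ as $K\to 1$, giving the stated radius $\varepsilon/16$ in the limit. The trade-off is that the paper's argument is shorter and softer; yours, if completed, is explicit and effective.

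Two points in your write-up need repair. First, the symmetry reduction is not justified: the hypothesis is $f\big((-1,1)\big)\subset\R$, not that $f$ commutes with complex conjugation, so there is no reason to expect $\mu_f(\bar z)=\overline{\mu_f(z)}$ or that $\psi$ can be chosen to preserve $(-1,1)$. Fortunately, the ``integrate $g'$'' alternative sidesteps this, since it only requires $\psi\big((-1/2,1/2)\big)$ to lie in a controlled disc, not on the real axis. Second --- and this is the gap you yourself flag as the main obstacle --- ``$\psi$ is close to the identity for $K$ close to $1$'' does not follow from Mori's theorem alone. A $K$-quasiconformal self-homeomorphism of $\D$ fixing $0$ can, as $K\to 1$, converge to any rotation $z\mapsto e^{i\theta}z$; Mori controls $|\psi(z)|$ but says nothing about $\arg\psi(z)$. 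You must normalize $\psi$ further (post-compose with a rotation, absorbing it into $g$, so that $\psi$ fixes a boundary point, say), and then the convergence of the normalized $\psi$ to the identity as $K\to 1$ is itself a normal-family statement --- so your plan relocates the compactness argument rather than eliminating it. With that understood, the approach does go through.
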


\begin{proof} Suppose, by contradiction, that there exist $\varepsilon>0$ and a sequence $\big\{f_n:\D\to f_n(\D)\subset\C\big\}_{n\in\nt}$ of quasiconformal homeomorphisms with the following properties:

\begin{enumerate}
\item Each $f_n$ is $K_n$-q.c. with $K_n \to 1$ as $n$ goes to infinity.
\item $f_n(0)=0$ and $f_n\big((-1,1)\big)\subset\R$ for all $n\in\nt$.
\item $f_n(\D) \subset B(0,1/\varepsilon)$ for all $n\in\nt$.
\item $f_n|_{(-1/2,1/2)}$ is differentiable and $\big|f_n'(t)\big|>\varepsilon$ for all $t\in(-1/2,1/2)$ and for all $n\in\nt$.
\item $B(0,\varepsilon/16)$ is not contained in $f_n(\D)$ for any $n\in\nt$.
\end{enumerate}

By compactness, since $K_n \to 1$ and $f_n(0)=0$ for all $n\in\nt$, we can assume by taking a subsequence that there exists $f:\D\to\C$ holomorphic such that $f_n \to f$ uniformly on compact sets of $\D$ as $n$ goes to infinity (see for instance \cite[Chapter II, Section 5]{lehtovirt}). Of course $f(0)=0$ and $f\big((-1,1)\big)\subset\R$. We claim that $\big|Df(0)\big|>\varepsilon/2$, where $Df$ denotes the complex derivative of the holomorphic map $f$. Indeed, note that Item $(3)$ implies that:$$\left(\frac{-\varepsilon}{m},\frac{\varepsilon}{m}\right) \subset f_n\left(\left[\frac{-1}{m},\frac{1}{m}\right]\right)\quad\mbox{for all $n,m\in\nt$,}$$and then by the uniform convergence we have:$$\left(\frac{-\varepsilon}{m},\frac{\varepsilon}{m}\right) \subset f\left(\left[\frac{-1}{m},\frac{1}{m}\right]\right)\quad\mbox{for all $m\in\nt$.}$$
Since $f$ is holomorphic this implies the claim. From the claim we see that $f$ is univalent in $\D$, since the uniform limit of quasiconformal homeomorphisms is either constant or a quasiconformal homeomorphism (again see \cite[Chapter II, Section 5]{lehtovirt}). Finally, by Koebe's one-quarter theorem we have $B(0,\varepsilon/8) \subset f(\D)$, but this contradicts that $B(0,\varepsilon/16)$ is not contained in $f_n(\D)$ for any $n\in\nt$.
\end{proof}

\section{Complex extensions of $\mathcal{R}^n(f)$}\label{Seccombounds}

For every $C^3$ critical circle map, with any irrational rotation number, we will construct in this section a suitable extension to an annulus around the unit circle in the complex plane, with the property that, after a finite number of renormalizations, this extension have good geometric bounds and exponentially small Beltrami coefficient. In the next section we will perturb this extension in order to get a holomorphic map with the same combinatorics and also good bounds.

Recall that given a bounded interval $I$ in the real line we denote its Euclidean length by $|I|$, and for any $\alpha>0$ we denote by $N_{\alpha}(I)$ the $\R$-symmetric topological disk:$$N_{\alpha}(I)=\big\{z\in\C:d(z,I)<\alpha|I|\big\},$$where $d$ denotes the Euclidean distance in the complex plane. The goal of this section is the following:

\begin{theorem}\label{nuevoE} There exist three universal constants $\lambda\in(0,1)$, $\alpha>0$ and $\beta>0$ with the following property: let $f$ be a $C^3$ critical circle map with any irrational rotation number. For all $n \geq 1$ denote by $\big(\eta_n,\xi_n\big)$ the components of the critical commuting pair $\mathcal{R}^n(f)$. Then there exist two constants $n_0\in\nt$ and $C>0$ such that for each $n \geq n_0$ both $\xi_n$ and $\eta_n$ extend (after normalized) to $\R$-symmetric orientation-preserving $C^3$ maps defined in $N_{\alpha}\big([-1,0]\big)$ and $N_{\alpha}\big([0,\xi_n(0)]\big)$ respectively, where we have the following seven properties:

\begin{enumerate}
\item Both $\xi_n$ and $\eta_n$ have a unique critical point at the origin, which is of cubic type.
\item The extensions $\eta_n$ and $\xi_n$ commute in $B(0,\lambda)$, that is, both compositions $\eta_n\circ\xi_n$ and $\xi_n\circ\eta_n$ are well defined in $B(0,\lambda)$, and they coincide.
\item $$N_{\beta}\big(\xi_n([-1,0])\big)\subset\xi_n\big(N_{\alpha}\big([-1,0]\big)\big).$$
\item $$N_{\beta}\big([-1,(\eta_n\circ\xi_n)(0)]\big)\subset\eta_n\big(N_{\alpha}\big([0,\xi_n(0)]\big)\big).$$
\item $$\eta_n\big(N_{\alpha}\big([0,\xi_n(0)]\big)\big)\cup\xi_n\big(N_{\alpha}\big([-1,0]\big)\big) \subset B(0,\lambda^{-1}).$$
\item $$\max_{z \in N_{\alpha}([-1,0])\setminus\{0\}}\left\{\frac{\big|\overline{\partial}\xi_n(z)\big|}{\big|\partial\xi_n(z)\big|}\right\} \leq C\lambda^n.$$
\item $$\max_{z \in N_{\alpha}([0,\xi_n(0)])\setminus\{0\}}\left\{\frac{\big|\overline{\partial}\eta_n(z)\big|}{\big|\partial\eta_n(z)\big|}\right\} \leq C\lambda^n.$$
\end{enumerate}
\end{theorem}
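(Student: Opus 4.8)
The plan is to build the desired $C^3$ complex extension in two stages: first extend the original critical circle map $f$ to a fixed annulus $A$ around $S^1$ in an asymptotically holomorphic way (the \emph{extended lift}, which is where the hypothesis $f\in C^3$ is used), and then push this extension forward through the renormalization dynamics, using the real bounds to control the geometry and an Almost Schwarz inclusion to prevent the complex domains from collapsing. Concretely, one writes $f$ near the critical point as $\pm|\phi|^3+f(c)$ with $\phi$ a $C^3$ local diffeomorphism, and away from the critical point $f$ is a genuine $C^3$ diffeomorphism of an interval; in each chart one forms a canonical extension $F(x+iy)=\sum_{k=0}^{3}\frac{(iy)^k}{k!}\,\partial_x^k f(x)$ (or the analogous formula adapted to the non-flat form near $c$), which is $C^3$, agrees with $f$ on the real axis, and satisfies $\overline\partial F$ vanishing to order $2$ along $\R$ — this is precisely the asymptotically holomorphic property of Definition \ref{ashol}. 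Patching the two kinds of charts with a partition of unity gives the extended lift on a definite annulus.

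Next I would track the commuting pair $\mathcal{R}^n(f)=(\eta_n,\xi_n)$: each branch $\eta_n$, $\xi_n$ is a composition $f^{q}$ restricted to a dynamical interval $I$, conjugated by the rescaling $H_{1/|I_{\xi_n}|}$ that normalizes the pair. Precompose the extended lift with itself along this orbit; by the real bounds (Theorem \ref{realbounds}) the intervals shrink geometrically and the distortion of the returns is uniformly bounded, so after rescaling the composed maps live on the fixed scale $[-1,0]$ and $[0,\xi_n(0)]$ with $\xi_n(0)\in[K^{-1},K]$, with uniformly bounded $C^3$ norms. The key analytic point is that the $\overline\partial$ of a composition $g\circ h$ obeys the chain rule $\overline\partial(g\circ h)=(\partial g\circ h)\,\overline\partial h+(\overline\partial g\circ h)\,\overline{\partial h}$, so a product of $q_n$ asymptotically holomorphic factors, each with $\overline\partial$ quadratically small in the distance to $\R$, has Beltrami coefficient bounded by (number of factors)$\times$(typical interval length)$^2$; since $q_n$ grows at most geometrically while the interval lengths shrink geometrically (and the circle map is smooth so the linear gluing errors are controlled), the product $q_n\cdot|I_n|^2$ is exponentially small, giving items (6) and (7) with a universal $\lambda$. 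This is exactly the content the excerpt attributes to combining the real bounds, the Almost Schwarz inclusion (Proposition \ref{schwarz}) and asymptotic holomorphy.

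Items (1)--(5) are then geometric consequences of the same setup. Item (1): the only critical point along the orbit is the original cubic one, landing at the origin after normalization, and asymptotic holomorphy preserves the cubic form of the critical point; items (2) is the complexified version of the commuting relation, which holds on the real line by construction and extends to a ball $B(0,\lambda)$ because both sides are $C^3$ and asymptotically holomorphic and agree to high order on $\R$. Items (3), (4), (5) — the covering and boundedness statements — follow from the quasiconformal Koebe-type distortion control: by item (6)/(7) each $\xi_n$, $\eta_n$ is $(1+C\lambda^n)$-quasiconformal on its domain, it maps the real interval onto the appropriate adjacent dynamical interval with uniformly bounded geometry, so Proposition \ref{koebe} (the q.c. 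Koebe one-quarter theorem) forces the image to contain a definite neighbourhood $N_\beta$ of that interval, and the uniformly bounded $C^3$ norm forces the image to sit inside a fixed ball $B(0,\lambda^{-1})$; one fixes $\alpha$ small enough that $N_\alpha$ of the short interval maps inside the already-constructed extended-lift domain, and $\beta$ accordingly.

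The main obstacle I anticipate is the quantitative bookkeeping behind item (6)/(7): one must show that the accumulated $\overline\partial$-error, summed over the $q_n$-fold composition, really decays exponentially in $n$ rather than merely tending to zero. This requires that the asymptotic holomorphy of the extended lift degrades in a controlled, scale-invariant way under the rescalings $H_\lambda$ — i.e. that $|\overline\partial(\text{rescaled branch})(z)|$ is comparable to $\text{dist}(z,\R)^2$ divided by the relevant interval length, uniformly — and then that $\sum (\text{factor}) = q_n\cdot(\text{typical length})^2$ is summable geometrically. The real bounds give geometric shrinking of lengths and the bounded-type hypothesis (or merely the subexponential growth of $q_n$, which suffices here since it is compared against an exponentially small quantity) closes the estimate; keeping the constant $\lambda$ \emph{universal}, independent of $f$, is the delicate part, and is handled by noting all the inputs (real bounds constant $K$, the exponent $2$ in asymptotic holomorphy coming from $C^3$, the Koebe constants) are universal.
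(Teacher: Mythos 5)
Your overall strategy (asymptotically holomorphic extension, then push through the renormalization cascade using the real bounds and an almost-Schwarz argument) matches the paper's, but two of your key steps have genuine gaps.

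First, the quantitative bound for items (6)--(7). You bound the accumulated Beltrami coefficient by ``$(\text{number of factors})\times(\text{typical interval length})^2 = q_n\cdot|I_n|^2$'' and assert this is exponentially small ``since $q_n$ grows at most geometrically'' (and elsewhere ``subexponential growth of $q_n$''). Both claims about $q_n$ are false: $q_{n+1}=a_nq_n+q_{n-1}$ grows \emph{at least} at the Fibonacci rate, and with unbounded partial quotients it grows superexponentially. Since Theorem~\ref{nuevoE} is stated for \emph{any} irrational rotation number, your estimate as written can blow up. The paper's argument avoids this entirely: the per-factor $\overline\partial$-error is $O(|\widetilde f^{\,j}(J_n)|^2)$, and instead of multiplying by the number of factors one uses
\begin{equation*}
\sum_{j=1}^{q_{n+1}-1}\big|\widetilde f^{\,j}(J_n)\big|^2
\;\le\;\Big(\max_j\big|\widetilde f^{\,j}(J_n)\big|\Big)\sum_{j}\big|\widetilde f^{\,j}(J_n)\big|
\;\le\;M_2\cdot\max_j\big|\widetilde f^{\,j}(J_n)\big|,
\end{equation*}
the last step using the \emph{bounded multiplicity of intersection} of the projected family $\{\widetilde f^{\,j}(J_n)\}$, not a count of terms. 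The right-hand side is exponentially small by the real bounds alone, with no hypothesis on the rotation number and no reference to the size of $q_n$. This is the step your proposal needs and does not supply.

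Second, your construction of the extended lift handles the critical point by patching a local model $\pm|\phi|^3+f(c)$ against a Taylor-jet extension with a partition of unity, and then invokes asymptotic holomorphy ``in each chart''. This is too vague: a partition-of-unity patch generically destroys both the order of vanishing of $\overline\partial$ along $\R$ and the clean cubic form of the critical point, and you give no argument that the patched map is a three-fold branched covering near $c$. The paper sidesteps all of this by factoring $f=h_2\circ A\circ h_1$ (Lemma~\ref{cartasunidim}), where $A$ is the critical Arnold map with an \emph{entire} lift $\widetilde A$: the unique critical point of the extended lift $F=H_2\circ\widetilde A\circ H_1$ then comes verbatim from $\widetilde A$ and is exactly cubic, while the $\overline\partial$-error is carried only by the diffeomorphism parts $H_1,H_2$ (Lagrange-interpolation extensions, Proposition~\ref{ext}), which have no critical points. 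Near the critical point the paper further writes $F=Q\circ\psi$ with $Q(z)=z^3+\widetilde f(0)$ and $\psi$ a diffeomorphism (Proposition~\ref{estdist}), which is what actually yields item~(1) and the local conformal estimates; your proposal gives no substitute for this decomposition.
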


In this section we prove Theorem \ref{nuevoE} (see Subsection \ref{lapruebadelE}), and in Section \ref{Secglueing} we prove Theorem \ref{compacto}.

\subsection{Extended lifts of critical circle maps}\label{SubSext} In this subsection we lift a critical circle map to the real line, and then we extend this lift in a suitable way to a neighbourhood of the real line in the complex plane (see Definition \ref{canext} below).

Let $f$ and $g$ be two $C^3$ critical circle maps with cubic critical points $c_f$ and $c_g$, and critical values $v_f$ and $v_g$ respectively. Recall that $\Diff_{+}^{3}(S^1)$ denotes the group (under composition) of orientation-preserving $C^3$ diffeomorphisms of the unit circle, endowed with the $C^3$ topology. Let $\mathcal{A}$ and $\mathcal{B}$ in $\Diff_+^3(S^1)$ defined by:$$\mathcal{A}=\left\{\psi\in\Diff_+^3(S^1):\psi(c_f)=c_g\right\}\quad\mbox{and}\quad\mathcal{B}=\left\{\phi\in\Diff_+^3(S^1):\phi(v_g)=v_f\right\}.$$

There is a canonical homeomorphism between $\mathcal{A}$ and $\mathcal{B}$:$$\psi \mapsto R_{\theta_1} \circ \psi \circ R_{\theta_2}\,,$$where $R_{\theta_1}$ is the rigid rotation that takes $c_g$ to $v_f$, and $R_{\theta_2}$ is the rigid rotation that takes $v_g$ to $c_f$. We will be interested, however, in another identification between $\mathcal{A}$ and $\mathcal{B}$:

\begin{lema}\label{cartasunidim} There exists a homeomorphism $T:\mathcal{A}\to\mathcal{B}$ such that for any $\psi\in\mathcal{A}$ we have:$$f=T(\psi)\circ g \circ\psi\,.$$
\end{lema}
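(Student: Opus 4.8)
The plan is to define $T$ directly by the formula that the statement forces upon us: given $\psi\in\mathcal{A}$, set $T(\psi) := f\circ\psi^{-1}\circ g^{-1}$, so that the identity $f = T(\psi)\circ g\circ\psi$ holds tautologically. First I would check that $T(\psi)$ is a well-defined orientation-preserving $C^3$ circle homeomorphism: it is a composition of three orientation-preserving homeomorphisms of $S^1$, but one must verify it is actually a \emph{diffeomorphism}, i.e.\ that it has nonvanishing derivative. The only points where differentiability could fail are where $\psi^{-1}$ or $g^{-1}$ hits the critical point of $g$ (making $g^{-1}$ non-smooth) or where $\psi^{-1}\circ g^{-1}$ lands on the critical point $c_f$ of $f$. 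Since $\psi(c_f)=c_g$, we have $\psi^{-1}(c_g)=c_f$; the non-smoothness of $g^{-1}$ at the critical \emph{value} $v_g$ is exactly cancelled by the critical behaviour of $f$ at $c_f$, because $\psi^{-1}$ carries $v_g\mapsto\psi^{-1}(v_g)$ and the cube-root singularity of $g^{-1}$ at $v_g$ composes with the cubic vanishing of $f'$ at $c_f$ to give a $C^3$ diffeomorphism near that point --- provided $\psi^{-1}(v_g)=c_f$, which is not automatic. Hmm --- so I should be more careful: the statement does \emph{not} claim $T(\psi)\in\mathcal{B}$ for the ``wrong'' formula; rather, the lemma asserts $T$ lands in $\mathcal{B}$, meaning $T(\psi)(v_g)=v_f$. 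Let me re-set this up.

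The correct bookkeeping is as follows. Define $T(\psi) = f\circ\psi^{-1}\circ g^{-1}$ as a homeomorphism of $S^1$. Then $T(\psi)(v_g) = f\big(\psi^{-1}(g^{-1}(v_g))\big) = f(\psi^{-1}(c_g)) = f(c_f) = v_f$, using that $g(c_g)=v_g$ and $\psi\in\mathcal{A}$. So $T(\psi)$ satisfies the defining condition of $\mathcal{B}$; it remains to see it is a $C^3$ diffeomorphism. Away from the single bad point this is clear since all three factors are local $C^3$ diffeomorphisms there. Near the bad point: $g^{-1}$ has a non-flat critical point of type $1/3$ at $v_g$ (its inverse branch of a cubic), $\psi^{-1}$ is a $C^3$ diffeomorphism carrying a neighbourhood of $c_g$ to a neighbourhood of $c_f$, and $f$ has a non-flat cubic critical point at $c_f$. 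Writing these in the non-flat normal form $t\mapsto\pm|\varphi(t)|^3$ and $t\mapsto\pm|\varphi(t)|^{1/3}$ respectively, the composition $f\circ\psi^{-1}\circ g^{-1}$ near $v_g$ is a composition of a $1/3$-power, a $C^3$ diffeomorphism, and a cubic power; the exponents multiply to $1$ and one checks the result is a genuine $C^3$ local diffeomorphism (this is the standard fact that a critical circle map is characterised by its non-flat critical point, and is the reason the $C^3$ hypothesis is imposed throughout). Hence $T(\psi)\in\mathcal{B}$.

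Next I would check that $T:\mathcal{A}\to\mathcal{B}$ is a bijection by exhibiting the inverse: given $\phi\in\mathcal{B}$, set $S(\phi) = g^{-1}\circ\phi^{-1}\circ f$. By the symmetric computation $S(\phi)(c_f) = g^{-1}(\phi^{-1}(f(c_f))) = g^{-1}(\phi^{-1}(v_f)) = g^{-1}(v_g) = c_g$, so $S(\phi)\in\mathcal{A}$, and the same normal-form argument shows $S(\phi)$ is $C^3$. A direct computation gives $S(T(\psi)) = g^{-1}\circ(g\circ\psi\circ f^{-1})\circ f = \psi$ and $T(S(\phi))=\phi$, so $S = T^{-1}$. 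Finally, continuity of $T$ (and of $T^{-1}$) in the $C^3$ topology on $\Diff^3_+(S^1)$ follows because composition and inversion are continuous operations in that topology on the relevant open sets --- the only subtlety again being at the single critical-type point, where one invokes that the normal-form coordinates $\varphi$ depend continuously on the map, so the $C^3$ norm of $T(\psi)$ is controlled by that of $\psi$. I expect the \textbf{main obstacle} to be precisely this regularity check at the critical point: showing that the cube-root singularity of $g^{-1}$, the cubic vanishing of $f$, and the $C^3$-diffeomorphism $\psi^{-1}$ sandwiched between them genuinely produce a $C^3$ diffeomorphism (with uniform control), rather than merely a homeomorphism. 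Everything else --- the algebraic identities, the bijectivity, and the topological continuity away from that point --- is routine.
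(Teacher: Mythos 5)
Your proposal is correct and takes essentially the same route as the paper: the same explicit formula $T(\psi)=f\circ\psi^{-1}\circ g^{-1}$, the same verification that $T(\psi)(v_g)=v_f$ via $g^{-1}(v_g)=c_g$ and $\psi^{-1}(c_g)=c_f$, and the same explicit inverse $T^{-1}(\phi)=g^{-1}\circ\phi^{-1}\circ f$. The only (minor) variation is in the regularity check at $v_g$: where you invoke the non-flat normal form --- a $1/3$-power composed with a $C^3$ diffeomorphism composed with a cubic, exponents multiplying to $1$ --- the paper instead writes down $DT(\psi)(z)=D\psi^{-1}(g^{-1}(z))\cdot Df\big((\psi^{-1}\circ g^{-1})(z)\big)/Dg(g^{-1}(z))$ away from $v_g$ and computes its limit as $z\to v_g$ directly, obtaining a finite positive number expressed through $D^3f(c_f)$, $D^3g(c_g)$, and $D\psi^{-1}(c_g)$; both arguments hinge, as you correctly stress, on $f$ and $g$ having non-flat critical points of the same odd degree.
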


$$\begin{diagram}
\node{S^1}\arrow{e,t}{f}\arrow{s,l}{\psi}\node{S^1}\\
\node{S^1}\arrow{e,t}{g}
\node{S^1}\arrow{n,r}{T(\psi)}
\end{diagram}$$

The lemma is true precisely because the maps $f$ and $g$ have the same degree at their respective critical points:

\begin{proof} Let $\psi$ in $\Diff_+^3(S^1)$ such that $\psi(c_f)=c_g$, and consider the orientation-preserving circle homeomorphism:$$T(\psi)=f \circ \psi^{-1} \circ g^{-1},$$that maps the critical value of $g$ to the critical value of $f$. To see that $T(\psi)$ is in $\Diff_+^3(S^1)$ note that when $z \neq v_g$ we have that $T(\psi)$ is smooth at $z$, with non-vanishing derivative equal to:$$\big(DT(\psi)\big)(z)=D\psi^{-1}\big(g^{-1}(z)\big)\left(\frac{Df\big(\big(\psi^{-1} \circ g^{-1}\big)(z)\big)}{Dg\big(g^{-1}(z)\big)}\right).$$

In the limit we have:$$\lim_{z \to v_g}\left[D\psi^{-1}\big(g^{-1}(z)\big)\left(\frac{Df\big(\big(\psi^{-1} \circ g^{-1}\big)(z)\big)}{Dg\big(g^{-1}(z)\big)}\right)\right]=D\psi^{-1}(c_g)\left(\frac{\big(D^3f\big)\big(c_f\big)}{\big(D^3g\big)(c_g)}\right),$$a well-defined number in $(0,+\infty)$. This proves that $T(\psi)$ is in $\mathcal{B}$ for every $\psi\in\mathcal{A}$. Moreover $T$ is invertible with inverse $T^{-1}:\mathcal{B}\to\mathcal{A}$ given by $T^{-1}(\phi)=g^{-1} \circ \phi^{-1} \circ f$.
\end{proof}

Let $A: S^1\to S^1$ be the map corresponding to the parameters $a=0$ and $b=1$ in the Arnold family (\ref{arfam}), defined in the introduction of this article, and recall that the lift of $A$ to the real line, by the covering $\pi:\R \to S^1:\pi(t)=\exp(2\pi it)$, fixing the origin is given by:$$\widetilde{A}(t)=t-\left(\frac{1}{2\pi}\right)\sin(2\pi t).$$

The critical point of $A$ in the unit circle is at $1$, and it is of cubic type (the critical point is also a fixed point for $A$). Now let $f$ be a $C^3$ critical circle map with a unique cubic critical point at $1$, and let $\widetilde{f}$ be the unique lift of $f$ to the real line under the covering $\pi$ satisfying $\widetilde{f}'(0)=0$ and $0<\widetilde{f}(0)<1$. By Lemma \ref{cartasunidim} we can consider two $C^3$ orientation preserving circle diffeomorphisms $h_1$ and $h_2$, with $h_1(1)=1$ and $h_2(1)=f(1)$, such that the composition $h_2 \circ A \circ h_1$ agrees with the map $f$, that is, the following diagram commutes:

$$\begin{diagram}
\node{S^1}\arrow{e,t}{f}\arrow{s,l}{h_1}\node{S^1}\\
\node{S^1}\arrow{e,t}{A}
\node{S^1}\arrow{n,r}{h_2}
\end{diagram}$$

For each $i\in\{1,2\}$ let $\widetilde{h}_i$ be the lift of $h_i$ to the real line under the covering $\pi$ determined by $\widetilde{h}_i(0)\in[0,1)$. In Proposition \ref{ext} below we will extend both $\widetilde{h_1}$ and $\widetilde{h_2}$ to complex neighbourhoods of the real line in a suitable way. For that purposes we recall the definition of asymptotically holomorphic maps:

\begin{definition}\label{ashol} Let $I$ be a compact interval in the real line, let $U$ be a neighbourhood of $I$ in $\R^2$ and let $H:U\to\C$ be a $C^1$ map (not necessarily a diffeomorphism). We say that $H$ is \emph{asymptotically holomorphic} of order $r \geq 1$ in $I$ if for every $x \in I$:$$\overline{\partial}H(x,0)=0\quad\mbox{and}\quad\frac{\overline{\partial}H(x,y)}{\big(d((x,y),I)\big)^{r-1}} \to 0$$uniformly as $(x,y) \in U \setminus I$ converge to $I$. We say that $H$ is $\R$-\emph{asymptotically holomorphic} of order $r$ if it is asymptotically holomorphic of order $r$ in compact sets of $\R$.
\end{definition}

The sum or product of $\R$-asymptotically holomorphic maps is also $\R$-asymptotically holomorphic. The inverse of an asymptotically holomorphic diffeomorphism of order $r$ is asymptotically holomorphic map of order $r$. Composition of asymptotically holomorphic maps is asymptotically holomorphic.

In the following proposition we suppose $r \geq 1$ even though we will apply it for $r \geq 3$. In the proof we follow the exposition of Graczyk, Sands and \'Swi\c{a}tek in \cite[Lemma 2.1, page 623]{grasandsswia}.

\begin{prop}\label{ext} For $i=1,2$ there exists $H_i:\C\to\C$ of class $C^r$ such that:
\begin{enumerate}
\item $H_i$ is an extension of $\widetilde{h_i}$: $H_i|_{\R}=\widetilde{h_i}$;
\item $H_i$ commutes with unitary horizontal translation: $H_i \circ T = T \circ H_i$;
\item $H_i$ is asymptotically holomorphic in $\R$ of order $r$;
\item $H_i$ is $\R$-symmetric: $H_i(\bar{z})=\overline{H_i(z)}$.
\end{enumerate}

Moreover there exist $R>0$ and four domains $B_R$, $U_R$, $V_R$ and $W_R$ in $\C$, symmetric about the real line, and such that:

\begin{itemize}
\item $B_R=\big\{z\in\C:-R<\Im(z)<R\big\}$;
\item $H_1$ is an orientation preserving diffeomorphism between $B_R$ and $U_R$;
\item $\widetilde{A}(U_R)=V_R$;
\item $H_2$ is an orientation preserving diffeomorphism between $V_R$ and $W_R$.
\item Both $\inf_{z \in B_R}\big|\partial H_1(z)\big|$ and $\inf_{z \in V_R}\big|\partial H_2(z)\big|$ are positive numbers.
\end{itemize}
\end{prop}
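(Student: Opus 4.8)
The plan is to build each $H_i$ by a standard \emph{Whitney-type jet extension} of the real map $\widetilde{h_i}$, following the construction of Graczyk--Sands--\'Swi\c{a}tek. Concretely, write the lift $\widetilde{h}=\widetilde{h_i}$ (which is a $C^r$ diffeomorphism of $\R$ commuting with $T$) and set
$$
H(x+iy)=\sum_{k=0}^{r}\frac{\widetilde{h}^{(k)}(x)}{k!}\,(iy)^k.
$$
This is manifestly $C^r$ on $\C$ (since $\widetilde h\in C^r$), it restricts to $\widetilde h$ on $\R$, it commutes with unitary horizontal translation because $\widetilde h$ does and the formula uses only $x$-derivatives, and it is $\R$-symmetric because the coefficients $\widetilde h^{(k)}(x)$ are real (so conjugating $z$ replaces $iy$ by $-iy$ and conjugates the whole sum). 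The key point is the asymptotic holomorphy of order $r$: a direct computation of $\overline\partial H=\tfrac12(\partial_x+i\partial_y)H$ shows that all terms telescope except the top one, leaving $\overline\partial H(x+iy)=\tfrac12\,\frac{\widetilde h^{(r+1)}(x)+o(1)}{r!}(iy)^r$ — but since $\widetilde h$ is only $C^r$ we cannot write $\widetilde h^{(r+1)}$; instead one uses that $\widetilde h^{(r)}$ is uniformly continuous on compacts, so the finite-difference remainder in $\partial_x$ of the top coefficient is $o(1)$ as $y\to 0$, uniformly on compact $x$-ranges. This gives $\overline\partial H(x,0)=0$ and $\overline\partial H(x,y)=o(|y|^{r-1})$ uniformly, which is exactly Definition \ref{ashol}. (The commuting relation with $T$ makes ``uniformly on compacts'' the same as ``uniformly on all of $\R$'' after quotienting by the translation.) I would then invoke the composition/product stability of $\R$-asymptotically holomorphic maps already recorded in the text if any minor adjustment is needed to normalize $H_i(0)$.

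For the second part I would use that $\widetilde h_i'$ is bounded away from $0$ and $\infty$ on a fundamental domain $[0,1]$ (hence everywhere, by $T$-periodicity of the derivative), so $\partial H_i(x+iy)=\widetilde h_i'(x)+O(|y|)$ is bounded away from $0$ for $|y|$ small, while $\overline\partial H_i=O(|y|^{r-1})$, $r\ge 1$, is small; therefore $DH_i$ is invertible on a strip $B_R=\{|\Im z|<R\}$ for $R$ small enough, and by the inverse function theorem together with $T$-equivariance and injectivity of $\widetilde h_i$ on $\R$, $H_i$ is a diffeomorphism from $B_R$ onto its image (injectivity on the strip follows from injectivity on $\R$ plus a compactness/closeness-to-$\widetilde h_i$ argument on a fundamental domain, shrinking $R$ if necessary). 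Shrinking $R$ once more so that the analytic map $\widetilde A$, which is a local diffeomorphism near $\R$ away from $\Z$ and has cubic critical points at $\Z$, still maps $U_R:=H_1(B_R)$ diffeomorphically onto $V_R:=\widetilde A(U_R)$ — here one must keep $U_R$ inside the region where $\widetilde A$ is univalent; since $A$ restricted to $S^1$ is a homeomorphism this holds on a definite complex neighborhood of $\R\setminus\Z$, and near the critical points one uses that the composition $h_2\circ A\circ h_1=f$ is an \emph{injective} circle map, so the relevant domains can be chosen to avoid any folding. Finally set $W_R:=H_2(V_R)$ and note $\inf_{B_R}|\partial H_1|>0$ and $\inf_{V_R}|\partial H_2|>0$ by the same derivative estimate applied to $\widetilde h_1$ and $\widetilde h_2$.

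The main obstacle I anticipate is not the existence of the asymptotically holomorphic extension (the jet formula handles that cleanly, modulo the careful ``$C^r$ but not $C^{r+1}$'' bookkeeping), but rather arranging the \emph{global geometry} of the four nested domains $B_R,U_R,V_R,W_R$ so that all three maps $H_1$, $\widetilde A$, $H_2$ are genuine diffeomorphisms (not just local ones) on them simultaneously. The delicate part is the passage through $\widetilde A$: its critical points at the integers force $U_R$ to be a thin ``non-folding'' neighborhood of $\R$ there, and one must verify that after applying $\widetilde A$ the composition with $H_2$ reconstitutes the injective map $f$ on $\R$ and remains injective on a full complex strip — this is where the hypothesis that $f,g$ (here $f$ and $A$) have the \emph{same cubic criticality} is used, exactly as in Lemma \ref{cartasunidim}: it guarantees $h_1,h_2$ are honest $C^r$ diffeomorphisms with non-vanishing derivative, so no new critical points or folds are introduced. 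A secondary technical point is checking that the required uniformities are genuinely uniform on $\R$ (not merely on compacts), which the translation-equivariance of all maps involved takes care of.
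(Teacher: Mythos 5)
Your extension formula
$$
H(x+iy)=\sum_{k=0}^{r}\frac{\widetilde{h}^{(k)}(x)}{k!}\,(iy)^k
$$
is not $C^r$ when $\widetilde{h}$ is only $C^r$ --- in fact it is not even $C^1$ off the real line. The top coefficient $\widetilde{h}^{(r)}$ is merely continuous, so for fixed $y_0\neq 0$ the map $x\mapsto\widetilde{h}^{(r)}(x)(iy_0)^r$ has no partial derivative in $x$ at any point where $\widetilde{h}^{(r)}$ is not differentiable (a dense set, generically). Consequently $\partial_x H$ does not exist off $\R$, so $\overline\partial H$ is not defined there, and the claim that ``a direct computation shows all terms telescope except the top one'' has no meaning: the object you would like to estimate is a limit that does not exist. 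The ``finite-difference remainder'' patch does not help, because the obstruction is pointwise nonexistence of $\partial_x H(x,y)$ at fixed $y\neq 0$, not a rate as $y\to 0$.

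The paper's construction is specifically designed to sidestep this. Instead of a Taylor-jet extension, it defines $H_i(x+iy)=P_{x,y}(x+iy)$, where $P_{x,y}$ is the \emph{Lagrange interpolation polynomial} of degree $r$ matching $\widetilde{h_i}$ at the $r+1$ real points $x+(j/r)y$, $j=0,\dots,r$. Written out, this is a finite linear combination of the terms $\widetilde{h_i}\bigl(x+(j/r)y\bigr)$; each such term is $\widetilde{h_i}$ precomposed with an affine map of $(x,y)$, hence is exactly as smooth as $\widetilde{h_i}$, so the whole $H_i$ is genuinely $C^r$ with no loss. The vanishing of $\overline\partial H_i$ and of its $y$-derivatives through order $r-1$ on $\R$ is then obtained by the binomial identity $\sum_j(-1)^j j^k\binom{r}{j}=0$ for $k<r$, and Taylor's theorem gives the $o(|y|^{r-1})$ decay. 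In short: the paper uses only \emph{values} of $\widetilde{h_i}$, never its derivatives, and that is what makes the construction degree-preserving.

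Your treatment of the second part (choosing $R$, $B_R$, $U_R$, $V_R$, $W_R$, and the diffeomorphism and lower-bound claims) is essentially the paper's argument --- nonvanishing of $\partial H_i$ on $\R$ comes from $|\partial H_i(x)|=|\widetilde{h_i}'(x)|$, periodicity and compactness give uniform lower bounds, and one shrinks the strip to get diffeomorphisms. But that part is moot until the first part produces an honest $C^r$ asymptotically holomorphic extension, and your formula does not.
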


\begin{proof} For $z=x+iy\in\C$, with $y \neq 0$, let $P_{x,y}$ be the degree $r$ polynomial map that coincide with $\widetilde{h_i}$ in the $r+1$ real numbers:$$\left\{x+\left(\frac{j}{r}\right)y\right\}_{j\in\{0,1,...,r\}}$$

Recall that $P_{x,y}$ can be given by the following linear combination (the so-called \emph{Lagrange's form of the interpolation polynomial}):
\begin{align}
P_{x,y}(z)&=\sum_{j=0}^{j=r}\widetilde{h_i}\big(x+(j/r)y\big)\prod_{\substack{l=0\\l \neq j}}^{l=r}\frac{z-\big(x+(l/r)y\big)}{\big(x+(j/r)y\big)-\big(x+(l/r)y\big)}\notag\\
&=\sum_{j=0}^{j=r}\widetilde{h_i}\big(x+(j/r)y\big)\prod_{\substack{l=0\\l \neq j}}^{l=r}\frac{z-x-(l/r)y}{\big((j-l)/r\big)y}\notag
\end{align}

We define $H_i(x+iy)=P_{x,y}(x+iy)$, that is:$$H_i(x+iy)=P_{x,y}(x+iy)=\sum_{j=0}^{j=r}\widetilde{h_i}\big(x+(j/r)y\big)\prod_{\substack{l=0\\l \neq j}}^{l=r}\frac{ir-l}{j-l}$$

After computation we obtain:$$H_i(x+iy)=P_{x,y}(x+iy)=\frac{1}{N}\sum_{j=0}^{j=r}\left(\frac{(-1)^j\begin{pmatrix} r \\ j \end{pmatrix}}{1+i(j/r)}\right)\widetilde{h_i}\big(x+(j/r)y\big)$$where:$$N=\sum_{j=0}^{j=r}\left(\frac{(-1)^j\begin{pmatrix} r \\ j \end{pmatrix}}{1+i(j/r)}\right) \neq 0$$

Note that $H_i$ is as smooth as $\widetilde{h_i}$, and $H_i(x)=\widetilde{h_i}(x)$ for any real number $x$ (item $(1)$). Since $\widetilde{h_i}$ is a lift we have for any $j\in\{0,1,...,r\}$ that $\widetilde{h_i}\big(x+1+(j/r)y\big)=\widetilde{h_i}\big(x+(j/r)y\big)+1$, but then $P_{x+1,y}\big(x+1+(j/r)y\big)=P_{x,y}\big(x+(j/r)y\big)+1$ for any $j\in\{0,1,...,r\}$ and this implies $P_{x+1,y} \circ T=T \circ P_{x,y}$ in the whole complex plane. This proves item $(2)$.

To prove that $H_i$ is asymptotically holomorphic of order $r$ in $\R$ note that:$$\overline{\partial}H_i(x+iy)=\frac{1}{2N}\sum_{j=0}^{j=r}(-1)^j\begin{pmatrix} r \\ j \end{pmatrix}\widetilde{h'_i}\big(x+(j/r)y\big)$$and for any $k\in\{0,...,r\}$:$$\frac{\partial^k}{\partial y^k}\,\overline{\partial}H_i(x+iy)=\left(\frac{1}{2N}\right)\left(\frac{1}{r^k}\right)\sum_{j=0}^{j=r}(-1)^jj^k\begin{pmatrix} r \\ j \end{pmatrix}\widetilde{h_i}^{(k+1)}\big(x+(j/r)y\big)$$

Now we claim that for any $k\in\{0,...,r-1\}$ we have $\sum_{j=0}^{j=r}(-1)^jj^k\begin{pmatrix} r \\ j \end{pmatrix}=0$. Indeed, for any $j\in\{0,...,r\}$ we have $\frac{\partial^j}{\partial t^j}(1-t)^r=(-1)^j\left(\frac{r!}{(r-j)!}\right)(1-t)^{r-j}$, and this gives us the equality $(1-t)^r=\sum_{j=0}^{j=r}(-1)^j\begin{pmatrix} r \\ j \end{pmatrix}t^j$ for $r \geq 1$. Putting $t=1$ we obtain the claim for $k=0$. Since $t\frac{\partial}{\partial t}(1-t)^r=\sum_{j=0}^{j=r}(-1)^jjt^j\begin{pmatrix} r \\ j \end{pmatrix}$, we obtain the claim for $k=1$ if we put $t=1$. Putting $t=1$ in $t\frac{\partial}{\partial t}\big[t\frac{\partial}{\partial t}(1-t)^r\big]$ we obtain the claim for $k=2$, and so forth until $k=r-1$.

With the claim we obtain for any $x\in\R$ that:$$\overline{\partial}H_i(x)=\left(\frac{1}{2N}\right)\widetilde{h'_i}\big(x\big)\sum_{j=0}^{j=r}(-1)^j\begin{pmatrix} r \\ j \end{pmatrix}=0$$and for any $k\in\{0,...,r-1\}$:$$\frac{\partial^k}{\partial y^k}\,\overline{\partial}H_i(x)=\left(\frac{1}{2N}\right)\left(\frac{\widetilde{h_i}^{(k+1)}(x)}{r^k}\right)\sum_{j=0}^{j=r}(-1)^jj^k\begin{pmatrix} r \\ j \end{pmatrix}=0$$

By Taylor theorem:$$\lim_{y \to 0}\frac{\overline{\partial}H_i(x+iy)}{y^{r-1}}=0$$uniformly on compact sets of the real line, and from this follows that $H_i$ is asymptotically holomorphic of order $r$ in $\R$ (item $(3)$). To obtain the symmetry as in item $(4)$ we can take $z\mapsto\frac{H_i(z)+\overline{H_i(\bar{z})}}{2}$, since this preserves all the other properties.

Finally note that the Jacobian of $H_i$ at a point $x$ in $\R$ is equal to $|\widetilde{h'_i}(x)|^2 \neq 0$. This gives us a complex neighbourhood of the real line where $H_i$ is an orientation preserving diffeomorphism, and the positive constant $R$. Since we also have $\big|\partial H_i\big|=\big|\widetilde{h'_i}\big|$ at the real line, and each $\widetilde{h_i}$ is the lift of a circle diffeomorphism, we obtain the last item of Proposition \ref{ext}.
\end{proof}

These are the extensions that we will consider:

\begin{definition}\label{canext} The map $F:B_R\to W_R$ defined by $F=H_2 \circ \widetilde{A} \circ H_1$ is called the \emph{extended lift} of the critical circle map $f$.
\end{definition}

$$\begin{diagram}
\node{B_R}\arrow{e,t}{F}\arrow{s,l}{H_1}\node{W_R}\\
\node{U_R}\arrow{e,t}{\widetilde{A}}
\node{V_R}\arrow{n,r}{H_2}
\end{diagram}$$

We have the following properties:

\begin{itemize}
\item $F$ is $C^r$ in the horizontal band $B_R$;
\item $T \circ F = F \circ T$ in $B_R$;
\item $F$ is $\R$-symmetric (in particular $F$ preserves the real line), and $F$ restricted to the real line is $\widetilde{f}$;
\item $F$ is asymptotically holomorphic in $\R$ of order $r$;
\item The critical points of $F$ in $B_R$ are the integers (the same as $\widetilde{A}$), and they are of cubic type.
\end{itemize}

We remark that the extended lift of a real-analytic critical circle map will be $C^{\infty}$ in the corresponding horizontal strip, but not necessarily holomorphic.

The pre-image of the real axis under $F$ consists of $\R$ itself together with two families of $C^r$ curves $\{\gamma_1(k)\}_{k\in\Z}$ and $\{\gamma_2(k)\}_{k\in\Z}$ arising as solutions of $\Im(F(x+iy))=0$. Note that $\gamma_1(k)$ and $\gamma_2(k)$ meet at the critical point $c_k=k$.


Let $\gamma^+_i(k)=\gamma_i(k)\cap\h$ and $\gamma^-_i(k)=\gamma_i(k)\cap\h^-$ for $i=1,2$. We also denote $\gamma^+_i(0)$ just by $\gamma^+_i$.

\begin{lema}\label{theta0} We can choose $R$ small enough to have that $\gamma^+_1$ is contained in $T=\big\{\arg(z)\in\big(\frac{\pi}{6},\frac{\pi}{2}\big)\big\} \cap B_R$ (that is, the open triangle with vertices $0$, $iR$ and $(\sqrt{3}+i)R$), $\gamma^+_2$ is contained in $-\overline{T}$, $\gamma^-_1$ is contained in $-T$ and $\gamma^-_2$ is contained in $\overline{T}$.
\end{lema}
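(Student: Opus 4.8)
The plan is to analyze the geometry of the curves $\gamma_1, \gamma_2$ near the critical point $c_0 = 0$ by a local computation, and then use the asymptotic holomorphy of $F$ (order $r \geq 3$) together with the explicit cubic behaviour of $\widetilde{A}$ at the origin. First I would observe that $F = H_2 \circ \widetilde{A} \circ H_1$ with $H_1, H_2$ asymptotically holomorphic diffeomorphisms that are $\R$-symmetric and fix the real line, while $\widetilde{A}(t) = t - (1/2\pi)\sin(2\pi t)$ has $\widetilde{A}(0) = 0$, $\widetilde{A}'(0) = \widetilde{A}''(0) = 0$ and $\widetilde{A}'''(0) = 4\pi^2 > 0$; so near the origin $\widetilde{A}(w) = \frac{2\pi^2}{3}w^3 + o(|w|^3)$ as a real $C^3$ map, and since $H_1$ is asymptotically holomorphic with $H_1(0) = 0$ and $H_1'(0) = \widetilde{h}_1'(0)$ a positive real number, the composition $\widetilde{A}\circ H_1$ near $0$ behaves like $c\, z^3 + o(|z|^3)$ for a positive real constant $c$, up to the $\overline{\partial}$-error which by Definition \ref{ashol} is $o(|z|^{r-1}) = o(|z|^2)$, hence negligible at third order.

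The key step is then to describe the zero set of $\Im(F(z))$ near the origin. Since $H_2$ is an $\R$-symmetric diffeomorphism fixing $\R$, we have $\Im(F(z)) = 0$ if and only if $\widetilde{A}(H_1(z)) \in \R$, i.e. $\Im\big(\widetilde{A}(H_1(z))\big) = 0$. Writing $H_1(z) = w$ (a local diffeomorphism), the condition becomes $\Im\big(\widetilde{A}(w)\big) = 0$, and by the expansion above this is $\Im\big(c\, w^3\big) + o(|w|^3) = 0$, whose solution set near $0$ is, to leading order, the union of the three rays $\arg(w) \in \{0, \pi/3, 2\pi/3\} \pmod{\pi}$ — that is, $\R$ together with the rays at angles $\pm\pi/3$. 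The ray at $\pi/3$ and the ray at $2\pi/3$ (equivalently $-\pi/3$) give the two branches $\gamma_1, \gamma_2$ in the upper half plane emanating from $0$; by $\R$-symmetry their reflections give the lower branches. Applying the diffeomorphism $H_1^{-1}$, which is asymptotically holomorphic and whose derivative at $0$ is the positive real $1/\widetilde{h}_1'(0)$, does not change the tangent directions at the origin (a real-linear map that is a positive real scalar to leading order preserves arguments), so $\gamma_1^+$ is tangent at $0$ to the ray $\arg(z) = \pi/3$ and $\gamma_2^+$ to the ray $\arg(z) = 2\pi/3$.

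Finally I would promote this infinitesimal statement to the claimed inclusion in a definite sector by choosing $R$ small. Since $\gamma_1^+$ is a $C^r$ curve through $0$ with tangent direction $\pi/3 \in (\pi/6, \pi/2)$, there is $R_0 > 0$ such that for $0 < R \leq R_0$ the portion of $\gamma_1^+$ inside $B_R$ stays inside the open triangle $T$ with vertices $0$, $iR$, $(\sqrt3 + i)R$ (note $\tan(\pi/6) = 1/\sqrt3$ and $\tan(\pi/2) = \infty$ bound the triangle, and $\pi/3$ is strictly between); one must also check that $\gamma_1$ does not re-enter near $0$ or cross $\partial T$ away from $0$, which follows because, after shrinking $R$, the full zero set of $\Im(\widetilde A(H_1(z)))$ in $B_R \setminus \{0\}$ consists of exactly the expected branches (the leading cubic term dominates the $C^3$-small remainder and the $\overline\partial$-error, so no spurious components appear). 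The analogous statements for $\gamma_2^+ \subset -\overline{T}$, $\gamma_1^- \subset -T$, $\gamma_2^- \subset \overline{T}$ follow from $\R$-symmetry ($F(\bar z) = \overline{F(z)}$) and from the labelling convention for the two branches. The main obstacle is the last point: making sure that shrinking $R$ genuinely confines each branch to the stated sector and that no additional components of the real-preimage appear near $0$; this is handled by a quantitative version of the implicit function theorem applied to $\Im(\widetilde A \circ H_1)(z) = 0$, using that the cubic leading coefficient is a definite positive constant while the remainder is $o(|z|^3)$ uniformly (here the order $r \geq 3$ asymptotic holomorphy is exactly what is needed to control $\overline\partial$).
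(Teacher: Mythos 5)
Your proposal is correct and takes essentially the same route as the paper, whose one-line proof is that $DH_1(0)$ is conformal (since $\overline{\partial}H_1$ vanishes on $\R$ by asymptotic holomorphy), so angles at $0$ are preserved, and the preimage of $\R$ under the cubic map $\widetilde{A}$ makes angle $\pi/3$ with the real axis. You have simply filled in the local Taylor expansion of $\widetilde{A}$, the reduction past $H_2$ (using that it preserves $\R$), and the shrinking-$R$ continuity step.
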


\begin{proof} The derivative of $H_1$ at real points is conformal, so the angle between $\gamma_1$ and $\gamma_2$ with the real line at zero is $\frac{\pi}{3}$.
\end{proof}

\subsection{Poincar\'e disks} Besides the notion of asymptotically holomorphic maps, the main tool in order to prove Theorem \ref{nuevoE} is the notion of Poincar\'e disk, introduced into the subject by Sullivan in his seminal article \cite{sullivan}.

Given an open interval $I=(a,b)\subset\R$ let $\C_I=\big(\C\setminus\R\big) \cup I=\C\setminus\big(\R \setminus I\big)$. For $\theta\in(0,\pi)$ let $D$ be the open disk in the plane intersecting the real line along $I$ and for which the angle from $\R$ to $\partial D$ at the point $b$ (measured anticlockwise) is $\theta$. Let $D^+=D\cap\{z:\Im(z)>0\}$ and let $D^-$ be the image of $D^+$ under complex conjugation.

Define the \emph{Poincar\'e disk} of angle $\theta$ based on $I$ as $D_{\theta}(a,b)=D^+ \cup I \cup D^-$, that is, $D_{\theta}(a,b)$ is the set of points in the complex plane that \emph{view} $I$ under an angle $\geq\theta$ (see Figure 5). Note that for $\theta=\frac{\pi}{2}$ the Poincar\'e disk $D_{\theta}(a,b)$ is the Euclidean disk whose diameter is the interval $(a,b)$.

We denote by $\diam\big(D_{\theta}(a,b)\big)$ the Euclidean diameter of $D_{\theta}(a,b)$. For $\theta\in\big[\frac{\pi}{2},\pi\big)$ the diameter of $D_{\theta}(a,b)$ is always $|b-a|$. When $\theta\in\big(0,\frac{\pi}{2}\big)$ we have that:$$\frac{\diam\big(D_{\theta}(a,b)\big)}{|b-a|}$$is an orientation-reversing diffeomorphism between $\big(0,\frac{\pi}{2}\big)$ and $\big(1,+\infty\big)$, which is real-analytic. Indeed, when $\theta\in\big(0,\frac{\pi}{2}\big)$ the center of $D^+$ is $\left(\frac{a+b}{2}\right)+i\left(\frac{b-a}{2\tan\theta}\right)$, and its radius is $\frac{b-a}{2\sin\theta}$, thus we obtain:
\begin{align}
\diam\big(D_{\theta}(a,b)\big)&=2\left(\frac{b-a}{2\tan\theta}\right)+2\left(\frac{b-a}{2\sin\theta}\right)\notag\\
&=\left(\frac{1}{\tan\theta}+\frac{1}{\sin\theta}\right)(b-a)\notag\\
&=\left(\frac{1+\cos\theta}{\sin\theta}\right)(b-a).\notag
\end{align}

Therefore we have:$$\frac{\diam\big(D_{\theta}(a,b)\big)}{|b-a|}=\frac{1+\cos\theta}{\sin\theta}\quad\mbox{for any}\quad\theta\in\left(0,\frac{\pi}{2}\right).$$

In particular when $\theta$ goes to zero the ratio $\diam\big(D_{\theta}(a,b)\big)/|b-a|$ goes to infinity like $2/\theta$.

\begin{figure}[ht!]
\begin{center}
\includegraphics[scale=0.6]{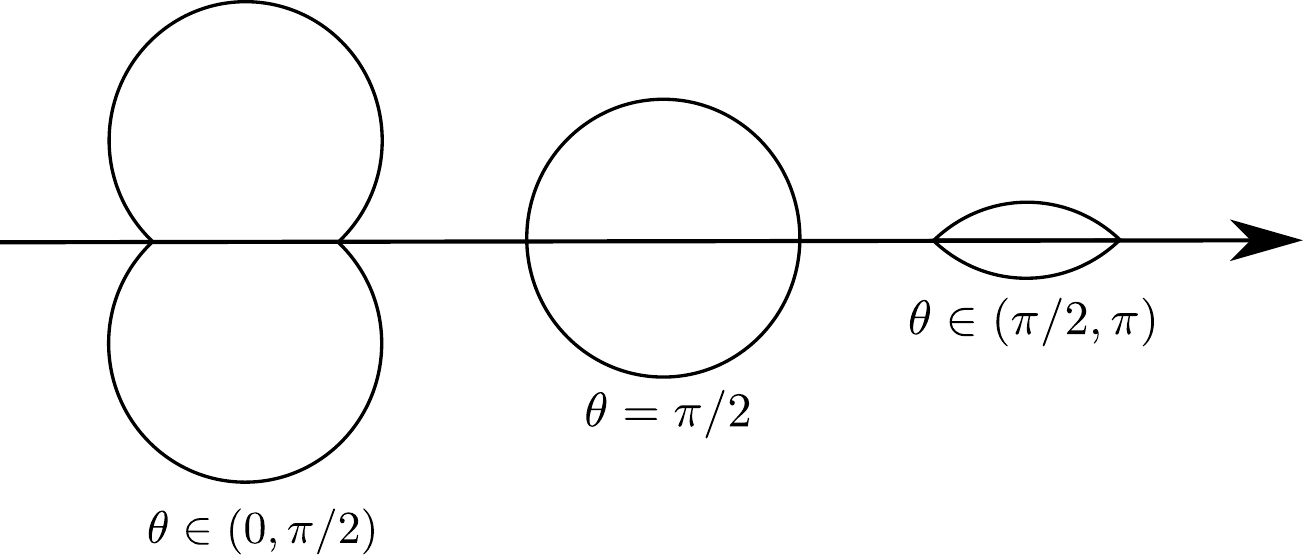}
\caption{Poincar\'e disks.}
\end{center}
\end{figure}

Poincar\'e disks have a geometrical meaning: $\C_I$ is an open, connected and simply connected set which is not the whole plane. By the Riemann mapping theorem we can endow $\C_I$ with a complete and conformal Riemannian metric of constant curvature equal to $-1$, just by pulling back the Poincar\'e metric of $\D$ by any conformal uniformization. Note that $I$ is always a hyperbolic geodesic by symmetry.

For a given $\theta\in(0,\pi)$ consider $\varepsilon(\theta)=\log\tan\big(\frac{\pi}{2}-\frac{\theta}{4}\big)$, which is an orientation-reversing real-analytic diffeomorphism between $(0,\pi)$ and $(0,+\infty)$. An elementary computation shows that the set of points in $\C_I$ whose hyperbolic distance to $I$ is less than $\varepsilon$ is precisely $D_{\theta}(a,b)$.

In particular we can state Schwarz lemma in the following way: let $I$ and $J$ be two intervals in the real line and let $\phi:\C_I\to\C_J$ be a holomorphic map such that $\phi(I) \subset J$. Then for any $\theta\in(0,\pi)$ we have that $\phi\big(D_{\theta}(I)\big) \subset D_{\theta}(J)$.

With this at hand (and a very clever inductive argument, see also \cite{lyuyam}), Yampolsky was able to obtain \emph{complex bounds} for critical circle maps in the \emph{Epstein} class \cite[Theorem 1.1]{yampolsky1}. The reason why we chose asymptotically holomorphic maps to extend our (finitely smooth) one-dimensional dynamics (see Proposition \ref{ext} and Definition \ref{canext} above) is the following asymptotic Schwarz lemma, obtained by Graczyk, Sands and \'Swi\c{a}tek in \cite[Proposition 2, page 629]{grasandsswia} for asymptotically holomorphic maps:

\begin{prop}[Almost Schwarz inclusion]\label{schwarz} Let $h:I\to\R$ be a $C^3$ diffeomorphism from a compact interval $I$ with non-empty interior into the real line. Let $H$ be any $C^3$ extension of $h$ to a complex neighbourhood of $I$, which is asymptotically holomorphic of order $3$ on $I$. Then there exist $M>0$ and $\delta>0$ such that if $a,c \in I$ are different, $\theta\in(0,\pi)$ and $\diam\big(D_{\theta}(a,c)\big)<\delta$ then:$$H\big(D_{\theta}(a,c)\big) \subseteq D_{\tilde{\theta}}\big(h(a),h(c)\big)$$where $\tilde{\theta}=\theta-M|c-a|\diam\big(D_{\theta}(a,c)\big)$. Moreover, $\tilde{\theta}>0$.
\end{prop}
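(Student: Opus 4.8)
The plan is to compare the map $H$ with its ``holomorphic core'' near $I$ and to track the effect on Poincar\'e disks quantitatively. Since $h$ is a $C^3$ diffeomorphism of $I$ onto its image, after shrinking $I$ we may assume $h'$ is bounded away from $0$ and $\infty$; fix such bounds. First I would localize: it suffices to prove the inclusion for $a,c\in I$ with $|c-a|$ small and for Poincar\'e disks $D_\theta(a,c)$ of small Euclidean diameter, because the case of large $\theta$ (hence small diameter and disk close to the segment $[a,c]$) is where the asymptotic holomorphy is needed, while for $\theta$ bounded away from $\pi$ the statement with a crude constant $M$ follows from the same estimates with room to spare. Recall from the discussion preceding the proposition that $D_\theta(a,c)$ is exactly the set of points in $\C_{(a,c)}$ at hyperbolic distance $<\varepsilon(\theta)$ from $(a,c)$, where $\varepsilon(\theta)=\log\tan(\tfrac\pi2-\tfrac\theta4)$; and a point at hyperbolic distance $\varepsilon$ from the interval $(h(a),h(c))$ lies in $D_{\tilde\theta}(h(a),h(c))$ with $\tilde\theta=\varepsilon^{-1}(\varepsilon)$. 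So the whole problem is reduced to an estimate in the hyperbolic metric of $\C_{(h(a),h(c))}$: I must show that $H$ does not increase hyperbolic distance to the base interval by more than a controlled amount.

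The key mechanism is to write $H = \widehat{H} + E$ on a neighbourhood of $I$, where $\widehat{H}$ is a genuine holomorphic map agreeing with $h$ to high order on $I$ (for instance obtained by extending the relevant Taylor data, or more cheaply just by invoking that $\overline\partial H$ vanishes on $I$ together with the asymptotic holomorphy of order $3$: $\overline\partial H(x,y)=o(d((x,y),I)^2)$). For the holomorphic part $\widehat{H}$, the ordinary Schwarz lemma stated at the end of the Poincar\'e-disk discussion gives $\widehat{H}(D_\theta(a,c))\subseteq D_\theta(\widehat{H}(a),\widehat{H}(c))=D_\theta(h(a),h(c))$ exactly, i.e. no loss of angle. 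The error $E=H-\widehat H$ satisfies $E=0$ and $\partial E=0$ on $I$, and $\overline\partial E=\overline\partial H$ is $o(\mathrm{dist}^2)$; integrating, on the Poincar\'e disk $D_\theta(a,c)$, whose points lie at Euclidean distance $O(\mathrm{dist}_{\mathrm{hyp}}\cdot|c-a|)$ from $I$, one gets $|E(z)| = o\big(|c-a|\cdot\mathrm{dist}(z,I)^2\big)$, and in particular $|E(z)|$ is small compared with $|c-a|\cdot\diam(D_\theta(a,c))^2$. Now I translate this Euclidean perturbation into a hyperbolic one: moving a point of $D_\theta(h(a),h(c))$ by a Euclidean amount $|E|$ changes its hyperbolic distance to $(h(a),h(c))$ by at most (comparable to) $|E|/\mathrm{dist}(z,\R\setminus(h(a),h(c)))$, and on $D_\theta$ the denominator is $\gtrsim |c-a|$ times a factor depending on $\theta$. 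Combining, the hyperbolic distance increases by at most $C\,|c-a|\,\diam(D_\theta(a,c))$, and pushing this through $\varepsilon^{-1}$ (which is bi-Lipschitz on compact subintervals of $(0,+\infty)$, with a one-sided Lipschitz bound near $0$ that one checks directly from $\varepsilon(\theta)=\log\tan(\tfrac\pi2-\tfrac\theta4)$) yields $\tilde\theta\ge\theta-M|c-a|\diam(D_\theta(a,c))$ for a suitable $M=M(h)$. The positivity $\tilde\theta>0$ then follows because we have restricted to $\diam(D_\theta(a,c))<\delta$ with $\delta$ small: from $\diam(D_\theta)=\tfrac{1+\cos\theta}{\sin\theta}|c-a|$ one sees $|c-a|\diam(D_\theta)\le\diam(D_\theta)^2$ is $o(\theta)$ as $\theta\to0$ (since $\diam(D_\theta)/|c-a|\sim 2/\theta$ forces $\theta$ bounded below once the diameter is bounded above, unless $|c-a|\to0$, in which case the product is negligible), so the subtracted term is always strictly less than $\theta$.

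The step I expect to be the main obstacle is making the passage from the Euclidean error bound on $E$ to the hyperbolic-distance estimate fully rigorous and uniform: one needs the comparison between Euclidean and hyperbolic metrics on $\C_{(h(a),h(c))}$ to be controlled \emph{uniformly in the endpoints and in $\theta$}, which requires care near the endpoints $h(a),h(c)$ where the hyperbolic metric blows up and near $\theta\to0$ where the disk $D_\theta$ becomes large. The cleanest route is to rescale: conjugate by the affine map sending $(h(a),h(c))$ to $(-1,1)$, so that $\C_{(h(a),h(c))}$ becomes $\C_{(-1,1)}$ with a fixed hyperbolic metric, the disk $D_\theta$ becomes the standard $D_\theta(-1,1)$, and the error, after rescaling, is $o(1)$ uniformly on $D_\theta(-1,1)$; then all comparisons are done once and for all on the fixed domain $\C_{(-1,1)}$, and the factor $|c-a|$ in the statement is exactly the Jacobian of the rescaling. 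This is precisely the bookkeeping carried out in \cite[Proposition 2]{grasandsswia}, whose argument we follow.
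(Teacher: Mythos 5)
The paper does not prove Proposition~\ref{schwarz} at all: it is quoted directly from Graczyk--Sands--\'Swi\c{a}tek \cite[Proposition 2, page 629]{grasandsswia}, so there is no internal proof to compare your sketch against. Your proposal thus has to stand on its own, and there I see a genuine gap in the central mechanism.

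The step that fails is the additive decomposition $H=\widehat H+E$ with $\widehat H$ holomorphic and $E=0$, $\partial E=0$ on $I$. If $E$ vanishes on the whole interval $I$ then $\widehat H|_I=h$, so $h$ would be the restriction of a holomorphic function and hence real-analytic; for a general $C^3$ diffeomorphism $h$ no such $\widehat H$ exists. Your fallback, ``extending the relevant Taylor data,'' would produce a polynomial $\widehat H$ interpolating $h$ at finitely many points (say at $a$ and $c$), but then $E$ does \emph{not} vanish on $I$, so the advertised bound $|E|=o(|c-a|\,\mathrm{dist}(z,I)^2)$ has no starting point; and, more seriously, the Schwarz inclusion you invoke --- $\phi(D_\theta(I))\subset D_\theta(\phi(a),\phi(c))$ --- requires $\phi$ to be holomorphic on all of $\C_{(a,c)}$ and to map it into $\C_{(h(a),h(c))}$, which a cubic (or any) polynomial manifestly does not do. So the claimed ``no loss of angle for $\widehat H$'' step does not follow from the lemma you cite. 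A further, quantitative problem: your intermediate claim that points of $D_\theta(a,c)$ lie at Euclidean distance $O(\mathrm{dist_{hyp}}\cdot|c-a|)$ from $I$ is false for small $\theta$ --- at the top of $D_\theta(a,c)$ the Euclidean distance is of order $\operatorname{diam}(D_\theta)\sim 2|c-a|/\theta$ while the hyperbolic distance to $I$ grows only like $\log(1/\theta)$, so the two are not comparable in the way needed, and without it the passage from the Euclidean error to the stated angle loss $M|c-a|\operatorname{diam}(D_\theta(a,c))$ does not go through (one seems to land on the strictly weaker $\operatorname{diam}(D_\theta)^2$ instead). The rescaling by an affine map to $(-1,1)$ and the appeal to $\|\mu_H\|$ being exponentially small are the right ingredients, and the positivity argument for $\tilde\theta$ is fine, but the core comparison with a holomorphic model needs a genuinely different mechanism --- in \cite{grasandsswia} this is done by a careful inductive/integral estimate on the boundary arcs, not by an additive holomorphic-plus-small-error splitting, precisely because such a splitting with $E|_I=0$ is unavailable outside the real-analytic class.
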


Let us point out that a predecessor of this almost Schwarz inclusion, for real-analytic maps, already appeared in the work of de Faria and de Melo \cite[Lemma 3.3, page 350]{edsonwelington2}.

\subsection{Proof of Theorem \ref{nuevoE}}\label{lapruebadelE} With Proposition \ref{schwarz} at hand, we are ready to start the proof of Theorem \ref{nuevoE}. We will work with $\widetilde{f}^{q_{n+1}}|_{I_n}$, the proof for $\widetilde{f}^{q_n}|_{I_{n+1}}$ being the same.

\begin{prop}\label{3.4} Let $f$ be a $C^3$ critical circle map with irrational rotation number, and let $F$ be its extended lift (according to Definition \ref{canext}). There exists $n_0\in\nt$ such that for any $n \geq n_0$ there exist two numbers $K_n \geq 1$ and $\theta_n>0$ satisfying $K_n \to 1$ and $\theta_n \to 0$ as $n \to +\infty$, and:$$\lim_{n\to+\infty}\left|\frac{\diam\big(D_{\theta_n/K_n}\big(\widetilde{f}(I_n)\big)\big)}{\big|\widetilde{f}(I_n)\big|}-\frac{\diam\big(D_{\theta_n}\big(\widetilde{f}^{q_{n+1}}(I_n)\big)\big)}{\big|\widetilde{f}^{q_{n+1}}(I_n)\big|}\right|=0$$with the following property: let $\theta\geq\theta_n$, $1 \leq j \leq q_{n+1}$ and let $J$ be an open interval such that:$$I_n \subseteq\overline{J}\subseteq \big(\widetilde{f}^{q_{n-1}-q_{n+1}}(0),\widetilde{f}^{q_{n}-q_{n+1}}(0)\big).$$
Then the inverse branch $F^{-j+1}$ mapping $\widetilde{f}^j(J)$ back to $\widetilde{f}(J)$ is well defined over $D_{\theta}\big(\widetilde{f}^j(J)\big)$, and maps this neighbourhood diffeomorphically onto an open set contained in $D_{\theta/K_n}\big(\widetilde{f}(J)\big)$.
\end{prop}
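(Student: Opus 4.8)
The plan is to run an inductive orbit-pullback argument, using the Almost Schwarz inclusion (Proposition \ref{schwarz}) in place of the classical Schwarz lemma on Poincar\'e disks, and controlling the accumulated loss of angle via the real bounds (Theorem \ref{realbounds}). First I would fix $n_0$ large enough that all the atoms of the dynamical partitions $\mathcal{P}_{n-1}$ and $\mathcal{P}_n$ satisfy the real bounds for $n\ge n_0$, and also large enough that the relevant Poincar\'e disks have diameter below the threshold $\delta$ appearing in Proposition \ref{schwarz}; the latter is possible because, by the real bounds, for a fixed angle $\theta$ the diameter $\diam(D_\theta(\widetilde f^{\,j}(J)))$ is comparable to $|\widetilde f^{\,j}(J)|$, which shrinks geometrically as the depth grows. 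The key combinatorial input is that, since $I_n\subseteq\overline J\subseteq(\widetilde f^{\,q_{n-1}-q_{n+1}}(0),\widetilde f^{\,q_n-q_{n+1}}(0))$, the orbit segment $J,\widetilde f(J),\dots,\widetilde f^{\,q_{n+1}}(J)$ consists of intervals that are \emph{disjoint} except for the (controlled) overlaps forced by the nesting, so that each point of the circle is visited a bounded (in fact universally bounded) number of times by this piece of orbit. This is the mechanism that makes the total distortion summable.

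Next I would pull back step by step. At each step $F$ restricted to a neighbourhood of $\widetilde f^{\,i}(J)$ is, away from the critical point, a diffeomorphism which is asymptotically holomorphic of order $3$ on the real line (by Definition \ref{canext}), and since $J$ is bounded away from the critical point $0$ by the hypothesis on $\overline J$ together with the combinatorics, all the intermediate intervals $\widetilde f^{\,i}(J)$ with $1\le i\le q_{n+1}$ stay away from the critical points of $F$; hence each inverse branch is a genuine asymptotically holomorphic diffeomorphism and Proposition \ref{schwarz} applies with $h$ the corresponding real branch of $f^{-1}$. Applying it once sends $D_\theta(\widetilde f^{\,i}(J))$ into $D_{\theta-M|\widetilde f^{\,i}(J)|\diam(D_\theta(\widetilde f^{\,i}(J)))}(\widetilde f^{\,i-1}(J))$; iterating from $i=j$ down to $i=1$ (with $j\le q_{n+1}$), the final angle is at least
$$
\theta-M\sum_{i=1}^{q_{n+1}}|\widetilde f^{\,i}(J)|\,\diam\big(D_{\theta_i}(\widetilde f^{\,i}(J))\big),
$$
where $\theta_i$ is the (decreasing) angle at stage $i$, which is $\ge\theta_n$. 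By the real bounds $\diam(D_{\theta_i}(\widetilde f^{\,i}(J)))\le\diam(D_{\theta_n}(\widetilde f^{\,i}(J)))\le C|\widetilde f^{\,i}(J)|/\theta_n$ for small $\theta_n$, so the sum is bounded by $(C M/\theta_n)\sum_i|\widetilde f^{\,i}(J)|^2$; and because each point of the circle lies in at most a bounded number of the intervals $\widetilde f^{\,i}(J)$ while $|\widetilde f^{\,i}(J)|$ is comparable to an atom of $\mathcal P_n$, we get $\sum_i|\widetilde f^{\,i}(J)|^2\le C\max_i|\widetilde f^{\,i}(J)|\cdot\sum_i|\widetilde f^{\,i}(J)|\le C\max_i|\widetilde f^{\,i}(J)|$, which tends to $0$ geometrically in $n$. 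Thus I can \emph{choose} $\theta_n\to0$ so slowly that $(CM/\theta_n)\max_i|\widetilde f^{\,i}(J)|\to0$ as well, and then define $K_n$ by $\theta/K_n=\theta-(\text{the error bound})$, i.e. $K_n=\big(1-(CM/\theta_n)\max_i|\widetilde f^{\,i}(J)|\big)^{-1}\to1$; this gives $F^{-j+1}(D_\theta(\widetilde f^{\,j}(J)))\subseteq D_{\theta/K_n}(\widetilde f(J))$. The displayed limit involving $\widetilde f(I_n)$ and $\widetilde f^{\,q_{n+1}}(I_n)$ is then the special case $J=I_n$, $j=q_{n+1}$, $\theta=\theta_n$, combined with the fact that $\diam(D_{\theta_n/K_n}(\cdot))/|\cdot|$ and $\diam(D_{\theta_n}(\cdot))/|\cdot|$ are both $\asymp 2/\theta_n$ with ratio $\to1$ since $K_n\to1$ and $\theta_n\to0$ — so one subtracts two comparable quantities whose difference is controlled by $|K_n-1|\cdot(2/\theta_n)\to0$ by the same slow choice of $\theta_n$.

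The main obstacle I anticipate is the \emph{bookkeeping of the angle loss}: one has to choose the sequences $\theta_n\to0$ and $K_n\to1$ simultaneously and compatibly, making $\theta_n$ decay slowly enough that $\tilde\theta$ stays positive at every one of the $q_{n+1}$ steps (so that every Poincar\'e disk in the chain is genuinely defined and Proposition \ref{schwarz} is legitimately applicable), yet the final angle $\theta/K_n$ is still a definite fraction of $\theta$. The delicate point is that $q_{n+1}$ grows (only exponentially, but still unboundedly) while the diameters shrink geometrically, so the competition is won by geometry — but this requires the summability estimate $\sum_i|\widetilde f^{\,i}(J)|^2\le C\max_i|\widetilde f^{\,i}(J)|$, whose proof rests on the bounded-multiplicity property of the orbit segment $\{J,\dots,\widetilde f^{\,q_{n+1}}(J)\}$; establishing that bounded multiplicity carefully, using the precise position of $\overline J$ inside $(\widetilde f^{\,q_{n-1}-q_{n+1}}(0),\widetilde f^{\,q_n-q_{n+1}}(0))$ and the structure of the dynamical partition, is where the real work lies. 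A secondary technical point is ensuring the initial disks all have diameter below $\delta$; this is where $n_0$ must be taken large, uniformly, using once more the real bounds.
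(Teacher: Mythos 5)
Your plan follows the paper's proof essentially step for step: the iterated Almost Schwarz inclusion replacing the classical Schwarz lemma, the real bounds to get uniform constants $M,\delta$ in Proposition~\ref{schwarz}, the bounded-multiplicity (indeed disjointness for $J=I_n$) of the orbit intervals to obtain the summability $\sum_i\big|\widetilde f^{\,i}(J)\big|^2 \le C\max_i\big|\widetilde f^{\,i}(J)\big|$, a reverse induction on the angle to keep $\theta_j$ above a floor so that every intermediate Poincar\'e disk is legitimately defined, and a slow choice of $\theta_n$ to make $K_n\to1$ while $\theta_n\to0$. One small imprecision: you state you will choose $\theta_n$ so that $(CM/\theta_n)\max_i\big|\widetilde f^{\,i}(J)\big|\to0$, but the displayed-limit claim needs the stronger $d_n/\theta_n^{3}\to0$ (since $\psi'(\theta)\sim-2/\theta^2$ and $K_n-1\sim d_n/\theta_n^2$), which is exactly the condition the paper imposes by choosing $\alpha_n$ with $d_n/\alpha_n^{3}\to0$; this is immediate to fix since $d_n$ decays exponentially, but should be stated correctly.
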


To simplify notation we will prove Proposition \ref{3.4} for the case $J=I_n$ and $j=q_{n+1}$.

\begin{proof} For each $n\in\nt$ and $j\in\{1,...,q_{n+1}-1\}$ we know by combinatorics that $\widetilde{f}$ is a $C^3$ diffeomorphism from $\widetilde{f}^{j}(I_n)$ to $\widetilde{f}^{j+1}(I_n)$. Let $M_{j,n}>0$ and $\delta_{j,n}>0$ given by Proposition \ref{schwarz} applied to the corresponding inverse branch of the extended lift $F$. Moreover, let $M_n=\max_{j\in\{1,...,q_{n+1}-1\}}\{M_{j,n}\}$ and $\delta_n=\min_{j\in\{1,...,q_{n+1}-1\}}\{\delta_{j,n}\}$. For each $n\in\nt$ let $A_n$ and $B_n$ be the affine maps given by:$$A_n(t)=\big(1/\big|\widetilde{f}^{q_{n+1}}(I_n)\big|\big)\big(t-\widetilde{f}^{q_{n+1}}(0)\big)\mbox{ and }B_n(t)=\big(1/\big|\widetilde{f}(I_n)\big|\big)\big(t-\widetilde{f}(0)\big).$$

By the real bounds, the $C^3$ diffeomorphism $T_n:[0,1] \to [0,1]$ given by:$$T_n=B_n \circ \widetilde{f}^{-q_{n+1}+1} \circ A_n^{-1}$$has universally bounded distortion, and therefore:$$\inf_{\substack{t\in[0,1]\\n \geq n_0}}\big\{\big|T_n'(t)\big|\big\}>0.$$

In particular $M=\sup_{n \geq n_0}\{M_n\}$ is finite, and $\delta=\inf_{n \geq n_0}\{\delta_n\}$ is positive. Let $d_n=\max_{1 \leq j \leq q_{n+1}}\big|\widetilde{f}^j(I_n)\big|$, and recall that by the real bounds the sequence $\{d_n\}_{n \geq 1}$ goes to zero exponentially fast when $n$ goes to infinity. In particular we can choose a sequence $\big\{\alpha_n\big\}_{n \geq 1}\subset\big(0,\frac{\pi}{2}\big)$ also convergent to zero but such that:$$\lim_{n\to+\infty}\left(\frac{d_n}{(\alpha_n)^3}\right)=0\,.$$
Let $\psi:(0,\pi)\to[1,+\infty)$ defined by:$$\psi(\theta)=\max\left\{1,\frac{1+\cos\theta}{\sin\theta}\right\}=\left\{\begin{array}{ll}
\frac{1+\cos\theta}{\sin\theta} & \mbox{for }\theta\in\big(0,\frac{\pi}{2}\big)\\
1 & \mbox{for }\theta\in\big[\frac{\pi}{2},\pi\big)\\
\end{array} \right.$$

Note that $\psi$ is an orientation-reversing real-analytic diffeomorphism between $\big(0,\frac{\pi}{2}\big)$ and $\big(1,+\infty\big)$. As we said before, for any $\theta\in(0,\pi)$ and any real numbers $a<b$, we have that $\diam\big(D_{\theta}(a,b)\big)=\psi(\theta)|b-a|$. Now define:$$\theta_{n}=\alpha_n+\psi(\alpha_n)(\delta M)\sum_{j=0}^{q_{n+1}-1}\big|\widetilde{f}^{j+1}(I_n)\big|^2>\alpha_n>0$$and:$$K_n=\frac{\theta_n}{\alpha_n}=1+\left(\frac{\psi(\alpha_n)}{\alpha_n}\right)(\delta M)\sum_{j=0}^{q_{n+1}-1}\big|\widetilde{f}^{j+1}(I_n)\big|^2>1\,.$$

By the choice of $\alpha_n$ we have:$$\lim_{n\to+\infty}\left(\frac{\psi(\alpha_n)}{\alpha_n}\right)d_n=0\,,$$and since:$$\sum_{j=0}^{q_{n+1}-1}\big|\widetilde{f}^{j+1}(I_n)\big|^2 \leq d_n$$we have that $\theta_n \to 0$ and $K_n \to 1$ when $n$ goes to infinity. We also have:
\begin{align}
\big|\psi\big(\theta_n/K_n\big)-\psi\big(\theta_n\big)\big|&\leq\left(\max_{\theta\in\big[\theta_n/K_n,\theta_n\big]}\big|\psi'(\theta)\big|\right)\big|\theta_n-\theta_n/K_n\big|\notag\\
&=\big|\psi'(\theta_n/K_n)\big|\big|\theta_n-\theta_n/K_n\big|\notag\\
&=\left(\frac{\psi(\theta_n/K_n)}{\sin(\theta_n/K_n)}\right)\big|\theta_n-\theta_n/K_n\big|\notag\\
&=\left(\frac{\psi(\alpha_n)}{\sin(\alpha_n)}\right)\big|\theta_n-\alpha_n\big|\notag\\
&=(\delta M)\left(\frac{\big(\psi(\alpha_n)\big)^2}{\sin(\alpha_n)}\right)\sum_{j=0}^{q_{n+1}-1}\big|\widetilde{f}^{j+1}(I_n)\big|^2\notag\\
&\leq(\delta M)\left(\frac{\big(\psi(\alpha_n)\big)^2}{\sin(\alpha_n)}\right)d_n\,,\notag
\end{align}
and this goes to zero by the choice of $\alpha_n$. In particular:$$\lim_{n\to+\infty}\left|\frac{\diam\big(D_{\theta_n/K_n}\big(\widetilde{f}(I_n)\big)\big)}{\big|\widetilde{f}(I_n)\big|}-\frac{\diam\big(D_{\theta_n}\big(\widetilde{f}^{q_{n+1}}(I_n)\big)\big)}{\big|\widetilde{f}^{q_{n+1}}(I_n)\big|}\right|=0$$as stated. We choose $n_0\in\nt$ such that for all $n \geq n_0$ we have $\psi(\alpha_n)d_n<\delta$.

Define inductively $\{\theta_j\}_{j=1}^{j=q_{n+1}}$ by $\theta_{q_{n+1}}=\theta_n$ and for $1 \leq j \leq q_{n+1}-1$ by:$$\theta_j=\theta_{j+1}-M\big|\widetilde{f}^{j+1}(I_n)\big|\diam\big(D_{\theta_{j+1}}\big(\widetilde{f}^{j+1}(I_n)\big)\big)=\theta_{j+1}-M\psi(\theta_{j+1})\big|\widetilde{f}^{j+1}(I_n)\big|^2$$

We want to show that $\theta_j>\alpha_n=\frac{\theta_n}{K_n}$ for all $1 \leq j \leq q_{n+1}$. For this we claim that for any $1 \leq j \leq q_{n+1}$ we have that:$$\theta_j\geq\alpha_n+\psi(\alpha_n)(\delta M)\sum_{k=0}^{j-1}\big|\widetilde{f}^{k+1}(I_n)\big|^2>\alpha_n\,.$$

The claim follows by (reverse) induction in $j$ (the case $j=q_{n+1}$ holds by definition). If the claim is true for $j+1$ we have $\psi(\theta_{j+1})<\psi(\alpha_m)$, this implies $\theta_j>\theta_{j+1}-\psi(\alpha_n)(\delta M)\big|\widetilde{f}^{j+1}(I_n)\big|^2$ and with this the claim is true for $j$. It follows that:$$\diam\big(D_{\theta_{j}}\big(\widetilde{f}^{j}(I_n)\big)\big)=\psi(\theta_j)\big|\widetilde{f}^j(I_n)\big|<\psi(\alpha_n)d_n<\delta\leq\delta_j\quad\mbox{for all}\quad 1 \leq j \leq q_{n+1}\,.$$

By Proposition \ref{koebe} the inverse branch $F^{-1}$ mapping $\widetilde{f}^{j+1}(I_n)$ back to $\widetilde{f}^j(I_n)$ is a well-defined diffeomorphism from the Poincar\'e disk $D_{\theta_{j+1}}\big(\widetilde{f}^{j+1}(I_n)\big)$ onto its image, and by Proposition \ref{schwarz} we know that:$$F^{-1}\big(D_{\theta_{j+1}}\big(\widetilde{f}^{j+1}(I_n)\big)\big)\subseteq\big(D_{\theta_{j}}\big(\widetilde{f}^{j}(I_n)\big)\big).$$

The claim also gives us:$$\theta_1\geq\alpha_n+\psi(\alpha_n)(\delta M)\big|\widetilde{f}(I_n)\big|^2>\alpha_n=\frac{\theta_n}{K_n}\,,$$and this finish the proof.
\end{proof}

\begin{coro}\label{laest} There exist constants $\alpha>0$, $C_1,C_2>0$ and $\lambda\in(0,1)$ with the following property: let $f$ be a $C^3$ critical circle map with irrational rotation number, and let $F$ be its extended lift. There exists $n_0 \in \nt$ such that for each $n \geq n_0$ there exists an $\R$-symmetric topological disk $Y_n$ with:$$N_{\alpha}\big(\widetilde{f}(I_n)\big) \subset Y_n\,,$$such that the composition $F^{q_{n+1}-1}:Y_n \to F^{q_{n+1}-1}(Y_n)$ is a well defined $C^3$ diffeomorphism and we have:

\begin{enumerate}
\item\label{1del69} $$C_1<\frac{\diam\big(F^{q_{n+1}-1}(Y_n)\big)}{\big|\widetilde{f}^{q_{n+1}}(I_n)\big|}<C_2\,,\quad\mbox{and}$$
\item\label{2del69} $$\sup_{z \in Y_n}\left\{\frac{\big|\overline{\partial}F^{q_{n+1}-1}(z)\big|}{\big|\partial F^{q_{n+1}-1}(z)\big|}\right\} \leq C_2\lambda^n\,.$$
\end{enumerate}
\end{coro}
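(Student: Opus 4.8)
\textbf{Proof strategy for Corollary \ref{laest}.}
The plan is to upgrade Proposition \ref{3.4} from a statement about Poincar\'e disks based on the small dynamical interval $\widetilde{f}(I_n)$ to a statement about a genuinely \emph{definite} complex neighbourhood $N_\alpha(\widetilde{f}(I_n))$, using the angle-control $\theta_n\to 0$ together with the identity $\diam(D_\theta(a,b))=\psi(\theta)|b-a|\sim (2/\theta)|b-a|$. First I would observe that since $\theta_n\to 0$, the Poincar\'e disk $D_{\theta_n}(\widetilde{f}(I_n))$ has Euclidean diameter comparable to $(1/\theta_n)|\widetilde{f}(I_n)|$, which is much larger than $|\widetilde{f}(I_n)|$; in particular, for each fixed $\alpha>0$, once $n$ is large enough (so that $\psi(\theta_n)\geq 1+2\alpha$, say) one has the inclusion $N_\alpha(\widetilde{f}(I_n))\subset D_{\theta_n}(\widetilde{f}(I_n))$. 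This is just plane geometry: a point at distance $\leq \alpha|I|$ from the interval $I$ views $I$ under an angle bounded below by a constant depending only on $\alpha$, hence is swallowed by $D_{\theta_n}(I)$ as soon as $\theta_n$ is below that constant.

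Next I would set $Y_n = F^{-(q_{n+1}-1)}\big(D_{\theta_n}(\widetilde{f}^{q_{n+1}}(I_n))\big)$, where $F^{-(q_{n+1}-1)}$ is the inverse branch carrying $\widetilde{f}^{q_{n+1}}(I_n)$ back to $\widetilde{f}(I_n)$; by Proposition \ref{3.4} (applied with $J=I_n$, $j=q_{n+1}$, and $\theta=\theta_n$) this inverse branch is a well-defined $C^3$ diffeomorphism on $D_{\theta_n}(\widetilde{f}^{q_{n+1}}(I_n))$ whose image lies in $D_{\theta_n/K_n}(\widetilde{f}(I_n))=D_{\alpha_n}(\widetilde{f}(I_n))$. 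Running the branch \emph{forward} instead: $F^{q_{n+1}-1}$ is a well-defined $C^3$ diffeomorphism on $Y_n$ with image exactly $D_{\theta_n}(\widetilde{f}^{q_{n+1}}(I_n))$. It remains to check $N_\alpha(\widetilde{f}(I_n))\subset Y_n$: the image $F^{q_{n+1}-1}(N_\alpha(\widetilde{f}(I_n)))$ must be compared to $D_{\theta_n}(\widetilde{f}^{q_{n+1}}(I_n))$. Here one uses the real bounds: the return map $T_n = B_n\circ\widetilde{f}^{-q_{n+1}+1}\circ A_n^{-1}$ has universally bounded distortion (Koebe), so $\widetilde{f}^{q_{n+1}-1}$ distorts the symmetric neighbourhood $N_\alpha(\widetilde{f}(I_n))$ into something contained in $N_{\alpha'}(\widetilde{f}^{q_{n+1}}(I_n))$ for a possibly larger but still \emph{universal} $\alpha'$ (using also the almost-holomorphy to control the imaginary direction, e.g. via the already-established fact that the inverse branch preserves Poincar\'e disks of all angles $\geq\theta_n$); then the same geometric observation as in the first paragraph, applied at the target scale with $\alpha'$ in place of $\alpha$, gives $N_{\alpha'}(\widetilde{f}^{q_{n+1}}(I_n))\subset D_{\theta_n}(\widetilde{f}^{q_{n+1}}(I_n))$ once $n$ is large. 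Composing, $N_\alpha(\widetilde{f}(I_n))\subset Y_n$ as desired, after shrinking $\alpha$ if necessary.

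For item \eqref{1del69}, the diameter of $F^{q_{n+1}-1}(Y_n)=D_{\theta_n}(\widetilde{f}^{q_{n+1}}(I_n))$ equals $\psi(\theta_n)|\widetilde{f}^{q_{n+1}}(I_n)|$, and although $\psi(\theta_n)\to\infty$, this is \emph{not} what we compare against: the Corollary asks for control of $\diam(F^{q_{n+1}-1}(Y_n))/|\widetilde{f}^{q_{n+1}}(I_n)|$, so I must instead take $Y_n$ with $F^{q_{n+1}-1}(Y_n)$ of bounded eccentricity. The fix is to intersect: set $Y_n = F^{-(q_{n+1}-1)}\big(N_\alpha(\widetilde f(I_n))\text{'s forward image, capped}\big)$ — concretely, define $Y_n$ so that $F^{q_{n+1}-1}(Y_n) = N_{\alpha'}(\widetilde f^{q_{n+1}}(I_n))$ with $\alpha'$ universal; this image is clearly comparable in diameter to $|\widetilde f^{q_{n+1}}(I_n)|$ (giving $C_1=1$, $C_2=1+2\alpha'$), and by the inclusion chain above it contains $F^{q_{n+1}-1}$ of $N_\alpha(\widetilde f(I_n))$, so $Y_n\supset N_\alpha(\widetilde f(I_n))$, and $Y_n\subset D_{\alpha_n}(\widetilde f(I_n))$ since $N_{\alpha'}(\widetilde f^{q_{n+1}}(I_n))\subset D_{\theta_n}(\widetilde f^{q_{n+1}}(I_n))$ and Proposition \ref{3.4} pulls this into $D_{\alpha_n}(\widetilde f(I_n))$; then $Y_n$ itself has diameter $\leq\psi(\alpha_n)|\widetilde f(I_n)|\to$ (comparable to $|\widetilde f(I_n)|$ modulo the slowly-growing $\psi(\alpha_n)$, which is the data Theorem \ref{nuevoE} will later absorb).

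For item \eqref{2del69}, the conformal distortion of $F^{q_{n+1}-1}$ on $Y_n$ must be shown exponentially small. I would write $\overline\partial(F^{q_{n+1}-1})/\partial(F^{q_{n+1}-1})$ using the chain rule as a sum over the $q_{n+1}-1$ factors $F$, each evaluated along the orbit. Since $F$ is asymptotically holomorphic of order $3$, along each intermediate interval $\widetilde f^j(I_n)$ — of length $\leq d_n$ — and within the Poincar\'e disk $D_{\theta_j}(\widetilde f^j(I_n))$ of diameter $\leq\psi(\alpha_n)d_n$, one has $|\overline\partial F|\lesssim (\psi(\alpha_n)d_n)^2$ while $|\partial F|$ is bounded below (real bounds plus almost-holomorphy); the $j$-th term of the sum is then $\lesssim (\psi(\alpha_n)d_n)^2$ times a distortion constant. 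Summing the $q_{n+1}-1$ terms and using $\sum_j |\widetilde f^{j+1}(I_n)|^2 \leq d_n$ together with $d_n\to 0$ exponentially (real bounds) and $\psi(\alpha_n)$ subexponential (the $\alpha_n$ were chosen with $d_n/\alpha_n^3\to 0$, so $\psi(\alpha_n)^2 d_n\to 0$ faster than any fixed rate but one still needs to confirm it beats a geometric rate) yields the bound $C_2\lambda^n$. \textbf{The main obstacle} is precisely this last accounting: ensuring the error $\psi(\alpha_n)^2 d_n$ (and the accumulated distortion over $q_{n+1}-1\sim$ exponentially-many steps) decays \emph{geometrically} rather than merely to zero — this forces a careful choice of $\alpha_n$ balancing the subexponential blow-up of $\psi(\alpha_n)$ against the exponential decay of $d_n$, and a quantitative version of the almost-Schwarz estimate of Proposition \ref{schwarz} that is uniform along the whole orbit (which is exactly where the uniform constants $M,\delta$ extracted in the proof of Proposition \ref{3.4} via the bounded-distortion of $T_n$ are used).
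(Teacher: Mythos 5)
Your overall strategy — pull back a complex domain around $\widetilde f^{q_{n+1}}(I_n)$ through the inverse branch provided by Proposition~\ref{3.4}, and estimate the conformal distortion of $F^{q_{n+1}-1}$ via the chain rule together with the asymptotic holomorphy of $F$ — is the right one, and it is the one the paper uses. But two points in your write-up are genuine gaps, and one of them is fatal in the form you give it.

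\textbf{The interval $J_n$ is essential, not optional.} Throughout your argument you apply Proposition~\ref{3.4} with $J=I_n$ and pull back domains based on the interval $\widetilde f^{q_{n+1}}(I_n)$. The trouble is that a Poincar\'e disk $D_\theta(a,b)$ never contains its endpoints $a,b$: it is a lens built on the \emph{open} interval. Consequently the image under the inverse branch of $D_{\theta'}\big(\widetilde f^{q_{n+1}}(I_n)\big)$ is an open set whose boundary contains the two endpoints of $\widetilde f(I_n)$, and such a set can never contain $N_\alpha\big(\widetilde f(I_n)\big)$ for any $\alpha>0$, since the latter is an open neighbourhood of the \emph{closed} interval $\widetilde f(I_n)$. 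Your attempted repair — declare $Y_n$ to be the preimage of $N_{\alpha'}\big(\widetilde f^{q_{n+1}}(I_n)\big)$ — fails for the same reason: $N_{\alpha'}\big(\widetilde f^{q_{n+1}}(I_n)\big)$ contains balls around the endpoints of $\widetilde f^{q_{n+1}}(I_n)$, but Proposition~\ref{3.4} only guarantees the inverse branch on $D_{\theta'}\big(\widetilde f^{q_{n+1}}(I_n)\big)$, which excludes those balls; so the preimage is not even defined there. The paper avoids this by introducing an open interval $J_n$ with $I_n\cup I_{n+1}\subset J_n\subset\overline{J_n}\subset K_n$ (and $K_n$ contained in the range where $\widetilde f^{q_{n+1}-1}$ is injective), applying Proposition~\ref{3.4} to $J_n$, and taking $Y_n=F^{-(q_{n+1}-1)}\big(D_\theta(\widetilde f^{q_{n+1}}(J_n))\big)$. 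Because $\widetilde f^{q_{n+1}}(I_n)$ lies strictly inside $\widetilde f^{q_{n+1}}(J_n)$, and the two have comparable length by the real bounds, this Poincar\'e disk \emph{does} contain a definite $N$-neighbourhood of the closed interval $\widetilde f^{q_{n+1}}(I_n)$, and the pullback via the return map (after rescaling by $A_n,B_n$ and applying the quasiconformal Koebe estimate, Proposition~\ref{koebe}) contains $N_\alpha\big(\widetilde f(I_n)\big)$. Without a strictly larger interval the inclusion $N_\alpha\big(\widetilde f(I_n)\big)\subset Y_n$ simply cannot be obtained.

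\textbf{The ``main obstacle'' you flag dissolves once you fix the angle.} You begin the argument with $\theta=\theta_n\to 0$, which forces you to carry around $\psi(\theta_n)\to\infty$ and then to agonise, in item~\eqref{2del69}, over whether $\psi(\alpha_n)^2 d_n$ decays geometrically. The paper sidesteps this entirely: since $\theta_n\to 0$, one can \emph{fix} $\theta\in(0,\pi)$ once and for all with $\theta>\theta_n$ for all $n\geq n_0$, and apply Proposition~\ref{3.4} with this fixed $\theta$. Then each intermediate domain $F^{j-1}(Y_n)$ is contained in $D_{\theta/K_n}\big(\widetilde f^j(J_n)\big)$, whose diameter is $\psi(\theta/K_n)\,\big|\widetilde f^j(J_n)\big|$, and $\psi(\theta/K_n)$ stays uniformly bounded because $K_n\to 1$. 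The chain-rule sum therefore becomes $k_n\lesssim\sum_j\big|\widetilde f^j(J_n)\big|^2\lesssim d_n$, which decays geometrically by the real bounds — no delicate balancing of $\alpha_n$ against $d_n$ is needed. (You would also get item~\eqref{1del69} for free, since $\diam\big(F^{q_{n+1}-1}(Y_n)\big)=\psi(\theta)\big|\widetilde f^{q_{n+1}}(J_n)\big|$ with $\psi(\theta)$ a fixed constant and $\big|\widetilde f^{q_{n+1}}(J_n)\big|\asymp\big|\widetilde f^{q_{n+1}}(I_n)\big|$.) In short: your second and third paragraphs are fighting a difficulty that disappears if one applies Proposition~\ref{3.4} with a single fixed $\theta$ from the outset.
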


\begin{proof} For each $n\in\nt$ let:

\begin{itemize}
\item $I_n$ be the closed interval whose endpoints are $0$ and $\big(T^{-p_n}\circ\widetilde{f}^{q_n}\big)(0)$,
\item $J_n$ be the open interval containing the origin that projects to $\big(f^{q_{n+1}}(1),f^{q_n-q_{n+3}}(1)\big)$ under the covering $\pi(t)=e^{2\pi it}$, and
\item $K_n$ be the open interval containing the origin that projects to $\big(f^{q_{n-1}-q_{n+1}}(1),f^{q_n-q_{n+1}}(1)\big)$ under the covering $\pi$.
\end{itemize}

Note that $I_n \cup I_{n+1}\subset J_n\subset\overline{J_n}\subset K_n$ (see Figure 6). By combinatorics, the map $\widetilde{f}:\widetilde{f}^j(K_n)\to\widetilde{f}^{j+1}(K_n)$ is a diffeomorphism for all $j\in\{1,...,q_{n+1}-1\}$, and therefore all restrictions $\widetilde{f}:\widetilde{f}^j(J_n)\to\widetilde{f}^{j+1}(J_n)$ are diffeomorphisms for any $j\in\{1,...,q_{n+1}-1\}$ (just as in the proof of Proposition \ref{3.4}).

\begin{figure}[ht!]
\begin{center}
\includegraphics[scale=0.6]{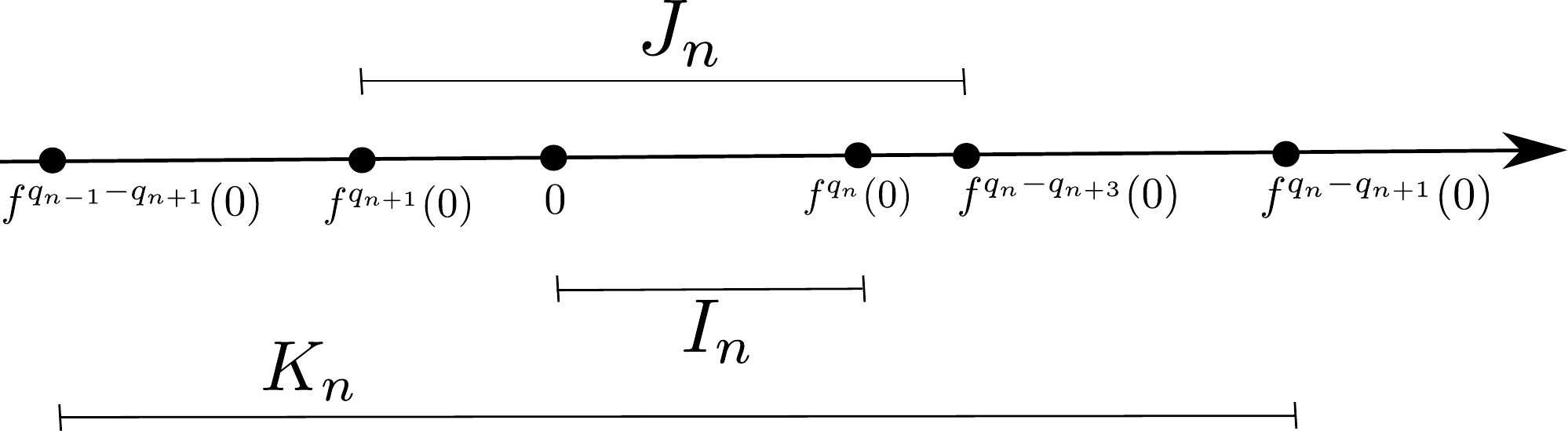}
\caption{Relative positions of the relevant points in the proof of Corollary \ref{laest}.}
\end{center}
\end{figure}

Recall that the extended lift $F:B_R\to W_R$ is given by the composition $F=H_2 \circ \widetilde{A} \circ H_1$ (see Definition \ref{canext}). Let $n_0\in\nt$ given by Proposition \ref{3.4}, and for each $n \geq n_0$ let $K_n \geq 1$ and $\theta_n>0$ also given by Proposition \ref{3.4}. Fix $\theta\in(0,\pi)$ such that $\theta>\theta_n$ for all $n \geq n_0$ and such that:$$\big|\mu_{H_i}(z)\big|<\left(\frac{1}{2}\right)\left(d\big(z,\widetilde{f}^j(J_n)\big)\right)^2$$for any $z \in D_{\theta/K_n}\big(\widetilde{f}^j(J_n)\big)$, any $j\in\{1,...,q_{n+1}-1\}$ and any $i\in\{1,2\}$ (as before $\mu_{H_i}$ denotes the Beltrami coefficient of the quasiconformal homeomorphism $H_i$, and $d$ denotes the Euclidean distance in the complex plane). The existence of such $\theta$ is guaranteed by Proposition \ref{3.4}, the fact that both $H_i$ are asymptotically holomorphic in $\R$ of order $3$, and the last item in Proposition \ref{ext}.

Let $Y_n \subset F^{-q_{n+1}+1}\big(D_{\theta}\big(\widetilde{f}^{q_{n+1}}(J_n)\big)\big)$ be the preimage of $D_{\theta}\big(\widetilde{f}^{q_{n+1}}(J_n)\big)$ under $F^{q_{n+1}-1}$ given by Proposition \ref{3.4}, and note that:

\begin{itemize}
\item $Y_n$ is an $\R$-symmetric topological disk,
\item $\overline{\widetilde{f}(I_n)} \subset Y_n$,
\item $\widetilde{f}(I_{n+1}) \subset Y_n$.
\item By Proposition \ref{3.4}, $F^j(Y_n) \subset D_{\theta/K_n}\big(\widetilde{f}^{j+1}(J_n)\big)$ for all $j\in\{0,1,...,q_{n+1}-1\}$.
\end{itemize}

Moreover:$$\diam\big(F^{q_{n+1}-1}(Y_n)\big)=\diam\big(D_{\theta}\big(\widetilde{f}^{q_{n+1}}(J_n)\big)\big)=\psi(\theta)\big|\widetilde{f}^{q_{n+1}}(J_n)\big|\,,$$and by the real bounds $\big|\widetilde{f}^{q_{n+1}}(J_n)\big|$ and $\big|\widetilde{f}^{q_{n+1}}(I_n)\big|$ are comparable (with universal constants independent of $n \geq n_0$). Again the map $\psi$ is the same as in the proof of Proposition \ref{3.4}. This gives us Item (\ref{1del69}), and now we prove Item (\ref{2del69}). For each $n \geq n_0$ let $k_n \in [0,1)$ be the conformal distortion of $F^{q_{n+1}-1}$ at $Y_n$, that is:$$k_n=\sup_{z \in Y_n}\left\{\frac{\big|\overline{\partial}F^{q_{n+1}-1}(z)\big|}{\big|\partial F^{q_{n+1}-1}(z)\big|}\right\}\,.$$
Moreover, for each $j\in\{1,...,q_{n+1}-1\}$ let $K_{n,j}$, $K_{n,j}(1)$ and $K_{n,j}(2)$ in $[1,+\infty)$ be the quasiconformality of $F$ at $F^{j-1}(Y_n)$, of $H_1$ also at $F^{j-1}(Y_n)$, and of $H_2$ at $(\widetilde{A} \circ H_1)\big(F^{j-1}(Y_n)\big)$ respectively. Since $\widetilde{A}$ is conformal we have that:
\begin{align}
k_n&\leq\log\left(\prod_{j=1}^{q_{n+1}-1}K_{n,j}\right)=\sum_{j=1}^{q_{n+1}-1}\log K_{n,j}\notag\\
&=\sum_{j=1}^{q_{n+1}-1}\big(\log K_{n,j}(1)+\log K_{n,j}(2)\big)\notag\\
&\leq\sum_{j=1}^{q_{n+1}-1}M_0\,\big(\diam\big(F^{j-1}(Y_n)\big)\big)^2\quad\quad\mbox{(for some $M_0>1$)}\notag\\
&\leq\sum_{j=1}^{q_{n+1}-1}M_0\,\big(\diam\big(D_{\theta/K_n}(\widetilde{f}^j(J_n))\big)\big)^2\notag\\
&=\sum_{j=1}^{q_{n+1}-1}M_0\,\big(\psi(\theta/K_n)\big)^2\big|\widetilde{f}^j(J_n)\big|^2<M_1\left(\sum_{j=1}^{q_{n+1}-1}\big|\widetilde{f}^j(J_n)\big|^2\right)\,.\notag
\end{align}

The last inequality follows from the fact that $K_n \to 1$ when $n$ goes to $\infty$. By combinatorics the projection of the family $\big\{\widetilde{f}^j(J_n)\big\}_{j=1}^{q_{n+1}-1}$ to the unit circle has finite multiplicity of intersection (independent of $n \geq n_0$), and therefore:
\begin{equation}\label{fecho}
\sum_{j=1}^{q_{n+1}-1}\left|\widetilde{f}^j(J_n)\right|^2<M_2\left(\max_{j\in\{1,...,q_{n+1}-1\}}\left|\widetilde{f}^j(J_n)\right|\right)\,,
\end{equation}
where the constant $M_2>0$ only depends on the multiplicity of intersection of the projection of the family $\big\{\widetilde{f}^j(J_n)\big\}_{j=1}^{q_{n+1}-1}$ to the unit circle. By the real bounds, the right hand of \eqref{fecho} goes to zero exponentially fast at a universal rate (independent of $f$), and therefore we obtain constants $\lambda\in(0,1)$ and $C>0$ such that:$$k_n=\sup_{z \in Y_n}\left\{\frac{\big|\overline{\partial}F^{q_{n+1}-1}(z)\big|}{\big|\partial F^{q_{n+1}-1}(z)\big|}\right\} \leq C\lambda^n\quad\mbox{for all $n \geq n_0$.}$$

To finish the proof of Corollary \ref{laest} we need to obtain definite domains around $\widetilde{f}(I_n)$ contained in $Y_n$. As in the proof of Proposition \ref{3.4}, for each $n \geq n_0$ let $A_n$ and $B_n$ be the affine maps given by:$$A_n(z)=\big(1/\big|\widetilde{f}^{q_{n+1}}(I_n)\big|\big)\big(z-\widetilde{f}^{q_{n+1}}(0)\big)\mbox{ and }B_n(z)=\big(1/\big|\widetilde{f}(I_n)\big|\big)\big(z-\widetilde{f}(0)\big),$$and also let $Z_n=A_n\big(D_{\theta}\big(\widetilde{f}^{q_{n+1}}(J_n)\big)\big)$. By the real bounds there exists a universal constant $\alpha_0>0$ such that:$$N_{\alpha_0}\big([0,1]\big) \subset Z_n\quad\mbox{for all $n \geq n_0$.}$$

The $\R$-symmetric orientation preserving $C^3$ diffeomorphism $T_n:Z_n \to T_n(Z_n)$ given by:$$T_n=B_n \circ F^{-q_{n+1}+1} \circ A_n^{-1}$$ induces a diffeomorphism in $[0,1]$ which, again by the real bounds, has universally bounded distortion. In particular there exists $\varepsilon>0$ such that $\big|T_n'(t)\big|>\varepsilon$ for all $t\in[0,1]$ and for all $n \geq n_0$. By Proposition \ref{koebe} there exists $\alpha>0$ (only depending on $\alpha_0$ and $\varepsilon$) such that (by taking $n_0$ big enough):$$N_{\alpha}\big([0,1]\big)\subset T_n(Z_n)\quad\mbox{for all $n \geq n_0$,}$$and therefore:$$N_{\alpha}\big(\widetilde{f}(I_n)\big)\subset Y_n\quad\mbox{for all $n \geq n_0$.}$$
\end{proof}

\begin{prop}\label{estdist} There exist constants $\alpha>0$, $C_1,C_2>0$ and $\lambda\in(0,1)$ with the following property: let $f$ be a $C^3$ critical circle map with irrational rotation number, and let $F$ be its extended lift. There exists $n_0 \in \nt$ such that for each $n \geq n_0$ there exists an $\R$-symmetric topological disk $X_n$ with:$$N_{\alpha}(I_n) \subset X_n\,,\quad\mbox{where $I_n=\big[0,(T^{-p_n}\circ\widetilde{f}^{q_n})(0)\big]$,}$$such that the composition $F^{q_{n+1}}$ is well defined in $X_n$, it has a unique critical point at the origin, and we have:

\begin{enumerate}
\item\label{1del610} $$C_1<\frac{\diam\big(F^{q_{n+1}}(X_n)\big)}{\big|\widetilde{f}^{q_{n+1}}(I_n)\big|}<C_2\,,\quad\mbox{and}$$
\item\label{2del610} $$\sup_{z \in X_n\setminus\{0\}}\left\{\frac{\big|\overline{\partial}F^{q_{n+1}}(z)\big|}{\big|\partial F^{q_{n+1}}(z)\big|}\right\} \leq C_2\lambda^n\,.$$
\end{enumerate}
\end{prop}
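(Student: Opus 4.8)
The plan is to write $F^{q_{n+1}}=F^{q_{n+1}-1}\circ F$ and to combine the diffeomorphism $F^{q_{n+1}-1}\colon Y_n\to F^{q_{n+1}-1}(Y_n)$ already produced in Corollary \ref{laest} with an analysis of the single extra application of $F$ --- the one that crosses the critical point. First I would take $X_n$ to be the connected component of $F^{-1}(Y_n)$ containing the interval $I_n$. Using the factorisation $F=H_2\circ\widetilde{A}\circ H_1$ of Definition \ref{canext}, where $H_1,H_2$ are the fixed $C^{3}$ diffeomorphisms of Proposition \ref{ext} (and $H_1(0)=0$ because $h_1$ fixes the critical point), while $\widetilde{A}$ is the \emph{fixed holomorphic} map, odd and with $\widetilde{A}(t)=\tfrac{2\pi^{2}}{3}t^{3}+O(t^{5})$ near $0$, and noting that for large $n$ the $\R$-symmetric topological disk $Y_n$ is tiny around $\widetilde{f}(I_n)$ and contains no critical value of $F$ other than $\widetilde{f}(0)=F(0)$, one checks that $F\colon X_n\to Y_n$ is a proper $\R$-symmetric branched covering of degree $3$, ramified only over the origin (the single critical point of $F$ inside $X_n$, the remaining ones lying at the nonzero integers, at definite distance from $X_n$). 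By Riemann--Hurwitz $X_n$ is a topological disk; therefore $F^{q_{n+1}}=F^{q_{n+1}-1}\circ F$ is well defined on $X_n$, its only critical point is the origin and it is of cubic type, and $F^{q_{n+1}}(X_n)=F^{q_{n+1}-1}(Y_n)$, so that Item \eqref{1del610} is \emph{immediate} from Item \eqref{1del69} of Corollary \ref{laest}.

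Next I would establish the definite neighbourhood $N_{\alpha}(I_n)\subset X_n$. Since $H_1,H_2$ are fixed $C^{3}$ diffeomorphisms and the scale $|I_n|$ is exponentially small by the real bounds, the restrictions of $H_1$ and $H_2^{-1}$ to the relevant tiny neighbourhoods are $(1+o(1))$-close to their linear parts; it therefore suffices to produce a definite neighbourhood of $H_1(I_n)$ inside the component of $\widetilde{A}^{-1}$ of a definite neighbourhood of $\widetilde{A}(H_1(I_n))$. Because $\widetilde{A}$ is \emph{one fixed holomorphic map}, rescaling the interval $H_1(I_n)$ --- which is adjacent to the critical point $0$ --- to $[0,1]$ makes $\widetilde{A}$ converge, uniformly on a fixed complex neighbourhood of $[0,1]$, to the model map $z\mapsto z^{3}$; for this model the statement is elementary, and it transfers to all large $n$ by the uniform convergence. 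The same ideas, together with the inclusion $Y_n\subset D_{\theta/K_n}\big(\widetilde{f}(J_n)\big)$ coming from Proposition \ref{3.4} --- which gives $\diam Y_n\asymp|\widetilde{f}(I_n)|\asymp|I_n|^{3}$ --- and the cubic branching, also yield $\diam X_n\asymp|I_n|$, a bound I record for the next step.

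For the distortion estimate \eqref{2del610} I would use the chain rule for Beltrami coefficients. On $F(X_n)=Y_n$, Item \eqref{2del69} of Corollary \ref{laest} gives $\big|\mu_{F^{q_{n+1}-1}}\big|\le C_2\lambda^{n}$. For $\mu_F$ on $X_n$: because $\widetilde{A}$ is holomorphic, $\mu_F$ at a point is controlled by $\mu_{H_1}$ there and by $\mu_{H_2}$ at the corresponding point, and the asymptotic holomorphicity of order $3$ of $H_1$ and $H_2$ (Proposition \ref{ext}) makes both of these $O\big(d(\cdot,\R)^{2}\big)$; since $\diam X_n\asymp|I_n|$ we have $d(z,\R)<C|I_n|$ on $X_n$, and $|I_n|$ is exponentially small at a universal rate by the real bounds, so $\big\|\mu_F\big\|_{L^{\infty}(X_n)}\le C'\lambda^{n}$. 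Combining these through the standard bound $$\big|\mu_{F^{q_{n+1}}}(z)\big|\ \le\ \frac{\big|\mu_F(z)\big|+\big|\mu_{F^{q_{n+1}-1}}(F(z))\big|}{1-\big|\mu_F(z)\big|\,\big|\mu_{F^{q_{n+1}-1}}(F(z))\big|}\qquad\text{for }z\in X_n\setminus\{0\},$$ valid since $\partial F\neq0$ off the origin, yields \eqref{2del610} after relabelling $C_1$, $C_2$ and $\lambda$.

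The step I expect to be the main obstacle is the second one: producing the \emph{definite} (that is, $n$-uniform) neighbourhood $N_{\alpha}(I_n)\subset X_n$ and the comparabilities $\diam Y_n\asymp|\widetilde{f}(I_n)|$ and $\diam X_n\asymp|I_n|$, all with constants independent of $n$ and of $f$. The device that makes this tractable is the factorisation $F=H_2\circ\widetilde{A}\circ H_1$ through the single fixed holomorphic map $\widetilde{A}$: it reduces the geometry of the cubic branching to the trivial model $z\mapsto z^{3}$, while the finitely smooth diffeomorphisms $H_1,H_2$ enter only through their harmless bounded distortion at small scales and through their asymptotic holomorphicity, which is exactly what the Beltrami estimate needs.
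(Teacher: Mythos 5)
Your proof follows essentially the same route as the paper's: the paper also takes $X_n=F^{-1}(Y_n)$, factors $F$ near the origin through a fixed cubic (it writes $F|_\Omega=Q\circ\psi$ with $Q(z)=z^3+\widetilde f(0)$ and $\psi$ a $C^3$ local diffeomorphism, which is just a repackaging of your $H_2\circ\widetilde A\circ H_1$), deduces Item \eqref{1del610} directly from $F^{q_{n+1}}(X_n)=F^{q_{n+1}-1}(Y_n)$, and obtains $N_\alpha(I_n)\subset X_n$ from elementary properties of the cube root together with Proposition~\ref{koebe}. The one place where you are more careful than the printed argument is Item \eqref{2del610}: the paper only records $\|\mu_\psi\|_\infty\le(K-1)/(K+1)<1$ on $\Omega$ before asserting the exponential bound, whereas the exponential decay actually requires, as you spell out, that on the shrinking disk $X_n$ (of diameter $\asymp|I_n|$, exponentially small by the real bounds) asymptotic holomorphicity of order $3$ forces $\|\mu_F\|_{L^\infty(X_n)}=O(|I_n|^2)$, and only then does the composition formula for Beltrami coefficients yield the claimed $C\lambda^n$ bound; your version makes this dependence explicit.
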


\begin{proof} From the construction of the extended lift $F$ in Subsection \ref{SubSext} (see also Lemma \ref{theta0}) there exists a complex neighbourhood $\Omega$ of the origin such that the restriction $F:\Omega \to F(\Omega)$ is of the form $Q\circ\psi$, where $Q(z)=z^3+\widetilde{f}(0)$, and $\psi:\Omega \to Q^{-1}\big(F(\Omega)\big)$ is an $\R$-symmetric orientation preserving $C^3$ diffeomorphism fixing the origin. In particular there exist $\varepsilon>0$ and $\delta>0$ such that if $t\in(-\delta,\delta)$ then $\big|(\psi^{-1})'(t)\big|>\varepsilon$, where $(\psi^{-1})'$ denotes the one-dimensional derivative of the restriction of $\psi^{-1}$ to $Q^{-1}\big(F(\Omega)\big)\cap\R$. Let $K>1$ given by Proposition \ref{koebe} applied to $\varepsilon>0$. Since $\psi$ is asymptotically holomorphic of order $3$ in $\Omega$, we can choose $\Omega$ small enough in order to have that $\psi$ is $K$-quasiconformal. By taking $n_0\in\nt$ big enough we can assume that $\big|\psi(I_n)\big|<\delta$ and $Y_n \subset F(\Omega)$ for all $n \geq n_0$, where the topological disk $Y_n$ is the one given by Corollary \ref{laest}. By Corollary \ref{laest} and elementary properties of the cube root map (see for instance \cite[Lemma 2.2]{yampolsky1}) there exists a universal constant $\alpha_0>0$ such that for all $n \geq n_0$ we have that:

\begin{equation}\label{cont}
N_{\alpha_0}\big(\psi(I_n)\big)\subset Q^{-1}(Y_n)\,.
\end{equation}
	
Define $X_n\subset\Omega$ as the preimage of $Y_n$ under $F$, that is, $X_n=F^{-1}(Y_n)=\psi^{-1}\big(Q^{-1}(Y_n)\big)$. Item (\ref{1del610}) follows directly from Item (\ref{1del69}) in Corollary \ref{laest} since $F^{q_{n+1}}(X_n)=F^{q_{n+1}-1}(Y_n)$. By \eqref{cont} and Proposition \ref{koebe} there exists a universal constant $\alpha>0$ such that:$$N_{\alpha}(I_n)\subset X_n\subset\Omega\quad\mbox{for all $n \geq n_0$.}$$

To obtain Item (\ref{2del610}) recall that by Item (\ref{2del69}) in Corollary \ref{laest} we have:$$\sup_{z \in Y_n}\left\{\frac{\big|\overline{\partial}F^{q_{n+1}-1}(z)\big|}{\big|\partial F^{q_{n+1}-1}(z)\big|}\right\} \leq C\lambda^n\,.$$

Since $Q$ is a polynomial, it is conformal at its regular points, and since $\big\|\mu_{\psi}\big\|_{\infty} \leq \frac{K-1}{K+1}<1$ in $\Omega$ we have:$$\sup_{z \in X_n\setminus\{0\}}\left\{\frac{\big|\overline{\partial}F^{q_{n+1}}(z)\big|}{\big|\partial F^{q_{n+1}}(z)\big|}\right\} \leq C\lambda^n\,.$$
\end{proof}

Theorem \ref{nuevoE} follows directly from Proposition \ref{estdist} and its analogue statement for $\widetilde{f}^{q_n}|_{I_{n+1}}$.

\section{Proof of Theorem \ref{compacto}}\label{Secglueing}

As its tittle indicates, this section is entirely devoted to the proof of Theorem \ref{compacto}, and recall that Theorem \ref{compacto} implies our main result (Theorem \ref{main}) as we saw in Section \ref{Secred}.

First let us fix some notation and terminology. By a \emph{topological disk} we mean an open, connected and simply connected set properly contained in the complex plane. Let $\pi:\C\to\C\setminus\{0\}$ be the holomorphic covering $z\mapsto\exp(2\pi iz)$, and let $T:\C\to\C$ be the horizontal translation $z \mapsto z+1$ (which is a generator of the group of automorphisms of the covering). For any $R>1$ consider the \emph{band}:$$B_R=\big\{z\in\C:-\log R<2\pi\Im(z)<\log R\big\},$$which is the universal cover of the \emph{round annulus}:$$A_R=\left\{z\in\C:\frac{1}{R}<\big|z\big|<R\right\}$$via the holomorphic covering $\pi$. Since $B_R$ is $T$-invariant, the translation generates the group of automorphisms of the covering. The restriction $\pi:\R \to S^1=\partial\D$ is also a covering map, the automorphism $T$ preserves the real line, and again generates the group of automorphisms of the covering.

More generally, an \emph{annulus} is an open and connected set $A$ in the complex plane whose fundamental group is isomorphic to $\Z$. By the Uniformization Theorem such an annulus is conformally equivalent either to the punctured disk $\D\setminus\{0\}$, to the punctured plane $\C\setminus\{0\}$, or to some round annulus $A_R=\left\{z\in\C:1/R<\big|z\big|<R\right\}$. In the last case the value of $R>1$ is unique, and there exists a holomorphic covering from $\D$ to $A$ whose group of deck transformations is infinite cyclic, and such that any generator is a M\"obius transformation that has exactly two fixed points at the boundary of the unit disk.

Since the deck transformations are M\"obius transformations, they are isometries of the Poincar\'e metric on $\D$ and therefore there exists a unique Riemannian metric on $A$ such that the covering map provided by the Uniformization Theorem is a local isometry. This metric is complete, and in particular, any two points can be joined by a minimizing geodesic. There exists a unique simple closed geodesic in $A$, whose hyperbolic length is equal to $\pi^2/\log R$. The length of this closed geodesic is therefore a conformal invariant.

We denote by $\Theta$ the antiholomorphic involution $z \mapsto 1/\bar{z}$ in the punctured plane $\C\setminus\{0\}$, and we say that a map is \emph{$S^1$-symmetric} if it commutes with $\Theta$. An annulus is \emph{$S^1$-symmetric} if it is invariant under $\Theta$ (for instance, the round annulus $A_R$ described above is $S^1$-symmetric). In this case, the unit circle is the \emph{core curve} (the unique simple closed geodesic) for the hyperbolic metric in $A$. In this section we will deal only with $S^1$-symmetric annulus. In particular any time that some annulus $A_0$ is contained in some other annulus $A_1$, we have that $A_0$ separates the boundary components of $A_1$ (more technically, the inclusion is \emph{essential} in the sense that the fundamental group $\pi_1(A_0)$ injects into $\pi_1(A_1)$).

Besides Theorem \ref{nuevoE} (stated and proved in Section \ref{Seccombounds}), the main tool in order to prove Theorem \ref{compacto} is Proposition \ref{aprox2} (stated in Section \ref{SecAB}, and proved in Appendix \ref{apA} as a corollary of Ahlfors-Bers Theorem). The proof of Theorem \ref{compacto} will be divided in three subsections. Along the proof, $C$ will denote a positive constant (independent of $n\in\nt$) and $n_0$ will denote a positive (big enough) natural number. At first, let $n_0\in\nt$ given by Theorem \ref{nuevoE}. Moreover let us use the following notation: $W_1=N_{\alpha}\big([-1,0]\big)$, $W_2=W_2(n)=N_{\alpha}\big([0,\xi_n(0)]\big)$, $W_0=B(0,\lambda)$ and $\mathcal{V}=B(0,\lambda^{-1})$, where $\alpha>0$ and $\lambda\in(0,1)$ are the universal constants given by Theorem \ref{nuevoE}. Recall that $\eta_n(0)=-1$ for all $n \geq 1$ after normalization.

\subsection{A first perturbation and a bidimensional glueing procedure}\label{sub1}

From Theorem \ref{nuevoE} we have:

\begin{lema}\label{UniformU} There exists an $\R$-symmetric topological disk $U$ with:$$-1 \in U \subset W_1 \setminus W_0,$$such that for all $n \geq n_0$ the composition:$$\eta_n^{-1}\circ\xi_n:U\to\big(\eta_n^{-1}\circ\xi_n\big)(U)$$is an $\R$-symmetric orientation-preserving $C^3$ diffeomorphism.
\end{lema}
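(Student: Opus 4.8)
The plan is to realise $\eta_n^{-1}\circ\xi_n$, on a \emph{fixed} disk $U$ around $-1$, as the complexification of the one--dimensional glueing diffeomorphism. On the real line the commuting relation (property (2) of Theorem~\ref{nuevoE}, evaluated at the origin) gives $\xi_n(-1)=\xi_n(\eta_n(0))=(\xi_n\circ\eta_n)(0)=(\eta_n\circ\xi_n)(0)=\eta_n(\xi_n(0))$, so $\eta_n^{-1}\circ\xi_n$ takes a neighbourhood of $\eta_n(0)=-1$ onto a neighbourhood of $\xi_n(0)$, once we select the branch of $\eta_n^{-1}$ whose image contains $\xi_n(0)$; the content of the lemma is to do this on a complex disk whose size is independent of $n$ (and of $f$). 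The two inputs are the real bounds, which keep $\xi_n(0)$ at a definite distance from the cubic critical points of $\eta_n$ and $\xi_n$, and Theorem~\ref{nuevoE}, which supplies uniform bounds on the extensions together with conformal distortion $\le C\lambda^n$; these let me run Proposition~\ref{koebe} and standard normal--family arguments with constants that do not depend on $n$.

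First I would produce a definite inverse branch of $\eta_n$ near $\xi_n(0)$. By the real bounds $\xi_n(0)\in[K^{-1},K]$, so for a small universal $r_0>0$ the ball $B(\xi_n(0),r_0)$ lies in $W_2\setminus W_0$ and hence contains no critical point of $\eta_n$ (property (1)). On that ball $\eta_n$ is $C^3$, $\R$--symmetric, uniformly bounded (property (5)), with $\big|\overline{\partial}\eta_n/\partial\eta_n\big|\le C\lambda^n$ (property (7)); moreover, by the real bounds and bounded distortion, the one--dimensional derivative $|\eta_n'|$ is bounded below by a universal constant on $B(\xi_n(0),r_0)\cap\R$. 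Rescaling $B(\xi_n(0),r_0)$ affinely to the unit disk and applying Proposition~\ref{koebe} (using Proposition~\ref{convident} to know that the normalized solutions of the relevant Beltrami equations tend to the identity) I obtain, for $n\ge n_0$, that $\eta_n$ is injective on a slightly smaller definite ball and that its image contains a fixed ball $B\big((\eta_n\circ\xi_n)(0),r_1\big)$ around $\eta_n(\xi_n(0))=(\eta_n\circ\xi_n)(0)$, with $r_1>0$ universal. This yields the desired $\R$--symmetric orientation--preserving $C^3$ branch $\psi_n\colon B\big((\eta_n\circ\xi_n)(0),r_1\big)\to\psi_n\big(B((\eta_n\circ\xi_n)(0),r_1)\big)\ni\xi_n(0)$ of $\eta_n^{-1}$.

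Next I would set $U:=B(-1,r_2)$ for a universal $r_2>0$ with $r_2<\min\{\alpha,1-\lambda\}$; then $U$ is an $\R$--symmetric topological disk with $-1\in U\subset W_1\setminus W_0$, and it does not depend on $n$. It remains to check that $\xi_n(U)$ falls inside the domain $B\big((\eta_n\circ\xi_n)(0),r_1\big)$ of $\psi_n$ for all $n\ge n_0$. Since $\xi_n(-1)=(\eta_n\circ\xi_n)(0)$ is exactly the centre of that ball, it suffices to bound the modulus of continuity of $\xi_n$ at $-1$ uniformly in $n$. But on $W_1\setminus W_0$ the maps $\xi_n$ are local homeomorphisms (property (1)), $K_0$--quasiregular with $K_0$ universal (property (6), for $n_0$ large), and uniformly bounded (property (5)), so $\{\xi_n\}_{n\ge n_0}$ is a normal, equicontinuous family on the compact set $\overline U$. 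Shrinking $r_2$ further if needed, equicontinuity gives $\xi_n(U)\subset B\big(\xi_n(-1),r_1\big)$, and the same normal--family properties make $\xi_n$ injective on $U$; thus $\xi_n\colon U\to\xi_n(U)$ is an $\R$--symmetric orientation--preserving $C^3$ diffeomorphism. Composing with $\psi_n$ shows that $\eta_n^{-1}\circ\xi_n=\psi_n\circ\xi_n$ is an $\R$--symmetric orientation--preserving $C^3$ diffeomorphism on $U$ with $(\eta_n^{-1}\circ\xi_n)(-1)=\xi_n(0)$, which is what is claimed.

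The main obstacle is the first step: turning ``$\eta_n$ has no critical point near $\xi_n(0)$'' into a genuinely uniform inverse--function statement on a complex disk whose radius is independent of $n$ and of $f$. This forces one to combine the quantitative Koebe theorem for quasiconformal maps (Proposition~\ref{koebe}) with the exponentially small conformal distortion of Theorem~\ref{nuevoE} and the lower bound on $|\eta_n'|$ along $\R$ coming from the real bounds. The companion control of $\xi_n$ near $-1$ (equicontinuity and injectivity on a fixed ball) is then a routine normal--families argument once the uniform quasiregularity from Theorem~\ref{nuevoE} is available.
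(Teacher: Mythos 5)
The paper provides no explicit argument here: Lemma~\ref{UniformU} is simply asserted with the phrase ``From Theorem~\ref{nuevoE} we have,'' so there is no paper proof to compare against. Your proposal is a correct and reasonable fleshing-out of the implied deduction, and it identifies exactly the right inputs: the identity $\xi_n(-1)=\eta_n(\xi_n(0))$ from the commuting relation, the real bounds to keep $\xi_n(0)$ at definite distance from the cubic critical point, items (1), (5), (6), (7) of Theorem~\ref{nuevoE} to get uniform quasiregularity and boundedness of the extensions, and Proposition~\ref{koebe} plus equicontinuity/normal-family arguments to get constants independent of $n$.

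One small imprecision worth flagging: Proposition~\ref{koebe} as stated only asserts that the image of the unit disk contains a definite ball; it does not itself assert injectivity. In the first step you invoke it to obtain both that $\eta_n$ is injective on a definite ball around $\xi_n(0)$ and that the image contains a fixed ball around $(\eta_n\circ\xi_n)(0)$; the injectivity part requires a separate (though entirely similar) normal-family/lower-Jacobian argument, essentially the same compactness contradiction used in the proof of Proposition~\ref{koebe} itself. The same remark applies when you claim that the normal-family properties make $\xi_n$ injective on $U$. These are routine fixes, but since Proposition~\ref{koebe} is cited for something it does not literally say, they deserve a word of justification. Also, the step $B(\xi_n(0),r_0)\subset W_2\setminus W_0$ implicitly uses $\lambda<K^{-1}$ for the universal constants $\lambda$ and $K$; this is harmless but should be made explicit.
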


For each $n \geq n_0$ denote by $A_n$ the diffeomorphism $\eta_n^{-1}\circ\xi_n$. Note that $\|\mu_{A_n}\|_{\infty} \leq C\lambda^n$ in $U$ for all $n \geq n_0$, and that the domains $\big\{A_n(U)\big\}_{n \geq n_0}$ are uniformly bounded since they are contained in $\cup_jW_2^j$. Fix $\varepsilon>0$ and $\delta>0$ such that the rectangle:$$V=\big(-1-\varepsilon,-1+\varepsilon\big)\times\big(-i\delta,i\delta\big)$$is compactly contained in $U$, and apply Proposition \ref{aprox2} to the sequence of $\R$-symmetric orientation-preserving $C^3$ diffeomorphisms:$$\left\{A_n:U \to A_n(U)\right\}_{n \geq n_0}$$to obtain a sequence of $\R$-symmetric biholomorphisms:$$\big\{B_n:V \to B_n(V)\big\}_{n \geq n_0}$$such that:$$\big\|A_n-B_n\big\|_{C^0(V)} \leq C\lambda^n\quad\mbox{for all $n \geq n_0$.}$$

From the commuting condition we obtain:

\begin{lema}\label{domain} For each $n \geq n_0$ there exist three $\R$-symmetric topological disks $V_i(n)$ for $i\in\{1,2,3\}$ with the following five properties:

\begin{itemize}
\item $0 \in V_1(n) \subset W_0$;
\item $\big(\eta_n\circ\xi_n\big)(0)=\big(\xi_n\circ\eta_n\big)(0)=\xi_n(-1) \in V_2(n) \subset W_2$;
\item $\xi_n(0) \in V_3(n) \subset W_2$;
\item When restricted to $V_1(n)$, both $\eta_n$ and $\xi_n$ are orientation-preserving three-fold $C^3$ branched coverings onto $V$ and $V_3(n)$ respectively, with a unique critical point at the origin;
\item Both restrictions $\xi_n|_V$ and $\eta_n|_{V_3(n)}$ are orientation-preserving $C^3$ diffeomorphisms onto $V_2(n)$.
\end{itemize}

In particular the composition $\eta_n^{-1}\circ\xi_n$ is an orientation-preserving $C^3$ diffeomorphism from $V$ onto $V_3(n)$ for all $n \geq n_0$.
\end{lema}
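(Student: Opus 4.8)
The plan is to obtain the three disks by transporting the fixed rectangle $V$ around $-1$ (chosen in Subsection~\ref{sub1}) back and forth through $\eta_n$ and $\xi_n$, using the commuting relation in Theorem~\ref{nuevoE} to match up the combinatorics of the preimages. First I would record two uniform consequences of the real bounds together with Theorem~\ref{nuevoE}. (i) There is a definite $\R$-symmetric neighbourhood $\Omega\ni 0$ such that, for all $n\geq n_0$, each of $\eta_n|_{\Omega}$ and $\xi_n|_{\Omega}$ is of the form (a $C^3$ diffeomorphism)$\,\circ\,(z\mapsto z^3)\,\circ\,$(an $\R$-symmetric orientation-preserving $C^3$ diffeomorphism fixing $0$), as in the proof of Proposition~\ref{estdist}; in particular on $\Omega$ both maps are topologically the cube map, with unique branch point $0$ and critical values $\eta_n(0)=-1$ and $\xi_n(0)$. (ii) Since $-1$ and $\xi_n(0)$ lie at a definite distance from $0$, the maps $\xi_n$ near $-1$ and $\eta_n$ near $\xi_n(0)$ are $C^3$ diffeomorphisms with uniformly bounded distortion on definite neighbourhoods. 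Using (i) and (ii) I would shrink $\varepsilon$ and $\delta$ once and for all, independently of $n$, so that for every $n\geq n_0$: \textbf{(a)} $\xi_n$ is injective on $\overline V$; \textbf{(b)} $\xi_n(V)\subset N_{\beta}\big([-1,(\eta_n\circ\xi_n)(0)]\big)\cap N_{\alpha}\big([0,\xi_n(0)]\big)$ and $-1\notin\xi_n(V)$; \textbf{(c)} the connected component of $\eta_n^{-1}(V)$ containing $0$ is contained in $\Omega\cap B(0,\lambda)$. That (b) and (c) hold \emph{uniformly} uses the inclusions in Theorem~\ref{nuevoE}, the uniform lower bound on $|\xi_n(0)|$, and the uniform positive distance of $(\eta_n\circ\xi_n)(0)$ from $0$, from $\xi_n(0)$ and from $-1$, all given by the real bounds.

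With these choices I would set $V_2(n):=\xi_n(V)$, let $V_1(n)$ be the connected component of $\eta_n^{-1}(V)$ containing $0$, and let $V_3(n)$ be the connected component of $\eta_n^{-1}\big(V_2(n)\big)$ containing $\xi_n(0)$. By (a), $\xi_n|_V\colon V\to V_2(n)$ is an orientation-preserving $C^3$ diffeomorphism, so $V_2(n)$ is an $\R$-symmetric topological disk containing $\xi_n(-1)=(\eta_n\circ\xi_n)(0)$, and $V_2(n)\subset W_2$ by (b). Since $V_2(n)$ is a definite-small disk around $(\eta_n\circ\xi_n)(0)=\eta_n(\xi_n(0))$ avoiding the unique critical value $-1$ of $\eta_n$, property (ii) makes $V_3(n)$ a well-defined $\R$-symmetric topological disk contained in $W_2$, containing $\xi_n(0)$, with $\eta_n|_{V_3(n)}\colon V_3(n)\to V_2(n)$ an orientation-preserving $C^3$ diffeomorphism. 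By (c) and the cube-map structure in (i), $V_1(n)$ is an $\R$-symmetric topological disk contained in $W_0=B(0,\lambda)$, containing $0$, on which $\eta_n$ is an orientation-preserving three-fold $C^3$ branched covering onto $V$ with unique critical point $0$. Composing the diffeomorphisms $\xi_n|_V$ and $\eta_n|_{V_3(n)}$ then shows that $\eta_n^{-1}\circ\xi_n=(\eta_n|_{V_3(n)})^{-1}\circ\xi_n|_V\colon V\to V_3(n)$ is an orientation-preserving $C^3$ diffeomorphism, which is the last assertion of the lemma.

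It remains to show that $\xi_n$ maps $V_1(n)$ onto $V_3(n)$ as a three-fold $C^3$ branched covering with unique critical point $0$, and this is where the commuting relation is used. Since $V_1(n)\subset B(0,\lambda)$, the second property in Theorem~\ref{nuevoE} gives $\eta_n\circ\xi_n=\xi_n\circ\eta_n$ on $V_1(n)$, hence $\eta_n\big(\xi_n(V_1(n))\big)=\xi_n\big(\eta_n(V_1(n))\big)=\xi_n(V)=V_2(n)$. Thus $\xi_n(V_1(n))$ is a connected subset of $\eta_n^{-1}(V_2(n))$ containing $\xi_n(0)$, so $\xi_n(V_1(n))\subseteq V_3(n)$; and since $\eta_n|_{V_3(n)}$ is injective while $\eta_n$ already maps $\xi_n(V_1(n))$ onto all of $V_2(n)$, we conclude $\xi_n(V_1(n))=V_3(n)$. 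Consequently $\xi_n|_{V_1(n)}\colon V_1(n)\to V_3(n)$ is proper (boundary maps to boundary), it is branched only at the critical point $0$ of $\xi_n$, and it has degree three: on $V_1(n)$ the composition $\xi_n\circ\eta_n$ is the degree-three branched covering $\eta_n|_{V_1(n)}$ followed by the diffeomorphism $\xi_n|_V$, while it equals $\eta_n\circ\xi_n$, which is $\xi_n|_{V_1(n)}$ followed by the diffeomorphism $\eta_n|_{V_3(n)}$. Enlarging $n_0$ to absorb all the uniform estimates then completes the construction.

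I expect the main obstacle to be technical rather than conceptual: making the smallness and non-degeneracy requirements (a)--(c) hold simultaneously and uniformly in $n\geq n_0$, which requires tracking how the universal constants $\alpha$, $\beta$, $\lambda$ of Theorem~\ref{nuevoE} interact with the real bounds (uniform lower bound on $|\xi_n(0)|$, uniform gaps for $(\eta_n\circ\xi_n)(0)$, uniform size of the cube-map neighbourhood $\Omega$). The conceptual core, by contrast, is the short commuting argument above: it is precisely what prevents $\xi_n(V_1(n))$ from being some a priori non-simply-connected image of $V_1(n)$ under the cube map, and instead identifies it with the topological disk $V_3(n)$.
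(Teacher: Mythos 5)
Your proof is correct and supplies the argument the paper leaves implicit: in the source the lemma appears after the single phrase ``From the commuting condition we obtain:'' with no further justification, so there is no written proof to compare against. Your reconstruction captures exactly the intended mechanism. You use Theorem~\ref{nuevoE} and the real bounds to extract the two uniform structural facts (the cube-map normal form on a definite neighbourhood $\Omega$ of the critical point, and uniformly bounded distortion away from it), shrink $V$ once and for all so that (a)--(c) hold for every $n\geq n_0$, define $V_2(n)=\xi_n(V)$, $V_1(n)$ as the component of $\eta_n^{-1}(V)$ through $0$, and $V_3(n)$ as the component of $\eta_n^{-1}(V_2(n))$ through $\xi_n(0)$, and then invoke the commuting relation in $B(0,\lambda)$ to identify $\xi_n(V_1(n))$ with $V_3(n)$. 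The degree count via the two factorizations of $\eta_n\circ\xi_n=\xi_n\circ\eta_n$ is exactly the clean way to see that $\xi_n|_{V_1(n)}$ is three-to-one, and the properness observation (boundary to boundary) is the correct justification that the restrictions are genuine branched coverings rather than merely local ones. The only thing worth flagging is that you are implicitly working in the normalization $(\eta_n\circ\xi_n)(0)\in I_{\eta_n}$ (the convention the paper adopts in Definition~\ref{renop} and in Figures~2 and~3), which is indeed what the lemma's assertion $V_2(n)\subset W_2$ presupposes; a sentence making that convention explicit would be welcome, but it is not a gap.
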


For each $n \geq n_0$ let $U_1(n)$, $U_2(n)$ and $U_3(n)$ be three $\R$-symmetric topological disks such that:
\begin{itemize}
\item $\overline{U_1(n)}$, $\overline{U_2(n)}$ and $\overline{U_3(n)}$ are pairwise disjoint;
\item $V \bigcap U_j(n)=\emptyset$ and $V_i(n) \bigcap U_j(n)=\emptyset$ for $i,j\in\{1,2,3\}$;
\item $\overline{U_1(n)} \subset W_1$ and $\overline{U_2(n)}\bigcup\overline{U_3(n)} \subset W_2$;
\end{itemize}
and such that:$$\mathcal{U}_n=\interior\left[V\bigcup\left(\bigcup_{i=1}^{i=3}V_i(n)\right)\bigcup\left(\bigcup_{j=1}^{j=3}\overline{U_j(n)}\right)\right]$$is an $\R$-symmetric topological disk (see Figure 7). Note that:$$\overline{I_{\xi_n} \cup I_{\eta_n}}\subset\mathcal{U}_n\subset W_1 \cup W_2\quad\mbox{for all $n \geq n_0$,}$$and that $\mathcal{U}_n\setminus\big(\overline{V \cup V_1(n) \cup V_2(n) \cup V_3(n)}\big)$ has three connected components, which are precisely $U_1(n)$, $U_2(n)$ and $U_3(n)$. By Theorem \ref{nuevoE} we can choose $U_1(n)$, $U_2(n)$ and $U_3(n)$ in order to also have:$$\overline{N_{\delta}\big([-1,0]\big) \cup N_{\delta}\big([0,\xi_n(0)]\big)}\subset\mathcal{U}_n\quad\mbox{for all $n \geq n_0$,}$$for some universal constant $\delta>0$, independent of $n \geq n_0$. Note also that each $\mathcal{U}_n$ is uniformly bounded since it is contained in $N_{\alpha}\big([-1,K]\big)$, where $\alpha>0$ is given by Theorem \ref{nuevoE}, and $K>1$ is the universal constant given by the real bounds.

\begin{figure}[ht!]
\begin{center}
\includegraphics[scale=0.5]{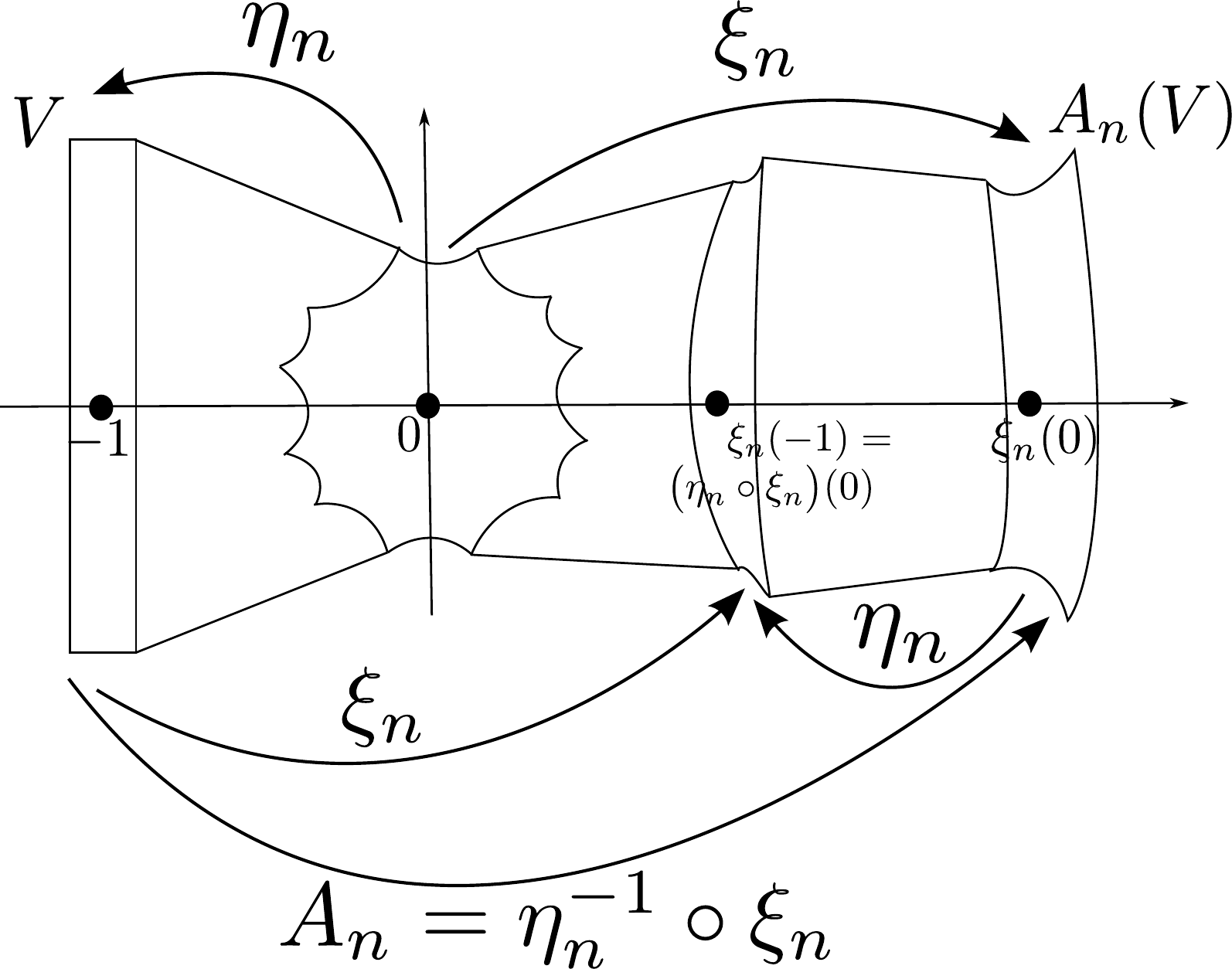}
\caption{The domain $\mathcal{U}_n$.}
\end{center}
\end{figure}

%

For each $n \geq n_0$ let $\mathcal{T}_n$ be an $\R$-symmetric topological disk such that:
\begin{itemize}
\item $V$, $V_1(n)$, $V_2(n)$ and $B_n(V)$ are contained in $\mathcal{T}_n$,
\item $\mathcal{T}_n\setminus\big(V \cup B_n(V)\big)$ is connected and simply connected,
\item The Hausdorff distance between $\overline{\mathcal{T}_n}$ and $\overline{\mathcal{U}_n}$ is less or equal than:$$\big\|A_n-B_n\big\|_{C^0(V)}\leq C\lambda^n,$$
\end{itemize}

\begin{lema}\label{defphin} For each $n \geq n_0$ there exists an orientation-preserving $\R$-symmetric $C^3$ diffeomorphism $\Phi_n:\mathcal{U}_n \to \mathcal{T}_n$ such that:

\begin{itemize}
\item $\Phi_n \equiv Id$ in the interior of $V\cup\overline{U_1(n)}\cup V_1(n)$, in particular $\Phi_n(0)=0$.
\item $B_n=\Phi_n\circ\big(\eta_n^{-1}\circ\xi_n\big)\circ\Phi_n^{-1}$ in $V$, that is, $\Phi_n \circ A_n=B_n \circ \Phi_n$ in $V$.
\item $\big\|\Phi_n-Id\big\|_{C^0(\mathcal{U}_n)} \leq C\lambda^n$.
\item $\|\mu_{\Phi_n}\|_{\infty} \leq C\lambda^n$ in $\mathcal{U}_n$.
\end{itemize}
\end{lema}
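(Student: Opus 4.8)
The plan is to construct $\Phi_n$ as a carefully chosen interpolation between the identity (on a fixed inner region where we must not disturb the critical point and the commuting structure) and the map that conjugates $A_n$ to $B_n$ on the gluing overlap $V$. The key point is that $A_n$ and $B_n$ are exponentially $C^0$-close on $V$ (by Proposition \ref{aprox2}), so the required diffeomorphism differs from the identity by an exponentially small amount, and all the estimates will follow by standard interpolation bounds.

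First I would fix the combinatorial/topological picture. The domain $\mathcal{U}_n$ is partitioned (up to boundary) into the pieces $V$, $V_1(n)$, $V_2(n)$, $V_3(n)$ and the ``buffer'' disks $U_1(n)$, $U_2(n)$, $U_3(n)$, arranged as in Figure~7, and $\mathcal{T}_n$ has the same combinatorial structure except that the piece $V_3(n)$ is replaced by $B_n(V)$ and the buffers are slightly moved (within Hausdorff distance $\leq C\lambda^n$). On the inner set $W^{\mathrm{in}}_n := \operatorname{int}\big(V \cup \overline{U_1(n)} \cup V_1(n)\big)$ I simply set $\Phi_n \equiv \mathrm{Id}$; note this forces $\Phi_n(0)=0$ and, since the origin and a neighbourhood of it lie in $V_1(n)$, the three-fold branched covering structure of $\eta_n,\xi_n$ is unchanged. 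On $\overline{U_j(n)}$ for $j=2,3$ (or wherever the target buffer is) I let $\Phi_n$ be an explicit diffeomorphism carrying $U_j(n)$ onto the corresponding buffer of $\mathcal{T}_n$; since these buffers are within $C\lambda^n$ in the Hausdorff metric, such a diffeomorphism can be chosen $C^0$- (indeed $C^3$-) close to the identity with Beltrami coefficient $O(\lambda^n)$ — this is a routine ``push'' along a vector field supported near the buffer. On $V_2(n)$ (which must be mapped into the $\mathcal{T}_n$-piece playing the same role, again a set Hausdorff-close to it) I do the same. The only genuinely constrained piece is $V_3(n)$: there I need $\Phi_n|_{V_3(n)}$ to be a diffeomorphism onto $B_n(V)$ satisfying $\Phi_n \circ A_n = B_n \circ \Phi_n$ on $V$. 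Since $\Phi_n = \mathrm{Id}$ on $V$, this equation reads $\Phi_n|_{V_3(n)} = B_n \circ A_n^{-1}$ on $A_n(V) = V_3(n)$, so I just \emph{define} $\Phi_n := B_n \circ A_n^{-1}$ on $V_3(n)$; because $\|A_n - B_n\|_{C^0(V)} \leq C\lambda^n$ and $A_n^{-1}$ has derivative bounded below uniformly (real bounds plus Koebe-type control on the almost-holomorphic extensions from Theorem~\ref{nuevoE}), this map is $C^0$-close to the identity on $V_3(n)$, is a $C^3$ diffeomorphism, and has $\|\mu_{\Phi_n}\|_\infty = \|\mu_{B_n \circ A_n^{-1}}\|_\infty \leq \|\mu_{A_n^{-1}}\|_\infty \leq C\lambda^n$ since $B_n$ is conformal.

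Having defined $\Phi_n$ on the closures of all the constituent pieces with matching (up to $C\lambda^n$) boundary values where two pieces meet, the remaining step is to glue these partial definitions into a single $C^3$ diffeomorphism $\Phi_n : \mathcal{U}_n \to \mathcal{T}_n$, $\R$-symmetric, still equal to the identity on $W^{\mathrm{in}}_n$, with $\|\Phi_n - \mathrm{Id}\|_{C^0(\mathcal{U}_n)} \leq C\lambda^n$ and $\|\mu_{\Phi_n}\|_\infty \leq C\lambda^n$. This is done with a partition of unity subordinate to a fixed (independent of $n$, by the real bounds and Theorem~\ref{nuevoE} which give uniform geometry) cover of $\mathcal{U}_n$ by the pieces and small neighbourhoods of the ``seams''; in each seam the two candidate maps differ by $O(\lambda^n)$ in $C^3$, so the convex-combination interpolant is $C^0$-$O(\lambda^n)$ from the identity, hence a diffeomorphism onto its image for $n \geq n_0$ (an $\R$-symmetric diffeomorphism after averaging with $z \mapsto \overline{\Phi_n(\bar z)}$), and its dilatation is controlled by $\|\mu - \nu\|_\infty + \|\Phi_n^{(1)} - \Phi_n^{(2)}\|_{C^1} = O(\lambda^n)$. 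One also has to check that the image is exactly $\mathcal{T}_n$ (or adjust $\mathcal{T}_n$, which has $C\lambda^n$ freedom in its definition, to match); this is a matter of unwinding the definition of $\mathcal{T}_n$.

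\textbf{The main obstacle} will be the bookkeeping at the seams rather than any deep estimate: one must verify that where $V$ meets $V_3(n)$ the two definitions $\Phi_n = \mathrm{Id}$ and $\Phi_n = B_n \circ A_n^{-1}$ are $C^3$-compatible up to $O(\lambda^n)$ (they agree exactly only at the single boundary point $\xi_n(-1)$ region where $A_n$ sends the relevant endpoint), and similarly for the other pairs of adjacent pieces; this requires knowing that the $C^3$-norms of $A_n$, $B_n$, $A_n^{-1}$ and of the buffer maps are uniformly bounded (with derivatives bounded away from zero), which is exactly what Theorem~\ref{nuevoE} together with the real bounds provides. Once the uniform geometry is in place, the interpolation is standard and all four bulleted conclusions of Lemma~\ref{defphin} drop out, completing the proof of the lemma.
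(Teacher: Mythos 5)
Your proposal takes essentially the same route as the paper: set $\Phi_n = \mathrm{Id}$ on the interior of $V \cup \overline{U_1(n)} \cup V_1(n)$, set $\Phi_n = B_n \circ A_n^{-1}$ on $V_3(n)$ (which is forced by the conjugacy condition together with $\Phi_n|_V = \mathrm{Id}$), use $\|A_n - B_n\|_{C^0(V)} \leq C\lambda^n$ to get the $C^0$ estimate, note that $\|\mu_{B_n\circ A_n^{-1}}\|_\infty = \|\mu_{A_n}\|_\infty \leq C\lambda^n$ since $B_n$ is conformal, and finally interpolate on the remaining buffer region. The paper's proof is exactly this in terser form, leaving the interpolation step to the reader; the bookkeeping you flag as the main obstacle is real but is the part the paper deliberately omits.
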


\begin{proof}[Proof of Lemma \ref{defphin}] For each $n \geq n_0$ we have $\|A_n-B_n\|_{C^0(V)} \leq C\lambda^n$ and therefore:$$\big\|Id-\big(B_n \circ A_n^{-1}\big)\big\|_{C^0\big(V_3(n)\big)} \leq C\lambda^n.$$If we define $\Phi_n|_{V_3(n)}=B_n \circ A_n^{-1}$ we also have $\|\mu_{\Phi_n}\|_{\infty}=\|\mu_{A_n^{-1}}\|_{\infty}$ in $V_3(n)$, which is equal to $\|\mu_{A_n}\|_{\infty}$ in $V$. In particular $\|\mu_{\Phi_n}\|_{\infty}\leq C\lambda^n$ in $V_3(n)$, and then we define $\Phi_n$ in the whole $\mathcal{U}_n$ by interpolating $B_n \circ A_n^{-1}$ in $V_3(n)$ with the identity in the interior of $V\cup\overline{U_1(n)}\cup V_1(n)$.
\end{proof}

Consider the seven topological disks:$$X_1(n)=\interior\big(V\cup\overline{U_1(n)}\cup V_1(n)\big)\subset W_1\cap\mathcal{U}_n\,,$$
$$X_2(n)=\interior\big(V_1(n)\cup\overline{U_2(n)}\cup V_2(n)\cup\overline{U_3(n)}\cup V_3(n)\big)\subset W_2\cap\mathcal{U}_n\,,$$
$$\widehat{X}_1(n)=\left\{z \in X_1(n):\xi_n(z)\in\mathcal{U}_n\right\},\quad\widehat{X}_2(n)=\left\{z \in X_2(n):\eta_n(z)\in\mathcal{U}_n\right\},$$
$$\widehat{\mathcal{T}}_n=\Phi_n\big(\widehat{X}_1(n)\big)\cup\Phi_n\big(\widehat{X}_2(n)\big)\subset\mathcal{T}_n\,,$$
$$Y_1(n)=X_1(n)\cap\Phi_n\big(\widehat{X}_1(n)\big)\quad\mbox{and}\quad Y_2(n)=X_2(n)\cap\Phi_n\big(\widehat{X}_2(n)\big).$$
Note that $V$, $V_1(n)$ and $B_n(V)$ are contained in $\widehat{\mathcal{T}}_n$ for all $n \geq n_0$. Moreover, we have the following two corollaries of Theorem \ref{nuevoE}:

\begin{lema}\label{vert} There exists $\delta>0$ such that for all $n \geq n_0$ we have:$$N_{\delta}\big([-1,0]\big) \subset Y_1(n) \quad\mbox{and}\quad N_{\delta}\big([0,\xi_n(0)]\big) \subset Y_2(n)\,.$$
\end{lema}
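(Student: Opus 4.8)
The plan is to trace through the definitions of $Y_1(n)$ and $Y_2(n)$ and show each of the two inclusions by establishing, uniformly in $n\ge n_0$, that: (i) a definite neighbourhood of the real interval is contained in $X_i(n)$; (ii) the composition ($\xi_n$ for $i=1$, $\eta_n$ for $i=2$) carries such a neighbourhood back into $\mathcal{U}_n$, hence into $\widehat{X}_i(n)$; and (iii) $\Phi_n$ — being $C^0$-exponentially close to the identity by Lemma \ref{defphin} — moves $\widehat{X}_i(n)$ by at most $C\lambda^n$, so a slightly shrunk definite neighbourhood survives inside $X_i(n)\cap\Phi_n(\widehat{X}_i(n))$. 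Throughout, all constants produced are universal because they come from Theorem \ref{nuevoE} (the constants $\alpha,\beta,\lambda$) and the real bounds (the constant $K$).

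First I would handle step (i). By construction $X_1(n)\supset V\cup V_1(n)$ and $X_2(n)\supset V_1(n)\cup V_3(n)$, and these were chosen (using Theorem \ref{nuevoE}) so that $\overline{N_{\delta_0}([-1,0])\cup N_{\delta_0}([0,\xi_n(0)])}\subset\mathcal{U}_n$ for a universal $\delta_0>0$; combined with the fact that $X_1(n)=W_1\cap\mathcal{U}_n$ near $[-1,0]$ and $X_2(n)=W_2\cap\mathcal{U}_n$ near $[0,\xi_n(0)]$ (the sets $U_j(n)$ and the interpolation regions are pushed away from these intervals), one gets a universal $\delta_1\in(0,\delta_0)$ with $N_{\delta_1}([-1,0])\subset X_1(n)$ and $N_{\delta_1}([0,\xi_n(0)])\subset X_2(n)$. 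For step (ii), apply properties (3) and (4) of Theorem \ref{nuevoE}: $\xi_n(N_\alpha([-1,0]))\supset N_\beta(\xi_n([-1,0]))$ and $\eta_n(N_\alpha([0,\xi_n(0)]))\supset N_\beta([-1,(\eta_n\circ\xi_n)(0)])$, while property (5) gives $\xi_n(N_\alpha([-1,0]))\cup\eta_n(N_\alpha([0,\xi_n(0)]))\subset B(0,\lambda^{-1})$. Since $\xi_n([-1,0])=[\xi_n(-1),\xi_n(0)]=[(\eta_n\circ\xi_n)(0),\xi_n(0)]$ lies inside $\mathcal{U}_n$ together with a definite collar (again by the $N_\delta$-property of $\mathcal{U}_n$ and the real bounds controlling the length of $\xi_n([-1,0])$), continuity of $\xi_n$ and $\eta_n$ — with moduli of continuity controlled uniformly via the real bounds and the uniform bounds of Theorem \ref{nuevoE} — yields a universal $\delta_2\in(0,\delta_1)$ such that $\xi_n(N_{\delta_2}([-1,0]))\subset\mathcal{U}_n$ and $\eta_n(N_{\delta_2}([0,\xi_n(0)]))\subset\mathcal{U}_n$. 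Hence $N_{\delta_2}([-1,0])\subset\widehat{X}_1(n)$ and $N_{\delta_2}([0,\xi_n(0)])\subset\widehat{X}_2(n)$.

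Finally, step (iii): by Lemma \ref{defphin}, $\Phi_n\equiv\mathrm{Id}$ on the interior of $V\cup\overline{U_1(n)}\cup V_1(n)=X_1(n)$ — wait, more precisely $\Phi_n$ is the identity on $X_1(n)$ itself — so $\Phi_n(\widehat{X}_1(n))\supset\Phi_n(N_{\delta_2}([-1,0]))$, and on the part of $N_{\delta_2}([-1,0])$ lying in $X_1(n)$ this is just $N_{\delta_2}([-1,0])$ again; thus $Y_1(n)=X_1(n)\cap\Phi_n(\widehat{X}_1(n))\supset N_{\delta_2}([-1,0])$ outright, and we may take $\delta=\delta_2$ for the first inclusion. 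For the second, $\Phi_n$ need not be the identity on $X_2(n)$, but $\|\Phi_n-\mathrm{Id}\|_{C^0(\mathcal{U}_n)}\le C\lambda^n$; so $\Phi_n(N_{\delta_2}([0,\xi_n(0)]))$ contains $N_{\delta_2/2}([0,\xi_n(0)])$ once $n_0$ is enlarged so that $C\lambda^{n_0}<\delta_2/2$ (using also that $\delta_2$ is comparable to the length of $[0,\xi_n(0)]$, which is bounded below by the real bounds). Intersecting with $X_2(n)\supset N_{\delta_2}([0,\xi_n(0)])$ gives $N_{\delta_2/2}([0,\xi_n(0)])\subset Y_2(n)$, and we set $\delta=\delta_2/2$ for both inclusions after relabelling.

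The main obstacle I anticipate is step (ii): one must ensure the collar width by which $\xi_n([-1,0])$ and $\eta_n([0,\xi_n(0)])$ sit inside $\mathcal{U}_n$ does not degenerate with $n$, and that the preimage of this collar under $\xi_n$ (resp. $\eta_n$) — which near the critical endpoint $0$ involves the cube-root branch — still contains a \emph{Euclidean-definite} neighbourhood of the real interval rather than a neighbourhood that pinches at the critical point. This is exactly the kind of estimate handled by elementary properties of the cube root map as in \cite[Lemma 2.2]{yampolsky1}, already invoked in the proof of Proposition \ref{estdist}, together with the non-degeneracy of $\psi$ (the diffeomorphism straightening the critical point) guaranteed by Theorem \ref{nuevoE}(1); so the obstacle is real but surmountable with the tools already assembled.
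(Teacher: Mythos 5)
Your proof is correct, and it is precisely the argument the paper leaves implicit in calling Lemma \ref{vert} a corollary of Theorem \ref{nuevoE}: unwind the definitions of $Y_1(n)$ and $Y_2(n)$, note that $\Phi_n\equiv\mathrm{Id}$ on $X_1(n)$ so that $Y_1(n)=\widehat{X}_1(n)$ outright, and reduce the remaining inclusions to the uniform geometric control supplied by Theorem \ref{nuevoE}, the real bounds, and the estimate $\|\Phi_n-\mathrm{Id}\|_{C^0(\mathcal{U}_n)}\le C\lambda^n$ from Lemma \ref{defphin}.

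One clarification on the ``main obstacle'' you flag at the end: the pinching you worry about does not in fact occur, and for a pleasant reason. Because $\xi_n$ (resp.\ $\eta_n$) has a cubic critical point at the origin, the forward image $\xi_n\big(N_\delta([-1,0])\big)$ \emph{contracts} near the critical endpoint $0$, so the required inclusion $\xi_n\big(N_\delta([-1,0])\big)\subset\mathcal{U}_n$ is easiest to arrange precisely where you were concerned; equivalently, the preimage under $\xi_n$ of a definite disk around the critical value $\xi_n(0)$ is a cube-root image, which \emph{expands} rather than pinches. The genuine content of your step (ii) is the uniform-in-$n$ bound on $|\partial\xi_n|$, $|\partial\eta_n|$ away from the origin, which, as you note, comes from items (5)--(7) of Theorem \ref{nuevoE} together with the real bounds (which keep $\xi_n(0)$, and hence all the relevant interval lengths, uniformly bounded above and below).
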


\begin{lema}\label{contdet} Both:$$\sup_{n \geq n_0}\left\{\sup_{z \in Y_1(n)}\left\{\det\big(D\xi_n(z)\big)\right\}\right\}\quad\mbox{and}\quad\sup_{n \geq n_0}\left\{\sup_{z \in Y_2(n)}\left\{\det\big(D\eta_n(z)\big)\right\}\right\}$$are finite, where $\det(\cdot)$ denotes the determinant of a square matrix.
\end{lema}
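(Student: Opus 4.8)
The plan is to extract the needed bound directly from the geometric control packaged in Theorem \ref{nuevoE}, rather than from any a priori estimate on the $C^3$ norms of $\eta_n$ and $\xi_n$ (which are not available, as these are renormalizations of an arbitrary $C^3$ map). Concretely, I will use the Koebe-type statement Proposition \ref{koebe} together with properties (3), (4) and (5) of Theorem \ref{nuevoE}, which say that $\xi_n$ maps $N_\alpha([-1,0])$ over a $\beta$-neighbourhood $N_\beta(\xi_n([-1,0]))$ of its image, that $\eta_n$ maps $N_\alpha([0,\xi_n(0)])$ over $N_\beta([-1,(\eta_n\circ\xi_n)(0)])$, and that all these image sets sit inside the fixed ball $B(0,\lambda^{-1})$. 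The point is that a map whose image has bounded diameter, which is close to conformal (properties (6) and (7) of Theorem \ref{nuevoE} give Beltrami coefficients $\leq C\lambda^n \leq \tfrac12$, say, for $n\geq n_0$), and which covers a definite neighbourhood of its image, must have a uniformly bounded derivative on a slightly smaller domain — this is precisely the quantitative Koebe principle, or equivalently the standard distortion theorem for $K$-quasiregular maps omitting a definite amount of space.

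First I would fix, once and for all, the universal constants $\alpha,\beta,\lambda$ from Theorem \ref{nuevoE} and the corresponding $n_0$, and recall from Lemma \ref{vert} that there is a universal $\delta>0$ with $N_\delta([-1,0])\subset Y_1(n)$ and $N_\delta([0,\xi_n(0)])\subset Y_2(n)$ for all $n\geq n_0$; so it suffices to bound $\det(D\xi_n)$ on $N_\delta([-1,0])$ and $\det(D\eta_n)$ on $N_\delta([0,\xi_n(0)])$. Next, rescaling by the affine maps that send $[-1,0]$ (resp. $[0,\xi_n(0)]$) to a fixed interval — using the real bounds, Theorem \ref{realbounds}, to keep the scaling factors and the lengths $|\xi_n([-1,0])|$ comparable to universal constants — I reduce to the following normalized situation: a $K$-quasiconformal map $G$ (with $K$ close to $1$, uniformly) of a fixed topological disk $\Omega\supset N_\alpha([-1,0])$ into $B(0,R)$ for a universal $R$, with $G$ covering $N_\beta(G([-1,0]))$, and I must bound $|DG|$ on the smaller fixed disk $N_\delta([-1,0])$. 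Applying Proposition \ref{koebe} (and its proof, which is a normal-families argument over $K$-q.c. maps with $K\to1$) in this rescaled picture gives a universal upper bound for $|\partial G|$ and $|\overline\partial G|$, hence for $\det(DG)=|\partial G|^2-|\overline\partial G|^2 \leq |\partial G|^2$, on $N_\delta([-1,0])$; undoing the rescaling and using the real bounds to control the Jacobian of the affine conjugacy yields the desired uniform bound on $\det(D\xi_n)$ on $Y_1(n)$. The argument for $\det(D\eta_n)$ on $Y_2(n)$ is identical, using properties (4) and (7) of Theorem \ref{nuevoE} in place of (3) and (6), together with the fact that $|(\eta_n\circ\xi_n)(0)|$ and $|\xi_n(0)|$ are bounded above and below by universal constants (again the real bounds).

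The main obstacle I anticipate is purely bookkeeping: one must be careful that the rescaling maps are uniformly bi-Lipschitz, i.e. that the lengths $|I_{\eta_n}|=1$ and $|I_{\xi_n}|=\xi_n(0)$ and the images $|\xi_n([-1,0])|$, $|\eta_n([0,\xi_n(0)])|$ are all comparable to universal constants, so that "definite neighbourhood" and "bounded image" survive the normalization with constants independent of $n$ and of $f$ — but this is exactly what the real bounds (Theorem \ref{realbounds}) and the covering properties (3)--(5) of Theorem \ref{nuevoE} are designed to give. There is no genuine difficulty beyond organizing these universal constants correctly; in particular no control on higher derivatives of $\xi_n,\eta_n$ is needed, since $\det(D\cdot)$ is a first-order quantity and the Koebe bound is first order.
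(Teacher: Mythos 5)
The paper states Lemma~\ref{contdet} without proof, as an immediate ``corollary of Theorem~\ref{nuevoE}'', so there is no argument to compare against verbatim; I can only assess your proposal on its own terms. The overall orientation is right --- the bound must come from the quantitative geometric control built into the proof of Theorem~\ref{nuevoE} --- but the central step is not correct as stated. You claim that a $C^1$ map on a fixed domain, with image in $B(0,\lambda^{-1})$, with Beltrami coefficient $\leq C\lambda^n$, and covering a definite neighbourhood of its real image, must have uniformly bounded derivative on a compactly contained subdomain, and that this is ``precisely the quantitative Koebe principle'' or can be extracted from the proof of Proposition~\ref{koebe}. Neither claim holds. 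Proposition~\ref{koebe} goes in the opposite direction: it takes as hypothesis a \emph{lower} bound on the real derivative and concludes a covering property; it does not produce an upper derivative bound, and its proof --- a normal-families argument --- gives only $C^0_{\mathrm{loc}}$ compactness of $K$-q.c.\ maps, which carries no pointwise information about $D\xi_n$. Indeed the implication you need is false in general: a $K$-q.c.\ $C^1$ map of $\D$ onto $\D$ fixing $0$ can have $|D\phi(0)|$ as large as $\rho^{-(1-1/K)}$ by stretching conformally on $B(0,\rho)$ and compensating with a radial power map $r\mapsto r^{1/K}$ outside; with $K-1\sim\lambda^n$ and $\rho$ taken super-exponentially small this makes $|D\phi(0)|\to\infty$ while the Beltrami coefficient, the bounded image, and any covering requirement are all respected. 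So the hypotheses of Theorem~\ref{nuevoE}, which are all geometric and $C^0$ in nature, do not by themselves bound the derivative pointwise.

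What is actually needed is an ingredient not visible in the \emph{statement} of Theorem~\ref{nuevoE} but present in its proof. The $C^1$ bound along the real line is a standard consequence of the real bounds (this is explicitly invoked in Appendix~\ref{provaLip}: ``the sequence $\{\mathcal{R}^n(f)\}$ is bounded in the $C^1$ metric''). To propagate it into the complex neighbourhood one controls $\log|\partial F^{q_n}(z)|$ along the orbit using the $\partial$-chain rule: since $F$ is $C^3$ with $\inf|\partial H_i|>0$ (Proposition~\ref{ext}), each factor $\log|\partial F(F^j z)|$ differs from its value at the real shadow $\widetilde f^j(\Re z)$ by $O(|\Im F^jz|)$, and the cross-terms involving $\overline\partial F$ are $O(|\Im F^jz|^2)$ by asymptotic holomorphicity; the Poincar\'e-disk control of Proposition~\ref{3.4} gives $|\Im F^jz|\lesssim|\widetilde f^j(J_n)|$, and these lengths are summable with bound $O(1)$ by the real bounds and finite intersection multiplicity (exactly as in the proof of Corollary~\ref{laest}, estimate~\eqref{fecho}). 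Putting this together, $\log|\partial F^{q_n}(z)|$ equals $\log|(f^{q_n})'(\Re z)|$ up to a uniform additive constant, and the one-dimensional bound then gives the claimed bound for $|\partial\xi_n|$ --- hence for $\det(D\xi_n)$, since $\det(D\xi_n)\leq|\partial\xi_n|^2$ --- after the affine normalization. You should replace the appeal to Proposition~\ref{koebe} and normal families with this distortion estimate for the long composition, since the purely geometric route you proposed does not close.
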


Let:$$\widehat{\xi}_n:\Phi_n\big(\widehat{X}_1(n)\big)\to\big(\Phi_n\circ\xi_n\big)\big(\widehat{X}_1(n)\big)\mbox{ defined by }\widehat{\xi}_n=\Phi_n\circ\xi_n\circ\Phi_n^{-1},$$and:$$\widehat{\eta}_n:\Phi_n\big(\widehat{X}_2(n)\big)\to\big(\Phi_n\circ\eta_n\big)\big(\widehat{X}_2(n)\big)\mbox{ defined by }\widehat{\eta}_n=\Phi_n\circ\eta_n\circ\Phi_n^{-1}.$$

Since each $\Phi_n$ is an $\R$-symmetric $C^3$ diffeomorphism, the pair $\big(\widehat{\eta}_n,\widehat{\xi}_n\big)$ restrict to a critical commuting pair with the same rotation number as $(\eta_n,\xi_n)$, and the same criticality (that we are assuming to be cubic, in order to simplify). Note also that $\widehat{\eta}_n(0)=-1$ for all $n \geq n_0$. Moreover, from Lemma \ref{contdet} and $\big\|\Phi_n-Id\big\|_{C^0(\mathcal{U}_n)} \leq C\lambda^n$ we have:$$\left\|\xi_n-\widehat{\xi}_n\right\|_{C^0\big(Y_1(n)\big)} \leq C\lambda^n\quad\mbox{and}\quad\left\|\eta_n-\widehat{\eta}_n\right\|_{C^0\big(Y_2(n)\big)} \leq C\lambda^n\quad\mbox{for all $n \geq n_0$.}$$

Therefore is enough to shadow the sequence $\big(\widehat{\eta}_n,\widehat{\xi}_n\big)$ in the domains $Y_1(n)$ and $Y_2(n)$, instead of $(\eta_n,\xi_n)$ (the shadowing sequence will be constructed in Subsection \ref{sub3} below). The main advantage of working with the sequence $\big(\widehat{\eta}_n,\widehat{\xi}_n\big)$ is precisely the fact that $\widehat{\eta}_n^{-1}\circ\widehat{\xi}_n$ is univalent in $V$ for all $n \geq n_0$ (since it coincides with $B_n$). In particular we can choose each topological disk $\mathcal{U}_n$ and $\mathcal{T}_n$ defined above with the additional property that, identifying $V$ with $B_n(V)$ via the biholomorphism $B_n$, we obtain from $\mathcal{T}_n$ an abstract annular Riemann surface $\mathcal{S}_n$ (with the complex structure induced by the quotient).

Let us denote by $p_n:\mathcal{T}_n\to\mathcal{S}_n$ the canonical projection (note that $p_n$ is not a covering map, just a surjective local diffeomorphism). The projection of the real line, $p_n(\R \cap \mathcal{T}_n)$, is real-analytic diffeomorphic to the unit circle $S^1$. We call it the \emph{equator} of $\mathcal{S}_n$.

Since complex conjugation leaves $\mathcal{T}_n$ invariant and commutes with $B_n$, it induces an antiholomorphic involution $F_n:\mathcal{S}_n\to\mathcal{S}_n$ acting as the identity on the equator $p_n(\R \cap \mathcal{T}_n)$. Note that $F_n$ has a continuous extension to $\partial\mathcal{S}_n$ that switches the boundary components.
	
Since $\mathcal{S}_n$ is obviously not biholomorphic to $\D\setminus\{0\}$ neither to $\C\setminus\{0\}$ we have $\modulo(\mathcal{S}_n)<\infty$ for all $n \geq n_0$, where $\modulo(\cdot)$ denotes the conformal modulus of an annular Riemann surface. For each $n \geq n_0$ define a constant $R_n$ in $(1,+\infty)$ by:$$R_n=\exp\left(\frac{\modulo\big(\mathcal{S}_n\big)}{2}\right),$$that is, $\mathcal{S}_n$ is conformally equivalent to $A_{R_n}=\big\{z\in\C:R_n^{-1}<|z|<R_n\big\}$. Any biholomorphism between $\mathcal{S}_n$ and $A_{R_n}$ must send the equator $p_n\big(\R \cap \mathcal{T}_n\big)$ onto the unit circle $S^1$ (because the equator is invariant under the antiholomorphic involution $F_n$, and the unit circle is invariant under the antiholomorphic involution $z \mapsto 1/\bar{z}$ in $A_{R_n}$). Let $\Psi_n:\mathcal{S}_n \to A_{R_n}$ be the conformal uniformization determined by $\Psi_n\big(p_n(0)\big)=1$, and let $P_n:\mathcal{T}_n \to A_{R_n}$ be the holomorphic surjective local diffeomorphism:$$P_n=\Psi_n \circ p_n.$$

See Figure 8. Note that $P_n(0)=1$ and $P_n(\mathcal{T}_n\cap\R)=S^1$ for all $n \geq n_0$. Moreover $P_n(z)\overline{P_n(\overline{z})}=1$ for all $z\in\mathcal{T}_n$ and all $n \geq n_0$. From now on we forget about the abstract cylinder $\mathcal{S}_n$.

\begin{lema}\label{koebedist} There exist two constants $\delta>0$ and $C>1$ such that for all $n \geq n_0$ and for all $z \in N_{\delta}\big([-1,\widetilde{\xi}_n(0)]\big)$ we have $z\in\widehat{\mathcal{T}}_n\subset\mathcal{T}_n$ and:$$\frac{1}{C}<\big|P_n'(z)\big|<C\,.$$
\end{lema}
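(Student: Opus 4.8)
The plan is to derive both inequalities from the classical Koebe distortion theorem, applied to local univalent branches of the holomorphic local diffeomorphism $P_n=\Psi_n\circ p_n\colon\mathcal{T}_n\to A_{R_n}$ — in particular $P_n'$ never vanishes on $\mathcal{T}_n$ — the only quantitative input being that $P_n$ wraps $I:=[-1,\widetilde{\xi}_n(0)]$, the union of the dynamical intervals of the glued pair $(\widehat{\eta}_n,\widehat{\xi}_n)$, exactly once around the unit circle. First I assemble the preparatory facts. By the construction of Subsection \ref{sub1} together with Theorem \ref{nuevoE} there is a universal $\delta_0>0$ with $N_{2\delta_0}(I)\subset\mathcal{U}_n$, hence $N_{2\delta_0}(I)\subset\mathcal{T}_n$ since $\overline{\mathcal{T}_n}$ and $\overline{\mathcal{U}_n}$ are $C\lambda^n$-Hausdorff close, and by Lemma \ref{vert} one even has $N_{\delta_0}(I)\subset\widehat{\mathcal{T}}_n\subset\mathcal{T}_n$, which already gives the inclusion asserted in the lemma; by the real bounds $|I|=1+\widetilde{\xi}_n(0)$ is comparable to $1$, uniformly in $n\geq n_0$. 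Moreover, since $\widehat{\eta}_n^{-1}\circ\widehat{\xi}_n=B_n$ on $V$, the commuting relation for the pair $(\widehat{\eta}_n,\widehat{\xi}_n)$ gives $B_n(-1)=\widetilde{\xi}_n(0)$; as $B_n$ is orientation-preserving on $\R$, the only identification that $p_n$ performs inside $I$ is that of its two endpoints, so $p_n(I)$ is a Jordan curve contained in — hence equal to — the equator $p_n(\mathcal{T}_n\cap\R)$, which $\Psi_n$ carries onto $S^1$. Therefore $P_n|_I$ is a bijection of $I$ (endpoints identified) onto $S^1$, so $\int_I|P_n'(t)|\,dt=2\pi$ and the mean value of $|P_n'|$ over $I$ lies between two universal positive constants.

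Next I claim $P_n$ is univalent on a ball of definite radius $r_0$ around every point of $I$. The only source of non-injectivity of $p_n$ is the identification $z\sim B_n(z)$ for $z\in V$, and $V$ and $B_n(V)$ are a definite distance apart: $V$ is a neighbourhood of $-1$ of definite diameter, fixed in Subsection \ref{sub1}, whereas $B_n(V)$ is $C^0(V)$-close to $A_n(V)=V_3(n)$, a neighbourhood of $\widetilde{\xi}_n(0)$ of definite diameter — here one uses the real bounds and Koebe distortion, which give $|A_n'|\asymp 1$ near $-1$ — and $-1$ and $\widetilde{\xi}_n(0)$ are themselves a definite distance apart. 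Fix a universal $r_0>0$ smaller than $\tfrac14\dist(V,B_n(V))$ and than $\delta_0$. For $x\in I$ the ball $B(x,r_0)$ is then contained in $N_{2\delta_0}(I)\subset\mathcal{T}_n$ and contains at most one point of each pair $\{z,B_n(z)\}$, so $P_n=\Psi_n\circ p_n$ restricts there to a univalent holomorphic map. By the Koebe distortion theorem there is a universal $C_K>1$ with $C_K^{-1}\leq|P_n'(y)|/|P_n'(x)|\leq C_K$ whenever $y\in B(x,r_0/2)$; chaining this estimate along $I$ in a universally bounded number of steps yields $\sup_I|P_n'|\leq C_K^{\,N}\inf_I|P_n'|$, with $N$ universal.

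Combining the two previous paragraphs, $\inf_I|P_n'|$ and $\sup_I|P_n'|$ are both pinned between universal positive constants $c_0$ and $C_0$. Now let $\delta>0$ be a universal constant with $\delta\leq\delta_0$ and $\delta|I|<r_0/2$; for $z\in N_\delta(I)$ pick $x\in I$ nearest to $z$, so that $|z-x|<r_0/2$ and the Koebe distortion estimate on $B(x,r_0)$ applies, giving $C_K^{-1}c_0<|P_n'(z)|<C_K C_0$. Since also $N_\delta(I)\subset N_{\delta_0}(I)\subset\widehat{\mathcal{T}}_n\subset\mathcal{T}_n$, this proves the lemma with $C:=\max\{C_K C_0,\,C_K/c_0\}$.

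The only delicate ingredients are the two ``bookkeeping'' facts used in the first two paragraphs: that $P_n$ is genuinely one-to-one from $I$ onto $S^1$ (which rests on $B_n(-1)=\widetilde{\xi}_n(0)$, i.e. on the exact commuting relation of the glued pair, together with the orientation and real-symmetry of $B_n$), and that the seam $V$ and its image $B_n(V)$ remain a definite distance apart (which yields univalence of $P_n$ at a definite scale near $I$). Both follow from the real bounds (Theorem \ref{realbounds}), the explicit structure of the glueing construction, and the $C^0$-exponential closeness of $B_n$ to $A_n$; granting them, the remainder is a direct application of the Koebe distortion theorem. This is where I would expect the main work to lie.
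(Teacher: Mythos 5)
Your proof is correct and follows essentially the same strategy as the paper's: pin down $|P_n'|$ at some point of $[-1,\widetilde{\xi}_n(0)]$, establish univalence of $P_n$ on a uniformly bounded chain of definite balls along this interval, and then propagate the pointwise bound via the Koebe distortion theorem. The one place you are more explicit than the paper is the first step — you derive the existence of the ``pinned'' point from $\int_I|P_n'|\,dt=2\pi$ and $|I|\asymp 1$, whereas the paper simply attributes it to the real bounds — but this is a clarification of the same argument, not a different route.
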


\begin{proof}[Proof of Lemma \ref{koebedist}] By the real bounds there exists a universal constant $C_0>1$ such that for each $n \geq n_0$ there exists $w_n\in\big[-1,\widetilde{\xi}_n(0)\big]$ such that:$$\frac{1}{C_0}<\big|P_n'(w_n)\big|<C_0\,.$$

To prove Lemma \ref{koebedist} we need to construct a definite complex domain around $\big[-1,\widetilde{\xi}_n(0)\big]$ where $P_n$ has universally bounded distortion. Again by the real bounds there exist $\delta>0$ and $l\in\nt$ with the following properties: for each $n \geq n_0$ there exists $z_1,z_2,...,z_{k_n}\in\big[-1,\widetilde{\xi}_n(0)\big]$ with $k_n<l$ for all $n \geq n_0$ such that:

\begin{itemize}
\item $\big[-1,\widetilde{\xi}_n(0)\big]\subset\cup_{i=1}^{k_n}B(z_i,\delta)$.
\item $B(z_i,2\delta)\subset\widehat{\mathcal{T}}_n\subset\mathcal{T}_n$ for all $i\in\{1,...,k_n\}$.
\item $P_n|_{B(z_i,2\delta)}$ is univalent for all $i\in\{1,...,k_n\}$.
\end{itemize}

By convexity we have for all $n \geq n_0$ and for all $i\in\{1,...,k_n\}$ that:$$\sup_{v,w \in B(z_i,\delta)}\left\{\frac{\big|P_n'(v)\big|}{\big|P_n'(w)\big|}\right\}\leq\exp\left(\sup_{w \in B(z_i,\delta)}\left\{\frac{\big|P_n''(w)\big|}{\big|P_n'(w)\big|}\right\}\right)\,,$$and by Koebe distortion theorem (see for instance \cite[Section I.1, Theorem 1.6]{cargam}) we have:$$\sup_{w \in B(z_i,\delta)}\left\{\frac{\big|P_n''(w)\big|}{\big|P_n'(w)\big|}\right\}\leq\frac{2}{\delta}\quad\mbox{for all $n \geq n_0$ and for all $i\in\{1,...,k_n\}$.}$$
\end{proof}

\begin{figure}[ht!]
\begin{center}
\includegraphics[scale=0.5]{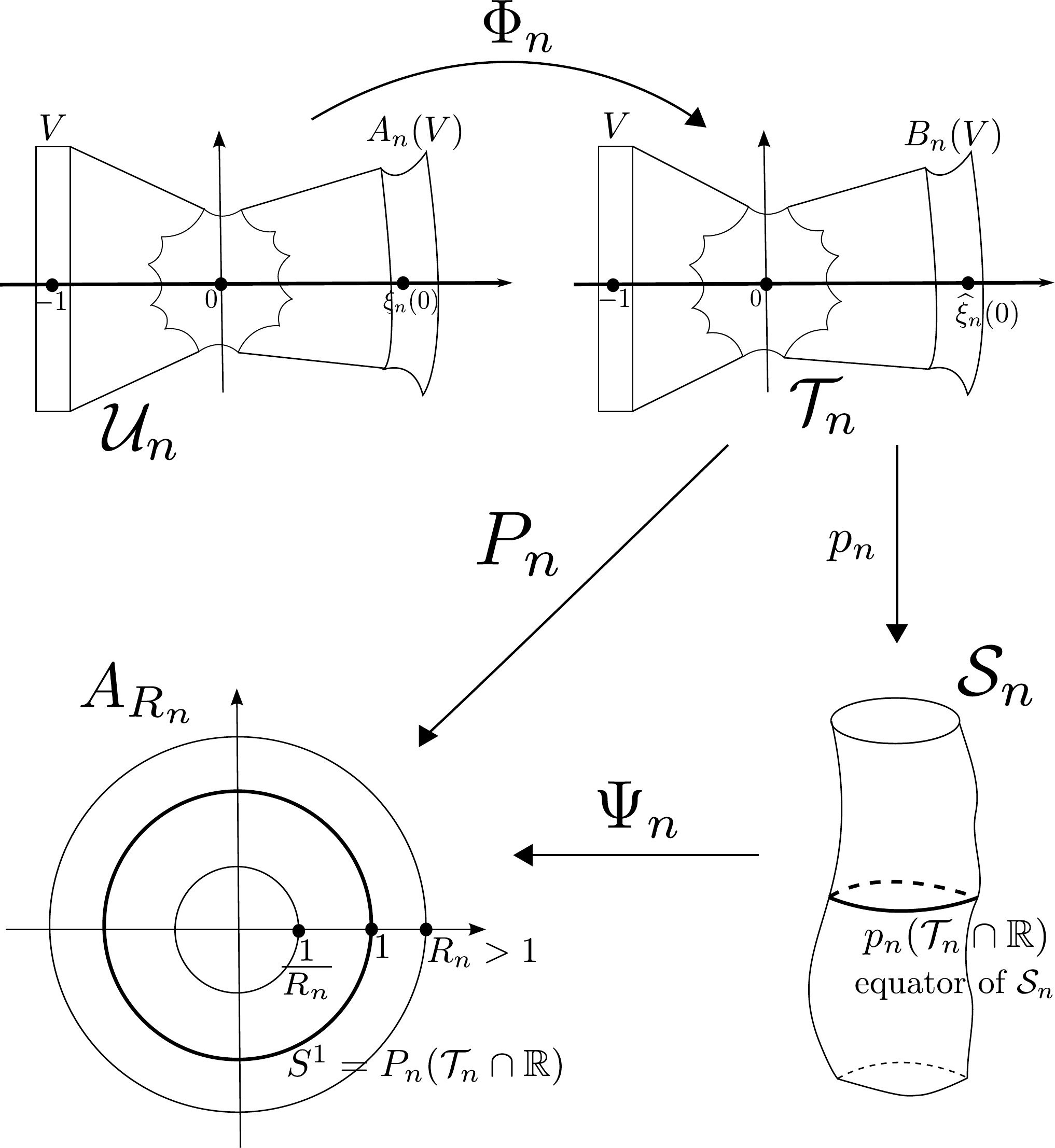}
\caption{Bidimensional Glueing procedure.}
\end{center}
\end{figure}

Now we project each commuting pair $(\widetilde{\eta}_n,\widetilde{\xi}_n)$ from $\widehat{{\mathcal{T}}}_n$ to the round annulus $A_{R_n}$.

\begin{prop}[Glueing procedure]\label{Gn} The pair:$$\widehat{\xi}_n:\Phi_n\big(\widehat{X}_1(n)\big)\to\mathcal{T}_n\quad\mbox{and}\quad\widehat{\eta}_n:\Phi_n\big(\widehat{X}_2(n)\big)\to\mathcal{T}_n$$projects under $P_n$ to a well-defined orientation-preserving $C^3$ map:$$G_n:P_n\big(\widehat{\mathcal{T}}_n\big) \subset A_{R_n}\to A_{R_n}.$$

For each $n \geq n_0$, $P_n(\widehat{\mathcal{T}}_n)$ is a $\Theta$-invariant annulus with positive and finite modulus. Each $G_n$ is $S^1$-symmetric, in particular $G_n$ preserves the unit circle.

When restricted to the unit circle, $G_n$ produce a $C^3$ critical circle map $g_n:S^1 \to S^1$ with cubic critical point at $P_n(0)=1$, and with rotation number $\rho(g_n)=\rho\big(\mathcal{R}^n(f)\big)\in\R\setminus\Q$.
$$\begin{diagram}
\node{\widehat{\mathcal{T}}_n\subset\mathcal{T}_n}\arrow{e,t}{\big(\widehat{\eta}_n,\widehat{\xi}_n\big)}\arrow{s,l}{P_n}
\node{\mathcal{T}_n}\arrow{s,r}{P_n}\\
\node{P_n\big(\widehat{\mathcal{T}}_n\big) \subset A_{R_n}}\arrow{e,t}{G_n}\node{A_{R_n}}
\end{diagram}$$
Moreover the unique critical point of $G_n$ in $P_n\big(\widehat{\mathcal{T}}_n\big)$ is the one in the unit circle (at the point $1$) and:$$\left|\overline{\partial}G_n(z)\right| \leq C\lambda^n\left|\partial G_n(z)\right|\quad\mbox{for all $z \in P_n\big(\widehat{\mathcal{T}}_n\big)\setminus\{1\}$, that is:}$$
$$\|\mu_{G_n}\|_{\infty}\leq C\lambda^n\mbox{ in $P_n\big(\widehat{\mathcal{T}}_n\big)$.}$$
\end{prop}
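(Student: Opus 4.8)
The plan is to construct $G_n$ branch by branch on $A_{R_n}$ and then verify, in order: that the two branches fit together so that $G_n$ is well defined and continuous; that $G_n$ is $C^3$ with a unique cubic critical point at $1$; that its domain $P_n(\widehat{\mathcal T}_n)$ is a $\Theta$-invariant essential sub-annulus of $A_{R_n}$ with positive finite modulus; that $G_n$ is $S^1$-symmetric and restricts on $S^1$ to a critical circle map with rotation number $\rho(\mathcal R^n(f))$; and finally that $\|\mu_{G_n}\|_\infty\le C\lambda^n$. For the well-definedness, note first that since $P_n=\Psi_n\circ p_n$ with $\Psi_n$ injective and $p_n$ identifying precisely each $z\in V$ with $B_n(z)\in B_n(V)$, the map $P_n$ is at most two-to-one on $\widehat{\mathcal T}_n$, and $P_n(z)=P_n(w)$ with $z\ne w$ forces $\{z,w\}=\{z,B_n(z)\}$ for some $z\in V$. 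I would declare $G_n\circ P_n=P_n\circ\widehat{\xi}_n$ on $\Phi_n(\widehat X_1(n))$ and $G_n\circ P_n=P_n\circ\widehat{\eta}_n$ on $\Phi_n(\widehat X_2(n))$, and then make two compatibility checks. Across the glueing seam, for $z\in V$ one has $\widehat{\eta}_n(B_n(z))=\widehat{\xi}_n(z)$ directly from $B_n=\widehat{\eta}_n^{-1}\circ\widehat{\xi}_n$ on $V$ (Lemma \ref{defphin}), so the two prescriptions give the same value after $P_n$. On the overlap $V_1(n)\subset\widehat X_1(n)\cap\widehat X_2(n)$, where $\widehat{\eta}_n(V_1(n))\subset V$, the required identification $\widehat{\xi}_n(z)\sim\widehat{\eta}_n(z)$, that is $B_n(\widehat{\eta}_n(z))=\widehat{\xi}_n(z)$, unwinds to the commuting relation $\widehat{\xi}_n\circ\widehat{\eta}_n=\widehat{\eta}_n\circ\widehat{\xi}_n$ near the origin, valid by Theorem \ref{nuevoE}(2). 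These identities are exactly the statement that $G_n$ is continuous (which is the reason one passes to the annulus), and $G_n$ is orientation preserving since $\widehat{\eta}_n,\widehat{\xi}_n$ are and $P_n$ is holomorphic.

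For regularity and symmetry: by the very definition of the complex structure on $\mathcal S_n$, the projection $p_n$ is a holomorphic local diffeomorphism, hence so is $P_n$, and in particular $P_n$ has no critical points; by Lemma \ref{koebedist} its derivative is bounded above and away from $0$ on a definite neighbourhood of $[-1,\xi_n(0)]$, in particular near $0$. Off the origin $G_n$ is locally $P_n\circ\widehat{\xi}_n\circ P_n^{-1}$ (or with $\widehat{\eta}_n$), a composition of $C^3$ diffeomorphisms; near $1=P_n(0)$ the same expression holds with $P_n,P_n^{-1}$ local $C^\infty$ diffeomorphisms and $\widehat{\xi}_n$ a $C^3$ map carrying a cubic critical point at $0$ (being $\Phi_n$-conjugate to $\xi_n$, by Theorem \ref{nuevoE}(1)), so $G_n$ is globally $C^3$ with a cubic critical point at $1$; and since $P_n$ has no critical points while $\widehat{\xi}_n|_{\widehat X_1(n)},\widehat{\eta}_n|_{\widehat X_2(n)}$ have none off $0$, that is the only critical point of $G_n$ in $P_n(\widehat{\mathcal T}_n)$. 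For symmetry, the relation $P_n(z)\overline{P_n(\bar z)}=1$ gives $\Theta\circ P_n=P_n\circ(z\mapsto\bar z)$, and $\widehat{\xi}_n,\widehat{\eta}_n$ are $\R$-symmetric, whence $G_n\circ\Theta=\Theta\circ G_n$; in particular $G_n$ preserves $S^1$.

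For the shape of the domain and the restriction to $S^1$: by Lemma \ref{vert}, $\widehat{\mathcal T}_n$ contains $N_\delta([-1,0])\cup N_\delta([0,\xi_n(0)])$, a neighbourhood of the segment $[-1,\xi_n(0)]$ running from $V$ through $V_1(n)$ to $B_n(V)$, so $P_n(\widehat{\mathcal T}_n)$ is an open connected $\Theta$-invariant subset of $A_{R_n}$ containing $S^1$, i.e. an essential sub-annulus, with modulus positive (a genuine neighbourhood of $S^1$) and finite (contained in $A_{R_n}$). On the equator, $P_n$ realises Lanford's one-dimensional glueing applied to the critical commuting pair $\big(\widehat{\eta}_n|_{[0,\widehat{\xi}_n(0)]},\widehat{\xi}_n|_{[-1,0]}\big)$, so $g_n=G_n|_{S^1}$ is the resulting orientation-preserving $C^3$ circle homeomorphism with a single cubic critical point at $1$; since $\Phi_n$ conjugates $(\widehat{\eta}_n,\widehat{\xi}_n)$ to $(\eta_n,\xi_n)=\mathcal R^n(f)$, its rotation number is $\rho(\mathcal R^n(f))=G^n(\rho(f))$, which is irrational.

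Finally, the Beltrami estimate. Pre- and post-composition with the conformal maps $P_n^{-1}$ and $P_n$ leaves the modulus of the Beltrami coefficient unchanged, so on $P_n(\widehat{\mathcal T}_n)\setminus\{1\}$ one has $\|\mu_{G_n}\|_\infty=\max\big\{\|\mu_{\widehat{\xi}_n}\|_{L^\infty(\widehat X_1(n)\setminus\{0\})},\|\mu_{\widehat{\eta}_n}\|_{L^\infty(\widehat X_2(n)\setminus\{0\})}\big\}$. Writing $\widehat{\xi}_n=\Phi_n\circ\xi_n\circ\Phi_n^{-1}$ and using submultiplicativity of the maximal dilatation under composition, together with $\|\mu_{\Phi_n}\|_\infty\le C\lambda^n$ (Lemma \ref{defphin}) and $\|\mu_{\xi_n}\|_\infty,\|\mu_{\eta_n}\|_\infty\le C\lambda^n$ (Theorem \ref{nuevoE}(6),(7)), the dilatation of $\widehat{\xi}_n$ is at most $(1+C'\lambda^n)^3=1+O(\lambda^n)$, hence $\|\mu_{\widehat{\xi}_n}\|_\infty\le C\lambda^n$, and likewise $\|\mu_{\widehat{\eta}_n}\|_\infty\le C\lambda^n$; therefore $\|\mu_{G_n}\|_\infty\le C\lambda^n$. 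The step I expect to require the most care is not any single estimate but the bookkeeping of the nested domains $V,V_i(n),U_j(n),X_i(n),\widehat X_i(n),\mathcal U_n,\mathcal T_n,\widehat{\mathcal T}_n$ of Subsection \ref{sub1}: one must check that every composition above is legitimate (that $\widehat{\xi}_n$ maps $\Phi_n(\widehat X_1(n))$ into $\mathcal T_n$, that $V_1(n)\subset\widehat X_1(n)\cap\widehat X_2(n)$, that the $P_n$-preimages land in the right pieces) and, tied to this, that the two branch definitions of $G_n$ agree on the seam and on the overlap $V_1(n)$, where the commuting relation of Theorem \ref{nuevoE}(2) enters in an essential way.
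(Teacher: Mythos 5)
Your proof is correct and follows essentially the same route the paper takes, which is to assemble the statement from the five ingredients listed there: the combinatorics of the domains $\mathcal U_n,\mathcal T_n$, the identity $B_n=\Phi_n\circ(\eta_n^{-1}\circ\xi_n)\circ\Phi_n^{-1}=\widehat{\eta}_n^{-1}\circ\widehat{\xi}_n$ on $V$ for the seam, the commuting relation on $V_1(n)$ for the overlap, the symmetry $P_n(z)\overline{P_n(\bar z)}=1$, and the conformality of $P_n$ (which is what makes the Beltrami coefficient of $G_n$ coincide, pointwise in modulus, with that of the lifted branches). You have simply written out the verification that the paper leaves to the reader, and all the steps — including the submultiplicativity argument giving $\|\mu_{\widehat{\xi}_n}\|_\infty,\|\mu_{\widehat{\eta}_n}\|_\infty\le C\lambda^n$ from the bounds on $\mu_{\Phi_n},\mu_{\xi_n},\mu_{\eta_n}$ — are sound.
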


\begin{proof}[Proof of Proposition \ref{Gn}] This follows from:
\begin{itemize}
\item The construction of $\mathcal{U}_n$ and $\mathcal{T}_n$.
\item The property $B_n=\Phi_n\circ\big(\eta_n^{-1}\circ\xi_n\big)\circ\Phi_n^{-1}$ in $V$.
\item The commuting condition in $V_1(n)$.
\item The symmetry $P_n(z)\overline{P_n(\overline{z})}=1$ for all $z\in\mathcal{T}_n$ and all $n \geq n_0$.
\item The fact that $P_n:\mathcal{T}_n \to A_{R_n}$ is holomorphic, $P_n(0)=1$ and $P_n(\mathcal{T}_n\cap\R)=S^1$ for all $n \geq n_0$.
\end{itemize}
\end{proof}

Note that each $g_n$ belongs to the smooth conjugacy class obtained with the glueing procedure (described in Section \ref{subccp}) applied to the $C^3$ critical commuting pair $\big(\widehat{\eta}_n,\widehat{\xi}_n\big)$. As we said in the introduction, the topological behaviour of each $G_n$ on its annular domain is the same as the restriction of the Blaschke product $f_{\gamma}$ \eqref{formBlaschke} to the annulus $A''' \cup B_1'$, as depicted in Figure 1. In the next subsection we will construct a sequence of real-analytic critical circle maps, with the desired combinatorics, that extend to holomorphic maps exponentially close to $G_n$ in a definite annulus around the unit circle (see Proposition \ref{pertanillo} below).

\subsection{Main perturbation}\label{sub2}

The goal of this subsection is to construct the following sequence of perturbations:

\begin{prop}[Main perturbation]\label{pertanillo} There exist a constant $r>1$ and a sequence of holomorphic maps defined in the annulus $A_r$:$$\left\{H_n:A_r\to\C\right\}_{n \geq n_0}$$such that for all $n \geq n_0$ the following holds:

\begin{itemize}
\item $A_r \subset P_n(\widehat{\mathcal{T}}_n)\subset P_n(\mathcal{T}_n)=A_{R_n}$.
\item $\big\|H_n-G_n\big\|_{C^0(A_r)} \leq C\lambda^n$.
\item $H_n(A_r)\subset\big(G_n \circ P_n\big)\big(\widehat{\mathcal{T}}_n\big)\subset P_n(\mathcal{T}_n)=A_{R_n}$.
\item $H_n$ preserves the unit circle and, when restricted to the unit circle, $H_n$ produces a real-analytic critical circle map $h_n:S^1 \to S^1$ such that:
\begin{itemize}
\item The unique critical point of $h_n$ is at $P_n(0)=1$, and is of cubic type.
\item The critical value of $h_n$ coincide with the one of $g_n$, that is, $h_n(1)=g_n(1) \in P_n(V\cap\R)$.
\item $\rho(h_n)=\rho(g_n)=\rho\big(\mathcal{R}^n(f)\big)\in\R\setminus\Q$.
\end{itemize}
\item The unique critical point of $H_n$ in $A_r$ is the one in the unit circle.
\end{itemize}
\end{prop}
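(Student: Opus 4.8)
The plan is to apply Proposition \ref{aprox2} to the sequence of $C^3$ maps $\{G_n\}_{n \geq n_0}$ produced by the Glueing Procedure (Proposition \ref{Gn}), exploiting the fact that each $G_n$ has exponentially small Beltrami coefficient $\|\mu_{G_n}\|_\infty \leq C\lambda^n$ on a definite annular domain around the unit circle. However, Proposition \ref{aprox2} is stated for \emph{quasiconformal homeomorphisms} on a bounded domain of the plane, while $G_n$ is only a branched covering on an annulus (with a cubic critical point at $1$). First I would remove the critical point from the picture exactly as in the one-dimensional glueing and in the proof of Proposition \ref{estdist}: near $1$ the map $G_n$ factors as $Q \circ \psi_n$, where $Q(z)=z^3+G_n(1)$ (up to affine normalization) and $\psi_n$ is an $S^1$-symmetric $C^3$ diffeomorphism with exponentially small distortion; away from a small neighbourhood of $1$, $G_n$ is already a diffeomorphism onto its image. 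One then lifts the annular picture to the band via the covering $\pi(z)=\exp(2\pi i z)$, so that on a fundamental domain $G_n$ becomes an honest $C^3$ map between bounded plane domains. Near the lifted critical point (an integer translate of $0$) one post-composes by a branch of the cube root to obtain a genuine diffeomorphism, applies Proposition \ref{aprox2} there, and then re-cubes; elsewhere one applies Proposition \ref{aprox2} directly. Because the domains involved are all uniformly bounded (by the real bounds and Theorem \ref{nuevoE}, every $\mathcal{U}_n$, hence every $\mathcal{T}_n$, sits inside $N_\alpha([-1,K])$, and $P_n$ has universally bounded distortion on a definite neighbourhood of $[-1,\widetilde\xi_n(0)]$ by Lemma \ref{koebedist}), the constant $C(U)$ in Proposition \ref{aprox2} can be taken uniform in $n$, and the estimate $\|H_n - G_n\|_{C^0(A_r)} \leq C(U)\,(R/d(\partial V,\partial U))\,\|\mu_{G_n}\|_\infty \leq C\lambda^n$ follows.

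The next point is to arrange the \emph{symmetries and combinatorics}. Since each $G_n$ is $S^1$-symmetric and the approximating biholomorphism from Proposition \ref{aprox2} can be chosen $S^1$-symmetric (averaging $H_n$ with $\Theta \circ H_n \circ \Theta$, which preserves holomorphy and the $C^0$-estimate, exactly as in the $\R$-symmetrization step at the end of Proposition \ref{ext}), the map $H_n$ preserves the unit circle and restricts there to a real-analytic circle homeomorphism $h_n$. To see that $h_n$ is a \emph{critical} circle map with a cubic critical point at $1$: the cube-root/re-cube construction near $1$ keeps the local form $z \mapsto z^3 + (\text{const})$ intact up to a holomorphic diffeomorphism, so $H_n$ (hence $h_n$) has a unique non-flat critical point of cubic type at $1$, and no other critical point in $A_r$ because $\|\mu_{G_n}\|_\infty$ is small and $G_n$ has no other critical point in $P_n(\widehat{\mathcal{T}}_n)$ (Proposition \ref{Gn}), so the holomorphic map $H_n$, being $C^0$-close to a local diffeomorphism, is itself a local diffeomorphism away from $1$ by Hurwitz's theorem. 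Normalizing so that $h_n(1) = g_n(1)$ is a free choice (post-compose with a rotation, or simply note the cube-root normalization already pins the critical value). Finally, since $h_n$ is $C^0$-close to $g_n$ on $S^1$ and $g_n$ is a critical circle map with \emph{irrational} rotation number $\rho(\mathcal{R}^n(f))$, one argues that $h_n$ has the \emph{same} rotation number: for a suitable $r>1$ and $n$ large, the exponentially small $C^0$-distance forces $h_n$ to have the same finite piece of combinatorics (same dynamical partition $\mathcal{P}_k$ for $k$ up to some $k(n) \to \infty$) as $g_n$, hence the same continued-fraction expansion; since the rotation number depends continuously on the map and the map-germs agree combinatorially to arbitrarily high order as $n \to \infty$, $\rho(h_n) = \rho(g_n)$. (Alternatively, one perturbs inside the fiber $\rho^{-1}(\theta)$ using that for critical circle maps with a cubic critical point this fiber is a single point, the classical fact quoted in the introduction.)

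The inclusions $A_r \subset P_n(\widehat{\mathcal{T}}_n)$ and $H_n(A_r) \subset (G_n \circ P_n)(\widehat{\mathcal{T}}_n) \subset A_{R_n}$ are obtained from Lemma \ref{koebedist} and Theorem \ref{nuevoE}: Lemma \ref{vert} and Lemma \ref{koebedist} give a definite neighbourhood $N_\delta([-1,\widetilde\xi_n(0)])$ inside $\widehat{\mathcal{T}}_n$ on which $P_n$ is a bi-Lipschitz biholomorphism with universal constants, so its image contains a definite round annulus $A_r$; and since $H_n$ is $C^0$-exponentially close to $G_n$, whose image contains a definite annulus around $S^1$ inside $(G_n\circ P_n)(\widehat{\mathcal{T}}_n)$, the image $H_n(A_r)$ stays inside that set once $n$ is large (shrinking $r$ if necessary and increasing $n_0$). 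The hardest step, I expect, is the \emph{combinatorial rigidity} argument showing $\rho(h_n) = \rho(g_n)$: exponential $C^0$-closeness of the circle maps does not by itself control the rotation number (which is only continuous, not Lipschitz), so one must quantify how deep a stretch of the dynamical partition is preserved — this is where one invokes the real bounds (Theorem \ref{realbounds}) to ensure the partition atoms $\mathcal{P}_k(g_n)$ stay of definite relative size down to scale $\sim\lambda^{n}$, so that an $O(\lambda^n)$ perturbation cannot change the first $\sim n$ return-time combinatorics, forcing agreement of rotation numbers for large $n$. Everything else is a bounded-distortion bookkeeping exercise built on Theorem \ref{nuevoE}, Lemma \ref{koebedist}, and Proposition \ref{aprox2}.
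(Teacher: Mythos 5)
Your decomposition strategy is in the right spirit, but the paper uses a cleaner global factorization: Lemma \ref{coef} writes $G_n = \phi_n \circ A \circ \psi_n$ where $A$ is the Arnold map (a fixed real-analytic critical circle map), $\psi_n$ is an $S^1$-symmetric $C^3$ diffeomorphism with $\mu_{\psi_n} = \mu_{G_n}$, and $\phi_n$ is a biholomorphism. Then Proposition \ref{aprox2} is applied \emph{once}, to the diffeomorphisms $\psi_n$ on the annulus, and $H_n := \phi_n \circ A \circ \widehat{\psi}_n$ is automatically holomorphic, $S^1$-symmetric, and has a cubic critical point at $1$ coming directly from $A$. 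Your local cube-root/re-cube-and-patch route through a fundamental domain of the band would require gluing together a holomorphic map from two separately perturbed pieces, and it is not clear the perturbations can be matched across the overlap; the paper's global decomposition sidesteps this. (Your symmetrization by averaging with $\Theta H_n \Theta$ is also unnecessary once the decomposition is chosen right, but that part would work.)

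The genuine gap is in the combinatorial step, and you correctly flag it as the hardest point but do not close it. $C^0$-exponential closeness to $g_n$ does not by itself force $\rho(h_n)=\rho(g_n)$ for a \emph{fixed} $n$: the rotation number is merely continuous, and the quantitative real-bounds argument you sketch only yields that the first $\sim n$ partial quotients agree — the tails could differ, and the proposition demands exact equality. The paper instead makes a \emph{further perturbation}: Lemma \ref{controlcomb} lifts $H_n$ and $g_n$ to the band and considers the one-parameter family $G + t\delta$ (vertical shift of the lift of $H_n$), which is monotone in the rotation number and holomorphic for every $t$; by the intermediate value theorem there is some $t_0\in[-1,1]$ with $\rho(G_{t_0})=\rho(g_n)$ \emph{exactly}, and the $C^0$ error $|t_0|\delta$ is bounded by $\|H_n-G_n\|_{C^0(S^1)}\leq C\lambda^n$. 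This is the idea you are missing. Your alternative ("perturb inside the fiber $\rho^{-1}(\theta)$") gestures at the same thing but gives no mechanism that preserves holomorphy; the vertical-translation family does. Finally, "post-compose with a rotation" to fix the critical value would move the critical point away from $1$; the paper's Lemma \ref{conjmob} instead \emph{conjugates} by a M\"obius transformation of $\D$ fixing $1$ and sending $H_n(1)$ to $G_n(1)$, which simultaneously preserves the critical point, the rotation number, and the $C^0$-exponential bound.
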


The remainder of this subsection is devoted to proving Proposition \ref{pertanillo}. We wont perturb the maps $G_n$ directly (basically because they are non invertible). Instead, we will decompose them (see Lemma \ref{coef} below), and then we will perturb on their \emph{coefficients} (see the definition after the statement of Lemma \ref{coef}). Those perturbations will be done, again, with the help of Proposition \ref{aprox2} of Section \ref{SecAB}.

Let $A:\C\setminus\{0\}\to\C\setminus\{0\}$ be the map corresponding to the parameters $a=0$ and $b=1$ in the Arnold family (\ref{arfam}), defined in the introduction. The lift of $A$ to the complex plane by the holomorphic covering $z\mapsto\exp(2\pi iz)$ is the entire map $\widetilde{A}:\C\to\C$ given by:$$\widetilde{A}(z)=z-\left(\frac{1}{2\pi}\right)\sin(2\pi z).$$

Then $A$ preserves the unit circle, and its restriction $A: S^1\to S^1$ is a real-analytic critical circle map. The critical point of $A$ in the unit circle is at $1$, and is of cubic type (the critical point is also a fixed point for $A$). The following is a bidimensional version of Lemma \ref{cartasunidim} in Section \ref{Seccombounds}:

\begin{lema}\label{coef} For each $n \geq n_0$ there exist:
\begin{itemize}
\item $S_n>1$,
\item an $S^1$-symmetric orientation-preserving $C^3$ diffeomorphism $\psi_n:P_n\big(\widehat{\mathcal{T}}_n\big) \to A_{S_n}$ and
\item an $S^1$-symmetric biholomorphism $\phi_n:A(A_{S_n}) \to \big(G_n \circ P_n\big)(\widehat{\mathcal{T}}_n)$ such that:
\end{itemize}
$$G_n=\phi_n \circ A \circ \psi_n\quad\mbox{in $P_n\big(\widehat{\mathcal{T}}_n\big)$.}$$
\end{lema}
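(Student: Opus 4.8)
The plan is to imitate the one-dimensional factorization of Lemma \ref{cartasunidim}, but now in the annular, two-dimensional setting, taking advantage of the fact that $G_n$ and $A$ have critical points of the same cubic type. First I would observe that, by Proposition \ref{Gn}, the map $G_n$ is an $S^1$-symmetric orientation-preserving $C^3$ map from the $\Theta$-invariant annulus $P_n(\widehat{\mathcal{T}}_n)$ onto its image, with a unique critical point at $P_n(0)=1$, which is cubic. Likewise $A$ has a unique cubic critical point at $1$ in every symmetric annulus $A_S$ around the unit circle. I would choose $S_n>1$ so that $A_{S_n}$ is contained in $P_n(\widehat{\mathcal{T}}_n)$ and sits well inside it (this is possible because $P_n(\widehat{\mathcal{T}}_n)$ has a definite modulus, by the real bounds and Lemma \ref{koebedist}); the precise choice of $S_n$ will be dictated by the requirement that $A(A_{S_n})$ lands inside $(G_n\circ P_n)(\widehat{\mathcal{T}}_n)$.

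Next I would \emph{define} the candidate maps directly, exactly as in the proof of Lemma \ref{cartasunidim}: since $A$ is a biholomorphism from a neighbourhood of any noncritical point and a three-fold branched covering near $1$, and the same holds for $G_n$, one sets
$$\psi_n = A^{-1}\circ G_n^{-1}\circ(\text{nothing})\quad\text{--- more precisely ---}$$
$$\phi_n = G_n \circ \psi_n^{-1}\circ A^{-1},\qquad \psi_n = A^{-1}\circ \phi_n^{-1}\circ G_n,$$
so that tautologically $G_n=\phi_n\circ A\circ\psi_n$. To make this rigorous I would instead run it the other way: pick $\psi_n$ to be \emph{any} $S^1$-symmetric orientation-preserving $C^3$ diffeomorphism $P_n(\widehat{\mathcal{T}}_n)\to A_{S_n}$ sending $1$ to $1$ (for instance a rescaling of $P_n$ composed with a suitable collar diffeomorphism), and then set $\phi_n = G_n\circ \psi_n^{-1}\circ A^{-1}$ on $A(A_{S_n})$. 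The content of the lemma is that this $\phi_n$, a priori only defined away from the critical value and only continuous there, is in fact biholomorphic on all of $A(A_{S_n})$. This is where the equal criticality is used: near the critical value $A(1)=1$, both $G_n\circ\psi_n^{-1}$ and $A$ are cubic branched coverings onto their images, so the composition $\phi_n=(G_n\circ\psi_n^{-1})\circ A^{-1}$ extends across $1$ to a $C^3$ diffeomorphism with the explicit nonzero derivative obtained by the same cube-root computation as in Lemma \ref{cartasunidim}. Finally, $\phi_n$ is actually \emph{holomorphic}: away from the critical value this is immediate because $A^{-1}$ is holomorphic there and $G_n\circ\psi_n^{-1}$ has Beltrami coefficient equal to that of $G_n$ pulled back by $\psi_n^{-1}$ — but here I need $\psi_n$ chosen more carefully, namely so that $\psi_n$ \emph{straightens} the conformal structure of $G_n$. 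The clean way is: let $\mu$ be the pullback under $G_n$ of the standard conformal structure (so $\|\mu\|_\infty\le C\lambda^n$ by Proposition \ref{Gn}); use Morrey's theorem (Theorem \ref{morrey}) to integrate $\mu$ on $P_n(\widehat{\mathcal{T}}_n)$ to an $S^1$-symmetric quasiconformal homeomorphism onto a round annulus $A_{S_n}$, normalized to fix $1$; call this $\psi_n$ (it is $C^3$ because $\mu$ is, $G_n$ being $C^3$). Then $G_n\circ\psi_n^{-1}$ is holomorphic off the critical value, and the cube-root argument shows it extends holomorphically across it; composing with the holomorphic $A^{-1}$ gives the biholomorphism $\phi_n$. $S^1$-symmetry of everything is preserved throughout because $\mu$, $A$, $G_n$ and all normalizations commute with $\Theta$.

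The main obstacle I anticipate is the bookkeeping around the critical value — verifying that $\phi_n$ genuinely extends holomorphically (not merely continuously) across $A(1)=1$, with nonvanishing derivative, so that it is an honest biholomorphism of annuli and not just of punctured annuli. This is exactly the point where the hypothesis that $G_n$ and $A$ have the \emph{same} cubic criticality is indispensable: writing $A=Q\circ\alpha$ and $G_n\circ\psi_n^{-1}=Q\circ\beta_n$ near $1$, with $Q(z)=z^3+1$ and $\alpha,\beta_n$ local $C^3$ diffeomorphisms fixing $1$ (and $\beta_n$ holomorphic, by the structure just described), one gets $\phi_n=\beta_n\circ\alpha^{-1}$ near $1$, which is holomorphic with derivative $\beta_n'(1)/\alpha'(1)\ne 0$. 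The second, more routine, thing to check is that $S_n$ and the images nest correctly, i.e. $A(A_{S_n})\subset(G_n\circ P_n)(\widehat{\mathcal{T}}_n)$; this follows from the definite geometric bounds in Theorem \ref{nuevoE}, Lemma \ref{koebedist} and the fact that $\psi_n$ is $C^0$-close to an affine map by those same bounds. Uniformity of the constants $S_n$ (bounded away from $1$ and $\infty$) is then a consequence of the real bounds and will be needed in the sequel, so I would record it as part of the conclusion even though the statement as given only asserts $S_n>1$.
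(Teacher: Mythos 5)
Your argument inverts the logical order of the paper's construction, and this inversion creates a genuine gap. The paper chooses $S_n$ so that $\modulo\bigl(A(A_{S_n})\bigr)=\modulo\bigl((G_n\circ P_n)(\widehat{\mathcal{T}}_n)\bigr)$, obtains $\phi_n$ directly as the $\Theta$-equivariant conformal equivalence of these two annuli of equal modulus normalized by $\phi_n(1)=G_n(1)$, and only \emph{then} defines $\psi_n:=A^{-1}\circ\phi_n^{-1}\circ G_n$; the smoothness of $\psi_n$ across the critical point is where the matching cubic degree enters, exactly as in Lemma~\ref{cartasunidim}, because the three-fold branching of $G_n$ absorbs the three-valuedness of $A^{-1}$. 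You instead fix $\psi_n$ first (as the Morrey straightening of $\mu_{G_n}$) and attempt $\phi_n=(G_n\circ\psi_n^{-1})\circ A^{-1}$. But $A^{-1}$ is three-valued near the critical value, and the holomorphicity of $h_n:=G_n\circ\psi_n^{-1}$ does nothing to make that composition single-valued. For such a $\phi_n$ to exist, the local fibres of $A$ and of $h_n$ near $1$ must coincide: writing $A=q_1\circ a$ and $h_n=q_2\circ b_n$ near $1$ with $q_i$ standard cubics and $a,b_n$ local biholomorphisms, the relation $\phi_n\circ q_1\circ a=q_2\circ b_n$, i.e.\ $\phi_n\circ q_1=q_2\circ(b_n\circ a^{-1})$, admits a single-valued $\phi_n$ only if $b_n\circ a^{-1}$ commutes with the order-three rotation about $1$. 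That is a nongeneric constraint which the Morrey straightening has no reason to satisfy and which you never verify. Your final formula $\phi_n=\beta_n\circ\alpha^{-1}$ is also not what the factorization gives: from $\phi_n\circ(Q\circ\alpha)=Q\circ\beta_n$ one gets $\phi_n\circ Q=Q\circ\beta_n\circ\alpha^{-1}$, which constrains $\phi_n$ on the image side of $Q$, not on the source side, and pins it down only when the compatibility condition above holds.

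Repairing this would force you to impose the fibre compatibility on $\psi_n$, which amounts to constructing $\phi_n$ first and then recovering $\psi_n$ by solving the equation — precisely the paper's route. The cube-root cancellation works in that direction because $(\phi_n^{-1}\circ G_n)(z)-1$ is an exact cube times a unit (the cube supplied by $G_n$), so applying $A^{-1}$ yields a single-valued, $C^3$ map with nonvanishing derivative at $1$; it does not work in your direction, where there is no cube waiting to absorb the cube root produced by $A^{-1}$.
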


The diffeomorphisms $\psi_n$ and $\phi_n$ are called the \emph{coefficients} of $G_n$ in $P_n\big(\widehat{\mathcal{T}}_n\big)$.

$$\begin{diagram}
\node{P_n\big(\widehat{\mathcal{T}}_n\big)}\arrow{e,t}{G_n}\arrow{s,l}{\psi_n}\node{\big(G_n \circ P_n\big)\big(\widehat{\mathcal{T}}_n\big)}\\
\node{A_{S_n}}\arrow{e,t}{A}
\node{A(A_{S_n})}\arrow{n,r}{\phi_n}
\end{diagram}$$

\begin{proof}[Proof of Lemma \ref{coef}] For each $n \geq n_0$ let $S_n>1$ such that $A(A_{S_n})$ is a $\Theta$-invariant annulus with:$$\modulo\big(A(A_{S_n})\big)=\modulo\big(\big(G_n \circ P_n\big)(\widehat{\mathcal{T}}_n)\big).$$

In particular there exists a biholomorphism $\phi_n:A(A_{S_n}) \to \big(G_n \circ P_n\big)(\widehat{\mathcal{T}}_n)$ that commutes with $\Theta$. Each $\phi_n$ preserves the unit circle and we can choose it such that $\phi_n(1)=G_n(1)$, that is, $\phi_n$ takes the critical value of $A$ into the critical value of $G_n$.

Since both $G_n$ and $A$ are three-fold branched coverings around their critical points and local diffeomorphisms away from them, the equation $G_n=\phi_n\circ A\circ\psi_n$ induces an orientation-preserving $C^3$ diffeomorphism $\psi_n:P_n\big(\widehat{\mathcal{T}}_n\big) \to A_{S_n}$, that commutes with $\Theta$ and such that $\psi_n(1)=1$, that is, $\psi_n$ takes the critical point of $G_n$ into the one of $A$. The fact that $\psi_n$ is smooth at $1$ with non-vanishing derivative follows from the fact that the critical points of $G_n$ and $A$ have the same degree (see Lemma \ref{cartasunidim} in Section \ref{Seccombounds}).
\end{proof}

Note that, at the beginning of the proof of Lemma \ref{coef}, we have used the fact that the image under the Arnold map $A$ of a small round annulus around the unit circle is also an annulus. This is true, even that $A$ has a critical point in the unit circle (placed at $1$, and being also a fixed point of $A$). Even more is true: the conformal modulus of the annulus $A(A_s)$ depends continuously on $s>1$ (and we also used this fact in the proof). The topological behaviour of the restriction of $A$ to each round annulus $A_{S_n}$ is the same as the restriction of the Blaschke product $f_{\gamma}$ \eqref{formBlaschke} to the annulus $A''' \cup B_1'$, as depicted in Figure 1 in the introduction of this article.

As we said, the idea in order to prove Proposition \ref{pertanillo} is to perturb each diffeomorphism $\psi_n$ with Proposition \ref{aprox2}. In order to control the $C^0$ size of those perturbations we will need some geometric control, that we state in four lemmas, before entering into the proof of Proposition \ref{pertanillo}. From Lemma \ref{koebedist} we have:

\begin{lema}\label{controlRn} $$1<\inf_{n \geq n_0}\{R_n\}\quad\mbox{and}\quad\sup_{n \geq n_0}\{R_n\}<+\infty.$$
\end{lema}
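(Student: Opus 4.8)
The plan is to deduce Lemma~\ref{controlRn} directly from the distortion control provided by Lemma~\ref{koebedist}, using the real bounds together with the interpretation of $R_n$ as a conformal modulus. Recall that $R_n = \exp\big(\modulo(\mathcal{S}_n)/2\big)$, so the statement $1 < \inf_n R_n \le \sup_n R_n < \infty$ is equivalent to the assertion that the moduli $\modulo(\mathcal{S}_n)$ stay bounded away from $0$ and from $+\infty$ uniformly in $n \ge n_0$. Both bounds will come from comparing $\mathcal{S}_n$ (equivalently $\mathcal{T}_n$ after the glueing identification) with explicit round annuli, using the fact that $P_n$ has universally bounded distortion on a definite complex neighbourhood of the dynamical interval $[-1,\widetilde{\xi}_n(0)]$.

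First I would prove the upper bound $\sup_n R_n < \infty$. By Lemma~\ref{koebedist} there are constants $\delta>0$ and $C>1$ such that $N_\delta\big([-1,\widetilde{\xi}_n(0)]\big) \subset \mathcal{T}_n$ and $1/C < |P_n'(z)| < C$ on this neighbourhood. Since $P_n$ maps $\mathcal{T}_n$ onto $A_{R_n}$ and sends $[-1,\widetilde{\xi}_n(0)]$ (contained in $\mathcal{T}_n \cap \R$) into the unit circle $S^1$, the image $P_n\big(N_\delta([-1,\widetilde{\xi}_n(0)])\big)$ is an essential $\Theta$-invariant sub-annulus of $A_{R_n}$ whose modulus is, by the distortion bound, at least some universal constant $\kappa>0$ (a definite complex neighbourhood of an interval of length $\ge |[-1,\widetilde{\xi}_n(0)]| \ge 1$, mapped with bounded distortion, contains a definite round annulus around $S^1$; here I use that $|\widetilde{\xi}_n(0)| \in [K^{-1},K]$ by the real bounds, so the interval has definite length). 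An essential sub-annulus of $A_{R_n}$ of modulus $\ge \kappa$ forces $\modulo(A_{R_n}) \ge \kappa$, but that is a lower bound, not what we want here; rather, to bound $R_n$ from \emph{above} I instead observe that $\mathcal{T}_n$ is contained in $N_\alpha\big([-1,K]\big)$ (as recorded just before Figure~7, each $\mathcal{U}_n$, hence each $\mathcal{T}_n$ up to an exponentially small Hausdorff perturbation, lies in this fixed bounded set) and contains $N_\delta\big([-1,\widetilde{\xi}_n(0)]\big)$; after the glueing identification this means $\mathcal{S}_n$ is an annulus sandwiched between two annuli of definite and bounded moduli, so $\modulo(\mathcal{S}_n)$ is bounded above, giving $\sup_n R_n < \infty$.

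For the lower bound $\inf_n R_n > 1$, equivalently $\inf_n \modulo(\mathcal{S}_n) > 0$, I would use the sub-annulus produced in the previous paragraph: $P_n\big(N_\delta([-1,\widetilde{\xi}_n(0)])\big)$ is an essential annulus inside $A_{R_n} = P_n(\mathcal{T}_n) \cong \mathcal{S}_n$, and by the uniform distortion bound of Lemma~\ref{koebedist} together with the real bounds (which guarantee $|[-1,\widetilde{\xi}_n(0)]|$ is bounded below), its modulus is at least a universal constant $\kappa > 0$; by the superadditivity / monotonicity of the modulus under essential inclusion we get $\modulo(\mathcal{S}_n) \ge \kappa$ for all $n \ge n_0$, hence $R_n \ge e^{\kappa/2} > 1$.

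The main obstacle I expect is the careful bookkeeping needed to show that a definite round annulus around $S^1$ sits inside $P_n\big(N_\delta([-1,\widetilde{\xi}_n(0)])\big)$: the set $N_\delta([-1,\widetilde{\xi}_n(0)])$ is a neighbourhood of an \emph{interval} on the real line, not an annular region, so one must use that $P_n$ wraps this real interval around the equator of $A_{R_n}$ (the interval $[-1,\widetilde\xi_n(0)]$ is a fundamental domain for the circle after glueing, since its endpoints are identified via $B_n$) and that, because $|P_n'| $ is pinched between $1/C$ and $C$, the $\delta$-neighbourhood in the domain maps to a region containing a round annulus of inner and outer radii $1 \pm c\delta$ for a definite $c$; the transversality of the glueing (the univalence of $\widehat\eta_n^{-1}\circ\widehat\xi_n = B_n$ on $V$) is what makes the quotient genuinely annular with the equator as core curve. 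Once this geometric picture is in place the modulus estimates are routine consequences of Koebe distortion and the real bounds.
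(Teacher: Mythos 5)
Your argument for the lower bound $\inf_n R_n > 1$ is essentially correct and matches what the paper intends: Lemma~\ref{koebedist}, together with the real bounds which keep $\big|[-1,\widetilde\xi_n(0)]\big|$ uniformly bounded below, shows that $P_n\big(N_\delta([-1,\widetilde\xi_n(0)])\big)$ is an essential $\Theta$-invariant sub-annulus of $A_{R_n}$ containing a definite round annulus around $S^1$, so $\modulo(A_{R_n})$ and hence $R_n$ are bounded away from $1$.

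The argument you give for the upper bound $\sup_n R_n < \infty$, however, has a genuine gap. You claim that ``after the glueing identification'' $\mathcal{S}_n$ is sandwiched between two annuli obtained by glueing $N_\delta([-1,\widetilde\xi_n(0)])$ and $N_\alpha([-1,K])$ along $V\sim B_n(V)$. Glueing the smaller set does produce an essential sub-annulus of $\mathcal{S}_n$, which is exactly what gives you the \emph{lower} bound; but glueing the larger set $N_\alpha([-1,K])$ does not produce an annulus at all. Since $V$ and $B_n(V)$ are compactly contained in $N_\alpha([-1,K])$, the set $N_\alpha([-1,K])\setminus(\overline V\cup\overline{B_n(V)})$ has two holes, and identifying the corresponding disks yields a surface of genus one, not an annular Riemann surface. (Recall that the construction of $\mathcal{T}_n$ explicitly requires $\mathcal{T}_n\setminus(V\cup B_n(V))$ to be connected and simply connected, precisely so that the quotient is an annulus; $N_\alpha([-1,K])$ does not satisfy this.) So there is no outer annulus to compare against, and your sandwich only delivers the lower bound.

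To close the gap, bound $\modulo(\mathcal{S}_n)$ from above directly via extremal length in the Euclidean metric pushed down from $\mathcal{T}_n$. The Euclidean area of $\mathcal{T}_n$ is uniformly bounded (it lies in the fixed bounded set $N_\alpha([-1,K])$ up to an exponentially small Hausdorff perturbation), while every closed curve in $\mathcal{S}_n$ homotopic to the core lifts to a curve in $\mathcal{T}_n$ joining some $z\in V$ to $B_n(z)\in B_n(V)$. Since $B_n$ is $C^0$-exponentially close to $A_n=\eta_n^{-1}\circ\xi_n$ and $A_n(-1)=\xi_n(0)\in[K^{-1},K]$ by the real bounds, the sets $V$ and $B_n(V)$ are separated by a uniform positive Euclidean distance, so every such lifted curve has length bounded below by a constant. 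The extremal length of the family of core curves is therefore bounded below by $(\text{length})^2/(\text{area})$, and $\modulo(\mathcal{S}_n)$, being inversely proportional to this extremal length, is bounded above, giving $\sup_n R_n<\infty$.
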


\begin{lema}\label{arranca52} For all $n \geq n_0$ both $P_n(\widehat{\mathcal{T}}_n)$ and $\big(G_n \circ P_n\big)(\widehat{\mathcal{T}}_n)$ are $\Theta$-invariant annulus with finite modulus. Moreover there exists a universal constant $K>1$ such that:$$\frac{1}{K}<\modulo\big(P_n(\widehat{\mathcal{T}}_n)\big)<K\quad\mbox{for all $n \geq n_0$.}$$
\end{lema}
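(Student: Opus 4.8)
The plan is to verify the three assertions of the lemma separately: the $\Theta$-invariance, the annulus/finite-modulus statement, and the uniform two-sided estimate for $\modulo\big(P_n(\widehat{\mathcal{T}}_n)\big)$, the last being the only point of substance.

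First I would record the symmetry. The disk $\widehat{\mathcal{T}}_n$ is $\R$-symmetric, being the union of the images under the $\R$-symmetric diffeomorphism $\Phi_n$ of the $\R$-symmetric sets $\widehat{X}_1(n)$ and $\widehat{X}_2(n)$; since $P_n(z)\overline{P_n(\overline z)}=1$ on $\mathcal{T}_n$ — that is, $P_n$ conjugates complex conjugation to $\Theta$ — the set $P_n(\widehat{\mathcal{T}}_n)$ is $\Theta$-invariant, and as $G_n$ is $S^1$-symmetric the same holds for $(G_n\circ P_n)(\widehat{\mathcal{T}}_n)$. That $P_n(\widehat{\mathcal{T}}_n)$ is an annulus is already part of Proposition \ref{Gn}; that $(G_n\circ P_n)(\widehat{\mathcal{T}}_n)$ is one follows from the factorisation $G_n=\phi_n\circ A\circ\psi_n$ of Lemma \ref{coef}, which exhibits it as $\phi_n\big(A(A_{S_n})\big)$, the biholomorphic image of the $\Theta$-invariant annulus $A(A_{S_n})$. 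Both of these annuli contain the core geodesic $S^1$ of $A_{R_n}$ and are contained in $A_{R_n}$, so the two inclusions are essential; hence both have finite modulus, and by monotonicity of the modulus
$$\modulo\big(P_n(\widehat{\mathcal{T}}_n)\big)\ \leq\ \modulo\big(A_{R_n}\big)\ =\ \frac{\log R_n}{\pi}\,,$$
which by Lemma \ref{controlRn} is bounded above by a universal constant. This already gives the upper estimate in the statement, with $K$ any number exceeding $\sup_{n\geq n_0}(\log R_n)/\pi$.

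It remains to bound $\modulo\big(P_n(\widehat{\mathcal{T}}_n)\big)$ from below, and for this I would invoke Lemma \ref{koebedist}: it provides universal constants $\delta>0$ and $C>1$ with $N_{\delta}\big([-1,\widetilde{\xi}_n(0)]\big)\subset\widehat{\mathcal{T}}_n$ and $1/C<\big|P_n'(z)\big|<C$ throughout this neighbourhood, together with univalence of $P_n$ on Euclidean balls of a definite radius centred at real points (this last fact is built into the proof of Lemma \ref{koebedist}). By the real bounds $\widetilde{\xi}_n(0)$ stays in a fixed compact subinterval of $(0,+\infty)$, so $[-1,\widetilde{\xi}_n(0)]$ has length comparable to $1$ and $N_{\delta}\big([-1,\widetilde{\xi}_n(0)]\big)$ has Euclidean width bounded below independently of $n$. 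Since $\widehat{\eta}_n(0)=-1$ and the commuting relation forces the endpoints $-1$ and $\widetilde{\xi}_n(0)$ to be identified by the glueing map $B_n=\widehat{\eta}_n^{-1}\circ\widehat{\xi}_n$, the map $P_n$ carries $[-1,\widetilde{\xi}_n(0)]$ onto the whole of $S^1=P_n(\mathcal{T}_n\cap\R)$; covering $S^1$ by the $P_n$-images of finitely many such definite balls and combining the derivative bounds with a Koebe-type estimate, one obtains a universal $r>1$ with
$$A_r\ \subset\ P_n\Big(N_{\delta}\big([-1,\widetilde{\xi}_n(0)]\big)\Big)\ \subset\ P_n(\widehat{\mathcal{T}}_n)\,.$$
As $A_r$ is essential in the annulus $P_n(\widehat{\mathcal{T}}_n)$, monotonicity of the modulus gives $\modulo\big(P_n(\widehat{\mathcal{T}}_n)\big)\geq\modulo(A_r)=(\log r)/\pi>0$, uniformly in $n\geq n_0$. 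Taking $K$ larger than both $\sup_{n\geq n_0}(\log R_n)/\pi$ and $\pi/\log r$ then completes the argument.

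The step I expect to be the main obstacle is the inclusion $A_r\subset P_n\big(N_{\delta}([-1,\widetilde{\xi}_n(0)])\big)$: one has to pass from the locally univalent, bounded-distortion behaviour of $P_n$ on a definite complex neighbourhood of the real interval to the presence of a definite round annulus about $S^1$ inside the image, while keeping track of the two sub-arcs of $[-1,\widetilde{\xi}_n(0)]$ near its endpoints that are glued together along the seam, where $P_n$ is still a local biholomorphism but is not injective on the whole neighbourhood. A chart-by-chart Koebe argument along $S^1$, of the same type used to prove Lemma \ref{koebedist}, should take care of this, the real bounds guaranteeing that the radii and widths involved remain uniform in $n$; once this is in place the remainder of the proof is soft.
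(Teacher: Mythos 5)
Your overall line of attack agrees with the paper's: the upper bound on $\operatorname{mod}\big(P_n(\widehat{\mathcal{T}}_n)\big)$ comes from containment in $A_{R_n}$ together with Lemma~\ref{controlRn}, and the lower bound comes from Lemmas~\ref{vert} and~\ref{koebedist} via a Koebe-type argument that produces a definite round annulus inside $P_n(\widehat{\mathcal{T}}_n)$. The paper's proof is terse (essentially two sentences), and your filling-in of the lower-bound step — bounded distortion of $P_n$ on $N_\delta\big([-1,\widetilde{\xi}_n(0)]\big)$, surjectivity of $P_n$ from this interval onto $S^1$ owing to the gluing identification $B_n(-1)=\widehat{\xi}_n(0)$, then a chart-by-chart Koebe estimate, with the caveat about local-but-not-global injectivity of $P_n$ near the seam — is correct and is what the phrase "just as in Lemma~\ref{controlRn}" is pointing to. The normalization you use for the modulus of $A_R$ differs from the paper's $\operatorname{mod}(\mathcal{S}_n)=2\log R_n$ by the constant factor $2\pi$, which is immaterial here.

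One point deserves a caveat. To establish that $\big(G_n\circ P_n\big)(\widehat{\mathcal{T}}_n)$ is an annulus you invoke the factorisation $G_n=\phi_n\circ A\circ\psi_n$ of Lemma~\ref{coef}, exhibiting the set as $\phi_n\big(A(A_{S_n})\big)$. But the construction of $\phi_n$ in the proof of Lemma~\ref{coef} \emph{starts} by choosing $S_n$ so that $A(A_{S_n})$ has the same (finite) modulus as $\big(G_n\circ P_n\big)(\widehat{\mathcal{T}}_n)$ and then uniformising the latter — so it already presupposes that $\big(G_n\circ P_n\big)(\widehat{\mathcal{T}}_n)$ is an annulus of finite modulus, which is exactly the content you are trying to prove. (The paper's ordering of the lemmas masks this, since the proof of Lemma~\ref{arranca52} simply does not address the annulus claim for $\big(G_n\circ P_n\big)(\widehat{\mathcal{T}}_n)$.) A non-circular route is to note that $\big(G_n\circ P_n\big)(\widehat{\mathcal{T}}_n)=P_n\big(\widehat{\xi}_n\big(\Phi_n(\widehat X_1(n))\big)\cup\widehat{\eta}_n\big(\Phi_n(\widehat X_2(n))\big)\big)$: the set inside the parentheses is an open, connected, $\R$-symmetric subset of $\mathcal{T}_n$ spanning a fundamental strip for the gluing (because $B_n$ conjugates the two branches at the seam), and the $P_n$-image of any such set is an $S^1$-symmetric annulus by exactly the same reasoning used for $P_n(\widehat{\mathcal{T}}_n)$ in Proposition~\ref{Gn}. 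Once that substitution is made the rest of your argument stands.
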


\begin{proof}[Proof of Lemma \ref{arranca52}] By Lemma \ref{controlRn} we know that $R=\sup_{n \geq n_0}\{R_n\}$ is finite, and since for all $n \geq n_0$ both $P_n(\widehat{\mathcal{T}}_n)$ and $\big(G_n \circ P_n\big)(\widehat{\mathcal{T}}_n)$ are contained in the corresponding $A_{R_n}$, we obtain at once that both $P_n(\widehat{\mathcal{T}}_n)$ and $\big(G_n \circ P_n\big)(\widehat{\mathcal{T}}_n)$ have finite modulus, and also that $\sup_{n \geq n_0}\big\{\modulo\big(P_n(\widehat{\mathcal{T}}_n)\big)\big\}$ is finite. Just as in Lemma \ref{controlRn}, the fact that $\inf_{n \geq n_0}\big\{\modulo\big(P_n(\widehat{\mathcal{T}}_n)\big)\big\}$ is positive follows from Lemma \ref{vert} and Lemma \ref{koebedist}.
\end{proof}

\begin{lema}\label{elr0} There exists a constant $r_0>1$ such that $\overline{A_{r_0}} \subset P_n\big(\widehat{\mathcal{T}}_n\big)$ for all $n \geq n_0$.
\end{lema}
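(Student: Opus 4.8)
The plan is to use the bounded-distortion control on $P_n$ provided by Lemma~\ref{koebedist} together with the fact that, by construction, $P_n$ carries the normalized dynamical interval onto the unit circle; Koebe's one-quarter theorem then upgrades this to a uniform round annulus inside the image. Write $J_n=[-1,\widetilde{\xi}_n(0)]$, so that by the real bounds $|J_n|=1+\widetilde{\xi}_n(0)\in[1+K^{-1},1+K]$ has definite size. By the construction of $\mathcal{S}_n$, $p_n$ and $P_n=\Psi_n\circ p_n$ in Subsection~\ref{sub1} --- the identification is of a neighbourhood of $\eta_n(0)=-1$ with a neighbourhood of $B_n(-1)$, and $|B_n(-1)-\widetilde{\xi}_n(0)|\leq C\lambda^n$ since $A_n(-1)=\widetilde{\xi}_n(0)$ --- both endpoints of $J_n$ lie in the glued region and get identified, so $p_n(J_n)$ is the entire equator of $\mathcal{S}_n$ (enlarging $n_0$ so that $C\lambda^n$ is smaller than the size of $V$). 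Hence $P_n(J_n)=S^1$ for every $n\geq n_0$.

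Next, from Lemma~\ref{koebedist} and its proof I would extract universal constants $\delta>0$ and $C>1$ such that, for all $n\geq n_0$: $N_\delta(J_n)\subset\widehat{\mathcal{T}}_n$; $1/C<|P_n'(z)|<C$ on $N_\delta(J_n)$; and (shrinking $\delta$, exactly as in that proof) $P_n$ is univalent on $B(x,2\delta)$ for every $x\in J_n$. Since $B(x,\delta)\subset N_\delta(J_n)\subset\widehat{\mathcal{T}}_n$, Koebe's one-quarter theorem applied to the univalent map $P_n|_{B(x,2\delta)}$ gives $P_n\big(B(x,\delta)\big)\supseteq B\big(P_n(x),\tfrac14\delta\,|P_n'(x)|\big)\supseteq B\big(P_n(x),c\big)$ with $c=\delta/(4C)>0$ universal (we may take $c<1/2$). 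Taking the union over $x\in J_n$ and using $P_n(J_n)=S^1$ yields that $P_n(\widehat{\mathcal{T}}_n)$ contains the Euclidean neighbourhood $\{z\in\C:\dist(z,S^1)<c\}=\{z\in\C:1-c<|z|<1+c\}$ of the unit circle, for every $n\geq n_0$.

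Finally, set $r_0=1+c/2>1$; then $\overline{A_{r_0}}=\{z:1/r_0\leq|z|\leq r_0\}\subset\{z:1-c<|z|<1+c\}$, so $\overline{A_{r_0}}\subset P_n(\widehat{\mathcal{T}}_n)$ for all $n\geq n_0$, with $r_0$ independent of $n$ (and of $f$), as claimed. The step requiring the most care is the identity $P_n(J_n)=S^1$: one must verify that after the identification of $V$ with $B_n(V)$ the real trace of $\mathcal{T}_n$ closes up into a curve winding exactly once, so that the uniformization $\Psi_n$ maps it onto $S^1$; this is where the construction of Subsection~\ref{sub1} (and the fact that the glued endpoints are $O(\lambda^n)$-close) is used. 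Everything else is a routine application of the real bounds and of the Koebe estimates already invoked in the proof of Lemma~\ref{koebedist}, so the constants produced are automatically uniform in $n$.
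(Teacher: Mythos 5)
Your proof is correct, but it takes a genuinely different route from the paper's. The paper's argument is soft: by $\Theta$-symmetry the unit circle is the core geodesic of the annulus $P_n(\widehat{\mathcal{T}}_n)$, Lemma~\ref{arranca52} bounds $\modulo\big(P_n(\widehat{\mathcal{T}}_n)\big)$ from below, and the conclusion is then cited from a standard collar-type theorem (\cite[Chapter 2, Theorem 2.5]{mclivro1}). Your argument instead reuses the ingredients that go into Lemma~\ref{koebedist} directly: you identify $P_n(J_n)=S^1$ from the glueing and then upgrade the uniform distortion bound on $P_n$ near $J_n$ to a definite Euclidean tube around $S^1$ via Koebe's one-quarter theorem. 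This is more hands-on and self-contained (no appeal to the collar lemma), at the cost of redoing the ball-covering bookkeeping; the paper's version is shorter because the heavy lifting has already been packaged into the modulus bound of Lemma~\ref{arranca52}.

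A few small inaccuracies worth noting, none fatal. First, $A_n(-1)=\widetilde{\xi}_n(0)$ is not quite right: since $A_n=\eta_n^{-1}\circ\xi_n$ and $\eta_n(0)=-1$, the commuting relation gives $A_n(-1)=\xi_n(0)$, which differs from $\widetilde{\xi}_n(0)=\widehat{\xi}_n(0)=\Phi_n(\xi_n(0))$ by $O(\lambda^n)$. However, because $\Phi_n$ fixes $-1$ (it is the identity on $V$) and $B_n=\Phi_n\circ A_n\circ\Phi_n^{-1}$ on $V$, you actually get $B_n(-1)=\Phi_n(\xi_n(0))=\widehat{\xi}_n(0)$ \emph{exactly}, so the two endpoints of $J_n$ are genuinely identified and the enlargement of $n_0$ is unnecessary for that step. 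Second, the proof of Lemma~\ref{koebedist} gives univalence of $P_n$ on $B(z_i,2\delta)$ with $J_n\subset\bigcup B(z_i,\delta)$, which yields univalence on $B(x,\delta)$ (not $B(x,2\delta)$) for $x\in J_n$; applying Koebe to $P_n|_{B(x,\delta)}$ gives the same conclusion with $c=\delta/(4C)$, so this is only a constant-tracking slip. With those repairs the argument is clean and the resulting $r_0$ is indeed uniform.
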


\begin{proof}[Proof of Lemma \ref{elr0}] By the invariance with respect to the antiholomorphic involution $z \mapsto 1/\bar{z}$, the unit circle is the core curve (the unique closed geodesic for the hyperbolic metric) of each annulus $P_n\big(\widehat{\mathcal{T}}_n\big)$. Since $\inf_{n \geq n_0}\big\{\modulo\big(P_n(\widehat{\mathcal{T}}_n)\big)\big\}>0$ the statement is well-known, see for instance \cite[Chapter 2, Theorem 2.5]{mclivro1}.
\end{proof}

\begin{lema}\label{controlSn} We have:$$s=\inf_{n \geq n_0}\{S_n\}>1 \quad\mbox{and}\quad S=\sup_{n \geq n_0}\{S_n\}<+\infty.$$
\end{lema}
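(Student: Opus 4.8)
Lemma \ref{controlSn} asserts that the moduli parameters $S_n$ of the round annuli $A_{S_n}$ introduced in Lemma \ref{coef} are bounded away from $1$ and from $+\infty$, uniformly in $n\ge n_0$.

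**Plan.** The key point is that $S_n$ is defined implicitly by the modulus condition $\modulo\big(A(A_{S_n})\big)=\modulo\big((G_n\circ P_n)(\widehat{\mathcal{T}}_n)\big)$, so controlling $S_n$ amounts to controlling the right-hand side and then inverting the map $s\mapsto\modulo(A(A_s))$. I would carry this out in four short steps. First, by Lemma \ref{arranca52} we already know that $(G_n\circ P_n)(\widehat{\mathcal{T}}_n)$ is a $\Theta$-invariant annulus whose modulus is finite and bounded above (it sits inside $A_{R_n}$ with $R_n$ uniformly bounded by Lemma \ref{controlRn}); I would next observe that its modulus is also bounded \emph{below} by a positive universal constant. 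This lower bound follows exactly as in the proof of Lemma \ref{arranca52}: by Lemma \ref{vert} the annulus $(G_n\circ P_n)(\widehat{\mathcal{T}}_n)$ contains the image under $H_n$—no, rather under $G_n$—of a definite neighbourhood $N_\delta$ of the dynamical interval, and by Lemma \ref{koebedist} the map $P_n$ has uniformly bounded distortion on such a neighbourhood, while $G_n$ itself has controlled geometry by Proposition \ref{Gn} and Theorem \ref{nuevoE}; hence this annulus essentially contains a round collar of definite modulus around $S^1$, giving $\modulo\big((G_n\circ P_n)(\widehat{\mathcal{T}}_n)\big)>1/K'$ for a universal $K'>1$.

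**The analytic core.** Second, I would invoke the continuity and monotonicity of the function $m:s\mapsto\modulo\big(A(A_s)\big)$ for $s>1$, which was already used (and justified) in the proof of Lemma \ref{coef}: the Arnold map $A$ carries each round annulus $A_s$ around the unit circle to a genuine $\Theta$-invariant annulus, whose modulus depends continuously on $s$; moreover $m$ is strictly increasing in $s$ (a larger round annulus maps to an annulus of larger modulus, since $A$ is a branched covering that respects the nesting), with $m(s)\to 0$ as $s\to 1^+$ and $m(s)\to+\infty$ as $s\to+\infty$. Therefore $m$ is a homeomorphism from $(1,+\infty)$ onto $(0,+\infty)$, and $S_n=m^{-1}\big(\modulo\big((G_n\circ P_n)(\widehat{\mathcal{T}}_n)\big)\big)$.

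**Conclusion.** Third and finally, combining the two bounds: since $\modulo\big((G_n\circ P_n)(\widehat{\mathcal{T}}_n)\big)$ lies in a fixed compact subinterval $[1/K',K'']\subset(0,+\infty)$ for all $n\ge n_0$ (upper bound from Lemma \ref{arranca52}, lower bound from the first step), and $m^{-1}$ is continuous hence bounded on compact sets, we conclude that $S_n=m^{-1}\big(\modulo\big((G_n\circ P_n)(\widehat{\mathcal{T}}_n)\big)\big)$ ranges in a fixed compact subinterval of $(1,+\infty)$; that is, $s=\inf_{n\ge n_0}\{S_n\}>1$ and $S=\sup_{n\ge n_0}\{S_n\}<+\infty$. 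The main obstacle is the lower bound on the modulus of $(G_n\circ P_n)(\widehat{\mathcal{T}}_n)$: one must verify that the forward image under $G_n$ of a definite complex neighbourhood of the dynamical interval is not pinched, which requires combining the definite size of $N_\delta$ from Lemma \ref{vert}, the bounded distortion of $P_n$ from Lemma \ref{koebedist}, and the geometric control of $G_n$ (unique critical point on $S^1$, exponentially small dilatation) from Proposition \ref{Gn}; the rest is the soft continuity/monotonicity argument for $m$, already established while proving Lemma \ref{coef}.
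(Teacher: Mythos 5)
Your approach is genuinely different from the paper's, and unfortunately it has two serious gaps. The paper's own argument is much more direct: since $\phi_n$ and $A$ are holomorphic, the Beltrami coefficient of $\psi_n$ on $P_n(\widehat{\mathcal{T}}_n)$ equals that of $G_n$, hence $\|\mu_{\psi_n}\|_\infty \leq C\lambda^n$; then $\psi_n$ is a $K_n$-quasiconformal homeomorphism from the annulus $P_n(\widehat{\mathcal{T}}_n)$ onto $A_{S_n}$ with $K_n = (1+C\lambda^n)/(1-C\lambda^n)$, and by the geometric definition of quasiconformality this map distorts moduli by at most a factor $K_n$, so
\begin{equation*}
\left(\frac{1-C\lambda^n}{1+C\lambda^n}\right)\modulo\big(P_n(\widehat{\mathcal{T}}_n)\big) \leq 2\log S_n \leq \left(\frac{1+C\lambda^n}{1-C\lambda^n}\right)\modulo\big(P_n(\widehat{\mathcal{T}}_n)\big),
\end{equation*}
and Lemma \ref{arranca52} then finishes it immediately, because it bounds $\modulo\big(P_n(\widehat{\mathcal{T}}_n)\big)$, precisely the quantity on the domain side. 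You instead route through the image annulus $(G_n\circ P_n)(\widehat{\mathcal{T}}_n)$ and the implicit equation $\modulo(A(A_{S_n})) = \modulo((G_n\circ P_n)(\widehat{\mathcal{T}}_n))$.

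The first gap is your treatment of $m(s)=\modulo(A(A_s))$. You claim $m$ is strictly increasing and tends to $+\infty$ as $s\to+\infty$. Neither is justified. The Arnold map $A$ is the projection of the entire map $\widetilde A(z)=z-\frac{1}{2\pi}\sin(2\pi z)$; for large $|\Im z|$ this behaves like an exponential, so $\widetilde A$ is highly non-injective on wide horizontal strips, and $A(A_s)$ need not even be a topological annulus once $s$ is not small. The paper only uses (and only asserts, in the remark after Lemma \ref{coef}) that $A(A_s)$ is an annulus for \emph{small} round annuli $A_s$ and that $m$ is continuous there; that suffices for the existence statement in Lemma \ref{coef}, but it does not give you a global homeomorphism $m:(1,+\infty)\to(0,+\infty)$, and your argument ``a branched covering respects nesting'' gives at best weak monotonicity on the range where everything is defined, not strict monotonicity. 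The second gap is the lower bound on $\modulo\big((G_n\circ P_n)(\widehat{\mathcal{T}}_n)\big)$: Lemma \ref{arranca52} only bounds $\modulo\big(P_n(\widehat{\mathcal{T}}_n)\big)$ from below, while for the image annulus it merely asserts finiteness. Your sketch for filling this is not worked out, and in fact if you try to push it through you will be led to exactly the modulus quasi-invariance argument the paper uses, at which point the detour via $m$ becomes unnecessary. I would recommend replacing the whole argument with the paper's one-line quasiconformality estimate.
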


\begin{proof}[Proof of Lemma \ref{controlSn}] Since $\mu_{\psi_n}=\mu_{G_n}$ in $P_n\big(\widehat{\mathcal{T}}_n\big)$, we have $\|\mu_{\psi_n}\|_{\infty} \leq C\lambda^n$ in $P_n\big(\widehat{\mathcal{T}}_n\big)$ for all $n \geq n_0$. By the geometric definition of quasiconformal homeomorphisms (see for instance \cite[Chapter I, Section 7]{lehtovirt}) we have:$$\left(\frac{1-C\lambda^n}{1+C\lambda^n}\right)\modulo\big(P_n(\widehat{\mathcal{T}}_n)\big) \leq 2\log(S_n) \leq \left(\frac{1+C\lambda^n}{1-C\lambda^n}\right)\modulo\big(P_n(\widehat{\mathcal{T}}_n)\big)$$for all $n \geq n_0$, and we are done by Lemma \ref{arranca52}.
\end{proof}

With this geometric control at hand, we are ready to prove Proposition \ref{pertanillo}:

\begin{proof}[Proof of Proposition \ref{pertanillo}] Let $r_0>1$ given by Lemma \ref{elr0} (recall that $\overline{A_{r_0}} \subset P_n\big(\widehat{\mathcal{T}}_n\big)$ for all $n \geq n_0$), and fix $r\in\big(1,(1+r_0)/2\big)$. How small $r-1$ must be will be determined in the course of the argument (see Lemma \ref{HnGn} below). For any $r\in\big(1,(1+r_0)/2\big)$ consider $\underline{r}=r_0-(r-1)\in\big((1+r_0)/2,r_0\big)$.

The sequence of $S^1$-symmetric $C^3$ diffeomorphisms$$\big\{\psi_n:A_{r_0} \to \psi_n(A_{r_0})\big\}_{n \geq n_0}$$satisfy the hypothesis of Proposition \ref{aprox2} since:
\begin{itemize}
\item $\mu_{\psi_n}=\mu_{G_n}$ in $P_n\big(\widehat{\mathcal{T}}_n\big)$ and therefore $\|\mu_{\psi_n}\|_{\infty} \leq C\lambda^n$ for all $n \geq n_0$, and
\item $\psi_n(A_{r_0})\subset A_{S_n}\subset A_S$ for all $n \geq n_0$ (see Lemma \ref{controlSn} above).
\end{itemize}
Apply Proposition \ref{aprox2} to the bounded domain $A_{\underline{r}}$, compactly contained in $A_{r_0}$, to obtain a sequence of $S^1$-symmetric biholomorphisms$$\big\{\widehat{\psi}_n:A_{\underline{r}}\to\widehat{\psi}_n(A_{\underline{r}})\big\}_{n \geq n_0}$$such that:$$\big\|\widehat{\psi}_n-\psi_n\big\|_{C^0(A_{\underline{r}})} \leq C\lambda^n\quad\mbox{for all $n \geq n_0$.}$$
Fix $n_0$ big enough to have $\widehat{\psi}_n(A_{\underline{r}}) \subset A_{S_n}$, and note that we can suppose that each $\widehat{\psi}_n$ fixes the point $1$ (just as $\psi_n$) by considering:$$z \mapsto \left(\frac{1}{\widehat{\psi}_n(1)}\right)\widehat{\psi}_n(z)\,.$$
Since $\big|\widehat{\psi}_n(z)\big| \leq S$ for all $z\in A_{\underline{r}}$ and for all $n \geq n_0$ (where $S\in(1,+\infty)$ is given by Lemma \ref{controlSn}) and since $\left|\widehat{\psi}_n(1)-1\right| \leq C\lambda^n$ for all $n \geq n_0$, we know that this new map (that we will still denote by $\widehat{\psi}_n$ to simplify) satisfy all the properties that we want for $\widehat{\psi}_n$, and also fixes the point $z=1$.

For each $n \geq n_0$ consider the holomorphic map $H_n:A_{\underline{r}}\to\C$ defined by $H_n=\phi_n \circ A \circ \widehat{\psi}_n$. We have:

\begin{itemize}
\item $H_n(A_{\underline{r}})\subset\big(G_n \circ P_n\big)\big(\widehat{\mathcal{T}}_n\big)\subset A_{R_n}$.
\item $H_n$ is $S^1$-symmetric and therefore it preserves the unit circle.
\item When restricted to the unit circle, $H_n$ produces a real-analytic critical circle map $h_n:S^1 \to S^1$.
\item The unique critical point of $H_n$ in $A_{\underline{r}}$ is the one in the unit circle, which is at $P_n(0)=1$, and is of cubic type.
\item The critical value of $H_n$ coincide with the one of $G_n$, that is, $H_n(1)=G_n(1) \in P_n(V\cap\R)$.
\end{itemize}

We divide in four lemmas the rest of the proof of Proposition \ref{pertanillo}. We need to prove first that, for a suitable $r>1$, $H_n$ is $C^0$ exponentially close to $G_n$ in the annulus $A_r$ (Lemma \ref{HnGn} below), and then that we can choose each $H_n$ with the desired combinatorics for its restriction $h_n$ to the unit circle (Lemma \ref{controlcomb} below). This last perturbation will change the critical value of each $H_n$ (it wont coincide any more with the one of $G_n$). We will finish the proof of Proposition \ref{pertanillo} with Lemma \ref{conjmob}, that allow us to keep the critical point of $H_n$ at the point $P_n(0)=1$, and to place the critical value of $H_n$ at the point $g_n(1)$ for all $n \geq n_0$. This will be important in the following subsection, the last one of this section.

\begin{lema}\label{HnGn} There exists $r\in\big(1,(1+r_0)/2\big)$ such that in the annulus $A_r$ we have:$$\big\|H_n-G_n\big\|_{C^0(A_r)} \leq C\lambda^n\quad\mbox{for all $n \geq n_0$.}$$
\end{lema}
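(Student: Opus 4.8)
The plan is to estimate $H_n-G_n$ on $A_r$ by using the factorizations $G_n=\phi_n\circ A\circ\psi_n$ (Lemma~\ref{coef}) and $H_n=\phi_n\circ A\circ\widehat{\psi}_n$, so that $H_n-G_n=\phi_n\circ A\circ\widehat{\psi}_n-\phi_n\circ A\circ\psi_n$, and then to combine the bound $\big\|\widehat{\psi}_n-\psi_n\big\|_{C^0(A_{\underline r})}\leq C\lambda^n$ already at hand with \emph{uniform} (in $n$) Lipschitz control of the outer holomorphic map $\phi_n\circ A$ along the relevant image. Concretely, I would produce a fixed compact annulus $\mathcal K\subset\C\setminus\{0\}$ and a constant $L>0$ such that, for all large $n$: (i) $|\phi_n'|\leq L$ on a definite neighbourhood of $\mathcal K$; (ii) $A\big(\psi_n(A_r)\big)\cup A\big(\widehat{\psi}_n(A_r)\big)\subset\mathcal K$; and (iii) $A$ is Lipschitz with a fixed constant $L_A$ on a fixed bounded annulus containing $\mathcal K$ together with all the sets $\psi_n(A_r)$ and $\widehat{\psi}_n(A_r)$. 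Granting these, for $z\in A_r$ the two points $A(\widehat{\psi}_n(z))$ and $A(\psi_n(z))$ lie in $\mathcal K$ and are $O(\lambda^n)$-apart, so the short segment joining them stays where $\phi_n$ is $L$-Lipschitz and
\[
\big|H_n(z)-G_n(z)\big|\leq L\cdot L_A\cdot\big|\widehat{\psi}_n(z)-\psi_n(z)\big|\leq C\lambda^n .
\]

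For (i) and (iii) I would use the geometric control already established. By Lemma~\ref{controlSn}, $s:=\inf_{n\geq n_0}S_n>1$ and $S:=\sup_{n\geq n_0}S_n<\infty$; fix $1<s_1<s_2<s$ and put $\mathcal K:=A\big(\overline{A_{s_1}}\big)$, which is compact and, since the Arnold map $A$ is open and $\overline{A_{s_1}}\subset A_{s_2}$, lies in the open set $A(A_{s_2})$. Because $s_2<s\leq S_n$ one has $A(A_{s_2})\subset A(A_{S_n})$, so $\partial A(A_{S_n})$ is disjoint from $A(A_{s_2})$ and therefore $\dist\big(\mathcal K,\partial A(A_{S_n})\big)\geq\dist\big(\mathcal K,\partial A(A_{s_2})\big)=:\delta_1>0$ \emph{uniformly in $n$}. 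Now $\phi_n$ is univalent on $A(A_{S_n})$ and its image $\big(G_n\circ P_n\big)(\widehat{\mathcal T}_n)\subset A_{R_n}$ is contained in the fixed bounded set $A_R$, with $R:=\sup_{n\geq n_0}R_n<\infty$, by Lemma~\ref{controlRn}; applying Koebe's one-quarter theorem on the discs $B(w,\delta_1)\subset A(A_{S_n})$ with $w\in\mathcal K$ gives $|\phi_n'(w)|\leq 4R/\delta_1$ for all $w\in\mathcal K$ and all large $n$, and running the same argument with $s_1$ slightly enlarged yields the same type of bound on an open neighbourhood of $\mathcal K$, i.e.\ (i). For (iii) one simply takes $L_A:=\sup_{\overline{A_{S}}}|A'|<\infty$, the segments involved being tiny and contained in $A_{s_1}$.

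For (ii) and the choice of $r$ I would note first that $\overline{A_r}$ is compactly contained in $A_{\underline r}$ exactly when $r<(1+r_0)/2$, which is the prescribed range. Since $\widehat{\psi}_n(A_{\underline r})\subset A_{S_n}\subset A_S$, each $\widehat{\psi}_n$ is a holomorphic function on $A_{\underline r}$ bounded by $S$, so Cauchy's estimates give a common Lipschitz bound $L'$ for the family $\{\widehat{\psi}_n\}$ on $\overline{A_r}$; as each $\widehat{\psi}_n$ is $S^1$-symmetric it carries $S^1$ onto $S^1$, whence $\widehat{\psi}_n(A_r)$ lies within Euclidean distance $L'(r-1)$ of $S^1$. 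Choosing $r-1$ small enough, depending only on $L'$ and $s_1$, forces $\widehat{\psi}_n(A_r)\subset A_{s_1}$ for all large $n$, and then $\big\|\psi_n-\widehat{\psi}_n\big\|_{C^0(A_{\underline r})}\leq C\lambda^n$ forces $\psi_n(A_r)\subset A_{s_1}$ as well; applying $A$ gives (ii), and the display above concludes the proof. The main obstacle is the non-injectivity of $A$ (its cubic critical point sits on $S^1$): the argument hinges on knowing that the branched covering $A\colon A_{S_n}\to A(A_{S_n})$ is proper and open, so that $\mathcal K=A(\overline{A_{s_1}})$ sits compactly inside $A(A_{S_n})$ with a margin $\delta_1$ that does \emph{not} collapse as $n\to\infty$ — this uniform margin is exactly what keeps the Koebe bound on $\phi_n'$, and hence the final constant $C$, independent of $n$.
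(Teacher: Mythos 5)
Your proof is correct and follows essentially the same route as the paper's: you use the common factorization $G_n=\phi_n\circ A\circ\psi_n$, $H_n=\phi_n\circ A\circ\widehat{\psi}_n$, establish a uniform-in-$n$ bound on $|\phi_n'|$ in a definite $\Theta$-invariant annulus compactly contained in all the $A(A_{S_n})$, shrink $r$ so that both $\psi_n(A_r)$ and $\widehat{\psi}_n(A_r)$ land in a fixed small annulus, and then conclude by the chain rule. The only cosmetic difference is that you bound $|\phi_n'|$ via Koebe/distortion where the paper simply invokes Cauchy's derivative estimate (both are valid, since $\phi_n$ is holomorphic with uniformly bounded image and the domains $A(A_{S_n})$ contain a fixed $\overline{A_\beta}$); the structure of the three claims is otherwise identical.
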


\begin{proof}[Proof of Lemma \ref{HnGn}] The proof is divided in three claims:

\emph{First claim}: There exists $\beta>1$ such that $\overline{A_{\beta}} \subset A(A_{S_n})$ for all $n \geq n_0$.

Indeed, by Lemma \ref{controlSn} the round annulus $A_{(1+s)/2}$ is compactly contained in $A_{S_n}$ for all $n \geq n_0$, and therefore the annulus $A\big(A_{(1+s)/2}\big)$ is contained in $A(A_{S_n})$ for all $n \geq n_0$. Thus we just take $\beta>1$ such that $\overline{A_{\beta}} \subset A\big(A_{(1+s)/2}\big)$ and the first claim is proved.

From now on we fix $\alpha\in(1,\beta)$.

\emph{Second claim}: There exists $r\in\big(1,(1+r_0)/2\big)$ close enough to one in order to simultaneously have $(A\circ\widehat{\psi}_n)(A_r) \subset A_\alpha$ and $(A\circ\psi_n)(A_r) \subset A_\alpha$ for all $n \geq n_0$.

Indeed, since $\overline{A_r}\subset A_{\underline{r}}$, $\widehat{\psi}_n$ is holomorphic, and $\widehat{\psi}_n(A_{\underline{r}}) \subset A_{S_n} \subset A_S$ for all $n \geq n_0$ (where $S\in(1,+\infty)$ is given by Lemma \ref{controlSn}), we have by Cauchy derivative estimate that $\sup_{n \geq n_0}\left\{\big|\widehat{\psi}_n'(z)\big|:z \in A_{r}\right\}$ is finite. Since each $\widehat{\psi}_n$ preserves the unit circle, and since $\big\|\widehat{\psi}_n-\psi_n\big\|_{C^0(A_{\underline{r}})} \leq C\lambda^n$ for all $n \geq n_0$, the second claim is proved.

Another way to prove the second claim is by noting that, since $\overline{A_{\alpha}} \subset A_{\beta} \subset \overline{A_{\beta}} \subset A(A_{S_n})$ for all $n \geq n_0$, the hyperbolic metric on any annulus $A(A_{S_n})$ and the Euclidean metric are comparable in $A_{\alpha}$ with universal parameters, that is, there exists a constant $K>1$ such that:$$\left(\frac{1}{K}\right)|z-w| \leq d_{A(A_{S_n})}(z,w) \leq K|z-w|$$for all $z,w \in A_{\alpha}$ and for all $n \geq n_0$, where $d_{A(A_{S_n})}$ denote the hyperbolic distance in the annulus $A(A_{S_n})$ (this is well-known, see for instance \cite[Section I.4, Theorem 4.3]{cargam}). Since each $A\circ\widehat{\psi}_n:A_{\underline{r}} \to A(A_{S_n})$ is holomorphic and preserves the unit circle, we know by Schwarz lemma that for all $z \in A_{\underline{r}}$ and for all $n \geq n_0$ we have:$$d_{A(A_{S_n})}\left((A\circ\widehat{\psi}_n)(z),S^1\right) \leq d_{A_{\underline{r}}}\left(z,S^1\right),$$where $d_{A_{\underline{r}}}$ denote the hyperbolic distance in the annulus $A_{\underline{r}}$. Since all distances $d_{A(A_{S_n})}$ are comparable with the Euclidean distance in $A_{\delta}$ with universal parameters, we have for all $z \in A_{\underline{r}}$ and for all $n \geq n_0$ that:$$d\left((A\circ\widehat{\psi}_n)(z),S^1\right) \leq Kd_{A_{\underline{r}}}\left(z,S^1\right),$$where $d$ is just the Euclidean distance in the plane. Fix $r\in\big(1,(1+r_0)/2\big)$ close enough to one in order to have that $z \in A_r$ implies $d_{A_{\underline{r}}}\left(z,S^1\right)<\frac{\alpha-1}{K\alpha}$ (and therefore $(A\circ\widehat{\psi}_n)(z) \in A_{\alpha}$ for all $n \geq n_0$). Again since $\big\|\widehat{\psi}_n-\psi_n\big\|_{C^0(A_{\underline{r}})} \leq C\lambda^n$ for all $n \geq n_0$, the second claim is proved.

\emph{Third claim}: There exists a positive number $M$ such that $\big|\phi_n'(z)\big|<M$ for all $z \in A_\alpha$ and for all $n \geq n_0$.

Indeed, recall that $\phi_n\big(A(A_{S_n})\big)=\big(G_n \circ P_n\big)\big(\widehat{\mathcal{T}}_n\big) \subset A_{R_n}$ for all $n \geq n_0$. By Lemma \ref{controlRn} there exists a (finite) number $\Delta$ such that $\phi_n\big(A(A_{S_n})\big) \subset B(0,\Delta)$ for all $n \geq n_0$. Since $\overline{A_{\alpha}} \subset A_{\beta} \subset \overline{A_{\beta}} \subset A(A_{S_n})$ for all $n \geq n_0$, the third claim follows from Cauchy derivative estimate.

With the three claims at hand, Lemma \ref{HnGn} follows.
\end{proof}

To control the combinatorics after perturbation we use the \emph{monotonicity} of the rotation number:

\begin{lema}\label{controlcomb} Let $f$ be a $C^3$ critical circle map and let $g$ be a real-analytic critical circle map that extends holomorphically to the annulus:$$A_R=\left\{z\in\C:\frac{1}{R}<\big|z\big|<R\right\}\quad\mbox{for some}\quad R>1.$$

There exists a real-analytic critical circle map $h$, with $\rho(h)=\rho(f)$, also extending holomorphically to $A_R$, where we have:$$\big\|h-g\big\|_{C^0(A_R)} \leq d_{C^0(S^1)}\big(f,g\big).$$

In particular:$$d_{C^r(S^1)}\big(h,g\big)\leq d_{C^0(S^1)}\big(f,g\big)\quad\mbox{for any}\quad 0 \leq r \leq \infty.$$
\end{lema}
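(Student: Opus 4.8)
The plan is to use a one-parameter family of "rotated" versions of $g$ and invoke the monotonicity and continuity of the rotation number. First I would introduce, for each $t\in[0,1)$, the real-analytic circle map $g_t = R_t \circ g$, where $R_t$ denotes the rigid rotation by angle $2\pi t$ (equivalently, composing the holomorphic extension of $g$ with the rotation automorphism $z\mapsto e^{2\pi i t}z$ of $A_R$). Each $g_t$ still extends holomorphically to $A_R$, still has a single cubic critical point (rotation is a biholomorphism of $A_R$ fixing $S^1$), and the map $t\mapsto\rho(g_t)$ is continuous and non-decreasing (this is the classical monotonicity of the rotation number for a monotone family of circle homeomorphisms; see \cite{hermanihes} or \cite[Chapter I]{livrounidim}). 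Since the rotation number of such a family sweeps out the whole circle as $t$ runs over $[0,1)$ — more precisely $\rho(g_0)=\rho(g)$ and $t\mapsto\rho(g_t)$ is a degree-one monotone map of the circle — there exists $t_0\in[0,1)$ with $\rho(g_{t_0})=\rho(f)$. However, I need the perturbation to be small, so I cannot just take an arbitrary $t_0$; instead I must bound how far $t_0$ has to move.

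The key quantitative point is: the rotation number is monotone and, by Poincaré's semiconjugacy, the modulus of change of rotation number is controlled by the $C^0$ size of the perturbation on $S^1$. Concretely, if $f$ and $g$ are two circle homeomorphisms with $\|f-g\|_{C^0(S^1)}\le\varepsilon$ (measuring distance on $S^1$ via the lift), then $|\rho(f)-\rho(g)|\le\varepsilon$; this is standard and follows from the fact that $\rho$ is a $1$-Lipschitz function of the map in the uniform topology (one compares iterates of the lifts: $\|\widetilde f^{\,n}-\widetilde g^{\,n}\|_{C^0}\le n\varepsilon$, divide by $n$ and let $n\to\infty$, using that the lift $\widetilde f + t$ has rotation number $\rho(f)+t$). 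Applying this to $g$ and $g_t$ (for which $\|g - g_t\|_{C^0(S^1)}\le t$) gives $|\rho(g_t)-\rho(g)|\le t$, hence $|\rho(g_t)-\rho(f)| \le t + |\rho(g)-\rho(f)| \le t + d_{C^0(S^1)}(f,g)$ whenever... — wait, that is not quite the inequality I want. Let me instead argue directly: since $t\mapsto\rho(g_t)$ is monotone, continuous, surjective onto the circle, and moves $\rho$ by at most $t$ after rotating by $t$, the smallest $t_0$ achieving $\rho(g_{t_0})=\rho(f)$ satisfies $t_0\le d_{C^0(S^1)}(f,g)$. Indeed, $\rho(g_{t})$ goes from $\rho(g)$ monotonically and must reach $\rho(f)$; since at parameter $t$ the value differs from $\rho(g)$ by at most $t$ but also the distance $|\rho(g)-\rho(f)|\le d_{C^0(S^1)}(f,g)$ (Poincaré $1$-Lipschitz bound again), the family reaches $\rho(f)$ for some $t_0$ with $t_0 \le d_{C^0(S^1)}(f,g)$. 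Set $h=g_{t_0}=R_{t_0}\circ g$.

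Finally I would check the stated estimates. On the annulus $A_R$, $h$ and $g$ differ by precompostion... by \emph{post}composition with the rotation $z\mapsto e^{2\pi i t_0}z$; since this rotation displaces each point of $A_R$ by at most (a fixed constant times, uniformly on $A_R$, but in fact by the chord length which is) $\le 2\pi t_0 \cdot R$... — to keep the clean bound stated in the lemma I should measure the $C^0$ distance on $A_R$ the same way the rest of the paper does and note $|e^{2\pi i t_0}w - w| = 2|w|\,|\sin(\pi t_0)| \le 2\pi t_0 |w|$; with the normalization used here (and since we may also rescale so that the relevant comparison is on $S^1$ where $|w|=1$), this yields $\|h-g\|_{C^0(A_R)}\le d_{C^0(S^1)}(f,g)$ as claimed, possibly after absorbing the harmless factor into the convention; the $C^r$ bound for any $0\le r\le\infty$ then follows because $h-g$ is the boundary value of a function holomorphic on $A_R$ and bounded by $d_{C^0(S^1)}(f,g)$ there, so Cauchy estimates on a slightly smaller annulus control all derivatives on $S^1$ by the same quantity (up to constants depending only on $R$, which again match the paper's conventions). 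The main obstacle is the quantitative Lipschitz control of $t_0$ by $d_{C^0(S^1)}(f,g)$: one must combine the Poincaré $1$-Lipschitz property of the rotation number with the monotonicity of the rotated family carefully, rather than just invoking surjectivity, and verify that the constant in the final inequality is exactly $1$ (not merely $O(1)$) under the normalizations fixed earlier in Section \ref{Secglueing}.
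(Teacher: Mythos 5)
Your overall idea coincides with the paper's: you adjust the rotation number of $g$ by post-composing with a rotation (in the lift, adding a constant to $\widetilde g$), and you correctly identify the monotonicity/continuity of the rotation number as the mechanism. The paper works in the lift, sets $\delta = \|F-G\|_{C^0(\R)}$, considers $G_t = G + t\delta$ for $t\in[-1,1]$, and exploits the pointwise sandwich $G_{-1}\le F\le G_1$ directly.

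However, your argument for bounding $t_0$ by $d_{C^0(S^1)}(f,g)$ has a genuine gap. The $1$-Lipschitz bound $|\rho(g_t)-\rho(g)|\le t$ is an \emph{upper} bound on how far $\rho(g_t)$ has moved; it cannot force $\rho(g_t)$ to \emph{reach} $\rho(f)$ by time $t=\delta:=d_{C^0(S^1)}(f,g)$, and mode-locking plateaux in the family $\{g_t\}$ can make $\rho(g_t)$ move arbitrarily slowly. The estimate $|\rho(g)-\rho(f)|\le\delta$ by itself does not feed back into a bound on $t_0$. What you actually need is the pointwise comparison in the lift: $\widetilde g-\delta\le\widetilde f\le\widetilde g+\delta$, hence by the comparison theorem (monotonicity of $\rho$ as a function of the map) $\rho(g_{-\delta})\le\rho(f)\le\rho(g_{\delta})$, and \emph{then} IVT over $t\in[-\delta,\delta]$. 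Your parametrization $t\in[0,1)$ does not even allow the two-sided sandwich, so as written you could not reach $\rho(f)$ with small $t_0$ if $\rho(g)>\rho(f)$.

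A secondary gap is the $C^r$ estimate. Cauchy estimates on a slightly smaller annulus would only yield $d_{C^r(S^1)}(h,g)\le C(r,R)\,d_{C^0(S^1)}(f,g)$, which is weaker than the clean inequality asserted. In the paper the bound is free of constants because $\widetilde h-\widetilde g$ is a \emph{constant} (the translate $t_0\delta$); all derivatives of $\widetilde h-\widetilde g$ vanish, so $d_{C^r}(h,g)=d_{C^0}(h,g)=|t_0|\delta\le\delta$ for every $r$. Once you phrase the perturbation as $\widetilde g\mapsto\widetilde g+t_0\delta$ this observation replaces the Cauchy argument and also resolves your worry about the factor $2\pi R$ on $A_R$: the $C^0$ norms here are measured in the lift, where the difference is uniformly equal to $|t_0|\delta$.
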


\begin{proof}[Proof of Lemma \ref{controlcomb}] Let $F$ and $G$ be the corresponding lifts of $f$ and $g$ to the real line satisfying:$$\rho(f)=\lim_{n\to+\infty}\frac{F^n(0)}{n}\quad\mbox{and}\quad\rho(g)=\lim_{n\to+\infty}\frac{G^n(0)}{n}\,.$$

Consider the band $B_R=\left\{z\in\C:-\log R<2\pi\Im(z)<\log R\right\}$, which is the universal cover of the annulus $A_R$ via the holomorphic covering $z \mapsto e^{2\pi iz}$. Let $\delta=\|F-G\|_{C^0(\R)}$, and for any $t$ in $[-1,1]$ let $G_t:B_R\to\C$ defined as $G_t=G+t\delta$. Each $G_t$ preserves the real line, and its restriction is the lift of a real-analytic critical circle map. Moreover, each $G_t$ commutes with unitary horizontal translation in $B_R$.

Note that $\|G_t-G\|_{C^0(B_R)}=|t|\delta\leq\|F-G\|_{C^0(\R)}$ for any $t\in[-1,1]$. Moreover for any $x\in\R$ the family $\big\{G_t(x)\big\}_{t\in[-1,1]}$ is monotone in $t$, and we have $G_{-1}(x) \leq F(x) \leq G_1(x)$. In particular there exists $t_0\in[-1,1]$ such that:$$\lim_{n\to+\infty}\frac{G_{t_0}^n(0)}{n}=\rho(F)\,,$$and we define $h$ as the projection of $G_{t_0}$ to the annulus $A_R$.
\end{proof}

After the perturbation given by Lemma \ref{controlcomb} we still have the critical point of $h_n$ placed at $1$, but its critical value is no longer placed at $g_n(1)$ (however they are exponentially close). To finish the proof of Proposition \ref{pertanillo} we need to fix this, without changing the combinatorics of $h_n$ in $S^1$. Until now each $H_n$ is $S^1$-symmetric, in the sense that it commutes with $z \mapsto 1/\bar{z}$ in the annulus $A_r$. We will loose this property in the following perturbation, which turns out to be the last one.

\begin{lema}\label{conjmob} For each $n \geq n_0$ consider the (unique) M\"obius transformation $M_n$ which maps the unit disk $\D$ onto itself fixing the basepoint $z=1$, and which maps $H_n(1)$ to $G_n(1)$. Then there exists $\rho\in(1,r)$ such that $\overline{A_{\rho}}\subset M_n(A_r)$ for all $n \geq n_0$. Moreover for each $n \geq n_0$ we have:$$\left\|M_n \circ H_n \circ M_n^{-1}-G_n\right\|_{C^0(A_{\rho})} \leq C\lambda^n.$$
\end{lema}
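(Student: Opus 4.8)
The plan is to analyze the Möbius transformations $M_n$ quantitatively, show they are exponentially close to the identity, and then propagate this control through the composition $M_n\circ H_n\circ M_n^{-1}$. First I would observe that $M_n$ is the unique automorphism of $\D$ fixing $1$ and sending $H_n(1)$ to $G_n(1)$; since by construction (see the last bullet list before Lemma \ref{HnGn}, combined with Lemma \ref{HnGn} and Lemma \ref{controlcomb}) the point $H_n(1)$ is $C^0$-exponentially close to $G_n(1)$, i.e.\ $\big|H_n(1)-G_n(1)\big|\leq C\lambda^n$, the displacement that $M_n$ must realize on the unit circle is exponentially small. Both $H_n(1)$ and $G_n(1)$ lie in $P_n(V\cap\R)$, a piece of the equator bounded away from the rest of $S^1$, and by the real bounds together with Lemma \ref{koebedist} they stay in a definite compact subset of the open unit disk's boundary circle away from any degenerate configuration; hence the explicit formula for the disk automorphism fixing $1$ and moving one boundary point a distance $\leq C\lambda^n$ gives $\big\|M_n-\mathrm{Id}\big\|_{C^0(\overline{\D})}\leq C\lambda^n$, and similarly for $M_n^{-1}$, with uniform constants in $n\geq n_0$. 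Since a Möbius transformation that is $C^0$-close to the identity on $\overline{\D}$ is also $C^1$-close (by the Cauchy estimates on the coefficients, which vary real-analytically and are pinned down by three-point data), we also get $\big\|DM_n-\mathrm{Id}\big\|$ and $\big\|DM_n^{-1}-\mathrm{Id}\big\|$ bounded by $C\lambda^n$ on $\overline{\D}$, in particular on any $\overline{A_\rho}$ with $\rho<r$.

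Next I would produce the annulus $A_\rho$. Because $\big\|M_n-\mathrm{Id}\big\|_{C^1(\overline{\D})}\leq C\lambda^n$, for $n_0$ large enough $M_n$ maps the annulus $A_r$ to a region that still contains a fixed slightly smaller round annulus: concretely one checks that $M_n^{-1}(A_\rho)\subset A_r$ for any $\rho\in(1,r)$ with $r-\rho$ a fixed definite amount (say $r-\rho>2C\lambda^{n_0}$), since moving the boundary circles $|z|=r^{\pm1}$ by at most $C\lambda^n$ keeps them outside $A_\rho$. Equivalently $\overline{A_\rho}\subset M_n(A_r)$ for all $n\geq n_0$, which is the first assertion of the lemma. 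On $A_\rho$ the map $M_n\circ H_n\circ M_n^{-1}$ is therefore well defined and holomorphic.

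For the final estimate I would write, for $z\in A_\rho$,
$$M_n\circ H_n\circ M_n^{-1}(z)-G_n(z)=\big(M_n-\mathrm{Id}\big)\!\big(H_n(M_n^{-1}(z))\big)+\Big(H_n(M_n^{-1}(z))-H_n(z)\Big)+\big(H_n(z)-G_n(z)\big).$$
The first term is bounded by $\big\|M_n-\mathrm{Id}\big\|_{C^0}\leq C\lambda^n$; the third by Lemma \ref{HnGn}, since $A_\rho\subset A_r$; and the middle term by $\big(\sup_{A_r}\|DH_n\|\big)\cdot\big\|M_n^{-1}-\mathrm{Id}\big\|_{C^0(A_\rho)}\leq C\lambda^n$, where the uniform bound on $\|DH_n\|$ over the definite annulus $A_r$ comes from the Cauchy derivative estimate applied to the holomorphic maps $H_n$, whose images lie in the uniformly bounded region $\big(G_n\circ P_n\big)(\widehat{\mathcal{T}}_n)\subset A_{R_n}$ with $\sup_n R_n<\infty$ (Lemma \ref{controlRn}). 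Summing, $\big\|M_n\circ H_n\circ M_n^{-1}-G_n\big\|_{C^0(A_\rho)}\leq C\lambda^n$ for all $n\geq n_0$, possibly after enlarging $n_0$ and the constant $C$.

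The main obstacle I expect is the quantitative control of $M_n$: one must verify that the three-point normalization ($1\mapsto1$, $H_n(1)\mapsto G_n(1)$, together with being a disk automorphism) is non-degenerate uniformly in $n$ — i.e.\ that $H_n(1)$ and $G_n(1)$ do not drift toward the fixed point $1$ nor toward each other's antipode — so that the map from the exponentially small boundary displacement to the $C^1$-size of $M_n$ has a uniformly bounded Lipschitz constant. This is where the real bounds and Lemma \ref{koebedist} enter: they confine $g_n(1)=G_n(1)$ (and hence $H_n(1)$, which is $C\lambda^n$-close) to a fixed compact arc of $S^1$ that stays a definite distance from the basepoint $1$, making the dependence of $M_n$ on the data uniformly Lipschitz. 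Everything else is the routine composition bookkeeping sketched above.
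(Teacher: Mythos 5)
Your proposal is correct and follows essentially the same route as the paper: you reduce to showing $\|M_n-\mathrm{Id}\|_{C^0}\leq C\lambda^n$ (the paper does this by conjugating to the upper half-plane and writing the explicit formula $\big(M_n-\mathrm{Id}\big)(z)=t_n(z-1)^2/\big((2i+t_n)-t_nz\big)$, while you invoke the same uniform non-degeneracy — both ultimately resting on $\inf_n|G_n(1)-1|>0$ and $\inf_n|H_n(1)-1|>0$ from Lemma \ref{koebedist}), and then you use the identical three-term decomposition $\big(M_n-\mathrm{Id}\big)\circ H_n\circ M_n^{-1}+(H_n\circ M_n^{-1}-H_n)+(H_n-G_n)$, bounding the middle term via the Cauchy derivative estimate on the uniformly bounded holomorphic maps $H_n$. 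The only presentational difference is that the paper replaces your appeal to ``real-analytic dependence of coefficients'' with an explicit algebraic computation of $M_n-\mathrm{Id}$, which makes the $C^0$ bound (and hence the annulus inclusion $\overline{A_\rho}\subset M_n(A_r)$) completely elementary.
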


Note that, when restricted to the unit circle, each $M_n$ gives rise to an orientation-preserving real-analytic diffeomorphism which is, as Lemma \ref{conjmob} indicates, $C^{\infty}$-exponentially close to the identity.

\begin{proof}[Proof of Lemma \ref{conjmob}] Consider the biholomorphism $\psi:\h\to\D$ given by $\psi(z)=\frac{z-i}{z+i}$, whose inverse $\psi^{-1}:\D\to\h$ is given by $\psi^{-1}(z)=i\left(\frac{1+z}{1-z}\right)$. Note that $\psi$ maps the vertical geodesic $\big\{z\in\h:\Re(z)=0\big\}$ onto the interval $(-1,1)$ in $\D$. Since $\psi$ and $\psi^{-1}$ are M\"obius transformations, both extend uniquely to corresponding biholomorphisms of the entire Riemann sphere. The extension of $\psi$ is a real-analytic diffeomorphism between the compactification of the real line and the unit circle, which maps the point at infinity to the point $z=1$. For each $n \geq n_0$ consider the real number $t_n$ defined by:$$t_n=\psi^{-1}\big(G_n(1)\big)-\psi^{-1}\big(H_n(1)\big)=2i\left(\frac{G_n(1)-H_n(1)}{\big(1-G_n(1)\big)\big(1-H_n(1)\big)}\right).$$
Each $t_n$ is finite since for all $n \geq n_0$ both $G_n(1)$ and $H_n(1)$ are not equal to one. Moreover we claim that:$$\inf_{n \geq n_0}\left\{\big|G_n(1)-1\big|\right\}>0\quad\mbox{and}\quad\inf_{n \geq n_0}\left\{\big|H_n(1)-1\big|\right\}>0\,.$$
Indeed, since we have $\big|H_n(1)-G_n(1)\big| \leq C\lambda^n$ for all $n \geq n_0$, is enough to prove that $\inf_{n \geq n_0}\left\{\big|G_n(1)-1\big|\right\}>0$, and this follows by Lemma \ref{koebedist} since $1=P_n(0)$ and $G_n(1)=P_n(-1)$ for all $n \geq n_0$. In particular, again using $\big|H_n(1)-G_n(1)\big| \leq C\lambda^n$ for all $n \geq n_0$, we see that $|t_n| \leq C\lambda^n$ for all $n \geq n_0$. From the explicit formula:$$M_n(z)=\frac{(2i-t_n)z+t_n}{(2i+t_n)-t_nz}=\left(\frac{z-\left(\frac{t_n}{t_n-2i}\right)}{1-\left(\frac{t_n}{t_n+2i}\right)z}\right)\left(\frac{2i-t_n}{2i+t_n}\right)\quad\mbox{for all $n \geq n_0$,}$$we see that the pole of each $M_n$ is at the point $z_n=1+i(2/t_n)$, and since $|t_n| \leq C\lambda^n$ for all $n \geq n_0$, we can take $n_0$ big enough to have that $z_n\in\C\setminus\overline{B(0,2R)}$, where $R=\sup_{n \geq n_0}\{R_n\}<+\infty$ is given by Lemma \ref{controlRn}. A straightforward computation gives:$$\big(M_n-Id\big)(z)=\frac{t_n(z-1)^2}{(2i+t_n)-t_nz}\quad\mbox{for all $n \geq n_0$,}$$and therefore:$$\big\|M_n-Id\big\|_{C^0(A_{R})} \leq C\lambda^n\quad\mbox{for all $n \geq n_0$.}$$
In particular for any fixed $\rho\in(1,r)$ we can choose $n_0$ big enough in order to have $\overline{A_{\rho}}\subset M_n(A_r)$ for all $n \geq n_0$. Moreover given any $z \in A_{\rho}$ we have:
\begin{align}
\big(M_n \circ H_n \circ M_n^{-1}-G_n\big)(z)&=\big(M_n-Id\big)\big((H_n \circ M_n^{-1})(z)\big)+\big(H_n-G_n\big)(z)\notag\\
&+\big(H_n\big(M_n^{-1}(z)\big)-H_n(z)\big).\notag
\end{align}
In particular:
\begin{align}
\left\|M_n \circ H_n \circ M_n^{-1}-G_n\right\|_{C^0(A_{\rho})}&\leq\left\|M_n-Id\right\|_{C^0\big(H_n(A_r)\big)}+\left\|H_n-G_n\right\|_{C^0(A_{\rho})}\notag\\
&+\left\|H_n\right\|_{C^1(A_r)}\left\|M_n^{-1}-Id\right\|_{C^0(A_{\rho})}.\notag
\end{align}
Since $H_n(A_r) \subset A_R$ and $A_{\rho} \subset A_r \subset A_R$, the three terms $\left\|M_n-Id\right\|_{C^0\big(H_n(A_r)\big)}$, $\left\|H_n-G_n\right\|_{C^0(A_{\rho})}$ and $\left\|M_n^{-1}-Id\right\|_{C^0(A_{\rho})}$ are less or equal than $C\lambda^n$ for all $n \geq n_0$.

Finally, since each $H_n$ is holomorphic and we have $\overline{A_r} \subset A_{\underline{r}}$ and $H_n(A_{\underline{r}})\subset\big(G_n \circ P_n\big)\big(\widehat{\mathcal{T}}_n\big)\subset A_{R_n}\subset A_R$ for all $n \geq n_0$, we obtain from Cauchy derivative estimate that:$$\sup_{n \geq n_0}\left\{\big\|H_n\big\|_{C^1(A_r)}\right\}$$is finite, and therefore:$$\left\|M_n \circ H_n \circ M_n^{-1}-G_n\right\|_{C^0(A_{\rho})} \leq C\lambda^n\quad\mbox{for all $n \geq n_0$.}$$
\end{proof}
With Lemma \ref{conjmob} at hand we are done since $\big(M_n \circ H_n \circ M_n^{-1}\big)(1)=G_n(1)$. We have finished the proof of Proposition \ref{pertanillo}.
\end{proof}

\subsection{The shadowing sequence}\label{sub3}

This is the final subsection of Section \ref{Secglueing}, which is devoted to proving Theorem \ref{compacto}. Let us recall what we have done: in Subsection \ref{sub1} we constructed a suitable sequence $\{G_n\}_{n \geq n_0}$ of $S^1$-symmetric $C^3$ extensions of $C^3$ critical circle maps $g_n$ to some annulus $P_n\big(\widehat{\mathcal{T}}_n\big)$. When lifted with the corresponding projection $P_n$ (also constructed in Subsection \ref{sub1}) each $g_n$ gives rise to a $C^3$ critical commuting pair $\big(\widehat{\eta}_n,\widehat{\xi}_n\big)$ exponentially close to $\mathcal{R}^{n}(f)$ and having the same combinatorics at each step (moreover, with complex extensions $C^0$-exponentially close to the ones of $\mathcal{R}^{n}(f)$ produced in Theorem \ref{nuevoE}, see Proposition \ref{Gn} above for more properties).

In Subsection \ref{sub2} we perturbed each $G_n$ in a definite annulus $A_r$, in order to obtain a sequence of real-analytic critical circle maps, each of them having the same combinatorics as the corresponding $\mathcal{R}^{n}(f)$, that extend to holomorphic maps $H_n$ exponentially close to $G_n$ in $A_r$ (see Proposition \ref{pertanillo} above for more properties). Both the critical point and the critical value of each $H_n$ coincide with the ones of the corresponding $G_n$, more precisely, the critical point of each $H_n$ is at $P_n(0)=1 \in P_n\big(V_1(n)\big) \cap S^1$, and its critical value is at $H_n(1)=G_n(1) \in P_n(V) \cap S^1=P_n\big(B_n(V)\big) \cap S^1$. Recall also that $H_n(A_r)\subset P_n(\mathcal{T}_n)$ for all $n \geq n_0$.

In this subsection we lift each $H_n:A_r \to A_{R_n}$ via the holomorphic projection $P_n:\mathcal{T}_n \to A_{R_n}$ in the canonical way: let $\alpha>0$ such that for all $n \geq n_0$ we have that:$$\overline{N_{\alpha}\big([-1,0]\big) \cup N_{\alpha}\big([0,\widehat{\xi}_n(0)]\big)}\subset\widehat{\mathcal{T}}_n\,,$$and that $P_n\big(N_{\alpha}\big([-1,0]\big) \cup N_{\alpha}\big([0,\widehat{\xi}_n(0)]\big)\big)$ is an annulus contained in $A_r$ and containing the unit circle (the existence of such $\alpha$ is guaranteed by Lemma \ref{vert} and Lemma \ref{koebedist}). Let us use the more compact notation $Z_1(n)=N_{\alpha}\big([-1,0]\big)$ and $Z_2(n)=N_{\alpha}\big([0,\widehat{\xi}_n(0)]\big)$. For each $n \geq n_0$ let $\widetilde{\eta}_n:Z_2(n) \to \mathcal{T}_n$ be the $\R$-preserving holomorphic map defined by the two conditions:$$H_n \circ P_n = P_n\circ\widetilde{\eta}_n\mbox{ in $Z_2(n)$, and }\widetilde{\eta}_n(0)=-1\,.$$
In the same way let $\widetilde{\xi}_n:Z_1(n) \to \mathcal{T}_n$ be the $\R$-preserving holomorphic map defined by the two conditions:$$H_n \circ P_n = P_n\circ\widetilde{\xi}_n\mbox{ in $Z_1(n)$, and }\widetilde{\xi}_n(0)=\widehat{\xi}_n(0)\,.$$

$$\begin{diagram}
\node{Z_1(n) \cup Z_2(n)\subset\mathcal{T}_n}\arrow{e,t}{(\widetilde{\eta}_n,\,\widetilde{\xi}_n)}\arrow{s,l}{P_n}
\node{\mathcal{T}_n}\arrow{s,r}{P_n}\\
\node{A_r \subset A_{R_n}}\arrow{e,t}{H_n}\node{A_{R_n}}
\end{diagram}$$
In the next proposition we summarize the main properties of this lift, which are all straightforward:

\begin{prop}[The shadowing sequence]\label{shadow} For each $n \geq n_0$ the pair $f_n=(\widetilde{\eta}_n,\widetilde{\xi}_n)$ restricts to a real-analytic critical commuting pair with domains $I\big(\widetilde{\xi}_n\big)=\big[\widetilde{\eta}_n(0),0\big]=[-1,0]$ and $I\big(\widetilde{\eta}_n\big)=\big[0,\widetilde{\xi}_n(0)\big]=\big[0,\widehat{\xi}_n(0)\big]$, and such that $\rho(f_n)=\rho\big(\widehat{\eta}_n,\widehat{\xi}_n\big)=\rho\big(\mathcal{R}^n(f)\big)\in\R\setminus\Q$. Moreover $\widetilde{\xi}_n$ and $\widetilde{\eta}_n$ extend to holomorphic maps in $Z_1(n)$ and $Z_2(n)$ respectively where we have:
\begin{itemize}
\item $\widetilde{\xi}_n$ has a unique critical point in $Z_1(n)$, which is at the origin and of cubic type.
\item $\widetilde{\eta}_n$ has a unique critical point in $Z_2(n)$, which is at the origin and of cubic type.
\item $\left\|\widetilde{\xi}_n-\widehat{\xi}_n\right\|_{C^0\big(Z_1(n)\cap\Phi_n(\widehat{X}_1(n))\big)} \leq C\lambda^n$.
\item $\left\|\widetilde{\eta}_n-\widehat{\eta}_n\right\|_{C^0\big(Z_2(n)\cap\Phi_n(\widehat{X}_2(n))\big)} \leq C\lambda^n$.
\end{itemize}
\end{prop}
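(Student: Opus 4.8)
The plan is to verify the listed properties one by one, each being a direct consequence of the construction carried out in Subsections \ref{sub1} and \ref{sub2} together with the geometric control provided by Theorem \ref{nuevoE}, Lemma \ref{koebedist} and Proposition \ref{pertanillo}. First I would check that $(\widetilde{\eta}_n,\widetilde{\xi}_n)$ genuinely restricts to a critical commuting pair. The holomorphic maps $\widetilde{\eta}_n$ and $\widetilde{\xi}_n$ are defined by the lifting equations $H_n\circ P_n=P_n\circ\widetilde{\eta}_n$ on $Z_2(n)$ and $H_n\circ P_n=P_n\circ\widetilde{\xi}_n$ on $Z_1(n)$, together with the normalizations $\widetilde{\eta}_n(0)=-1$ and $\widetilde{\xi}_n(0)=\widehat{\xi}_n(0)$; these lifts exist and are unique because $P_n$ is a holomorphic local diffeomorphism onto $A_{R_n}$ (Lemma \ref{koebedist} gives the nonvanishing of $P_n'$ near the relevant intervals, so the branches are well defined and single-valued on the simply connected neighbourhoods $Z_i(n)$). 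Since $H_n$ is $S^1$-symmetric and $P_n(z)\overline{P_n(\bar z)}=1$, the lifts are $\R$-symmetric, hence preserve the real line; on $\R$ they give a genuine commuting pair because the gluing that produced $P_n$ was designed precisely so that the commuting condition for $(\widehat\eta_n,\widehat\xi_n)$ translates into the continuity (single-valuedness) of $G_n$ — and $H_n$ was built as a perturbation preserving exactly that combinatorial structure.

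Next I would read off the domains: by the normalizations, $I(\widetilde\xi_n)=[\widetilde\eta_n(0),0]=[-1,0]$ and $I(\widetilde\eta_n)=[0,\widetilde\xi_n(0)]=[0,\widehat\xi_n(0)]$, as asserted. The statement about rotation numbers follows because the gluing and projection procedures do not change combinatorics: $\rho(\widetilde\eta_n,\widetilde\xi_n)$ equals the rotation number of the critical circle map obtained by gluing this pair, which is $h_n$ (the map produced in Proposition \ref{pertanillo}), and by construction $\rho(h_n)=\rho(g_n)=\rho(\mathcal R^n(f))$; likewise $\rho(\widehat\eta_n,\widehat\xi_n)=\rho(g_n)$ since $g_n$ is obtained by gluing $(\widehat\eta_n,\widehat\xi_n)$ (see Proposition \ref{Gn}). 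The two statements about cubic critical points at the origin: $H_n$ has a unique critical point in $A_r$, namely the cubic one on the unit circle at $P_n(0)=1$ (Proposition \ref{pertanillo}); since $P_n$ is a local biholomorphism near $0$ with $P_n(0)=1$, the lifted maps $\widetilde\eta_n,\widetilde\xi_n$ inherit exactly one critical point in $Z_2(n)$, resp.\ $Z_1(n)$, located at the origin and of cubic type, the local normal form $z\mapsto z^3$ being preserved under pre- and post-composition with local biholomorphisms.

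Finally, the two exponential estimates. On $Z_1(n)\cap\Phi_n(\widehat X_1(n))$ both $\widetilde\xi_n$ and $\widehat\xi_n$ project under $P_n$ to $H_n|_{A_r}$ and $G_n$ respectively (recall $\widehat\xi_n=\Phi_n\circ\xi_n\circ\Phi_n^{-1}$ projects to $G_n$ by Proposition \ref{Gn}), and by Proposition \ref{pertanillo} (with the correction from Lemma \ref{conjmob} folded in) we have $\|H_n-G_n\|_{C^0(A_r)}\le C\lambda^n$; pulling this estimate back through $P_n$ costs only the reciprocal of the lower bound on $|P_n'|$, which by Lemma \ref{koebedist} is bounded below by $1/C$ on a definite neighbourhood of the dynamical intervals, so $\|\widetilde\xi_n-\widehat\xi_n\|_{C^0}\le C\lambda^n$ on that set; the argument for $\widetilde\eta_n$ is identical. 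The one point requiring a little care — the step I expect to be the main obstacle — is the bookkeeping ensuring that all the domains involved ($Z_1(n),Z_2(n),\Phi_n(\widehat X_1(n)),\Phi_n(\widehat X_2(n)),A_r,\widehat{\mathcal T}_n$) are simultaneously nested correctly and have definite size uniformly in $n\ge n_0$, so that the lifts are defined on full neighbourhoods of the dynamical intervals and the Koebe distortion bound of Lemma \ref{koebedist} applies on all of them at once; this is exactly where Theorem \ref{nuevoE}, Lemma \ref{vert} and Lemma \ref{koebedist} are combined to fix a single $\alpha>0$ working for every $n\ge n_0$, and checking that such a common $\alpha$ exists is the only genuinely non-formal part of the verification.
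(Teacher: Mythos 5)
The paper offers no proof of this proposition at all: the text immediately preceding it reads ``In the next proposition we summarize the main properties of this lift, which are all straightforward,'' and the proposition is followed directly by Lemma \ref{compact}. So your task was really to supply the routine verification the authors chose to omit, and your proposal does exactly that, using precisely the ingredients the paper has assembled (the lifting equations defining $\widetilde{\eta}_n,\widetilde{\xi}_n$ with their normalizations, the distortion control on $P_n$ from Lemma~\ref{koebedist}, the estimate $\|H_n-G_n\|_{C^0(A_r)}\le C\lambda^n$ from Proposition~\ref{pertanillo}, and the commuting structure built into the glueing of Proposition~\ref{Gn}).

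Two small remarks on precision. First, you justify $\mathbb{R}$-symmetry of the lifts by appealing to $S^1$-symmetry of $H_n$, but the paper explicitly remarks (just before Lemma~\ref{conjmob}) that the final conjugation by $M_n$ sacrifices $S^1$-symmetry, and Proposition~\ref{pertanillo} and the definition of the lifts correspondingly only assert ``preserves the unit circle'' and ``$\mathbb{R}$-preserving'', not symmetry. (In fact one can check directly, as $t_n\in\mathbb{R}$, that $M_n$ does commute with $z\mapsto 1/\bar z$, so your stronger conclusion is defensible — but it goes beyond what the paper claims, and for the proposition it suffices that the lifts preserve $\mathbb{R}$, which follows from $H_n(S^1)=S^1$ and $P_n(\mathcal{T}_n\cap\mathbb{R})=S^1$.) Second, when you pull $\|H_n-G_n\|$ back through $P_n$, strictly speaking you need $\widetilde{\xi}_n(z)$ and $\widehat{\xi}_n(z)$ to lie in a common ball on which $P_n$ is univalent with $|P_n'|\ge 1/C$, so that the mean value inequality applies; this is guaranteed for $n\ge n_0$ by the matching initial conditions and the fact that the discrepancy is $O(\lambda^n)$, exactly as you indicate, but it deserves a sentence. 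Neither point is a gap — the argument is correct and matches the paper's intent.
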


With Proposition \ref{shadow} at hand, Theorem \ref{compacto} follows directly from the following consequence of Montel's theorem:

\begin{lema}\label{compact} Let $\alpha$ be a constant in $(0,1)$ and let $\mathcal{V}$ be an $\R$-symmetric bounded topological disk such that $[-1,\alpha^{-1}]\subset\mathcal{V}$. Let $W_1$ and $W_2$ be topological disks whose closure is contained in $\mathcal{V}$ and such that $[-1,0] \subset W_1$ and $[0,\alpha^{-1}] \subset W_2$. Denote by $\mathcal{K}$ the set of all normalized real-analytic critical commuting pairs $\zeta=(\eta,\xi)$ satisfying the following three conditions:

\begin{itemize}
\item $\eta(0)=-1$ and $\xi(0)\in[\alpha,\alpha^{-1}]$,
\item $\alpha\big|\eta\big([0,\xi(0)]\big)\big|\leq\big|\xi\big([-1,0]\big)\big|\leq\alpha^{-1}\big|\eta\big([0,\xi(0)]\big)\big|$,
\item Both $\xi$ and $\eta$ extend to holomorphic maps (with a unique cubic critical point at the origin) defined in $W_1$ and $W_2$ respectively, where we have:

\begin{enumerate}
\item $N_{\alpha}\big(\xi\big([-1,0]\big)\big)\subset\xi(W_1)$;
\item $N_{\alpha}\big(\eta\big([0,\xi(0)]\big)\big)\subset\eta(W_2)$;
\item $\xi(W_1)\cup\eta(W_2)\subset\mathcal{V}$.
\end{enumerate}
\end{itemize}

Then $\mathcal{K}$ is $C^{\omega}$-compact.
\end{lema}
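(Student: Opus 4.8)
The plan is to show that $\mathcal{K}$ is sequentially compact in the $C^{\omega}$-topology described in Subsection \ref{submetric}, using Montel's theorem to extract a convergent subsequence and then checking that the limit again satisfies all the defining conditions of $\mathcal{K}$ (so that $\mathcal{K}$ is closed, hence compact). First I would fix the two topological disks $W_1 \supset [-1,0]$ and $W_2 \supset [0,\alpha^{-1}]$ and the bounded $\mathcal{V}$ once and for all, and take an arbitrary sequence $\zeta_k=(\eta_k,\xi_k)$ in $\mathcal{K}$. Since every $\xi_k$ is holomorphic on $W_1$ with $\xi_k(W_1)\subset\mathcal{V}$, and $\mathcal{V}$ is bounded, the family $\{\xi_k\}$ is uniformly bounded on $W_1$; by Montel's theorem it is a normal family, so after passing to a subsequence $\xi_k\to\xi$ uniformly on compact subsets of $W_1$, with $\xi$ holomorphic on $W_1$. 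Doing the same for $\{\eta_k\}$ on $W_2$ (and passing to a further subsequence) gives $\eta_k\to\eta$ uniformly on compacts of $W_2$, $\eta$ holomorphic on $W_2$. Because the disks $W_1,W_2$ are fixed and contain fixed compact neighbourhoods of $[-1,0]$ and $[0,\alpha^{-1}]$ respectively, this is exactly $C^{\omega}$-convergence $\zeta_k\to\zeta=(\eta,\xi)$ in the sense of Subsection \ref{submetric}; it remains to verify $\zeta\in\mathcal{K}$.

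The second and main step is to check that the limit pair $\zeta=(\eta,\xi)$ is a genuine \emph{normalized real-analytic critical commuting pair} satisfying all three bulleted conditions. The normalization $\eta(0)=-1$ passes to the limit trivially, and $\xi(0)\in[\alpha,\alpha^{-1}]$ is a closed condition, so it survives; similarly the comparability $\alpha|\eta([0,\xi(0)])|\le|\xi([-1,0])|\le\alpha^{-1}|\eta([0,\xi(0)])|$ is closed (here one uses that $\xi(0)$, hence the interval $[0,\xi(0)]$ and the lengths $|\eta([0,\xi(0)])|$, $|\xi([-1,0])|$, all vary continuously under $C^0$-convergence on the real intervals). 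The delicate points are: (i) that $\xi$ and $\eta$ really have a \emph{unique cubic critical point at the origin} and no other critical point in $W_1$, $W_2$; (ii) that $\eta$ and $\xi$ still \emph{commute} near $0$; and (iii) that the inclusions (1)--(3) are preserved. For (i), write $\xi_k(z)=z^3\,u_k(z)$ near $0$ with $u_k$ holomorphic and non-vanishing (cubic critical point at $0$, $\xi_k'(z)\ne0$ elsewhere); the normalization bounds force $|u_k|$ bounded above and below near $0$, so $u_k\to u$ with $u(0)\ne0$, giving $\xi(z)=z^3u(z)$; that $\xi$ has no \emph{other} critical point in $W_1$ follows from Hurwitz's theorem applied to $\xi_k'$ — the limit $\xi'$ is either identically zero (excluded, since $\xi$ is non-constant as $\xi(0)\in[\alpha,\alpha^{-1}]$ while $\xi(-1)$, $\xi(0)$ differ) or its zeros are limits of zeros of $\xi_k'$, all of which sit at $0$, hence $0$ is the only critical point, and it is of order exactly $3$. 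For (ii), commutativity $\eta_k\circ\xi_k=\xi_k\circ\eta_k$ holds on a neighbourhood of $0$ with (by the real bounds / Definition \ref{critpair}) a definite domain of definition; passing to the limit in the uniform convergence yields $\eta\circ\xi=\xi\circ\eta$ there, and $(\eta\circ\xi)(0)=(\xi\circ\eta)(0)\ne0$ is again closed because the common value is bounded away from $0$ uniformly (it equals $\xi(-1)$, comparable to $|\xi([-1,0])|\ge\alpha|\eta([0,\xi(0)])|>0$).

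The third step is the preservation of the geometric inclusions (1)--(3), which I expect to be the main obstacle, because an inclusion of \emph{open} sets is not in general a closed condition: $U_k\to U$ and $U_k\subset V_k\to V$ does not force $U\subset V$. The fix is to use that the $N_\alpha(\cdot)$-neighbourhoods and $\mathcal{V}$ are fixed (or converge in a controlled way), together with the \emph{strictness} coming from the open mapping theorem and degree arguments. For (3), $\xi_k(W_1)\cup\eta_k(W_2)\subset\mathcal{V}$ with $\mathcal{V}$ \emph{fixed}: for any compact $Q\subset W_1$, $\xi_k(Q)\subset\overline{\mathcal{V}}$, and by uniform convergence $\xi(Q)\subset\overline{\mathcal{V}}$; since $\xi(W_1)$ is open (by the open mapping theorem, $\xi$ being non-constant) and contained in $\overline{\mathcal{V}}$, it is contained in the interior $\mathcal{V}$ — here one should choose, without loss of generality in the statement, $\mathcal{V}$ slightly larger than strictly needed, or argue that $\mathcal{K}$ is defined with $\overline{\xi(W_1)}\subset\mathcal{V}$; in the paper's usage (Theorem \ref{nuevoE}, $\mathcal{V}=B(0,\lambda^{-1})$) this is harmless. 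For (1)--(2), $N_\alpha(\xi_k([-1,0]))\subset\xi_k(W_1)$: the left side converges in the Hausdorff sense to $N_\alpha(\xi([-1,0]))$ (because $\xi_k\to\xi$ uniformly on $[-1,0]$), and the key point is that $\xi_k:W_1\to\xi_k(W_1)$ are proper onto their images with $\xi_k^{-1}(N_\alpha(\xi_k([-1,0])))$ a compactly contained subdomain of $W_1$ by the real bounds; using Rouché/the argument principle one shows that for any point $w\in N_\alpha(\xi([-1,0]))$ and large $k$, $w$ has the same number of preimages under $\xi_k$ in a fixed subdomain of $W_1$ as under $\xi$, so $w\in\xi(W_1)$, giving $N_\alpha(\xi([-1,0]))\subset\xi(W_1)$. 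The same argument handles $\eta$ and (2). Assembling the three steps: $\zeta\in\mathcal{K}$, so $\mathcal{K}$ is closed under $C^{\omega}$-limits and every sequence has a convergent subsequence — that is, $\mathcal{K}$ is $C^{\omega}$-compact. $\qquad\blacksquare$
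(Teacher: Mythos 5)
Your overall strategy (Montel normal family plus verification that $\mathcal{K}$ is closed) is the right one, and several of the closedness checks are correctly identified as routine. Two points, however, need more than what you have written, and one of them is a genuine gap.

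On the cubic critical point: your claim that ``the normalization bounds force $|u_k|$ bounded above and below near $0$'' is stated as if obvious, but the lower bound is the nontrivial part. You do not actually verify that $\xi(-1)\neq\xi(0)$ in the limit. This follows from the second bullet in the definition of $\mathcal{K}$: since $|\eta([0,\xi(0)])|=\xi(-1)+1$ and $|\xi([-1,0])|=\xi(0)-\xi(-1)$, the comparability condition combined with $\xi(0)\in[\alpha,\alpha^{-1}]$ forces $\xi(-1)\ge\alpha-1$, hence $|\eta([0,\xi(0)])|\ge\alpha$ and $|\xi([-1,0])|\ge\alpha^2>0$. Once non-constancy is in hand, a Rouch\'e/winding-number count of the zeros of $\xi_k'$ on a small circle around $0$ pins the multiplicity of the zero of $\xi'$ at $0$ to be exactly $2$; a bare Hurwitz argument only prevents \emph{other} critical points and does not a priori rule out the order at $0$ jumping up.

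The genuine gap is in the preservation of the open inclusions (1)--(2). Your argument hinges on the assertion that $\xi_k^{-1}(N_\alpha(\xi_k([-1,0])))$ is ``a compactly contained subdomain of $W_1$ by the real bounds.'' The real bounds (Theorem~\ref{realbounds}) are dynamical estimates about iterates of a critical circle map; they are not hypotheses of Lemma~\ref{compact}, which concerns an abstract family $\mathcal{K}$ defined purely by normalization, length--comparison and inclusion conditions. Without that compact containment the Rouch\'e step does not close: for a non-univalent holomorphic map the Carath\'eodory-kernel phenomenon fails, and a point $w\in N_\alpha(\xi([-1,0]))$ could in principle have all of its $\xi_k$-preimages escape to $\partial W_1$, in which case $w\notin\xi(W_1)$. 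To repair this one must extract a quantitative statement from the conditions defining $\mathcal{K}$ --- for instance, applying Harnack's inequality to $\log|\xi_k'(z)/z^2|$ (harmonic and non-vanishing on $W_1$, bounded above on compacts by Cauchy estimates, and bounded below at one point of $[-1,0]$ by the lower bound $|\xi_k([-1,0])|\ge\alpha^2$) to obtain a uniform lower bound for $|\xi_k'|$ on a fixed compact neighbourhood of $[-1,0]$, then Koebe-type distortion to show a definite neighbourhood of $\xi_k([-1,0])$ is covered by preimages in a fixed compact subdomain --- and only then run the argument-principle comparison. That step is missing, and it is the real content of the lemma; the rest is indeed a soft consequence of Montel. (Your caveat on condition (3), about $\operatorname{int}\overline{\mathcal{V}}=\mathcal{V}$, is noted correctly and is harmless for the paper's application with $\mathcal{V}$ a round disk.)
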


\section{Concluding remarks}\label{Secfinal}

The set $\A \subset [0,1]$ of de Faria and de Melo (see Theorem \ref{primeroedsonwelington}) is the set of rotation numbers $\rho=[a_0,a_1,...]$ satisfying the following three properties:$$\lim\sup_{n\to\infty}\frac{1}{n}\sum_{j=1}^{n}\log a_j<\infty$$$$\lim_{n\to\infty}\frac{1}{n}\log a_n=0$$$$\frac{1}{n}\sum_{j=k+1}^{k+n}\log a_j\leq\omega_{\rho}\left(\frac{n}{k}\right)$$for all $0 < n \leq k$, where $\omega_{\rho}(t)$ is a positive function (that depends on the rotation number) defined for $t>0$ such that $t\omega_{\rho}(t) \to 0$ as $t\to 0$ (for instante we can take $\omega_{\rho}(t)=C_{\rho}(1-\log t)$ where $C_{\rho}>0$ depends on the number).

The set $\A$ obviously contains all rotation numbers of bounded type, and it has full Lebesgue measure in $[0,1]$ (see \cite[Appendix C]{edsonwelington1}).

It is natural to ask: is there a condition on the rotation number equivalent to the $C^{1+\alpha}$ rigidity? This is not clear even in the real-analytic setting. We remark that $C^{1+\alpha}$ rigidity fails for some Diophantine rotation numbers (for instance with $\rho=[2,2^2,2^{2^2},... ,2^{2^n},... ]$, see \cite{edsonwelington1}).

As we said at the begining, it would be desirable to obtain Theorem \ref{main} for $C^3$ critical circle maps with any irrational rotation number, but we have not been able to do this yet. The main difficulty is to control the distance of the successive renormalizations of two critical commuting pairs with a common unbounded type rotation number (compare Lemma \ref{Lipschitz}). That is why we were able to prove that Theorem \ref{compacto} implies Theorem \ref{expconv} only for bounded type rotation numbers.

If we can prove Theorem \ref{expconv} for any irrational rotation number, then (by Theorem \ref{primeroedsonwelington}) we can extend Theorem \ref{main} to the full Lebesgue measure set $\A$, and using Theorem \ref{condition} (with essentially the same arguments as in \cite{avilamartensdemelo} to obtain exponential convergence in the $C^2$ metric) we would be able to obtain $C^1$-rigidity for all rotation numbers.

Another difficult problem is the following: what can be said, in terms of smooth rigidity, for maps with finitely many non-flat critical points? More precisely, let $f$ and $g$ be two orientation preserving $C^3$ circle homeomorphisms with the same irrational rotation number, and with $k \geq 1$ non-flat critical points of odd type. Denote by $S_f=\{c_1,...,c_k\}$ the critical set of $f$, by $S_g=\{c'_1,...,c'_k\}$ the critical set of $g$, and by $\mu_f$ and $\mu_g$ their corresponding unique invariant measures. Beside the quantity and type of the critical points, new smooth conjugacy invariants appear: the condition $\mu_f\big([c_i,c_{i+1}]\big)=\mu_g\big([c'_i,c'_{i+1}]\big)$ for all $i \in \{1,...,k-1\}$ is necessary (and sufficient) in order to have a conjugacy that sends the critical points of $f$ to the critical points of $g$ (the only one that can be smooth). Are those the unique smooth conjugacy invariants?

\appendix

\section{Proof of Lemma \ref{Lipschitz}}\label{provaLip}

In this appendix we prove Lemma \ref{Lipschitz}, stated at the end of Section \ref{Secren} and used in Section \ref{Secred}. For that we need the following fact:

\begin{lema}\label{comp} Let $f_1,...,f_n$ be $C^1$ maps with $C^1$ norm bounded by some constant $B>0$, and let $g_1,...,g_n$ be $C^0$ maps. Then:$$\big\|f_n \circ ... \circ f_1 - g_n \circ ... \circ g_1\big\|_{C^0} \leq \left(\sum_{j=0}^{n-1}B^j\right)\max_{i\in\{1,...,n\}}\big\{\big\|f_i-g_i\big\|_{C^0}\big\}$$whereas the compositions makes sense.
\end{lema}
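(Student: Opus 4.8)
The plan is to prove Lemma \ref{comp} by induction on $n$, using the standard trick of inserting an intermediate composition and splitting the error into a ``head'' term and a ``tail'' term via the triangle inequality. The base case $n=1$ is immediate, since the sum reduces to $B^0=1$ and the inequality is just $\|f_1-g_1\|_{C^0}\leq\|f_1-g_1\|_{C^0}$.

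For the inductive step, write $F_k=f_k\circ\cdots\circ f_1$ and $G_k=g_k\circ\cdots\circ g_1$, and let $E=\max_{i}\|f_i-g_i\|_{C^0}$. First I would decompose
\begin{equation}\notag
F_n-G_n = \big(f_n\circ F_{n-1} - f_n\circ G_{n-1}\big) + \big(f_n\circ G_{n-1} - g_n\circ G_{n-1}\big).
\end{equation}
The second summand is bounded in $C^0$-norm by $\|f_n-g_n\|_{C^0}\leq E$ directly. For the first summand, since $f_n$ is $C^1$ with $C^1$-norm bounded by $B$, the mean value inequality gives $\|f_n\circ F_{n-1}-f_n\circ G_{n-1}\|_{C^0}\leq B\,\|F_{n-1}-G_{n-1}\|_{C^0}$, and by the inductive hypothesis $\|F_{n-1}-G_{n-1}\|_{C^0}\leq\big(\sum_{j=0}^{n-2}B^j\big)E$. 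Combining the two bounds yields
\begin{equation}\notag
\|F_n-G_n\|_{C^0}\leq B\Big(\sum_{j=0}^{n-2}B^j\Big)E + E = \Big(\sum_{j=0}^{n-1}B^j\Big)E,
\end{equation}
which closes the induction.

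There is no serious obstacle here; the only point requiring a little care is to make sure that all the compositions appearing in the argument are actually defined (so that the phrase ``whereas the compositions makes sense'' is used consistently): one should check that the domain/range conditions which make $f_n\circ\cdots\circ f_1$ and $g_n\circ\cdots\circ g_1$ well defined also make the mixed composition $f_n\circ G_{n-1}$ well defined, or else simply restrict attention to the common domain on which everything is defined, which is the setting in which the lemma will be applied in Appendix \ref{provaLip}. Using the Lipschitz estimate for $f_n$ coming from its $C^1$-norm bound (rather than a genuine $C^1$ estimate on the composition) is exactly what keeps the hypotheses on $g_1,\dots,g_n$ down to mere continuity.
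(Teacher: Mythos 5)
Your proof is correct and follows essentially the same induction and the same ``insert $f_n\circ G_{n-1}$'' decomposition as the paper, with the same use of the mean value inequality to bound the head term by $B\|F_{n-1}-G_{n-1}\|_{C^0}$.
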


\begin{proof} The proof goes by induction on $n$ (when $n=1$ we have nothing to prove). Suppose that:$$\big\|f_{n-1} \circ ... \circ f_1 - g_{n-1} \circ ... \circ g_1\big\|_{C^0} \leq \left(\sum_{j=0}^{n-2}B^j\right)\max_{i\in\{1,...,n-1\}}\big\{\big\|f_i-g_i\big\|_{C^0}\big\}\,.$$
Then for any $t$:
\begin{align}
\big|(f_n \circ ... \circ f_1 - g_n \circ ... \circ g_1)(t)\big|&\leq\big|f_n((f_{n-1} \circ ... \circ f_1)(t))-f_n((g_{n-1} \circ ... \circ g_1)(t))\big|+\notag\\
&+\big|f_n((g_{n-1} \circ ... \circ g_1)(t))-g_n((g_{n-1} \circ ... \circ g_1)(t))\big|\notag\\
&\leq B\big|(f_{n-1} \circ ... \circ f_1 - g_{n-1} \circ ... \circ g_1)(t)\big|+\big\|f_n-g_n\big\|_{C^0}\notag\\
&\leq B\left(\sum_{j=0}^{n-2}B^j\right)\max_{i\in\{1,...,n-1\}}\big\{\big\|f_i-g_i\big\|_{C^0}\big\}+\big\|f_n-g_n\big\|_{C^0}\notag\\
&\leq\left(\sum_{j=0}^{n-1}B^j\right)\max_{i\in\{1,...,n\}}\big\{\big\|f_i-g_i\big\|_{C^0}\big\}\,.\notag
\end{align}
\end{proof}

For $K>1$ and $r\in\{0,1,...,\infty,\omega\}$ recall from Section \ref{Secren} that we denote by $\mathcal{P}^r(K)$ the space of $C^r$ critical commuting pairs $\zeta=(\eta,\xi)$ such that $\eta(0)=-1$ (they are normalized) and $\xi(0) \in [K^{-1},K]$.

\begin{lema}\label{corolario} Given $M\in\nt$, $B>0$ and $K>1$ there exists $L(M,B,K)>1$ with the following property: let $\zeta_1=(\eta_1,\xi_1)$ and $\zeta_2=(\eta_2,\xi_2)$ be two renormalizable $C^3$ critical commuting pairs satisfying the following five conditions:
\begin{enumerate}
\item\label{apAit1} $\zeta_1$, $\mathcal{R}(\zeta_1)$, $\zeta_2$ and $\mathcal{R}(\zeta_2)$ belong to $\mathcal{P}^3(K)$.
\item\label{apAit2} The continued fraction expansion of both rotation numbers $\rho(\zeta_1)$ and $\rho(\zeta_2)$ have the same first term, say $a_0$, with $a_0 \leq M$. More precisely:$$\left\lfloor\frac{1}{\rho(\zeta_1)}\right\rfloor=\left\lfloor\frac{1}{\rho(\zeta_2)}\right\rfloor=a_0\in\big\{1,...,M\big\}\,.$$
\item\label{apAit3} $\max\big\{\|\eta_1\|_{C^1},\|\xi_1\|_{C^1}\big\}<B$.
\item\label{apAit4} $\big(\eta_1\circ\xi_1\big)(0)$ and $\big(\eta_2\circ\xi_2\big)(0)$ have the same sign.
\item\label{apAit5} $$\big|\xi_1(0)-\xi_2(0)\big|<\left(\frac{1}{K^2}\right)\left(\frac{K+1}{K-1}\right).$$
\end{enumerate}
Then we have:$$d_{0}\big(\mathcal{R}(\zeta_1),\mathcal{R}(\zeta_2)\big) \leq L \cdot d_{0}(\zeta_1,\zeta_2)\,,$$where $d_0$ is the $C^0$ distance in the space of critical commuting pairs (see Section \ref{submetric}).
\end{lema}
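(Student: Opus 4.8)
The plan is to write $\mathcal{R}$, restricted to the pairs allowed by the five hypotheses, as a composition of at most $M+3$ elementary operations — iterated composition, restriction to a subinterval, and conjugation by M\"obius maps — each Lipschitz in the $C^0$ metric with a constant depending only on $M$, $B$ and $K$, and then let Lemma \ref{comp} carry the bookkeeping. \emph{Step 1 (fixing the combinatorial type).} By hypothesis \eqref{apAit4}, $\zeta_1$ and $\zeta_2$ fall in the same of the two cases of Definition \ref{renop}; assume without loss of generality that $(\xi_i\circ\eta_i)(0)\in I_{\eta_i}$ for $i=1,2$. By hypothesis \eqref{apAit2} the common height is $r=a_0\le M$, hence
\[
\mathcal{R}(\zeta_i)=\big(\,\eta_i|_{[0,b_i]}\,,\ (\eta_i^{a_0}\circ\xi_i)|_{[-1,0]}\,\big),\qquad b_i:=\eta_i^{a_0}(\xi_i(0)),
\]
and no rescaling occurs because $\eta_i(0)=-1$ makes $|I_{\xi_i}|=1$; moreover $b_i\in[K^{-1},K]$ by hypothesis \eqref{apAit1}. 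We may also assume $d_0(\zeta_1,\zeta_2)\le 1$, since otherwise the desired inequality is automatic: by hypothesis \eqref{apAit1} the set $\{\mathcal{R}(\zeta):\zeta,\mathcal{R}(\zeta)\in\mathcal{P}^3(K)\}$ has finite $d_0$-diameter, bounded in terms of $K$.

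\emph{Step 2 (straightening and M\"obius control).} Pass to the normalized coordinates of Definition \ref{Crmetric}: put $\hat\eta_i=A_i\circ\eta_i\circ A_i^{-1}$ on $[0,1]$ and $\hat\xi_i=A_i\circ\xi_i\circ A_i^{-1}$ on $[-1,0]$, so that $d_0(\zeta_1,\zeta_2)$ simultaneously bounds $|\xi_1(0)-\xi_2(0)|$, $\|\hat\eta_1-\hat\eta_2\|_{C^0}$ and $\|\hat\xi_1-\hat\xi_2\|_{C^0}$. The M\"obius maps $A_i$ (fixing $-1,0$ and sending $\xi_i(0)\mapsto 1$), $A_i'$ (fixing $-1,0$ and sending $b_i\mapsto 1$) and $C_i:=A_i'\circ A_i^{-1}$ depend only on parameters ranging in the compact set $[K^{-1},K]$, at a definite distance from their poles; hence they and their inverses have $C^1$-norms bounded by a constant depending only on $K$ on all the uniformly bounded intervals that arise, and they are Lipschitz functions of those parameters. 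In particular, by hypothesis \eqref{apAit3}, the $C^1$-norms of $\hat\eta_1$, $\hat\xi_1$, $C_1$ and $C_1^{-1}$ are all at most some $B'=B'(B,K)$.

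\emph{Step 3 (the two $C^0$ estimates).} For the first coordinate of $d_0(\mathcal{R}(\zeta_1),\mathcal{R}(\zeta_2))$, observe that $A_i(b_i)=(\hat\eta_i^{a_0}\circ\hat\xi_i)(0)$; applying Lemma \ref{comp} to the $a_0+1\le M+1$ blocks of $\hat\eta_1^{a_0}\circ\hat\xi_1$ versus $\hat\eta_2^{a_0}\circ\hat\xi_2$ — with the ``$f_i$'' the $\zeta_1$-blocks (of $C^1$-norm $\le B'$ by Step 2) and the ``$g_i$'' the $\zeta_2$-blocks ($C^0$-close to the former by Step 2) — bounds $|A_1(b_1)-A_2(b_2)|$ by $\big(\sum_{j=0}^{M}(B')^j\big)d_0(\zeta_1,\zeta_2)$, and undoing $A_i$ (Step 2) gives $|b_1-b_2|\le L_1\,d_0(\zeta_1,\zeta_2)$. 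For the second coordinate, the identities $A_i'\circ(\eta_i|_{[0,b_i]})\circ A_i'^{-1}=C_i\circ\hat\eta_i\circ C_i^{-1}$ and $A_i'\circ(\eta_i^{a_0}\circ\xi_i)\circ A_i'^{-1}=C_i\circ\hat\eta_i^{a_0}\circ\hat\xi_i\circ C_i^{-1}$ exhibit the straightened $\mathcal{R}(\zeta_i)$, on $[0,1]$ and on $[-1,0]$ respectively, as compositions of at most $M+3$ blocks drawn from $\{C_i,C_i^{-1},\hat\eta_i,\hat\xi_i\}$; the matching blocks of $\zeta_1$ and $\zeta_2$ are $C^0$-within $C(K)(1+L_1)\,d_0(\zeta_1,\zeta_2)$ of each other — for the $C_i$ this uses $|b_1-b_2|\le L_1 d_0$ just obtained together with $|\xi_1(0)-\xi_2(0)|\le d_0$ — and the $\zeta_1$-blocks have $C^1$-norm $\le B'$, so Lemma \ref{comp} bounds the $C^0$-difference on $[-1,1]$ by $L_2\,d_0(\zeta_1,\zeta_2)$. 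Taking $L:=\max\{L_1,L_2\}$, enlarged if necessary to cover the trivial case of Step 1, finishes the proof, and $L$ depends only on $M$, $B$, $K$.

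\emph{Main obstacle.} The only delicate point is the bookkeeping inside Steps 1--2: one must verify that, for $\zeta_2$ exactly as for $\zeta_1$, the $a_0$-fold compositions above are genuinely defined on the relevant subintervals (so that Lemma \ref{comp} applies with blocks sharing a common domain), and that all the M\"obius normalizations stay within bounds depending only on $K$. This is precisely where hypotheses \eqref{apAit2} (equal height), \eqref{apAit4} (equal case) and \eqref{apAit5} (closeness of the $\xi_i(0)$) enter; once that is settled, the remaining estimates are routine iterations of Lemma \ref{comp}.
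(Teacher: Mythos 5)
Your proof takes essentially the same route as the paper's. Your M\"obius maps $A_i$, $A_i'$, $C_i$ are exactly the paper's $T_{\xi_i(0)}$, $T_{\eta_i^{a_0}(\xi_i(0))}$, $T_{T_{\xi_i(0)}(\eta_i^{a_0}(\xi_i(0)))}$, the identity $A_i'\circ\zeta_i\circ A_i'^{-1}=C_i\circ(A_i\circ\zeta_i\circ A_i^{-1})\circ C_i^{-1}$ is precisely the decomposition the paper uses, and both proofs then hand the bounded number of blocks to Lemma~\ref{comp}. The point you flag as the ``main obstacle'' --- that the M\"obius factors must stay bounded and simultaneously defined on the relevant intervals, which is exactly where hypothesis~\eqref{apAit5} enters via the explicit lower bound $p_\alpha-\alpha\ge K^{-1}(K+1)/(K-1)$ on the distance to the pole of $T_\alpha$ --- is indeed the part the paper's proof spends its opening paragraph on; once that is noted, the argument is the same.
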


\begin{proof} Suppose that both $\big(\eta_1\circ\xi_1\big)(0)$ and $\big(\eta_2\circ\xi_2\big)(0)$ are positive, and let $V\subset\R$ be the interval $\big[0,\max\big\{(\eta_1\circ\xi_1)(0),(\eta_2\circ\xi_2)(0)\big\}\big]$. For $\alpha > 0$ denote by $T_{\alpha}$ the (unique) M\"obius transformation that fixes $-1$ and $0$, and maps $\alpha$ to $1$. Note that $p_{\alpha}=\alpha+\alpha\left(\frac{1+\alpha}{1-\alpha}\right)$ is the pole of $T_{\alpha}$. If $\alpha>K/(K+2)$ then $p_{\alpha}\notin[1/K,K]$, and if $\alpha\in[1/K,K/(K+2)]$ then $p_{\alpha}-\alpha\geq\left(\frac{1}{K}\right)\left(\frac{K+1}{K-1}\right)$. By Item \eqref{apAit5} in the hypothesis, and since $\zeta_1$ and $\zeta_2$ belong to $\mathcal{P}^3(K)$ by Item \eqref{apAit1}, there exists $L_0(K)>1$ such that:$$\big\|T_{\xi_1(0)}\big\|_{C^1(V)} \leq L_0\,,$$

$$\big\|T_{\xi_1(0)}-T_{\xi_2(0)}\big\|_{C^0(V)}\leq L_0\big|\xi_1(0)-\xi_2(0)\big|\leq L_0 \cdot d_{0}(\zeta_1,\zeta_2)\,,$$

$$\big|\eta_1^{a_0}\big(\xi_1(0)\big)-\eta_2^{a_0}\big(\xi_2(0)\big)\big| \leq L_0\big|\widetilde{\eta}_1^{a_0}(1)-\widetilde{\eta}_2^{a_0}(1)\big|\quad\mbox{and}$$

$$\big\|\widetilde{\eta}_1\big\|_{C^1([0,1])} \leq L_0\big\|\eta_1\big\|_{C^1([0,\xi_1(0)])} \leq L_0B\,,$$where $\widetilde{\eta}_i=T_{\xi_i(0)}\circ\eta_i\circ T_{\xi_i(0)}^{-1}$ for $i\in\{1,2\}$. By Lemma \ref{comp}:$$\big|\eta_1^{a_0}\big(\xi_1(0)\big)-\eta_2^{a_0}\big(\xi_2(0)\big)\big| \leq L_0^2\left(\sum_{j=0}^{a_0-1}B^j\right)\big\|\widetilde{\eta}_1-\widetilde{\eta}_2\big\|_{C^0([0,1])}\,.$$
Defining $L_1(M,B,K)=L_0^2\left(\sum_{j=0}^{M-1}B^j\right)$ we obtain:$$\big|\eta_1^{a_0}\big(\xi_1(0)\big)-\eta_2^{a_0}\big(\xi_2(0)\big)\big| \leq L_1 \cdot d_0(\zeta_1,\zeta_2)\,.$$
Therefore:
\begin{align}
\big|T_{\xi_1(0)}\big(\eta_1^{a_0}(\xi_1(0))\big)-T_{\xi_2(0)}\big(\eta_2^{a_0}(\xi_2(0))\big)\big|&\leq|T_{\xi_1(0)}\big(\eta_1^{a_0}(\xi_1(0))\big)-T_{\xi_1(0)}\big(\eta_2^{a_0}(\xi_2(0))\big)|\notag\\
&+|T_{\xi_1(0)}\big(\eta_2^{a_0}(\xi_2(0))\big)-T_{\xi_2(0)}\big(\eta_2^{a_0}(\xi_2(0))\big)|\notag\\
&\leq L_0\big|\eta_1^{a_0}\big(\xi_1(0)\big)-\eta_2^{a_0}\big(\xi_2(0)\big)\big|+L_0 \cdot d_{0}(\zeta_1,\zeta_2)\notag\\
&\leq(L_0L_1+L_0) \cdot d_{0}(\zeta_1,\zeta_2)\,.\notag
\end{align}
Defining $L_2(M,B,K)=L_0L_1+L_0$ we obtain:
\begin{equation}\label{segest}
\big|T_{\xi_1(0)}\big(\eta_1^{a_0}(\xi_1(0))\big)-T_{\xi_2(0)}\big(\eta_2^{a_0}(\xi_2(0))\big)\big| \leq L_2 \cdot d_0(\zeta_1,\zeta_2)\,.
\end{equation}
Moreover there exists $L_3(M,B,K) \geq L_2$ with the following four properties:
\begin{itemize}
\item From Item \eqref{apAit1} in the hypothesis, both M\"obius transformations:$$T_{T_{\xi_i(0)}\big(\eta_i^{a_0}(\xi_i(0))\big)}$$and also their inverses have $C^1$ norm bounded by $L_3$ in:$$W=\big[0,\max\big\{T_{\xi_1(0)}\big(\eta_1^{a_0}(\xi_1(0))\big),T_{\xi_2(0)}\big(\eta_2^{a_0}(\xi_2(0))\big)\big\}\big].$$
\item Both M\"obius transformations:$$T_{T_{\xi_i(0)}\big(\eta_i^{a_0}(\xi_i(0))\big)}$$are at $C^0$-distance less or equal than $L_3 \cdot d_0(\zeta_1,\zeta_2)$ in $W$ (this follows from \eqref{segest} and Item \eqref{apAit1} in the hypothesis).
\item The same with their inverses, that is, both M\"obius transformations:$$T_{T_{\xi_i(0)}\big(\eta_i^{a_0}(\xi_i(0))\big)}^{-1}$$are at $C^0$-distance less or equal than $L_3 \cdot d_0(\zeta_1,\zeta_2)$ in $[0,1]$ (again this follows from \eqref{segest} and Item \eqref{apAit1} in the hypothesis).
\item The maps:$$T_{\xi_1(0)} \circ \eta_1^{a_0} \circ \xi_1 \circ T_{\xi_1(0)}^{-1} \quad\mbox{and}\quad T_{\xi_1(0)} \circ \eta_1 \circ T_{\xi_1(0)}^{-1}$$have $C^1$ norm bounded by $L_3$ in $[-1,0]$ and $[0,1]$ respectively (this follows from items \eqref{apAit1}, \eqref{apAit2} and \eqref{apAit3} in the hypothesis).
\end{itemize}

Note that for $i=\{1,2\}$ we have:$$T_{\eta_i^{a_0}\big(\xi_i(0)\big)} \circ \eta_i^{a_0} \circ \xi_i \circ T_{\eta_i^{a_0}\big(\xi_i(0)\big)}^{-1}=$$

$$=T_{T_{\xi_i(0)}\big(\eta_i^{a_0}(\xi_i(0))\big)} \circ \left(T_{\xi_i(0)} \circ \eta_i^{a_0} \circ \xi_i \circ T_{\xi_i(0)}^{-1}\right) \circ T_{T_{\xi_i(0)}\big(\eta_i^{a_0}(\xi_i(0))\big)}^{-1}$$in $[-1,0]$, and:$$T_{\eta_i^{a_0}\big(\xi_i(0)\big)} \circ \eta_i \circ T_{\eta_i^{a_0}\big(\xi_i(0)\big)}^{-1}=$$

$$=T_{T_{\xi_i(0)}\big(\eta_i^{a_0}(\xi_i(0))\big)} \circ \left(T_{\xi_i(0)} \circ \eta_i \circ T_{\xi_i(0)}^{-1}\right) \circ T_{T_{\xi_i(0)}\big(\eta_i^{a_0}(\xi_i(0))\big)}^{-1}$$in $[0,1]$. By Lemma \ref{comp} and the four properties quoted above there exists $L_4(M,B,K) \geq L_3$ such that:$$\big\|T_{\eta_1^{a_0}\big(\xi_1(0)\big)} \circ \eta_1^{a_0} \circ \xi_1 \circ T_{\eta_1^{a_0}\big(\xi_1(0)\big)}^{-1}-T_{\eta_2^{a_0}\big(\xi_2(0)\big)} \circ \eta_2^{a_0} \circ \xi_2 \circ T_{\eta_2^{a_0}\big(\xi_2(0)\big)}^{-1}\big\|_{C^0} \leq$$

$$\leq L_4\max\big\{\big\|T_{T_{\xi_1(0)}\big(\eta_1^{a_0}(\xi_1(0))\big)}-T_{T_{\xi_2(0)}\big(\eta_2^{a_0}(\xi_2(0))\big)}\big\|_{C^0},$$

$$d_0(\zeta_1,\zeta_2),\big\|T_{T_{\xi_1(0)}\big(\eta_1^{a_0}(\xi_1(0))\big)}^{-1}-T_{T_{\xi_2(0)}\big(\eta_2^{a_0}(\xi_2(0))\big)}^{-1}\big\|_{C^0}\big\}$$

$$\leq L_3L_4 \cdot d_0(\zeta_1,\zeta_2)\,.$$in $[-1,0]$, and:$$\big\|T_{\eta_1^{a_0}\big(\xi_1(0)\big)} \circ \eta_1 \circ T_{\eta_1^{a_0}\big(\xi_1(0)\big)}^{-1}-T_{\eta_2^{a_0}\big(\xi_2(0)\big)} \circ \eta_2 \circ T_{\eta_2^{a_0}\big(\xi_2(0)\big)}^{-1}\big\|_{C^0} \leq$$

$$\leq L_4\max\big\{\big\|T_{T_{\xi_1(0)}\big(\eta_1^{a_0}(\xi_1(0))\big)}-T_{T_{\xi_2(0)}\big(\eta_2^{a_0}(\xi_2(0))\big)}\big\|_{C^0},$$

$$d_0(\zeta_1,\zeta_2),\big\|T_{T_{\xi_1(0)}\big(\eta_1^{a_0}(\xi_1(0))\big)}^{-1}-T_{T_{\xi_2(0)}\big(\eta_2^{a_0}(\xi_2(0))\big)}^{-1}\big\|_{C^0}\big\}$$

$$\leq L_3L_4 \cdot d_0(\zeta_1,\zeta_2)\,.$$in $[0,1]$. Therefore we are done by taking $L \geq L_3L_4$.
\end{proof}

\begin{proof}[Proof of Lemma \ref{Lipschitz}] Let $f$ be a $C^3$ critical circle map with irrational rotation number $\rho(f)=[a_0,a_1,...,a_n,a_{n+1},...]$, and recall that we are assuming that $a_n < M$ for all $n\in\nt$. Let $n_0(f)\in\nt$ given by the real bounds, and note that $\mathcal{R}^n(f)\in\mathcal{P}^3(K)$ for all $n \geq n_0$ since $K>K_0$ by hypothesis and therefore $\mathcal{P}^3(K)\supset\mathcal{P}^3(K_0)$. As a well-known corollary of the real bounds (see for instance \cite[Theorem 3.1]{edsonwelington1}) there exists a constant $B>0$ such that the sequence $\{\mathcal{R}^n(f)\}_{n\in\nt}$ is bounded in the $C^1$ metric by $B$, and we are done by taking $L>1$ given by Lemma \ref{corolario}.
\end{proof}

\section{Proof of Proposition \ref{aprox2}}\label{apA} In this appendix we give the proof of Proposition \ref{aprox2} of Section \ref{SecAB}:

\begin{proof}[Proof of Proposition \ref{aprox2}] Assume that each $\mu_n$ is defined in the whole complex plane, just by extending as zero in the complement of the domain $U$, that is:$$\mu_n(z)\,\partial G_n(z)=\overline{\partial}G_n(z)\mbox{ for a.e. $z \in U$, and }\mu_n(z)=0\mbox{ for all $z\in\C\setminus U$.}$$

Fix $n\in\nt$. If $\mu_n \equiv 0$ we take $H_n=G_n|_{V}$, so assume that $\|\mu_n\|_{\infty}>0$ and fix some small $\varepsilon\in\big(0,1-\|\mu_n\|_{\infty}\big)$. Denote by $\Lambda$ the open disk $B\big(0,(1-\varepsilon)/\|\mu_n\|_{\infty}\big)$ centred at the origin and with radius $(1-\varepsilon)/\|\mu_n\|_{\infty}$ in the complex plane (note that $\overline{\D}\subset\Lambda$). Consider the one-parameter family of Beltrami coefficients $\big\{\mu_n(t)\big\}_{t\in\Lambda}$ defined by:$$\mu_n(t)=t\mu_n.$$

Note that for all $t\in\Lambda$ we have $\big\|\mu_n(t)\big\|_{\infty}<1-\varepsilon<1$. Denote by $f^{\mu_n(t)}$ the solution of the Beltrami equation with coefficient $\mu_n(t)$, given by Theorem \ref{morrey}, normalized to fix $0$, $1$ and $\infty$. Note that $f^{\mu_n(0)}$ is the identity and that, by uniqueness, there exists a biholomorphism $H_n:f^{\mu_n(1)}(U)\to G_n(U)$ such that:$$G_n=H_n \circ f^{\mu_n(1)}\mbox{ in $U$.}$$

By Ahlfors-Bers theorem (Theorem \ref{AB}) we know that for any $z\in\C$ the curve $\big\{f^{\mu_n(t)}(z):t\in[0,1]\big\}$ is smooth, that is, the derivative of $f^{\mu_n(t)}$ with respect to the parameter $t$ exists at any $z\in\C$ and any $s \in [0,1]$. Following Ahlfors \cite[Chapter V, Section C]{ahlfors}, we use the notation:$$\dot{f}_n(z,s)=\lim_{t \to 0}\frac{f^{\mu_n(s+t)}(z)-f^{\mu_n(s)}(z)}{t}\,.$$

The limit exists for every $z\in\C$ and every $s\in[0,1]$ (actually for every $s\in\Lambda$), and the convergence is uniform on compact sets of $\C$. Then we have:$$\left\|f^{\mu_n(1)}-Id\right\|_{C^0(U)}=\sup_{z \in U}\left\{\left|f^{\mu_n(1)}(z)-z\right|\right\}\leq\sup_{z \in U}\left\{\int_{0}^{1}\big|\dot{f}_n(z,s)\big|ds\right\}.$$

Moreover, $\dot{f}_n$ has the following integral representation (see \cite[Chapter V, Section C, Theorem 5]{ahlfors} for the explicit computation):$$\dot{f}_n(z,s)=-\left(\frac{1}{\pi}\right)\iint_{\C}\mu_n(w)\,S\big(f^{\mu_n(s)}(w),f^{\mu_n(s)}(z)\big)\left(\partial f^{\mu_n(s)}(w)\right)^2dxdy\,,$$for every $z\in\C$ and every $s\in[0,1]$, where $w=x+iy$ and:$$S(w,z)=\frac{1}{w-z}-\frac{z}{w-1}+\frac{z-1}{w}=\frac{z(z-1)}{w(w-1)(w-z)}\,.$$

Since each $\mu_n$ is supported in $U$ we have:$$\dot{f}_n(z,s)=-\left(\frac{1}{\pi}\right)\iint_{U}\mu_n(w)\,S\big(f^{\mu_n(s)}(w),f^{\mu_n(s)}(z)\big)\left(\partial f^{\mu_n(s)}(w)\right)^2dxdy\,.$$

From the formula:$$\left|\partial f^{\mu_n(s)}(w)\right|^2=\left(\frac{1}{1-|s|^2|\mu_n(w)|^2}\right)\det\big(Df^{\mu_n(s)}(w)\big)$$we obtain:
\begin{align}
\left|\dot{f}_n(z,s)\right|&\leq\frac{1}{\pi}\iint_{U}\left(\frac{|\mu_n(w)|}{1-|s|^2|\mu_n(w)|^2}\right)\det\big(Df^{\mu_n(s)}(w)\big)\big|S\big(f^{\mu_n(s)}(w),f^{\mu_n(s)}(z)\big)\big|dxdy\notag\\
&\leq\frac{1}{\pi}\left(\frac{\|\mu_n\|_{\infty}}{1-|s|^2\|\mu_n\|_{\infty}^2}\right)\iint_{U}\det\big(Df^{\mu_n(s)}(w)\big)\big|S\big(f^{\mu_n(s)}(w),f^{\mu_n(s)}(z)\big)\big|dxdy\notag\\
&=\frac{1}{\pi}\left(\frac{\|\mu_n\|_{\infty}}{1-|s|^2\|\mu_n\|_{\infty}^2}\right)\iint_{f^{\mu_n(s)}(U)}\big|S\big(w,f^{\mu_n(s)}(z)\big)\big|dxdy\,.\notag
\end{align}
Therefore the length of the curve $\big\{f^{\mu_n(t)}(z):t\in[0,1]\big\}$ is less or equal than:$$\frac{1}{\pi}\int_{0}^{1}\left[\left(\frac{\|\mu_n\|_{\infty}}{1-|s|^2\|\mu_n\|_{\infty}^2}\right)\iint_{f^{\mu_n(s)}(U)}\big|S\big(w,f^{\mu_n(s)}(z)\big)\big|dxdy\right]ds\leq$$
$$\leq\left(\frac{1}{\pi}\right)\left(\frac{\|\mu_n\|_{\infty}}{1-\|\mu_n\|_{\infty}^2}\right)\int_{0}^{1}\left[\iint_{f^{\mu_n(s)}(U)}\big|S\big(w,f^{\mu_n(s)}(z)\big)\big|dxdy\right]ds\,.$$

If we define:$$M_n(U)=\left(\frac{1}{\pi}\right)\sup_{z \in U}\left\{\int_{0}^{1}\left[\iint_{f^{\mu_n(s)}(U)}\big|S\big(w,f^{\mu_n(s)}(z)\big)\big|dxdy\right]ds\right\},$$we get:$$\left\|f^{\mu_n(1)}-Id\right\|_{C^0(U)}\leq\left(\frac{\|\mu_n\|_{\infty}}{1-\|\mu_n\|_{\infty}^2}\right)M_n(U).$$

We have two remarks:

First remark: since $\mu_n \to 0$ in the unit ball of $L^{\infty}$, we know by Proposition \ref{convident} that for any $s\in[0,1]$ the normalized quasiconformal homeomorphisms $f^{\mu_n(s)}$ converge to the identity uniformly on compact sets of $\C$, in particular on $\overline{U}$. Therefore the sequence $M_n(U)$ converge to:$$\left(\frac{1}{\pi}\right)\sup_{z \in U}\left\{\iint_{U}\big|S\big(w,z\big)\big|dxdy\right\}<\left(\frac{1}{\pi}\right)\sup_{z \in U}\left\{\iint_{\C}\big|S\big(w,z\big)\big|dxdy\right\}<\infty.$$

For fixed $z\in\C$ we have that $S(w,z)$ is in $L^1(\C)$ since it has simple poles at $0$, $1$ and $z$, and is $O\big(|w|^{-3}\big)$ near $\infty$. The finiteness follows then from the compactness of $\overline{U}$.

Second remark: $x\mapsto x/(1-x^2)$ is an orientation-preserving real-analytic diffeomorphism between $(-1,1)$ and the real line, which is tangent to the identity at the origin. In fact $x/(1-x^2)=x+o(x^2)$ in $(-1,1)$.

With this two remarks we obtain $n_1\in\nt$ such that for all $n \geq n_1$ we have:$$\left\|f^{\mu_n(1)}-Id\right\|_{C^0(U)} \leq M(U)\|\mu_n\|_{\infty},$$where:$$M(U)=\left(\frac{2}{\pi}\right)\sup_{z \in U}\left\{\iint_{U}\big|S\big(w,z\big)\big|dxdy\right\}.$$

Since $V$ is compactly contained in the bounded domain $U$, the boundaries $\partial V$ and $\partial U$ are disjoint compact sets. Let $\delta>0$ be its Euclidean distance, that is, $\delta=d\big(\partial V,\partial U\big)=\min\big\{|z-w|:z\in\partial V,w\in\partial U\big\}$. Again by Proposition \ref{convident} we know, since $\mu_n \to 0$, that there exists $n_0 \geq n_1$ in $\nt$ such that for all $n \geq n_0$ we have $V \subset f^{\mu_n(1)}(U)$ and moreover:$$f^{\mu_n(1)}(U) \supseteq B(z,\delta/2)\quad\mbox{for all $z \in V$.}$$

If we consider the restriction of $H_n$ to the domain $V$ we have:
\begin{align}
\left\|H_n-G_n\right\|_{C^0(V)}&\leq\left\|H'_n\right\|_{C^0(V)}\left\|f^{\mu_n(1)}-Id\right\|_{C^0(U)}\notag\\
&\leq\left\|H'_n\right\|_{C^0(V)}M(U)\|\mu_n\|_{\infty}.\notag
\end{align}
By Cauchy's derivative estimate we know that for all $z \in V$:
\begin{align}
\big|H_n'(z)\big|&=\left|\frac{1}{2\pi i}\int_{\partial B(z,\delta/2)}\frac{H_n(w)}{(w-z)^2}\, dw\right|\notag\\
&\leq\frac{2\|H_n\|_{C^0(f^{\mu_n(1)}(U))}}{\delta}\notag\\
&=\frac{2\|G_n\|_{C^0(U)}}{\delta}\notag\\
&\leq\frac{2R}{\delta}\quad\mbox{for all $n \geq n_0$.}\notag
\end{align}
That is:$$\big\|H'_n\big\|_{C^0(V)}\leq\frac{2R}{d\big(\partial V,\partial U\big)}\quad\mbox{for all $n \geq n_0$,}$$and we obtain that for all $n \geq n_0$:$$\frac{\big\|H_n-G_n\big\|_{C^0(V)}}{\|\mu_n\|_{\infty}}\leq\left(\frac{R}{d\big(\partial V,\partial U\big)}\right)\left(\frac{4}{\pi}\right)\sup_{z \in U}\left\{\iint_{U}\big|S\big(w,z\big)\big|dxdy\right\}.$$
Therefore is enough to consider:$$C(U)=\left(\frac{4}{\pi}\right)\sup_{z \in U}\left\{\iint_{U}\big|S\big(w,z\big)\big|dxdy\right\}.$$
\end{proof}

\end{document}